\documentclass[11pt,a4paper]{amsart}

\usepackage[utf8]{inputenc}
\usepackage{relsize}
\usepackage{amssymb}
\usepackage{amsfonts}
\usepackage{amsthm}
\usepackage{amsmath}
\usepackage{mathtools}
\usepackage{amscd}
\usepackage[utf8]{inputenc}
\usepackage{t1enc}
\usepackage[mathscr]{eucal}
\usepackage{indentfirst}
\usepackage{graphicx}
\usepackage{graphics}
\usepackage{wasysym}
\usepackage{pict2e}
\usepackage{lmodern}    
\usepackage{ifthen}  
\usepackage{epic}
\numberwithin{equation}{section}
\usepackage[margin=3.6cm]{geometry}
\usepackage{epstopdf} 
\usepackage{tikz-cd}
\tikzcdset{scale cd/.style={every label/.append style={scale=#1},
    cells={nodes={scale=#1}}}}
\usetikzlibrary{matrix,arrows,decorations.pathmorphing}
\usepackage{yhmath}
\author{Bence Hevesi}
\email{bh624@cam.ac.uk}
\address{Department of Pure Mathematics and Mathematical Statistics\\
Centre for Mathematical Sciences\\
Wilberforce Road\\
Cambridge\\
CB3 0WB}

\usepackage{mathrsfs}
\usepackage{lipsum}

\usepackage[
backend=biber,
style=alphabetic,
maxnames=8
]{biblatex}

\usepackage[hidelinks]{hyperref}
\hypersetup{
    colorlinks=false, %set true if you want colored links
    linktoc= all,     %set to all if you want both sections and subsections linked
}
 \usepackage{url}
 \usepackage{textcomp}

\theoremstyle{plain}
\newtheorem{Th}{Theorem}[section]
\newtheorem{Lemma}[Th]{Lemma}
\newtheorem{Cor}[Th]{Corollary}
\newtheorem{Prop}[Th]{Proposition}

\theoremstyle{definition}

\newtheorem{Def}[Th]{Definition}
\newtheorem{Conj}[Th]{Conjecture}
\newtheorem{Rem}[Th]{Remark}
\newtheorem{?}[Th]{Problem}
\newtheorem{Ex}[Th]{Example}
\newtheorem{assumption}[Th]{Assumption}

\newcommand{\im}{\operatorname{im}}
\newcommand{\Hom}{{\rm{Hom}}}

\begin{document}
\title{Local-global compatibility at $p\neq \ell$ for torsion automorphic forms}
\makeatletter
\@namedef{subjclassname@2020}{%
 \textup{2010} Mathematics Subject Classification}
\makeatother
\keywords{Langlands reciprocity, local-global compatibility, Galois representations, automorphic representations} 
\subjclass[2020]{ 11F75, 11F70 (Primary); 11F80, 11F33 (Secondary)}

\begin{abstract}
We prove local-global compatibility results at $p \neq \ell$ for the automorphic group determinants constructed by Scholze \cite{Sch15}, generalising the result of Varma \cite{Var14} to torsion classes appearing in Betti cohomology. Our argument combines the construction of Scholze with the theory of representations of $p$-adic general linear groups with $\mathbf{Z}_{\ell}$-coefficients.
\end{abstract}
\maketitle

\tableofcontents

\section{Introduction}
Let $F$ be a number field and let $n\geq 1$ be an integer. In \cite{Lan79}, Langlands introduced a conjectural framework predicting a parametrisation of automorphic representations of $\textnormal{GL}_n(\mathbf{A}_F)$ by $n$-dimensional complex representations of the hypothetical Langlands group $L_F$. Guided by Langlands’ work, Clozel \cite{Cl90} singled out the class of \textit{algebraic} automorphic representations, which, under Langlands reciprocity, should correspond to compatible systems of Galois representations of $F$. In particular, one has the following conjecture.
\begin{Conj}\label{Conj_Reciprocity_char0}
Let $\pi$ be an algebraic cuspidal automorphic representation of $\textnormal{GL}_n(\mathbf{A}_F)$ and $\iota:\overline{\mathbf{Q}}_{\ell}\xrightarrow{\sim}\mathbf{C}$ be a field isomorphism. There exists a continuous semisimple Galois representation
\begin{equation*}
    r_{\iota}(\pi):\textnormal{Gal}(\overline{F}/F)\to \textnormal{GL}_n(\overline{\mathbf{Q}}_{\ell})
\end{equation*}
such that, for every finite place $v\nmid \ell$ of $F$,
    \begin{equation}\label{eqn_LGC_v}
        \textnormal{WD}(r_{\iota}(\pi)|_{G_{F_v}})^{F-ss}\cong \textnormal{rec}_{F_v}^T(\iota^{-1}\pi_v).
    \end{equation}
\end{Conj}
 In the above conjecture, $\textnormal{WD}(\rho)^{F-ss}$ denotes the Frobenius-semisimplified Weil--Deligne representation associated with $\rho:G_{F_v}\to\textnormal{GL}_n(\overline{\mathbf{Q}}_{\ell})$ via Grothendieck's $\ell$-adic monodromy theorem and\footnote{This is often called the Tate-normalised local Langlands correspondence and has the virtue that it is invariant under any automorphism of $\mathbf{C}$, making it possible to define it over any field abstractly isomorphic to $\mathbf{C}$, like $\overline{\mathbf{Q}}_{\ell}$.} $\textnormal{rec}_{F_v}^T(-)=\textnormal{rec}_{F_v}(-\otimes|\det|_v^{\frac{1-n}{2}})$ where $\textnormal{rec}_{F_v}(-)$ denotes the local Langlands correspondence constructed in \cite{HT01}. 
 
 In particular, \ref{eqn_LGC_v} asserts that, for $v\nmid \ell$, $r_{\iota}(\pi)|_{G_{F_v}}$ is unramified whenever $\pi_v$ is and the Satake-parameter of $\pi_v|\det|_v^{\frac{1-n}{2}}$ is conjugate to $\iota(r_{\iota}(\pi)(\textnormal{Frob}_v))\in \textnormal{GL}_n(\mathbf{C})$ for any geometric Frobenius lift $\textnormal{Frob}_v$.

 \begin{Rem}
     In fact, Clozel conjectures much more than this. He predicts the existence of an associated motive $M_{\pi}$ over $F$ satisfying local-global compatibility at \textit{all} places of $F$. The Galois representation $r_{\iota}(\pi)$ should then be the $\ell$-adic \'etale realisation of $M_{\pi}$. In particular, it is expected that $r_{\iota}(\pi)$ is de Rham, its $\ell$-adic Hodge--Tate structure matches with the infinitesimal character of $\pi_{\infty}$ and \ref{eqn_LGC_v} holds at $\ell$ as well. This part of his conjecture is often referred to as "local-global compatibility at $p=\ell$" and is beyond the scope of this article. 
 \end{Rem}

Due to the spectacular work\footnote{The last missing cases in the conjugate self-dual case were established in \cite{Car12} and we point the reader to the introduction of \textit{loc. cit.} for further references.} of many authors, Conjecture \ref{Conj_Reciprocity_char0} is known under the following assumptions.
\begin{itemize}
    \item $F$ is a \textit{CM number field},
    \item $\pi$ is \textit{regular algebraic} (its infinitesimal character is that of an algebraic representation of $(\textnormal{Res}_{F/\mathbf{Q}}\textnormal{GL}_n)_{\mathbf{R}}$),
    \item $\pi$ is \textit{conjugate self-dual} ($\pi^{\vee}= \pi\circ c$).
\end{itemize} 
The main point is that, under these assumptions, the Hecke eigensystem of $\pi^{\infty}$ appears in the \'etale cohomology of unitary Shimura varieties. Beyond the conjugate self-dual case, the Hecke eigensystem associated with $\pi$ only appears in the Betti cohomology of some smooth manifold, a $G=\textnormal{Res}_{F/\mathbf{Q}}\textnormal{GL}_n$-locally symmetric space $X_K^G$. Despite the lack of direct relation to the cohomology of Shimura varieties, Harris--Lan--Taylor--Thorne \cite{HLTT16} and Scholze \cite{Sch15} construct $r_{\iota}(\pi)$ satisfying local-global compatibility at almost all primes by realising $r_{\iota}(\pi)\oplus r_{\iota}(\pi)^{\vee,c}(1-2n)$ as an $\ell$-adic limit of automorphic Galois representations for the quasi-split unitary group $U(n,n)$. Moreover, Varma \cite{Var14} shows that the construction of \cite{HLTT16} is sufficient to prove a semisimplified version of \ref{eqn_LGC_v} at every $v\nmid \ell$. Therefore, one has the following theorem.
\begin{Th}[Harris--Lan--Taylor--Thorne, Scholze, Varma]\label{Thm_HLTT_Sch_Var}
    Let $F$ be a CM number field, $\pi$ be a regular algebraic cuspidal automorphic representation of $\textnormal{GL}_n(\mathbf{A}_F)$ and $\iota:\overline{\mathbf{Q}}_{\ell}\xrightarrow{\sim}\mathbf{C}$ be a field isomorphism. There exists a continuous semisimple Galois representation
\begin{equation*}
    r_{\iota}(\pi):\textnormal{Gal}(\overline{F}/F)\to \textnormal{GL}_n(\overline{\mathbf{Q}}_{\ell})
\end{equation*}
such that, for every finite place $v\nmid \ell$ of $F$,
    \begin{equation}\label{eqn_ssLGC_v}
        (r_{\iota}(\pi)|_{W_{F_v}})^{ss}\cong \textnormal{rec}_{F_v}^T(\iota^{-1}\pi_v)^{\textnormal{ss}}.
    \end{equation}
\end{Th}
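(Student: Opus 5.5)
This is the combination of known results. The plan is to construct $r_{\iota}(\pi)$ by $\ell$-adic interpolation, following \cite{HLTT16} and \cite{Sch15}, and then to check the semisimplified compatibility (\ref{eqn_ssLGC_v}) at each $v\nmid\ell$ by Varma's method \cite{Var14}.

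\textbf{Step 1 (realising $\pi$ in Betti cohomology).} First, realise the Hecke eigensystem of $\pi^{\infty}$ in the Betti cohomology of the locally symmetric space $X_K^G$ with coefficients in an algebraic local system determined by $\pi_\infty$. Regularity of $\pi$ ensures that cuspidal cohomology is nonzero. By $\iota$ this gives a $\overline{\mathbf{Q}}_\ell$-valued system of Hecke eigenvalues away from a finite set $S$ of places.

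\textbf{Step 2 (transfer to $U(n,n)$ and passage to the limit).} Next, view $X_K^G$ as the boundary stratum of the Borel--Serre compactification of the locally symmetric space attached to the quasi-split unitary group $U(n,n)$, whose Levi contains $\textnormal{Res}_{F/\mathbf{Q}}\textnormal{GL}_n$. The Hecke eigensystem then lifts, up to twist, to coherent or ordinary cohomology of the Shimura variety for $U(n,n)$. Harris--Lan--Taylor--Thorne do this via $p$-adic families of Hasse-invariant-twisted forms; Scholze does it via perfectoid Shimura varieties and the Hodge--Tate period map. Congruences then show that modulo each $\ell^m$ the eigensystem agrees with that of a cuspidal automorphic representation $\Pi_m$ of $U(n,n)$. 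Such $\Pi_m$ transfer to conjugate self-dual regular algebraic representations of $\textnormal{GL}_{2n}$, which have Galois representations satisfying full local-global compatibility. Interpolating these Galois representations via determinants or pseudo-characters gives a continuous semisimple $r_\iota(\pi)$. The resulting representation $r_\iota(\pi)\oplus r_\iota(\pi)^{\vee,c}(1-2n)$ has the correct Frobenius characteristic polynomials at every $v\notin S$.

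\textbf{Step 3 (all places $v\nmid \ell$).} Finally, handle the finitely many $v\in S$ with $v\nmid\ell$, following Varma. For these places, use that the semisimplified Weil restriction is determined by the traces of $r_\iota(\pi)$ on $W_{F_v}$. On the automorphic side, $\textnormal{rec}^T_{F_v}(\pi_v)^{ss}$ is determined by the action of the Bernstein centre, or by Bushnell--Kutzko types and their Hecke algebras, on $\pi_v$. The first sub-step is to redo Step 2 with level structure at $v$ cut out by a type, so that the Bernstein centre of $\textnormal{GL}_n(F_v)$ acts on the relevant cohomology. The second sub-step is to track how this action passes through the parabolic induction from the Levi to $U(n,n)$ and through base change to $\textnormal{GL}_{2n}$. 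For $\Pi_m$, local-global compatibility at $v$ is known, and comparing Bernstein centre actions mod $\ell^m$ yields a congruence between the characteristic polynomials of $r_\iota(\pi)(\sigma)$ and those predicted by $\textnormal{rec}^T_{F_v}(\pi_v)\oplus(\cdots)^{\vee,c}$ for all $\sigma\in W_{F_v}$. Letting $m\to\infty$ gives equality of semisimplifications of the doubled representation. To extract the $n$-dimensional piece, vary the twisting character or the auxiliary choices so that the two summands can be separated, or use a Chebotarev/linear-independence argument on characters.

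\textbf{Main obstacle.} I expect the hardest step to be the compatibility in Step 3 of the local Bernstein centre action with the boundary (parabolic induction) map and with the Jacquet module at $v$. Concretely, this means showing that the Hecke operators at $v$ on the $\textnormal{GL}_n$-boundary cohomology correspond, under the Satake-type map for $U(n,n)$, to elements of the Bernstein centre of $U(n,n)(F_v^+)$ that detect $\textnormal{rec}^T_{F_v}(\pi_v)^{ss}$. My first attempt would be to use the explicit description of the Bernstein centre via Helm's integral results and its compatibility with parabolic induction.
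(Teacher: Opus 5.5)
Your proposal matches the paper's treatment. The paper does not prove this theorem itself; it quotes it as the combination of \cite{HLTT16} and \cite{Sch15} (construction of $r_{\iota}(\pi)$ from Eisenstein congruences with cohomological cusp forms on $U(n,n)$, with compatibility at almost all places) and \cite{Var14} (semisimplified compatibility at every $v\nmid\ell$), which is the decomposition you give.

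The one claim to tighten is that the $U(n,n)$ cusp forms satisfy local-global compatibility at an arbitrary ramified $v$. In the form used here (Theorem~\ref{SelfdualLGC}(ii)), this needs $F$ to contain an imaginary quadratic field in which the residue characteristic $p$ of $v$ splits. For general CM $F$ and $v$, first base change to $FE$ with $E$ imaginary quadratic and $p$ split in $E$, so that $(FE)_w=F_v$.
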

In parallel with developments in Langlands reciprocity, it has become clear—through, for instance, the work of Calegari–Geraghty \cite{CG18}—that it is fruitful to study an extension of Conjecture \ref{Conj_Reciprocity_char0} to Hecke eigensystems appearing in the Betti cohomology of $X_K^G$. To discuss such a generalisation, we need to introduce some notation.

For a finite place $v\nmid \ell$ of $F$, let $\mathfrak{Z}_{\textnormal{GL}_n(F_v)}$ be the Bernstein centre of the category of smooth $\mathbf{Z}_{\ell}$-representations of $\textnormal{GL}_n(F_v)$. For a finite set of finite places $S$ of $F$ containing the $\ell$-adic places $S_{\ell}(F)$, set
\begin{equation*}
\mathbf{T}^S:=\otimes_{v\notin S}'\mathfrak{Z}_{\textnormal{GL}_n(F_v)}.
\end{equation*}
The commutative algebra $\mathbf{T}^S$ acts on the Betti cohomology groups $H^{\ast}(X_K^G,\mathbf{Z}/\ell^m\mathbf{Z})$ via Hecke correspondences and we can introduce the faithful quotients
\begin{equation*}
    \mathbf{T}^S(K,m):=\im\Big(\mathbf{T}^S\to \textnormal{End}_{\mathbf{Z}_{\ell}}\big(H^{\ast}(X_K^G,\mathbf{Z}/\ell^m\mathbf{Z})\big)\Big)
\end{equation*}
that, for $v\notin S$,
admit natural maps
\begin{equation*}
    \textnormal{nat}_v:\mathfrak{Z}_{\textnormal{GL}_n(F_v)}\to \mathbf{T}^S(K,m).
\end{equation*}

On the other hand, for every $v\nmid \ell$, one can introduce an affine scheme 
\begin{equation*}
    X_{F_v,n}=\textnormal{Spec}(\mathfrak{R}^{\textnormal{ps}}_{F_v,n}),
\end{equation*} an increasing union of finite type affine schemes over $\mathbf{Z}_{\ell}$, satisfying the following properties.
\begin{itemize}
    \item It represents the functor sending an Artinian $\mathbf{Z}_{\ell}$-algebra $A$ to the set of $n$-dimensional continuous group determinants $W_{F_v}\to A$ in the sense of \cite{Che09}.
    \item Sending an $L$-parameter to its associated determinant induces a $\mathbf{Z}_{\ell}$-algebra homomorphism
    \begin{equation*}
\xi:\mathfrak{R}^{\textnormal{ps}}_{F_v,n}\to\mathcal{O}\big(Z^1_n(W_{F_v})\big)^{\textnormal{GL}_n}
    \end{equation*}
    to the ring of global sections of the stack $[Z^1_n(W_{F_v})/\textnormal{GL}_n]$ of $n$-dimensional $L$-parameters. After inverting $\ell$, the map $\xi[\frac{1}{\ell}]$ is an isomorphism.
\end{itemize}
In particular, postcomposition of $\xi$ by the isomorphism
\begin{equation*}
    \Psi_{F_v,n}:\mathcal{O}\big(Z^1_n(W_{F_v})\big)^{\textnormal{GL}_n} \xrightarrow{\sim}\mathfrak{Z}_{\textnormal{GL}_n(F_v)}
\end{equation*} 
of \cite{HM18} interpolating the (Tate-normalised) local Langlands correspondence yields a map
\begin{equation*}
    \Phi_{F_v,n}:\mathfrak{R}^{\textnormal{ps}}_{F_v,n}\to \mathfrak{Z}_{\textnormal{GL}_n(F_v)}.
\end{equation*}
\begin{Conj}\label{Conj_LGC_torsion}
     Let $S_{\ell}(F)\subset S_{\textnormal{avoid}}\subset S_{\textnormal{bad}}$ be finite sets of finite places of $F$. Let $K\leq \textnormal{GL}_n(\mathbf{A}_F^{\infty})$ be a good\footnote{For the definition, see \S\ref{sec_locally_symm_spaces}.} compact open subgroup such that $K_v = \textnormal{GL}_n(\mathcal{O}_{F_v})$ for $v\notin S_{\textnormal{bad}}$. Let $\mathfrak{m}\leq\mathbf{T}^{S_{\textnormal{bad}}}(K,m) $ be a maximal ideal.

     There exists an integer $N\geq 1$ depending only on $n$ and $[F:\mathbf{Q}]$, an ideal $I\leq \mathbf{T}^{S_{\textnormal{avoid}}}(K,m)_{\mathfrak{m}}$ with $I^N=0$ and a continuous group determinant
     \begin{equation*}
        D_{\mathfrak{m}}:G_{F,S_{\textnormal{bad}}}\to \mathbf{T}^{S_{\textnormal{avoid}}}(K,m)_{\mathfrak{m}}/I
    \end{equation*}
    such that, for every finite place $v\notin S_{\textnormal{avoid}}$, we have
    \begin{equation}\label{eqn_LGC_torsion}
        D_{\mathfrak{m}}|_{G_{F_v}}=\textnormal{nat}_v\circ \Phi_{F_v,n}
    \end{equation}
    as maps $\mathfrak{R}^{\textnormal{ps}}_{F_v,n}\to \mathbf{T}^{S_{\textnormal{avoid}}}(K,m)_{\mathfrak{m}}/I$.
\end{Conj}
\begin{Rem}
    Given an algebraic representation $W$ of $(\textnormal{Res}_{F/\mathbf{Q}}\textnormal{GL}_n)_{\mathbf{R}}$ and a field isomorphism $\iota:\overline{\mathbf{Q}}_{\ell}\xrightarrow{\sim}\mathbf{C}$, we can build a $\mathbf{Z}_{\ell}$-local system $\mathcal{W}$ on $X_K$. By an argument with the Hochschild--Serre spectral sequence, Conjecture \ref{Conj_LGC_torsion} implies the analogous result for $H^{\ast}(X_K,\mathcal{W})$ as well. Therefore, by the description of rational Betti cohomology of $X_K^G$ in terms of automorphic representations of $G$ (cf. \cite{Fr98}, \cite{FS98}), it implies Theorem \ref{Thm_HLTT_Sch_Var}.
\end{Rem}

The state of Conjecture \ref{Conj_LGC_torsion} is as follows.

\begin{itemize}
    \item The work of Scholze \cite{Sch15} verifies the Conjecture under the following assumptions.
    \begin{enumerate}
        \item $F$ is a CM number field that contains an imaginary quadratic subfield.
        \item The set $S_{\textnormal{bad}}$ is stable under complex conjugation and is \textit{unconditional} in the sense of Definition \ref{DefinitionOnPlaces} (ii).
        \item We have $S_{\textnormal{avoid}}=S_{\textnormal{bad}}$.
    \end{enumerate}
    In  particular, for every $S_{\ell}(F)\subset S_{\textnormal{avoid}}'\subset S_{\textnormal{bad}}'\subset  S_{\textnormal{bad}}$, $K$ and $\mathfrak{m}$ we have determinants
    \begin{equation*}
        D_{\mathfrak{m}}:G_{F,S_{\textnormal{bad}}}\to \mathbf{T}^{S_{\textnormal{avoid}}'}(K,m)_{\mathfrak{m}}/I
    \end{equation*}
    satisfying \ref{eqn_LGC_torsion} at $v\notin S_{\textnormal{bad}}$. In other words, to verify Conjecture \ref{Conj_LGC_torsion} for $(S_{\textnormal{avoid}}',S_{\textnormal{bad}}')$, it is left to prove that, after possibly enlarging $I$, the following hold.
    \begin{itemize}
        \item The continuous determinant $D_{\mathfrak{m}}$ factors through $G_{F,S_{\textnormal{bad}}'}$.
        \item For $v\notin S_{\textnormal{bad}}\setminus S_{\textnormal{avoid}}'$, \ref{eqn_LGC_torsion} holds.
    \end{itemize}
    \item For $S_{\textnormal{avoid}}\neq S_{\textnormal{bad}}$, the only available results are for "non-Eisenstein" maximal ideals and a restricted class of level subgroups. For example, \cite{ACC23}, Theorem 3.1.1 shows Conjecture \ref{Conj_LGC_torsion} under the following assumptions.
    \begin{enumerate}
        \item $F$ is a CM number field that contains an imaginary quadratic subfield.
        \item The set $S_{\textnormal{avoid}}$ is stable under complex conjugation and \textit{unconditional} in the sense of Definition \ref{DefinitionOnPlaces} (ii).
        \item For every $v\in S_{\textnormal{ram}}:=S_{\textnormal{bad}}\setminus S_{\textnormal{avoid}}$, the residue characteristic of $v$ splits in an imaginary quadratic subfield of $F$.
        \item The Galois representation $\overline{\rho}_{\mathfrak{m}}$ constructed in \cite{Sch15} is absolutely irreducible.
        \item For\footnote{Here $\textnormal{Iw}_v(b,c)\leq \textnormal{GL}_n(\mathcal{O}_{F_v})$ is the Iwahori subgroup of matrices that reduce to upper triangular ones modulo $\varpi_v^c$ and strictly upper triangular ones modulo $\varpi_v^b$.} $v\in S_{\textnormal{ram}}$, $\textnormal{Iw}_{v}(1,1)\leq K_v\leq \textnormal{Iw}_v(0,1)$. 
    \end{enumerate}
    
    For a generalisation of the above result of \cite{ACC23} to parahoric level subgroups $\mathcal{P}_v(1,1)\leq K_v\leq \mathcal{P}_v(0,1)$, see \cite{MT22}, Theorem 7.7.
\end{itemize}

\subsection{Results of this paper}
The following\footnote{This follows from Theorem \ref{MainTHM}. However, note that it is formulated slightly differently, that is, to apply Theorem \ref{MainTHM} one takes $S_{\textnormal{bad}}^{4.1}=S_{\textnormal{bad}}^{1.6}\cup (S_{\textnormal{bad}}^{1.6})^c$.} is our main theorem, generalising \cite{Sch15}, Theorem V.3.1, \cite{ACC23}, Theorem 3.1.1, and \cite{MT22}, Theorem 7.7.

\begin{Th}\label{Thm_MainResultIntro}
Conjecture \ref{Conj_LGC_torsion} holds for any CM number field $F$ and finite sets of finite places $S_{\ell}(F)\subset S_{\textnormal{avoid}}\subset S_{\textnormal{bad}}$ of $F$ satisfying the following.
    \begin{enumerate}
        \item  $F$ contains an imaginary quadratic subfield.
        \item The set $S_{\textnormal{avoid}}$ is stable under complex conjugation and is \textit{unconditional} in the sense of Definition \ref{DefinitionOnPlaces} (ii).
        \item For $v\in S_{\textnormal{ram}}:=S_{\textnormal{bad}}\setminus S_{\textnormal{avoid}}$, the residue characteristic of $v$ splits in an imaginary quadratic subfield of $F$.
    \end{enumerate}
\end{Th}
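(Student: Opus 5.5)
The plan is to reduce to Scholze's construction \cite{Sch15} at the enlarged set $S_{\textnormal{bad}}$, and then to descend information about the places $v\in S_{\textnormal{ram}}$ from the Hecke algebra at level $K$ to the determinant. By the discussion after Conjecture \ref{Conj_LGC_torsion}, applying \cite{Sch15} with a suitable unconditional, conjugation-stable set containing $S_{\textnormal{bad}}\cup S_{\textnormal{bad}}^c$ gives a determinant $D_{\mathfrak{m}}:G_{F,S}\to \mathbf{T}^{S_{\textnormal{avoid}}}(K,m)_{\mathfrak{m}}/I$ satisfying \eqref{eqn_LGC_torsion} away from $S$. Two things then remain, after enlarging $I$ by a nilpotent ideal of bounded exponent. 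First, for each $v\in S_{\textnormal{ram}}$ we must show $D_{\mathfrak{m}}|_{G_{F_v}}=\textnormal{nat}_v\circ\Phi_{F_v,n}$. Second, we must show that $D_{\mathfrak{m}}$ factors through $G_{F,S_{\textnormal{bad}}}$, which follows from local-global compatibility at the places of $S\setminus S_{\textnormal{bad}}$ (these are hyperspecial, where $\textnormal{nat}_v$ factors through the unramified quotient).

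To prove compatibility at $v\in S_{\textnormal{ram}}$, we follow Scholze's strategy. We realise $H^{\ast}(X_K^G,\mathbf{Z}/\ell^m)$ as a Hecke-equivariant subquotient of the boundary, and then of the full cohomology of the $U(n,n)$ Shimura variety $\tilde G$, using the Borel--Serre boundary stratum for the Siegel parabolic $P$ with Levi $G$. By Scholze's perfectoid argument, $H^\ast$ of the Shimura variety with torsion coefficients is controlled by classical cusp forms for $\tilde G$. Their Galois representations satisfy full local-global compatibility at $v$, where $v$ splits over an imaginary quadratic field and so $\tilde G_v\cong \textnormal{GL}_{2n}(F_v)$, by Harris--Taylor and Caraiani. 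The key input is then an integral, Bernstein-centre version of this compatibility for $\tilde G$ over $\mathbf{Z}_\ell$: the determinant $\tilde D$ of the $2n$-dimensional Galois representation restricted to $G_{F_v}$ equals $\textnormal{nat}_v\circ\Phi_{F_v,2n}$ on the image of $\mathfrak{Z}_{\textnormal{GL}_{2n}(F_v)}$. This holds in characteristic zero and passes to the torsion-free Hecke algebra, hence to its quotients.

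Next comes the comparison of Bernstein centres. We need the compatibility of the Helm--Moss map $\Psi$ with parabolic induction, namely a commutative square
\begin{equation*}
\begin{tikzcd}
\mathfrak{R}^{\textnormal{ps}}_{F_v,2n}\arrow[r,"\Phi_{F_v,2n}"]\arrow[d] & \mathfrak{Z}_{\textnormal{GL}_{2n}(F_v)}\arrow[d,"\textnormal{Sat}_P"]\\
\mathfrak{R}^{\textnormal{ps}}_{F_v,n}\otimes\mathfrak{R}^{\textnormal{ps}}_{F_v,n}\arrow[r] & \mathfrak{Z}_{\textnormal{GL}_{n}(F_v)}\otimes\mathfrak{Z}_{\textnormal{GL}_{n}(F_v)}.
\end{tikzcd}
\end{equation*}
Here the left map sends $D$ to $D_1\cdot D_2^{\vee,c}(1-2n)$, and the right map is the Jacquet-module (constant-term) map with twists. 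This square is checked after inverting $\ell$, where it is the compatibility of the local Langlands correspondence with parabolic induction. Integrally it then follows because the rings involved are $\ell$-torsion free; the square is on the local rings, not on the Hecke algebras at level $K$, so the later, separate step of keeping track of $\ell$-torsion in the torsion Hecke algebras is not affected.

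Combining these, $D_{\mathfrak{m}}\oplus D_{\mathfrak{m}}^{\vee,c}(1-2n)$ satisfies the compatibility at $v$ for the $2n$-dimensional determinant. To extract the $n$-dimensional statement, we use the same ``splitting'' trick as Scholze at unramified places. We twist by characters, as in \cite{Sch15} V.3, to separate the two summands; this costs a further nilpotent ideal. Alternatively, the part of the $\mathfrak{Z}_{\textnormal{GL}_{2n}}$-action factoring through $\mathfrak{Z}_{\textnormal{GL}_n}\otimes 1$ recovers $\textnormal{nat}_v$ on $\mathfrak{R}^{\textnormal{ps}}_{F_v,n}$ via characteristic-polynomial coefficients.

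I expect the main obstacle to be the middle step: making the boundary and constant-term maps equivariant for the whole Bernstein centre at a ramified level $K_v$, rather than only for spherical Hecke operators. This means showing that the Borel--Serre boundary restriction intertwines $\mathfrak{Z}_{\textnormal{GL}_{2n}(F_v)}$ acting on $\tilde G$-cohomology with its image under the normalised Jacquet functor acting on the $P$-stratum, with $\mathbf{Z}_\ell$-coefficients and arbitrary level. My first approach would be to express the boundary cohomology as compactly induced from $P(F_v)$. I would then use the fact that the Bernstein centre acts on $\textnormal{Ind}_P^{G}$ through $\textnormal{Sat}_P$ (second adjointness, valid over $\mathbf{Z}_\ell$ by Dat--Helm--Moss). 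Finally, I would pass to $K$-invariants using that $\mathfrak{Z}$ acts on every smooth representation compatibly with taking invariants.
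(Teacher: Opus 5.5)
Your outline has the right surrounding ingredients: Scholze's determinant, the Siegel stratum of the $\widetilde G$-boundary, local-global compatibility for cusp forms on $\widetilde G$ at split $p$, and compatibility of the Bernstein centre with parabolic induction (Theorem~\ref{TheMapI}, Lemma~\ref{LocalEisensteinFunctoriality}). However, the step that carries the content of the theorem is not supplied. Pushing the $2n$-dimensional compatibility through $I$ only yields an identity of products
\begin{equation*}
D_{\mathfrak m}|_{G_{F_v}}\cdot D_{\mathfrak m}^{\vee,c}(1-2n)|_{G_{F_v}} = D_1'\cdot D_2'
\end{equation*}
over the Hecke algebra. Here $D_1'$ corresponds to $\textnormal{nat}_v\circ\Phi_{F_v,n}$ and $D_2'$ to the (dualised, twisted) $v^c$-factor. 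The image of $I$ sees a point of $\mathfrak{Z}_{G(F^+_{\bar v})}$ only through the combined supercuspidal support, so $I^\ast$ is not injective. Hence there is no ``part of the $\mathfrak{Z}_{\textnormal{GL}_{2n}}$-action factoring through $\mathfrak{Z}_{\textnormal{GL}_n}\otimes 1$''. A product identity does not give $D_{\mathfrak m}|_{G_{F_v}}=D_1'$, not even residually: a priori $H^\ast(X_K)_{\mathfrak m}$ could be supported at some $\mathfrak m'_{\bar v}\neq\mathfrak m_{\bar v}$ with $\phi_{\mathfrak m'_v}$ containing constituents of $\phi^{\vee}_{\mathfrak m_{v^c}}(1-2n)$. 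The twisting you cite from Scholze serves to produce a determinant with prescribed Frobenius polynomials $P_w(X)$, read off from the $\textnormal{GL}_n$-Hecke algebra. It says nothing about which factor of a product the Levi centre sees at a ramified place.

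Two things are needed. First, the residual statement (Theorem~\ref{modellLGC}): over $\mathfrak Z_{G(F^+_{\bar v})}$, $H^\ast(X_K)_{\mathfrak m}$ is supported only at $(\mathfrak m_v,\mathfrak m_{v^c})$. The paper proves this from $\textnormal{Supp}(\textnormal{Ind}_P^{\widetilde G}\pi)=I^\ast(\textnormal{Supp}\,\pi)$ (Corollary~\ref{ParabolicLemma}). It compares the resulting identities before and after twisting by a global $\chi$ with $\chi_v$ generic unramified and $\chi_{v^c}$ trivial, having first twisted so that $\phi_{\mathfrak m_v}$ and $\phi^\vee_{\mathfrak m_{v^c}}(1-2n)$ are coprime. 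Second, an integral separation after a further twist making $\mathfrak n_{\bar v}$ $(n,n)$-generic. The paper uses Lemma~\ref{KeyLemma}: every characteristic-zero lift splits as $\rho_1\oplus\rho_2$, and $J_{\textnormal{Q}}(\pi(\rho))_{\mathfrak m_\chi}\cong\pi(\rho_1)\otimes\pi(\rho_2)|\det|^n$. It combines this with Casselman's lift (Lemma~\ref{Casselmanslift}) at Iwahori-decomposed level, so that the full Levi centre acts on the $\widetilde G$-side. The $n$-dimensional factor is then matched on the torsion-free $\widetilde A$, and Hensel's lemma for determinants identifies its image in $A_!$ with $D_{\mathfrak n}|_{G_{F_v}}$. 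Alternatively, once the residual statement and coprimality are established, uniqueness in Hensel's lemma applied directly to your product identity on $A_!$ would also separate the factors. So your route can be completed, but some such argument must be given.

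The obstacle you single out, Bernstein-centre equivariance of the boundary restriction, is essentially formal: $f\mapsto f(1)$ intertwines $z$ with $I(z)$ by Theorem~\ref{TheMapI}. Finally, the $\widetilde G$-boundary only controls $H_!=\im(H^\ast_c\to H^\ast)$, since $\widetilde X^P$ is open, not closed, in $\partial\widetilde X$. For $H^\ast$ and $H^\ast_c$ you still need the induction on $n$ over the strata of $\partial X_K$ (Lemma~\ref{BSInductionLemma}, Lemma~\ref{LocalEisensteinFunctoriality}), which your proposal omits.
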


\begin{Rem}
In \cite{Sch15}, the assumptions (i), (ii) are present to have access to the unconditional base change result of \cite{Shi14}. We assume these conditions for the same reason.

In our proof, we also need condition (iii) to ensure that the transfers from $U(n,n)/F^+$ to $\textnormal{GL}_{2n}/F$ are compatible at $v\in S_{\textnormal{ram}}$ even when the automorphic representation is ramified at $v$.
In addition, (iii) plays a crucial role in the various twisting arguments we employ, and is therefore integral to our proof.
\end{Rem}

Before discussing the proof, we record a simple corollary.\footnote{We thank Andrea Dotto for drawing our attention to this application.}
\begin{Cor}
    Let $F$ and $S_{\textnormal{ram}}\subset S_{\textnormal{bad}}$ be as in Theorem \ref{Thm_MainResultIntro}. Consider $v\in S_{\textnormal{ram}}$, a good compact open subgroup $K^v\leq \textnormal{GL}_n(\mathbf{A}_{F}^{\infty,v})$ with $K^v_w=\textnormal{GL}_n(\mathcal{O}_{F_w})$ for $w\notin S_{\textnormal{bad}}$ and a maximal ideal $\mathfrak{m}\leq \mathbf{T}(K,1)$. Let $\overline{\rho}_{\mathfrak{m}}$ be the corresponding semisimple Galois representation constructed in \cite{Sch15}.
    
    The smooth $\mathfrak{Z}_{\textnormal{GL}_n(F_v)}[\textnormal{GL}_n(F_v)]$-module
    \begin{equation*}
        H(K^v)_{\mathfrak{m}}:=\varinjlim_{K_v}H^{\ast}(X_{K^vK_v}^G,\mathbf{F}_{\ell})_{\mathfrak{m}}
    \end{equation*}
    localises at the maximal ideal $\mathfrak{m}_v\leq \mathfrak{Z}_{\textnormal{GL}_n(F_v)}$ corresponding to $(\overline{\rho}_{\mathfrak{m}}|_{G_{F_v}})^{\textnormal{ss}}$ under the Tate-normalised mod $\ell$ semisimple local Langlands correspondence of Vigneras \cite{Vig01}.

    In particular, $H(K^v)_{\mathfrak{m}}$ is of finite length as a smooth $\mathbf{F}_{\ell}[\textnormal{GL}_n(F_v)]$-module.
\end{Cor}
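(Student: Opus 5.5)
The plan is to apply Theorem \ref{Thm_MainResultIntro} (in the form of Theorem \ref{MainTHM}) with $S_{\textnormal{avoid}}=S_{\textnormal{bad}}\setminus\{v\}$ after enlarging $S_{\textnormal{bad}}$ to be stable under complex conjugation; condition (iii) holds at $v$ because $v\in S_{\textnormal{ram}}$. We work one finite level $K_v$ at a time and then pass to the colimit. For each sufficiently small $K_v$ and $m=1$, the theorem gives a nilpotent ideal $I$ with $I^N=0$, where $N$ is independent of $K_v$, together with a determinant $D_{\mathfrak{m}}:G_{F,S_{\textnormal{bad}}}\to \mathbf{T}^{S_{\textnormal{avoid}}}(K^vK_v,1)_{\mathfrak{m}}/I$. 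This determinant satisfies $D_{\mathfrak{m}}|_{G_{F_v}}=\textnormal{nat}_v\circ\Phi_{F_v,n}$.

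Reducing modulo the maximal ideal, $D_{\mathfrak{m}}$ becomes the determinant of $\overline{\rho}_{\mathfrak{m}}$ by construction: both agree with Scholze's determinant away from $S_{\textnormal{bad}}$, and Chebotarev together with Chenevier's results shows that determinants over a field are determined by their values on a dense set of Frobenii. Restricting to $G_{F_v}$, the composite $\mathfrak{R}^{\textnormal{ps}}_{F_v,n}\to \mathfrak{Z}_{\textnormal{GL}_n(F_v)}\to \mathbf{T}/\mathfrak{m}$ is therefore the point corresponding to $(\overline{\rho}_{\mathfrak{m}}|_{G_{F_v}})^{\textnormal{ss}}$.

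The key step is then to identify the kernel of $\mathfrak{Z}_{\textnormal{GL}_n(F_v)}\to\mathbf{T}/\mathfrak{m}$ with $\mathfrak{m}_v$. Because $\Phi_{F_v,n}$ need not be surjective (only $\xi[\frac1\ell]$ is an isomorphism), it is not immediate that the image of $\mathfrak{Z}$ is pinned down by the image of $\mathfrak{R}^{\textnormal{ps}}$. I expect this to be the main obstacle. I would handle it by invoking the known compatibility that $\Phi_{F_v,n}$ is finite, since $\mathfrak{Z}$ is finite over the image by Helm--Moss, and that on $\overline{\mathbf{F}}_\ell$-points it induces a bijection compatible with Vign\'eras' semisimple mod $\ell$ correspondence. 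This identifies the maximal ideal of $\mathfrak{Z}$ lying over the given point of $X_{F_v,n}$, and that maximal ideal is $\mathfrak{m}_v$.

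Since $I$ is nilpotent and the map from the local finite algebra $\mathbf{T}^{S_{\textnormal{avoid}}}(K^vK_v,1)_{\mathfrak{m}}$ sends $\mathfrak{m}_v$ into its maximal ideal, we get $\mathfrak{m}_v^{M}\cdot H^\ast(X^G_{K^vK_v},\mathbf{F}_\ell)_{\mathfrak{m}}=0$ for some $M$. Hence $H^\ast(X^G_{K^vK_v},\mathbf{F}_\ell)_{\mathfrak{m}}$ is already $\mathfrak{m}_v$-local, because elements of $\mathfrak{Z}\setminus\mathfrak{m}_v$ act invertibly. Localisation commutes with the colimit, which gives the first claim.

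For finite length, I would use that $H(K^v)_{\mathfrak{m}}$ is admissible, since each $K_v$-invariant space is finite-dimensional, and that it is annihilated by a power of $\mathfrak{m}_v$ on each finite-level piece, i.e.\ it is supported on a single point of the Bernstein centre. I would then argue via the finiteness of $K_v$-invariants for a fixed depth-zero or sufficiently small $K_v$ that generates the relevant Bernstein block, so that the module is finitely generated. Finitely generated admissible modules over $\mathfrak{Z}_{\textnormal{GL}_n(F_v)}/\mathfrak{m}_v^M$ have finite length by Bernstein--Vign\'eras finiteness (Dat--Helm--Kurinczuk--Moss). A subtlety is that the annihilating power $M$ might depend on $K_v$. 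The uniform bound $N$ on nilpotence, coming from $N$ depending only on $n$ and $[F:\mathbf{Q}]$, controls this.
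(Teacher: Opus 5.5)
Your argument is essentially the paper's proof, with more detail. The first claim is read off from Theorem~\ref{Thm_MainResultIntro}. Your extra care that $\mathrm{Spec}\,\mathfrak{Z}_{\textnormal{GL}_n(F_v)}\to\mathrm{Spec}\,\mathfrak{R}^{\textnormal{ps}}_{F_v,n}$ is injective on closed points is legitimate; it comes from Proposition~\ref{GIT} combined with the Helm--Moss isomorphism. Finite length then follows from admissibility (via Borel--Serre) plus bounded depth of the block cut out by $\mathfrak{m}_v$, which gives finite generation.

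Two corrections are needed:
\begin{enumerate}
\item Take $S_{\textnormal{avoid}}=S_{\textnormal{bad}}\setminus S_{\textnormal{ram}}$ as given, not $S_{\textnormal{bad}}\setminus\{v\}$. After your enlargement of $S_{\textnormal{bad}}$ to a conjugation-stable set, $S_{\textnormal{bad}}\setminus\{v\}$ contains $v^c\neq v$ but not $v$, so it violates hypothesis (ii). The given $S_{\textnormal{avoid}}$ already yields compatibility at every $v\in S_{\textnormal{ram}}$.
\item Drop your closing remark. $N$ bounds only the nilpotence of $I$, not that of the maximal ideal of $\mathbf{T}^{S_{\textnormal{avoid}}}(K^vK_v,1)_{\mathfrak{m}}$ into which $\mathfrak{m}_v$ maps. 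Nothing in the theorem bounds that index independently of $K_v$, so $N$ gives no uniform power of $\mathfrak{m}_v$. None is needed, though. Finitely generated admissible already implies finite length. A uniform annihilating power then follows a posteriori, since the module is generated by finitely many vectors, each killed by some power of the central ideal $\mathfrak{m}_v$.
\end{enumerate}
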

\begin{proof}
    The first claim is an immediate consequence of Theorem \ref{Thm_MainResultIntro}.

    For the second claim note that, due to the existence of the Borel--Serre compactification, $H(K^v)_{\mathfrak{m}}$ is an admissible $\mathbf{F}_{\ell}$-representation of $\textnormal{GL}_n(F_v)$. By the first claim, it also has bounded depth. In particular, it must also be finitely generated and, therefore, of finite length.
\end{proof}

We sketch the proof of Theorem \ref{Thm_MainResultIntro}. Following the construction of \cite{Sch15}, we use the boundary cohomology of the quasi-split unitary group $\widetilde{G}$ over the maximal totally real subfield $F^+\leq F$ to study the Hecke eigensystems appearing in the cohomology of $G=\textnormal{Res}_{F/F^+}\textnormal{GL}_n$-locally symmetric spaces by realising $G$ as the Levi factor of the Siegel parabolic subgroup $P\leq \widetilde{G}$. The main innovation of our work is to blend the construction of Scholze with the robust theory of $\mathbf{Z}_{\ell}$-representations of $p$-adic reductive groups. We proceed in three steps.
\begin{enumerate}
    \item We first show that for any maximal ideal $\mathfrak{m}\leq \mathbf{T}^{S_{\textnormal{bad}}}(K,m)$ and finite place $\overline{v}=v\cdot v^c$ of $F^+$ under $ S_{\textnormal{ram}}$, the $\mathfrak{Z}_{G(F^+_{\Bar{v}})}=\mathfrak{Z}_{\textnormal{GL}_n(F_v)}\otimes_{\mathbf{Z}_{\ell}}\mathfrak{Z}_{\textnormal{GL}_n(F_{v^c})}$-module
    \begin{equation*}
        H_!(K^{\overline{v}})_{\mathfrak{m}}:=\im(H^{\ast}_c(K^{\overline{v}})_{\mathfrak{m}}\to H^{\ast}(K^{\overline{v}})_{\mathfrak{m}})
    \end{equation*} localises at the maximal ideal $(\mathfrak{m}_v,\mathfrak{m}_{v^c})$ associated with $\big((\overline{\rho}_{\mathfrak{m}}|_{G_{F_v}})^{\textnormal{ss}},(\overline{\rho}_{\mathfrak{m}}|_{G_{F_{v^c}}})^{\textnormal{ss}}\big)$ for the semisimple Galois representation $\overline{\rho}_{\mathfrak{m}}$ constructed in \cite{Sch15}.
    
    To do this, we realise $\textnormal{Ind}_{P(F_{\overline{v}}^+)}^{\widetilde{G}(F^+_{\Bar{v}})}H_!(K^{\overline{v}})_{\mathfrak{m}}$ as a $\widetilde{G}(F^+_{\Bar{v}})$-equivariant subquotient of
    \begin{equation*}
        H_{\partial}(\widetilde{K}^{\Bar{v}})_{\widetilde{\mathfrak{m}}}:=\varinjlim_{\widetilde{K}_{\overline{v}}\leq \widetilde{G}(F^+_{\overline{v}})}H^{\ast}(\partial X^{\widetilde{G}}_{\widetilde{K}^{\overline{v}}\widetilde{K}_{\overline{v}}}, \mathbf{F}_{\ell})_{\widetilde{\mathfrak{m}}}\in \textnormal{Mod}_{\textnormal{sm}}(\mathbf{F}_{\ell}[\widetilde{G}(F^+_{\overline{v}})])
    \end{equation*}
    where $\partial X^{\widetilde{G}}_{\widetilde{K}}$ denotes the Borel--Serre boundary and $\widetilde{\mathfrak{m}}\leq \widetilde{\mathbf{T}}^{S_{\textnormal{bad}}}$ is the pullback of $\mathfrak{m}$ under the Satake transform.
    
    By \cite{Sch15} Chapter IV and local-global compatibility for cusp forms for $\widetilde{G}$, $H_{\partial}(\widetilde{K}^{\Bar{v}})_{\widetilde{\mathfrak{m}}}$ must localise at the maximal ideal $\widetilde{\mathfrak{m}}_{\Bar{v}}$ corresponding to
    \begin{equation*}
        (\rho_{\widetilde{\mathfrak{m}}}|_{G_{F_v}})^{\textnormal{ss}}=\big(\rho_{\mathfrak{m}}|_{G_{F_v}}\big)^{\textnormal{ss}}\oplus \big(\rho_{\mathfrak{m}}^{\vee,c}(1-2n)|_{G_{F_{v}}}\big)^{\textnormal{ss}}.
    \end{equation*} 
    This determines the support $H_!(K^{\overline{v}})_{\mathfrak{m}}:=\im(H^{\ast}_c(K^{\overline{v}})_{\mathfrak{m}}\to H^{\ast}(K^{\overline{v}})_{\mathfrak{m}})$ as a $\mathfrak{Z}_{\widetilde{G}(F^+_{\Bar{v}})}$-module\footnote{The $\mathfrak{Z}_{\widetilde{G}(F^+_{\Bar{v}})}$-module structure is via the usual map $\mathfrak{Z}_{\widetilde{G}(F^+_{\Bar{v}})}\to \mathfrak{Z}_{G(F^+_{\Bar{v}})}$ intertwining parabolic induction.} and so provides information on its support as a $\mathfrak{Z}_{G(F^+_{\Bar{v}})}$-module. By considering a suitable twist of $\mathfrak{m}$ by a global mod $\ell$ character, we show that this forces the support with respect to $\mathfrak{Z}_{G(F^+_{\Bar{v}})}$-action to be $(\mathfrak{m}_v,\mathfrak{m}_{v^c})$.
    \item In the second step, we prove Theorem \ref{Thm_MainResultIntro} for
    \begin{equation*}
        \mathbf{T}^{S_{\textnormal{avoid}}}_!(K,m)_{\mathfrak{m}}:=\im\Big(\mathbf{T}^{S_{\textnormal{avoid}}}\to \textnormal{End}_{\mathbf{Z}_{\ell}}\big(H_!(X_K^G,\mathbf{Z}/\ell^m\mathbf{Z})\big)\Big)_{\mathfrak{m}}.
    \end{equation*}
    We do this by applying the "Key lemma" \ref{KeyLemma} which shows that, after twisting $\mathfrak{m}$ by a suitable global mod $\ell$ character, any lift of 
    \begin{equation*}
        \big(\rho_{\mathfrak{m}}|_{G_{F_v}}\big)^{\textnormal{ss}}\oplus \big(\rho_{\mathfrak{m}}^{\vee,c}(1-2n)|_{G_{F_{v}}}\big)^{\textnormal{ss}}
    \end{equation*} to a continuous representation $\widetilde{\rho}:G_{F_v}\to \textnormal{GL}_{2n}(\overline{\mathbf{Q}}_{\ell})$ with associated smooth representation $\widetilde{\pi}$ of $\textnormal{GL}_{2n}(F_v)$ must satisfy the following.
    \begin{itemize}
        \item There is an isomorphism $\widetilde{\rho}=\rho_1\oplus\rho_2$ with $\rho_1$ lifting $\big(\rho_{\mathfrak{m}}|_{G_{F_v}}\big)^{\textnormal{ss}}$.
        \item The localised Jacquet module
        \begin{equation*}
            \big(J_{P_{(n,n)}(F_v)}(\widetilde{\pi})\big)_{(\mathfrak{m}_v,\mathfrak{m}_{v^c})}
        \end{equation*} is of the form $\pi_1\otimes\pi_2$ with $\pi_1$ matching $\rho_1$ under the Tate-normalised local Langlands correspondence.
    \end{itemize}
    
    By (i), we have
    \begin{equation*}
        \mathbf{T}^{S_{\textnormal{avoid}}}_!(K,m)_{\mathfrak{m}} = \left(\mathbf{T}^{S_{\textnormal{avoid}}}_!(K,m)_{\mathfrak{m}}\right)_{(\mathfrak{m}_v,\mathfrak{m}_{v^c})}.
    \end{equation*} In particular, applying \cite{Sch15}, Chapter IV to the Jacquet-module at $\overline{v}$ of the boundary cohomology of $\widetilde{G}$ localised at $(\mathfrak{m}_v,\mathfrak{m}_{v^c})$, we obtain fine enough lifts of $\mathbf{T}^{S_{\textnormal{avoid}}}_!(K,m)_{\mathfrak{m}} $ to characteristic $0$ to deduce local-global compatibility.
    \item Finally, to obtain local-global compatibility for $\mathbf{T}^{S_{\textnormal{avoid}}}(K,m)$, it suffices to verify local-global compatibility for the boundary cohomology of $G$. We establish this by running an induction on $n\geq 1$. The simplicity of this inductive argument demonstrates how robust the formulation of Conjecture \ref{Conj_LGC_torsion} is.
    
\end{enumerate}

\subsection{An application to automorphic Galois representations}\label{subsection_intro_application}
One motivation to consider an extension of Conjecture \ref{Conj_Reciprocity_char0} to torsion automorphic forms comes from the work of Calegari--Geraghty. In \cite{CG18}, they extend the Taylor--Wiles method to the cohomology of $\textnormal{Res}_{F/\mathbf{Q}}\textnormal{GL}_n$-locally symmetric spaces provided that, among other things, 
Conjecture \ref{Conj_LGC_torsion} holds at Taylor--Wiles primes for non-Eisenstein maximal ideals. Their conjecture was essentially verified in \cite{ACC23} for CM number fields and was an ingredient in the proof of their automorphy lifting theorems.

On the other hand, the Taylor--Wiles method has also found applications in cases where the mod $\ell$ Hecke eigensystem is Eisenstein. For instance, in \cite{New23} patching is carried out at a dimension one prime of the Hecke algebra to deduce vanishing of the conjugate self-dual part of the adjoint Bloch--Kato Selmer group of residually reducible conjugate self-dual automorphic Galois representations. In \cite{Aca26} we use Theorem \ref{Thm_MainResultIntro} as an ingredient to extend their vanishing results to not necessarily conjugate self-dual automorphic Galois representations.

\subsubsection*{Acknowledgements}
This article grew out of answering some questions raised by Ana Caraiani. I am deeply grateful to her for posing these questions and for encouraging me to work on this problem. I also thank Vincent Pilloni, and Justin Trias for conversations from which this work benefited.

This project has received funding from the European Union’s Horizon 2020 research and innovation programme under the Marie Sklodowska-Curie grant agreement No 101034255. This work was supported by the Engineering and Physical Sciences Research Council [EP/S021590/1]. 
The EPSRC Centre for Doctoral Training in Geometry and Number Theory (The London School of Geometry and Number Theory), University College London, King's College London and Imperial College London. This project has received funding from the European Research Council (ERC) under the European Union’s Horizon 2020 research and innovation programme (grant agreement No. 804176).

\section*{Notation and Conventions}\label{Notations}
Given a number field $F$, we will denote by $S(F)$ its set of finite places and by $S_p(F)$ its set of $p$-adic places. We set $G_F$ to be the absolute Galois group $\textnormal{Gal}(\overline{F}/F)$ and for a finite set $S\subset S(F)$ we denote by $G_{F,S}$ the quotient of $G_F$ corresponding to the maximal Galois extension of $F$, unramified outside $S$. For $v\in S(F)$, set $F_v$ to be the $v$-adic completion of $F$, fix a choice of uniformiser $\varpi_v$ and set $k_v:=\mathcal{O}_{F_v}/\varpi_v$ to be its residue field. Set $G_{F_v}:=\textnormal{Gal}(\overline{F}_v/F_v)$. Moreover, $I_{F_v}\subset G_{F_v}$ will denote its inertia subgroup and set $\textnormal{Frob}_v\in G_{F_v}/I_{F_v}$ to be the geometric Frobenius. We denote by $\mathbf{A}_F$ the ring of adeles of $F$ and for $S$ a finite set of places of $F$ we denote by $\mathbf{A}_{F}^S$ its prime-to-$S$ part.

For $G$ a reductive group over a number field $F$ and a finite set $S\subset S(F)$, we will denote by $G^S:=G(\mathbf{A}_{F}^{S\cup \infty})$ and by $G_S:=G(\prod_{v\in S}F_v)$. %Moreover, if $S=S_p(F)$, we only write $G^p$, respectively $G_p$.

%Let $G$ be a linear algebraic group over $\mathcal{O}_L$ where $\mathcal{O}_L$ is the ring of integers of a finite extension $L/\mathbf{Q}_p$. Let $\varpi_L\in \mathcal{O}_L$ be a choice of uniformiser. For $n\in \mathbf{Z}_{\geq 0}$, denote $\ker(G(\mathcal{O}_L)\to G(\mathcal{O}_L/\varpi_L^n))$ by $G^n$.

For $G$ a reductive group over a finite extension $L/\mathbf{Q}_p$ with parabolic subgroup $Q=M\ltimes N \subset G$, we denote by $\delta_Q:M(L)\to \mathbf{Q}^{\times}$ the corresponding modulus character $x\mapsto |\det(\textnormal{ad}(x)|\textnormal{Lie}N)|_L$. 

For a smooth representation $\sigma$ of $M(L)$, we denote by $\textnormal{Ind}_{Q(L)}^{G(L)}\sigma$ the unnormalised parabolic induction. For a smooth $\overline{\mathbf{Q}}_{\ell}$-representation $\pi$ of $G(L)$, we will denote by $J_Q(\pi)$ its \textit{unnormalised} Jacquet module associated with $Q$. In general we denote by $\textnormal{c-Ind}$ compact induction for smooth representations.

For a smooth irreducible representation $\pi$ of $\textnormal{GL}_n(L)$ with supercuspidal support $(\textnormal{GL}_{n_1}(L)\times...\times \textnormal{GL}_{n_k}(L),\pi_1\otimes...\otimes \pi_k)$, we define the multiset $\textnormal{SC}(\pi):=\{\pi_1,...,\pi_k\}$.

Set $W_L$ to be the Weil group of $L$ and write $\textnormal{Art}_L:L^{\times}\xrightarrow{\sim} W_L^{\textnormal{ab}}$ for the Artin map of local Class Field Theory normalised by sending uniformisers to lifts of the geometric Frobenius.

We denote by $\textnormal{rec}_L$ the local Langlands correspondence for $L$ constructed in \cite{HT01}. If it is clear from the context, we will just write $\textnormal{rec}$ instead. Moreover, for $\pi$ an irreducible admissible $\textnormal{GL}_n(L)$-representation, we set $\textnormal{rec}^T(\pi)=\textnormal{rec}(\pi\otimes|\det|_L^{\frac{1-n}{2}})$. Then $\textnormal{rec}^T$ commutes with $\textnormal{Aut}(\mathbf{C})$ and therefore $\textnormal{rec}^T$ makes sense over $\overline{\mathbf{Q}}_{\ell}$ by choosing an abstract isomorphism $\iota:\overline{\mathbf{Q}}_{\ell}\xrightarrow{\sim} \mathbf{C}$. In the literature, this is often called the \textit{Tate normalisation} of the local Langlands correspondence.

For an $\ell$-adic Galois representation $\rho:G_L\to \textnormal{GL}_n(\overline{\mathbf{Q}}_{\ell})$ with $\ell\neq p$, we denote by $\textnormal{WD}(\rho)$ the associated Weil--Deligne representation. For a Weil--Deligne representation $(r,N)$, we denote by $(r,N)^{F-ss}$ its Frobenius semisimplification and by $(r,N)^{ss}$ its semisimplification.

For a ring $R$, we denote by $D(R)$ the derived category of left $R$-modules and by $D^+(R)$ the bounded below derived category. Given a locally profinite group $G$, we will denote by $\textnormal{Mod}_{\textnormal{sm}}(R[G])$ the category of smooth $R$-representations of $G$ and by $D^+_{\textnormal{sm}}(G,R)$ its bounded below derived category.

Given a topological group $G$, and a topological space $X$ with a continuous right action of $G$, we denote by $\textnormal{Sh}_G(X)$ the category of $G$-equivariant sheaves on $X$ in the sense of \cite{NT16}, Definition 2.22, (2). Moreover, for a ring $R$, we denote by $\textnormal{Sh}_G(X,R)$ the category of $G$-equivariant sheaves of $R$-modules on $X$.

For $G/L$ a split reductive group with a choice of a Borel subgroup $B$ and a maximal torus $T$, denote by $w_0^G$ the longest element in the Weyl group $W_G:=W(G,T)$. For a standard parabolic subgroup $Q\subset G$ with Levi decomposition $M\ltimes N$, set $W^Q\subset W_G$ to be the set of minimal length representatives of $ W_G/W_Q$. We denote by $w_0^Q$ the longest element in $W^Q$ that is, in fact, given by $w_0^Gw_0^M$. Similar notations apply to ${}^QW$. Moreover, for another standard parabolic subgroup $Q'\subset G$ with Levi decomposition $M'\ltimes N'$, denote by ${}^{Q'}W^{Q}\subset W_G$ the set of minimal length representatives of $W_{Q'}\backslash W_G/W_Q$.

\section{Representations of $p$-adic reductive groups with $\ell$-adic coefficients}
Let $R$ denote a commutative $\mathbf{Z}_{\ell}$-algebra. Fix a rational prime $p$ different from $\ell$, a finite field extension  $ L/ \mathbf{Q}_p$, and an integer $n\geq 1$. In this section, we study the category $\textnormal{Mod}_{\textnormal{sm}}(R[\textnormal{G}])$ of smooth $R$-representations of a standard Levi subgroup $\textnormal{G}$ of $\textnormal{GL}_n(L)$ and its relation to continuous semisimple representations of the Weil group $W_L$ via the local Langlands correspondence for various choices of $R$.

We start with a brief introduction to the theory of smooth $R$-representations of $\textnormal{G}$. We then introduce the moduli of pseudocharacters of $L$-parameters and the corresponding map interpolating the semisimple local Langlands correspondence. Finally, in the last subsection we collect the technical results we need in our proof of local-global compatibility.
\subsection{Reminder on smooth representation theory}
\subsubsection{The integral Bernstein centre}
We denote by $\mathfrak{Z}_{\textnormal{G},R}$ the ring of endomorphisms of the identity endofunctor $\textnormal{id}_{\textnormal{Mod}_{\textnormal{sm}}(R[\textnormal{G}])}\in \textnormal{End}(\textnormal{Mod}_{\textnormal{sm}}(R[\textnormal{G}]))$
and we refer to it as the Bernstein centre of $\textnormal{G}$ over $R$. By abuse of notation we will simply write $\mathfrak{Z}_{\textnormal{G}}$ for the Bernstein centre $\mathfrak{Z}_{\textnormal{G},\mathbf{Z}_{\ell}}$ of $\textnormal{G}$ over $\mathbf{Z}_{\ell}$. 

Concretely, elements $z\in \mathfrak{Z}_{\textnormal{G},R}$ are given by collections of endomorphisms
\begin{equation*}
    z_{\pi}:\pi\to \pi
\end{equation*}
for every representation $\pi\in \textnormal{Mod}_{\textnormal{sm}}(R[\textnormal{G}])$ such that, for every morphism $f:\pi\to \pi'$ in $\textnormal{Mod}_{\textnormal{sm}}(R[\textnormal{G}])$,
\begin{equation}\label{centrecoherence}
    z_{\pi'}\circ f=f\circ z_{\pi}
\end{equation}
and the ring structure is given by componentwise addition and composition.

Consequently, for every $\pi\in \textnormal{Mod}_{\textnormal{sm}}(R[\textnormal{G}])$, we obtain an algebra homomorphism
\begin{equation*}
    \mathfrak{Z}_{\textnormal{G},R}\xrightarrow{t_{\textnormal{G},\pi}}Z(\textnormal{End}_{R[\textnormal{G}]}(\pi)),
\end{equation*}
\begin{equation*}
    z\mapsto z_{\pi}.
\end{equation*}
In particular, for every compact open subgroup $K\leq \textnormal{G}$, we have a natural algebra homomorphism
\begin{equation*}
t_{\textnormal{G},K}:\mathfrak{Z}_{\textnormal{G},R}\xrightarrow{t_{\textnormal{G},\textnormal{c-Ind}_K^{\textnormal{G}}\mathbf{1}}}Z(\textnormal{End}_{R[\textnormal{G}]}(\textnormal{c-Ind}_K^{\textnormal{G}}\mathbf{1}))\cong Z(R[K\backslash \textnormal{G}/K])
\end{equation*}
where the target is equipped with the algebra structure given by convolution product. Namely, the isomorphism we postcompose by is realised by acting with the Hecke algebra $R[K\backslash \textnormal{G}/K]$ on $\textnormal{c-Ind}_K^{\textnormal{G}}\mathbf{1}$ via convolution.

An unravelling of the definitions then gives the following simple observation.
\begin{Lemma}\label{BernsteinCentreLemma}
    Let $\pi\in \textnormal{Mod}_{\textnormal{sm}}(R[\textnormal{G}])$, and $K\leq \textnormal{G}$ be a compact open subgroup. We have a commutative diagram
    \begin{equation*}
        \begin{tikzcd}
	&& {Z(\textnormal{End}_{R[\textnormal{G}]}(\pi))} \\
	{\mathfrak{Z}_{\textnormal{G},R}} &&&& {\textnormal{End}_{R[K\backslash \textnormal{G}/K]}(\pi^K)} \\
	&& {Z(R[K\backslash \textnormal{G}/K])}
	\arrow[from=1-3, to=2-5]
	\arrow["{t_{\textnormal{G},\pi}}", from=2-1, to=1-3]
	\arrow["{t_{\textnormal{G},K}}"', from=2-1, to=3-3]
	\arrow["{z_K\mapsto z_{K}\ast(-)}"', from=3-3, to=2-5]
\end{tikzcd}
    \end{equation*}
    natural in $\pi$ where $(-)\ast (-)$ denotes the action via convolution and the top right map is (the restriction to the centre of) the natural map sending a $\textnormal{G}$-equivariant map $\pi\to \pi$ to its restriction along $\pi^K\subset \pi$.
\end{Lemma}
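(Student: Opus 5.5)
The lemma asks that for $z\in\mathfrak{Z}_{\textnormal{G},R}$ and $v\in\pi^K$ we have $z_\pi(v)=z_K\ast v$, where $z_K:=t_{\textnormal{G},K}(z)$. The plan is to reduce to the universal case $\textnormal{c-Ind}_K^{\textnormal{G}}\mathbf{1}$ using Frobenius reciprocity together with the naturality condition \eqref{centrecoherence}.

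First, the top-right map is well defined. Any $\textnormal{G}$-equivariant endomorphism $\phi$ of $\pi$ preserves $\pi^K$. If $\phi$ is central in $\textnormal{End}_{R[\textnormal{G}]}(\pi)$, then $\phi|_{\pi^K}$ commutes with the action of $R[K\backslash\textnormal{G}/K]$. This holds because the Hecke action is $[KgK]\ast v=\sum_{h\in KgK/K}hv$, which is built from the $\textnormal{G}$-action, so any $\textnormal{G}$-equivariant map commutes with it. The bottom-right map is well defined because $z_K$ is central, so $v\mapsto z_K\ast v$ commutes with the Hecke action.

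For commutativity, fix $v\in\pi^K$. By Frobenius reciprocity there is a unique $\textnormal{G}$-map $f_v:\textnormal{c-Ind}_K^{\textnormal{G}}\mathbf{1}\to\pi$ with $f_v(\mathbf{1}_K)=v$. Moreover $f_v$ intertwines the Hecke actions: $f_v(h\ast\varphi)=h\ast f_v(\varphi)$ for $h\in R[K\backslash\textnormal{G}/K]$ and $\varphi$ $K$-invariant. It suffices to check this on $\varphi=\mathbf{1}_K$ with $h=[KgK]$, where both sides equal $\sum_{x\in KgK/K}xv$. Applying \eqref{centrecoherence} to $f_v$ gives
\begin{equation*}
z_\pi(v)=z_\pi(f_v(\mathbf{1}_K))=f_v\big(z_{\textnormal{c-Ind}}(\mathbf{1}_K)\big).
\end{equation*}

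Under the identification $\textnormal{End}_{R[\textnormal{G}]}(\textnormal{c-Ind}_K^{\textnormal{G}}\mathbf{1})\cong R[K\backslash\textnormal{G}/K]$, the element $z_K$ is, by definition, the Hecke element acting on $\textnormal{c-Ind}_K^{\textnormal{G}}\mathbf{1}$ as $z_{\textnormal{c-Ind}}$. Explicitly, $z_K=z_{\textnormal{c-Ind}}(\mathbf{1}_K)$ viewed as a bi-$K$-invariant function, and $z_{\textnormal{c-Ind}}(\mathbf{1}_K)=z_K\ast\mathbf{1}_K$. Hence
\begin{equation*}
z_\pi(v)=f_v(z_K\ast\mathbf{1}_K)=z_K\ast f_v(\mathbf{1}_K)=z_K\ast v.
\end{equation*}

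Naturality in $\pi$ is immediate. For a map $g:\pi\to\pi'$, both composites send $z$ to a family of endomorphisms of $\pi^K$ and $\pi'^K$. These are compatible with $g|_{\pi^K}$: along the top route by \eqref{centrecoherence}, and along the bottom route because $g$ commutes with the Hecke action.

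The only point that needs care is the convention matching. One must check that the isomorphism $Z(\textnormal{End}(\textnormal{c-Ind}))\cong Z(R[K\backslash\textnormal{G}/K])$, which the paper realises via the convolution action, is the same as evaluating at $\mathbf{1}_K$. This includes whether the convolution is on the left or the right, and hence whether the two products are opposite to each other. On the centre this ambiguity is harmless, since left and right multiplication by a central element coincide. So I would simply fix the convention $\varphi\mapsto h\ast\varphi$ and verify $h\ast\mathbf{1}_K=h$.
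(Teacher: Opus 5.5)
Your argument is correct and is exactly the unravelling of definitions the paper has in mind: Frobenius reciprocity $f_v\colon\textnormal{c-Ind}_K^{\textnormal{G}}\mathbf{1}\to\pi$ together with naturality of $z$ reduces the claim to the identity $z_{\textnormal{c-Ind}_K^{\textnormal{G}}\mathbf{1}}(\mathbf{1}_K)=t_{\textnormal{G},K}(z)$. One small correction to your closing remark: the convention that actually matters is not left versus right convolution, which is indeed harmless on the centre, but a possible twist by the inversion $h\mapsto\check h$, $\check h(g):=h(g^{-1})$, which in general does not fix the centre pointwise; this twist is ruled out by your check $f_v([KgK]\ast\mathbf{1}_K)=\sum_{x\in KgK/K}xv$ (i.e.\ by working in the model $R[\textnormal{G}/K]$), not by $h\ast\mathbf{1}_K=h$ alone.
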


 We also note that, for a $\mathbf{Z}_{\ell}$-algebra homomorphism $f:R\to R'$, by viewing an $R'[\textnormal{G}]$-module as an $R[\textnormal{G}]$-module via restriction of scalars, we obtain a pullback morphism $\mathfrak{Z}_{\textnormal{G},R}\to \mathfrak{Z}_{\textnormal{G},R'}$ that moreover is injective, as long as $f$ is injective (cf. \cite{DHKM24,} Lemma 3.1, Lemma 3.2).

\subsubsection{The integral Bernstein decomposition and preliminaries}
The Bernstein decomposition (cf. \cite{BD84}) states that the category $\textnormal{Mod}_{\textnormal{sm}}(\overline{\mathbf{Q}}_{\ell}[\textnormal{G}])$ admits a direct sum decomposition labelled by the so-called inertial supercuspidal supports. Moreover, the corresponding direct factors of $\mathfrak{Z}_{\textnormal{G},\overline{\mathbf{Q}}_{\ell}}$ are explicitly described, showing that they are of finite type over $\overline{\mathbf{Q}}_{\ell}$. We recall the characteristic $\ell$ and $\ell$-integral versions of this decomposition, due to Vigneras, and Helm, respectively, and remind the reader how these decompositions interact with each other.
\subsubsection{Parabolic induction}
For a parabolic subgroup $\textnormal{Q}\leq \textnormal{G}$ with a Levi decomposition $\textnormal{Q}=\textnormal{M}\textnormal{N}$, we have the unnormalised Jacquet functor $J_{\textnormal{Q}}:\textnormal{Mod}_{\textnormal{sm}}(R[\textnormal{G}])\to \textnormal{Mod}_{\textnormal{sm}}(R[\textnormal{M}])$, and the unnormalised parabolic induction $\textnormal{Ind}_{\textnormal{Q}}^{\textnormal{G}}(-):\textnormal{Mod}_{\textnormal{sm}}(R[\textnormal{M}])\to \textnormal{Mod}_{\textnormal{sm}}(R[\textnormal{G}])$, the latter being right adjoint to the former.

We fix a square root $q^{1/2}$ of the cardinality $q\in (\mathbf{Z}[p^{-1}])^{\times}\subset \mathbf{Z}_{\ell}^{\times}$ of the residue field of $L$. Assuming that $R$ is a $\mathbf{Z}_{\ell}[q^{1/2}]$-algebra, we have a well-defined square root $\delta_{\textnormal{Q}}^{1/2}:\textnormal{M}\to R^{\times}$ of the modulus character. In particular, we can define the \textit{normalised} Jacquet functor $\textnormal{n-}J_{\textnormal{Q}}:=\delta_{\textnormal{Q}}^{-1/2}J_{\textnormal{Q}}(-)$, and the \textit{normalised} parabolic induction $\textnormal{n-\textnormal{Ind}}_{\textnormal{Q}}^{\textnormal{G}}:=\textnormal{Ind}_{\textnormal{Q}}^{\textnormal{G}}(\delta_{\textnormal{Q}}^{1/2}-)$ that once again form an adjoint pair.

\begin{Rem}\label{ModulusRemark}
    Let $\textnormal{G}=\textnormal{GL}_n(L)$ and $\textnormal{Q}:=P_{(n_1,...,n_k)}(L)$ the standard parabolic subgroup of block upper-triangular matrices corresponding to a partition $n=n_1+...+n_k$. Write $\textnormal{M}:=M_{(n_1,...n_k)}(L)\cong\textnormal{GL}_{n_1}(L)\times...\times \textnormal{GL}_{n_k}(L)$ for its standard Levi subgroup and, for $1\leq i\leq k$, denote by $\textnormal{GL}_{n_i}(L)\cong \textnormal{G}_i\leq \textnormal{M}$ the image of the embedding $A\mapsto \textnormal{diag}(\mathbf{1}_{n_1\times n_1},...,\mathbf{1}_{n_{i-1}\times n_{i-1}},A,\mathbf{1}_{n_{i+1}\times n_{i+1}},...,\mathbf{1}_{n_k\times n_k})$. Then an easy computation shows that
    \begin{equation}\label{modulusformula}
        \delta_{\textnormal{Q}}|_{\textnormal{G}_i}(A)=|\det(A)|_L^{(n-n_i)-2(n_1+...+n_{i-1})}.
    \end{equation}
    In particular, $\delta_{\textnormal{Q}}|_{\textnormal{G}_i}$ is an integral power of the determinant character.
\end{Rem}

\subsubsection{(Super)cuspidal support over algebraically closed fields}
Let $R$ now be an \textit{algebraically closed field}. We say that an irreducible $R$-representation $\Pi$ of $\textnormal{G}$ is \textit{cuspidal} if, for every proper parabolic subgroup $\textnormal{Q}\leq \textnormal{G}$ with a Levi decomposition $\textnormal{Q}=\textnormal{M}\textnormal{N}$, the Jacquet module $J_{\textnormal{Q}}(\Pi)$ vanishes.

We say that an irreducible $R$-representation $\Pi$ of $\textnormal{G}$ is \textit{supercuspidal} if there is no proper parabolic subgroup $\textnormal{Q}\leq \textnormal{G}$ with a Levi decomposition $\textnormal{Q}=\textnormal{M}\textnormal{N}$ and an irreducible $R$-representation $\pi$ of $\textnormal{M}$ such that $\Pi$ is isomorphic to a subquotient of $\textnormal{n-Ind}_{\textnormal{Q}}^{\textnormal{G}}\pi$. Even though in characteristic $0$ being supercuspidal coincides with being cuspidal, in characteristic $\ell$ the notions might differ.

By the \textit{cuspidal support} (respectively \textit{supercuspidal support}) of an irreducible $R$-representation $\Pi$ of $\textnormal{G}$, we will mean the set of pairs $(\textnormal{M},\pi)$ of a Levi subgroup $\textnormal{M}$ of $\textnormal{G}$, and a cuspidal $R$-representation of $\textnormal{M}$ such that $\Pi$ is isomorphic to an $R[\textnormal{G}]$-equivariant quotient (respectively subquotient) of $\textnormal{n-Ind}_{\textnormal{Q}}^{\textnormal{G}}\pi$ for some parabolic subgroup $\textnormal{Q}=\textnormal{M}\textnormal{N}$. More generally, for an absolutely irreducible smooth representation $\Pi$ of $\textnormal{G}$ over some field $E$, we will refer to the cuspidal (respectively supercuspidal) support of $\Pi\otimes_{E}\overline{E}$ as the cuspidal (respectively supercuspidal) support of $\pi$.

Recall that we say two pairs $(\textnormal{M},\pi)$, and $(\textnormal{M}',\pi')$ as above are \textit{G-conjugate} if there exists an element $g\in \textnormal{G}$ such that $g\textnormal{M}g^{-1}=\textnormal{M}'$, and $\pi\cong \pi'\circ \textnormal{ad}(g)$. By work of Vigneras (cf. \cite{Vig96}, II 2.20 respectively, \cite{Vig98}, V.4), we know that both the cuspidal and supercuspidal support is unique up to $\textnormal{G}$-conjugation.

\begin{Def}
    Let $(\textnormal{M},\pi)$ be a representative of a supercuspidal support for $\textnormal{Mod}_{\textnormal{sm}}(R[\textnormal{G}])$. We call $(\textnormal{M},\pi)$ a \textit{standard} representative if the Levi subgroup $\textnormal{M}\leq\textnormal{G}$ is standard with respect to the Borel subgroup of upper triangular matrices.
\end{Def}
\begin{Rem}\label{StandardRemark}
    Notice that every supercuspidal support has a standard representative. Moreover, if $\textnormal{G}=\textnormal{GL}_n(L)$ and $(\textnormal{M},\pi)$ is a standard representative of a supercuspidal support for $\textnormal{Mod}_{\textnormal{sm}}(R[G])$, then $\textnormal{M}$ can uniquely be written as a product of auxiliary general linear groups
    \begin{equation*}
        \textnormal{M}=\textnormal{GL}_{n_1}(L)\times...\times\textnormal{GL}_{n_k}(L)
    \end{equation*}
    for a partition $n=n_1+...+n_k$ for some integer $k\geq 1$. In particular, $\textnormal{M}$ is uniquely determined by this partition. Accordingly, we can write
    \begin{equation*}
        \pi=\pi_1\otimes...\otimes\pi_k.
    \end{equation*}

    Moreover, given another standard representative $(\textnormal{M}',\pi')$, $\pi'$ must be of the form $\pi_{\sigma(1)}\otimes...,\otimes\pi_{\sigma(k)}$ for some permutation $\sigma\in S_k$.

    More generally, for $\textnormal{G}=\textnormal{GL}_{m_1}(L)\times...\times\textnormal{GL}_{m_l}(L)$, standard representatives $(\textnormal{M},\pi)$ of supercuspidal supports for $\textnormal{Mod}_{\textnormal{sm}}(R[\textnormal{G}])$  are of the form 
    \begin{equation*}
(\pi_{1,1}\otimes...\otimes\pi_{1,k_1})\otimes...\otimes(\pi_{l,1}\otimes...\otimes\pi_{l,k_l})
    \end{equation*} 
    for integers $l\geq 1$, $\{k_j\geq 1\}_{1\leq j\leq l}$
    with the $\pi_{i,j}$'s being supercuspidal representations of auxiliary general linear groups. Again, another standard representative is obtained by applying an element of $S_{k_1}\times...\times S_{k_l}$ to the labels of $\pi$.
\end{Rem}

\subsubsection{Integral representations and the mod $\ell$ reduction map}
Recall that, when $R$ is a Noetherian $\mathbf{Z}_{\ell}$-algebra, we say that a smooth $R[\textnormal{G}]$-module $\pi$ is \textit{admissible} if $\pi^K$ is a finitely generated $R$-module for every compact open subgroup $K\leq\textnormal{G}$.

We say that an admissible $\overline{\mathbf{Q}}_{\ell}$-representation $\pi$ of $\textnormal{G}$
is $\ell$\textit{-integral} if there exists a finite field extension $E/\mathbf{Q}_{\ell}$ with ring of integers $\mathcal{O}$, and an $\mathcal{O}[\textnormal{G}]$-submodule $\pi^{\circ}\subset \pi$ that is admissible as an $\mathcal{O}[\textnormal{G}]$-module and spans $\pi$ over $\overline{\mathbf{Q}}_{\ell}$. We will call $\pi^{\circ}$ an $\mathcal{O}$-\textit{lattice} of $\pi$.
\begin{Rem}
If $\pi^{\circ}\subset \pi$ is an $\mathcal{O}$-lattice of an admissible $\overline{\mathbf{Q}}_{\ell}[\textnormal{G}]$-module, it is an $\mathcal{O}[\textnormal{G}]$-lattice of $\pi^{\circ}\otimes_{\mathcal{O}}E$ in the sense of \cite{Vig96}, I.9.1. Moreover, $\mathcal{O}$ being a principal ideal domain, and $\pi^{\circ}$ having countable $\mathcal{O}$-rank, it is equivalent to ask for the assertion that $\pi^{\circ}$ is $\mathcal{O}$-free (cf. \cite{Vig96}, I.9.2). In particular, the notion of $\ell$-integral is equivalent to the one appearing in \cite{Vig98}, IV.1.5, and \cite{Vig01}, 1.5 respectively.

We further note that if $\pi$ is an irreducible $\overline{\mathbf{Q}}_{\ell}$-representation of $\textnormal{G}$, then $\pi$ is $\ell$-integral if and only if it admits a $\overline{\mathbf{Z}}_{\ell}$-free $\overline{\mathbf{Z}}_{\ell}[\textnormal{G}]$-submodule that generates it over $\overline{\mathbf{Q}}_{\ell}$ (i.e. our definition agrees with \cite{Vig96}, II.4.11).
Indeed, this follows from the previous paragraph and the fact that $\pi$ can always be realised over a finite field extension $E/\mathbf{Q}_{\ell}$ (cf. \cite{Vig96}, II 4.9, 4.10).
\end{Rem}

\begin{Ex}
Let $G$ be a connected reductive group over a number field $F$. Let $v|p$ be a finite place of $F$ and assume that $G_{F_{v}}\cong \textnormal{GL}_n$. Set $\textnormal{G}:=G(F_{v})\cong \textnormal{GL}_{n}(F_v)$. Then, for a cohomological cuspidal automorphic representation $\pi$ of $G(\mathbf{A}_{F})$, and a field isomorphism $\iota:\overline{\mathbf{Q}}_{\ell}\xrightarrow{\sim}\mathbf{C}$, $\iota^{-1}\pi_{v}$ is an example of an $\ell$-integral admissible $\overline{\mathbf{Q}}_{\ell}[\textnormal{G}]$-module.
\end{Ex}
%\begin{Lemma}\label{ellreduction}
%    Let $\pi$ be an $\ell$-integral admissible $\overline{\mathbf{Q}}_{\ell}$-representation of $\textnormal{G}$ of finite length. Then, for every $\mathcal{O}$-lattice $\pi^{\circ}\subset \pi$ for the ring of integers of some subfield $\mathbf{Q}_{\ell}\subset E\subset\overline{\mathbf{Q}}_{\ell}$, finite over $\mathbf{Q}_{\ell}$, $\pi^{\circ}\otimes_{\mathcal{O}}\overline{\mathbf{F}}_{\ell}$ is of finite length as a smooth $\overline{\mathbf{F}}_{\ell}[\textnormal{G}]$-module. Moreover, its semisimplification is independent of the choice of lattice $\pi^{\circ}$.
%\end{Lemma}
%\begin{proof}
%    Note that $\pi^{\circ}\otimes_{\mathcal{O}}E$ is a finitely generated $E[\textnormal{G}]$-module and, consequently, $\pi^{\circ}$ is a finitely generated $\mathcal{O}[\textnormal{G}]$-module (see \cite{Vig96}, I. 9.3, vii). Therefore, as the Brauer--Nesbitt principle of Vigneras holds for $\textnormal{G}$ (cf. \cite{Vig96}, II.5.11b), we see that $\pi^{\circ}\otimes_{\mathcal{O}}k$ is of finite length and its semisimplification is independent of the chosen $\mathcal{O}$-lattice of $\pi^{\circ}\otimes_{\mathcal{O}}E$.
%\end{proof}

Given an $\ell$-integral smooth $\overline{\mathbf{Q}}_{\ell}[\textnormal{G}]$-module $\pi$ of \textit{finite length} with a choice of $\mathcal{O}-$lattice $\pi^{\circ}$, the Brauer--Nesbitt principle for $\textnormal{G}$ asserts that $\pi^{\circ}\otimes_{\mathcal{O}}\overline{\mathbf{F}}_{\ell}$ is of finite length and its semisimplification $r_{\ell}(\pi)$ is independent of the choice of $\pi^{\circ}$.

Finally, we recall the compatibility between forming the mod $\ell$ reduction and passage to  supercuspidal support (cf. \cite{Vig01}, 1.5).
\begin{Prop}\label{modlred}
    Let $\Pi$ be an irreducible smooth $\ell$-integral $\overline{\mathbf{Q}}_{\ell}$-representation of $\textnormal{G}$ and denote by $(\textnormal{M},\pi)$ its supercuspidal support. Then $\pi$ is an $\ell$-integral $\overline{\mathbf{Q}}_{\ell}[M]$-module and $r_{\ell}(\pi)$ is an irreducible cuspidal $\overline{\mathbf{F}}_{\ell}[\textnormal{M}]$-module. Moreover, the supercuspidal support of any Jordan--H\"older constituent of $r_{\ell}(\Pi)$ coincides with the supercuspidal support of $r_{\ell}(\pi)$.
\end{Prop}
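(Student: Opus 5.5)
We prove Proposition \ref{modlred} by reducing to the cuspidal case through the structure of $\textnormal{G}$ as a product of general linear groups, and then using the exactness properties of parabolic induction and Jacquet functors over $\mathcal{O}$. Since $\textnormal{G}$ is a standard Levi subgroup of $\textnormal{GL}_n(L)$, it is a finite product of general linear groups. Irreducible representations, supercuspidal supports, $\ell$-integrality and $r_\ell$ are all compatible with external tensor products, so we may assume $\textnormal{G}=\textnormal{GL}_m(L)$. In characteristic $0$, the supercuspidal support is a standard pair $(\textnormal{M},\pi)$ with $\pi=\pi_1\otimes\dots\otimes\pi_k$ cuspidal and $\Pi\hookrightarrow \textnormal{n-Ind}_{\textnormal{Q}}^{\textnormal{G}}\pi$.

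First we show that $\pi$ is $\ell$-integral. Take an $\mathcal{O}$-lattice $\Pi^\circ$. By Frobenius reciprocity, $\pi$ is a quotient of $\textnormal{n-}J_{\textnormal{Q}}(\Pi)$, which is admissible of finite length. The image of $\textnormal{n-}J_{\textnormal{Q}}(\Pi^\circ)$ in $\pi$ is then an $\mathcal{O}[\textnormal{M}]$-submodule spanning $\pi$. Its admissibility follows because Jacquet functors preserve admissibility of finitely generated $\mathcal{O}$-representations; this is Vign\'eras' second adjointness/Jacquet finiteness over $\mathcal{O}$. Alternatively, we use the criterion that a cuspidal irreducible representation is $\ell$-integral iff its central character is integral, which is inherited from $\Pi$. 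Hence $r_\ell(\pi)=\bigotimes_i r_\ell(\pi_i)$.

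Next we show that $r_\ell(\pi_i)$ is irreducible cuspidal. Cuspidality holds because Jacquet functors commute with reduction mod $\ell$, being exact and compatible with $\otimes_{\mathcal{O}}$; hence every proper Jacquet module of $\pi_i^\circ\otimes\overline{\mathbf{F}}_\ell$ vanishes. Irreducibility is the key input from Vign\'eras' classification: a cuspidal $\overline{\mathbf{Q}}_\ell$-representation of $\textnormal{GL}_{m_i}(L)$ is compactly induced from an extended maximal simple type $(\mathbf{J},\Lambda)$. Its reduction is $\textnormal{c-Ind}_{\mathbf{J}}^{\textnormal{G}}r_\ell(\Lambda)$, and $r_\ell(\Lambda)$ is again an extended maximal simple type over $\overline{\mathbf{F}}_\ell$ (\cite{Vig96}, III), hence yields an irreducible cuspidal representation. \textbf{We expect this step to be the main obstacle}; we would cite \cite{Vig96}, III.5 rather than reprove it.

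Finally we compare supercuspidal supports. Normalised parabolic induction is exact and commutes with reduction of lattices. Therefore $\textnormal{n-Ind}(\pi^\circ)\otimes\overline{\mathbf{F}}_\ell=\textnormal{n-Ind}(r_\ell(\pi))$, and $\Pi^\circ$ can be chosen as $\Pi\cap\textnormal{n-Ind}(\pi^\circ)$. By Brauer--Nesbitt independence of the lattice, every constituent of $r_\ell(\Pi)$ is a subquotient of $\textnormal{n-Ind}_{\textnormal{Q}}^{\textnormal{G}}r_\ell(\pi)$. Here $r_\ell(\pi)$ is cuspidal but possibly not supercuspidal. Write $\textnormal{SC}(r_\ell(\pi_i))$ via an inclusion into an induction from supercuspidals $\tau_{i,j}$, and use transitivity of induction. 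This shows that every constituent of $r_\ell(\Pi)$ is a subquotient of the induction of $\bigotimes_{i,j}\tau_{i,j}$. Uniqueness of the supercuspidal support up to conjugacy (\cite{Vig98}, V.4) then shows that each constituent has supercuspidal support equal to that of $r_\ell(\pi)$.
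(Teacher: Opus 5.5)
Your proposal is correct and follows the standard argument behind the reference the paper relies on. The paper gives no proof beyond citing \cite{Vig01}, 1.5, which packages exactly your chain of inputs: integrality of the cuspidal support via Jacquet modules of lattices, irreducibility and cuspidality of reductions of integral cuspidals via types as in \cite{Vig96}, III, and exactness of induction plus uniqueness of supercuspidal support \cite{Vig98}, V.4.

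There are two small slips to fix. First, a cuspidal but non-supercuspidal $r_{\ell}(\pi_i)$ can never embed into a proper parabolic induction, since Frobenius reciprocity would produce a nonzero Jacquet module. So in the last step you must say ``subquotient'' rather than ``inclusion''; that is all your argument actually uses. Second, in your ``alternative'' integrality argument the central character of $\pi$ lives on $Z(\textnormal{M})$, which is in general larger than $Z(\textnormal{G})$. Its integrality is therefore not simply inherited from $\Pi$; it is precisely what your lattice argument establishes.
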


\subsubsection{Inertial supercuspidal supports and Bernstein block decompositions}
Let $R\in \{\overline{\mathbf{Q}}_{\ell},\overline{\mathbf{F}}_{\ell}\}$, and consider two pairs $(\textnormal{M},\pi)$, and $(\textnormal{M}',\pi')$ of Levi subgroups $\textnormal{M}$ and $\textnormal{M}'$ of $\textnormal{G}$, and smooth representations $\pi$ and $\pi'$ of $\textnormal{M}$ and $\textnormal{M}'$, respectively. We say $(M,\pi)$, and $(M',\pi')$ are \textit{inertially equivalent} if there is a pair $(\textnormal{M},\pi'')$ that is $\textnormal{G}$-conjugate to $(\textnormal{M}',\pi')$ and $\pi''$ is a twist of $\pi$ by an unramified character $\chi:\textnormal{M}\to R^{\times}$.
We denote by $[\textnormal{M},\pi]$ the inertial equivalence class represented by $(\textnormal{M},\pi)$.

We will refer to the inertial equivalence class $[\textnormal{M},\pi]$ of the (super)cuspidal support of a smooth irreducible $R$-representation $\Pi$ of $\textnormal{G}$ as its \textit{inertial (super)cuspidal support}.

More generally, for a simple smooth $W(\overline{\mathbf{F}}_{\ell})[\textnormal{G}]$-module $\Pi$, Helm defined the notion of the \textit{mod $\ell$ inertial supercuspidal support} of $\Pi$, given by the inertial equivalence $[\textnormal{M},\pi]$ of a Levi subgroup $\textnormal{M}$ of $\textnormal{G}$ and a supercuspidal smooth $\overline{\mathbf{F}}_{\ell}[M]$-module (cf. \cite{Hel16}, Definition 4.12). Instead of giving the definition, we note that it is compatible with the mod $\ell$ and characteristic $0$ definitions. Namely, for an $\ell$-integral smooth irreducible $\overline{\mathbf{Q}}_{\ell}[\textnormal{G}]$-module $\Pi$, its mod $\ell$ inertial supercuspidal support is exactly the inertial supercuspidal support of \textit{any} simple $\overline{\mathbf{F}}_{\ell}[\textnormal{G}]$-subquotient of $r_{\ell}(\Pi)$ (cf. \cite{Hel16}, Proposition 4.13).

For $R\in \{\overline{\mathbf{F}}_{\ell},W(\overline{\mathbf{F}}_{\ell}),\overline{\mathbf{Q}}_{\ell}\}$ we set $\mathfrak{B}_{\textnormal{G},R}$ to be the set of (mod $\ell$) inertial equivalence classes for the category $\textnormal{Mod}_{\textnormal{sm}}(R[\textnormal{G}])$. For a (mod $\ell$) inertial supercuspidal support $[\textnormal{M},\pi]\in \mathfrak{B}_{\textnormal{G},R}$, we can form the full subcategory $\textnormal{Mod}_{\textnormal{sm}}(R[\textnormal{G}])_{[\textnormal{M},\pi]}\subset \textnormal{Mod}_{\textnormal{sm}}(R[\textnormal{G}])$ of smooth $R[\textnormal{G}]$-modules with each of their simple $R[\textnormal{G}]$-subquotients having (mod $\ell$) inertial supercuspidal support given by $[\textnormal{M},\pi]$. Then the Bernstein decomposition (cf. \cite{BD84} for $R=\overline{\mathbf{Q}}_{\ell}$, \cite{Vig98} IV.6.2 for $R=\overline{\mathbf{F}}_{\ell}$, and \cite{Hel16}, Theorem 11.8 for $R=W(\overline{\mathbf{F}}_{\ell})$) asserts that there is a direct product decomposition
\begin{equation*}
    \textnormal{Mod}_{\textnormal{sm}}(R[\textnormal{G}])=\prod_{[\textnormal{M},\pi]\in\mathfrak{B}_{\textnormal{G},R}}\textnormal{Mod}_{\textnormal{sm}}(R[\textnormal{G}])_{[\textnormal{M},\pi]}.
\end{equation*}
 One refers to the obtained direct factors as \textit{Bernstein blocks}.

The Bernstein decomposition yields a decomposition $\mathfrak{Z}_{\textnormal{G},R}=\prod_{[\textnormal{M},\pi]\in \mathfrak{B}_{\textnormal{G},R}}\mathfrak{Z}_{[\textnormal{M},\pi],R}$ where $\mathfrak{Z}_{[\textnormal{M},\pi],R}$ is the centre of the corresponding block. Bernstein and Helm then explicitly compute $\mathfrak{Z}_{[\textnormal{M},\pi],R}$ for $R=\overline{\mathbf{Q}}_{\ell}$ and $R=W(\overline{\mathbf{F}}_{\ell})$, respectively, deducing that it is a finitely generated, reduced and flat $R$-algebra.

Moreover, this allows one to describe the geometric points of $\mathfrak{Z}_{\textnormal{G},W(\overline{\mathbf{F}}_{\ell})}$ (cf. \cite{BD84}, \cite{Hel16}, Corollary 12.12). Namely, for $R\in \{\overline{\mathbf{F}}_{\ell},\overline{\mathbf{Q}}_{\ell}\}$ and a smooth irreducible $R[\textnormal{G}]$-module $\Pi$, $\mathfrak{Z}_{\textnormal{G},W(\overline{\mathbf{F}}_{\ell})}$ acts on $\Pi$ via scalars by Schur's lemma, giving an $R$-point of the Bernstein centre. This in fact sets up a one-to-one correspondence between the $R$-points of $\mathfrak{Z}_{\textnormal{G},W(\overline{\mathbf{F}}_{\ell})}$ and supercuspidal supports in $\textnormal{Mod}_{\textnormal{sm}}(R[\textnormal{G}])$.

We also note that there is a specialisation map
\begin{equation*}
\textnormal{sp}_{\textnormal{G}}:\mathfrak{B}_{\textnormal{G},\overline{\mathbf{Q}}_{\ell}}\to \mathfrak{B}_{\textnormal{G},W(\overline{\mathbf{F}}_{\ell})}(=\mathfrak{B}_{\textnormal{G},\overline{\mathbf{F}}_{\ell}}).
\end{equation*}
Namely, every inertial supercuspidal support $[\textnormal{M},\pi]\in \mathfrak{B}_{\overline{\mathbf{Q}}_{\ell}}$ has an $\ell$-integral representative $(\textnormal{M},\pi')$ (cf. \cite{Vig96}, II 4.12) and we can set $\textnormal{sp}_{\textnormal{G}}([\textnormal{M},\pi])$ to be the supercuspidal support of $r_{\ell}(\pi')$. It is in fact a surjective map with finite fibers (\cite{Vig98}, IV.6.2).\footnote{ When $\ell$ is a \textit{banal} prime for $\textnormal{G}$, it is known to be a bijection.}

Finally, \cite{Hel16}, Proposition 12.1 shows that, for $[\textnormal{M},\pi]\in \mathfrak{B}_{\textnormal{G},W(\overline{\mathbf{F}}_{\ell})}(=\mathfrak{B}_{\textnormal{G},\overline{\mathbf{F}}_{\ell}})$, the natural isomorphism $\mathfrak{Z}_{\textnormal{G},W(\overline{\mathbf{F}}_{\ell})}\otimes_{W(\overline{\mathbf{F}}_{\ell})}\overline{\mathbf{Q}}_{\ell}\cong \mathfrak{Z}_{\textnormal{G},\overline{\mathbf{Q}}_{\ell}}$ induces an isomorphism 
\begin{equation*}
\mathfrak{Z}_{[\textnormal{M},\pi],W(\overline{\mathbf{F}}_{\ell})}\otimes_{W(\overline{\mathbf{F}}_{\ell})}\overline{\mathbf{Q}}_{\ell}\cong\prod_{[\widetilde{\textnormal{M}},\widetilde{\pi}]\in \textnormal{sp}_{\textnormal{G}}^{-1}([\textnormal{M},\pi])} \mathfrak{Z}_{[\widetilde{\textnormal{M}},\widetilde{\pi}],\overline{\mathbf{Q}}_{\ell}}.
\end{equation*}

\subsection{The semisimple local Langlands correspondence in families}
To discuss local-global compatibility for $\ell$-adic families of automorphic Hecke eigensystems, we will use the interpolation map of semisimple local Langlands for $\textnormal{GL}_n$ constructed by Helm--Moss \cite{HM18}.
\subsubsection{The correspondence over $\overline{\mathbf{Q}}_{\ell}$ and $\overline{\mathbf{F}}_{\ell}$ and their relation}
Recall that, as an application of Grothendieck's $\ell$-adic monodromy theorem, there is an exact equivalence of categories $\varphi\mapsto \textnormal{WD}(\varphi)$ (cf. \cite{Del73}, \S8) between the following.
\begin{enumerate}
    \item Continuous\footnote{For a representation $\varphi:W_L\to \textnormal{GL}_n(\overline{\mathbf
    Q}_{\ell})$ we ask for continuity with respect to the $\ell$-adic topology on the target. Namely, a representation is continuous if it factors through some $\textnormal{GL}_{n}(E)$ for some finite field extension and the factored map is continuous for the $\ell$-adic topology on $\textnormal{GL}_n(E)$.} representations of $W_L$ on finite dimensional $\overline{\mathbf{Q}}_{\ell}$-vectorspaces.
    \item Weil--Deligne representations $(r,N)$ of $L$ on finite dimensional $\overline{\mathbf{Q}}_{\ell}$-vectorspaces.
\end{enumerate}
We therefore have a commutative diagram
\begin{equation}\label{ssLLchar0}
    \begin{tikzcd}[scale cd=0.8]
	{\{\varphi:W_L\to\textnormal{GL}_n(\overline{\mathbf{Q}}_{\ell}) \textnormal{ continuous}\}_{/\sim}} &&& {\{\textnormal{irreducible }\Pi\in\textnormal{Mod}_{\textnormal{sm}}(\overline{\mathbf{Q}}_{\ell}[\textnormal{G}])\}_{/\sim}} \\
	\\
	{\{\varphi:W_L\to\textnormal{GL}_n(\overline{\mathbf{Q}}_{\ell}) \textnormal{ continuous, s.s.}\}_{/\sim}} &&& {\{\textnormal{s.c. supports (M},\pi\textnormal{) for }\textnormal{Mod}_{\textnormal{sm}}(\overline{\mathbf{Q}}_{\ell}[\textnormal{G}])\}_{/\sim}}
	\arrow["{\varphi\mapsto \pi(\varphi):= \textnormal{rec}_L^{T,-1}(\textnormal{WD}(\varphi)^{F-ss})}", from=1-1, to=1-4]
	\arrow["{\varphi\mapsto \varphi^{\textnormal{ss}}}"', from=1-1, to=3-1]
	\arrow["{\Pi\mapsto \textnormal{scs}(\Pi)}", from=1-4, to=3-4]
	\arrow["{\varphi\mapsto \pi^{\textnormal{ss}}(\varphi)}"', from=3-1, to=3-4]
\end{tikzcd}
\end{equation}
with the bottom map $\pi^{\textnormal{ss}}(-)$ being a bijection. Here $\textnormal{rec}_L^{\textnormal{T}}(-):=\textnormal{rec}_L(-\otimes|\det|_L^{\frac{1-n}{2}})$ is the Tate-normalisation of the local Langlands correspondence $\textnormal{rec}_{L}$ constructed in \cite{HT01}, $\textnormal{scs}(-)$ denotes the map passing to the underlying supercuspidal support and in the right bottom set we consider equivalence classes with respect to $\textnormal{G}$-conjugation.

Moreover, a representation $\varphi:W_L\to \textnormal{GL}_n(\overline{\mathbf{Q}}_{\ell})$ extends to an $\ell$-adic Galois representation $\rho:G_L\to\textnormal{GL}_n(\overline{\mathbf{Q}}_{\ell})$ if and only if $\textnormal{WD}(\varphi)=(r,N)$ is $\ell$-\textit{integral}, meaning that $r(\textnormal{Frob}_L)$ has eigenvalues lying in $\overline{\mathbf{Z}}_{\ell}^{\times}$ for a (or equivalently any) choice of lift of the geometric Frobenius. On the other hand, $\textnormal{rec}_L$ is known to interchange the notion of $\ell$-integrality. In particular, objects on the LHS of the diagram \ref{ssLLchar0} that extend to $G_L$ are exactly the ones sent to $\ell$-integral objects on the right.

Vigneras proved (cf. \cite{Vig01}) the existence of a mod $\ell$ version of the semisimple correspondence.
\begin{Th}[Vigneras]\label{ModEllLL}
  There is a bijection between sets of isomorphism classes
  \begin{equation*}
      \begin{tikzcd}[scale cd=0.8]
	{\{\overline{\varphi}:W_L\to\textnormal{GL}_n(\overline{\mathbf{F}}_{\ell}) \textnormal{ continuous, s.s.}\}_{/\sim}} && {\{\textnormal{s.c. supports (M},\overline{\pi}\textnormal{) for }\textnormal{Mod}_{\textnormal{sm}}(\overline{\mathbf{F}}_{\ell}[\textnormal{G}])\}_{/\sim}}.
	\arrow["{\overline{\varphi}\mapsto \overline{\pi}^{\textnormal{ss}}(\overline{\varphi})}", from=1-1, to=1-3]
\end{tikzcd}
  \end{equation*}
  It is characterised by the property that, for every $\ell$-integral, continuous representation $\varphi:W_L\to\textnormal{GL}_n(\overline{\mathbf{Q}}_{\ell})$, the supercuspidal support $\overline{\pi}^{\textnormal{ss}}(\overline{\varphi}^{\textnormal{ss}})$ coincides with
  \begin{enumerate}
      \item the supercuspidal support of any simple subquotient of the mod $\ell$ reduction of $\pi(\varphi)$, or equivalently with
      \item the supercuspidal support of the mod $\ell$ reduction of $\pi^{\textnormal{ss}}(\varphi^{\textnormal{ss}})$.
  \end{enumerate}
\end{Th}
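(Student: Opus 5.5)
The plan is to construct the map $\overline{\varphi}\mapsto\overline{\pi}^{\textnormal{ss}}(\overline{\varphi})$ by lifting to characteristic $0$, transporting through the bijection $\pi^{\textnormal{ss}}(-)$ at the bottom of \ref{ssLLchar0}, and reducing modulo $\ell$. Everything then comes down to proving independence of the lift and bijectivity. Property (ii) will hold by construction, and (i) then follows from Proposition \ref{modlred}: the supercuspidal support of any constituent of $r_{\ell}(\pi(\varphi))$ is that of the reduction of the cuspidal support of $\pi(\varphi)$. The diagram \ref{ssLLchar0} identifies that cuspidal support with the one of $\pi^{\textnormal{ss}}(\varphi^{\textnormal{ss}})$.

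\emph{Step 1: reduction to irreducibles.} Both sides are additive. A semisimple $\overline{\varphi}$ is a direct sum $\bigoplus_i\overline{\varphi}_i$ of irreducibles. Likewise, a supercuspidal support is a multiset $\{\overline{\pi}_1,\dots,\overline{\pi}_k\}$ of supercuspidal representations of smaller general linear groups, by Remark \ref{StandardRemark}. Parabolic induction is compatible with reduction mod $\ell$, and $\pi^{\textnormal{ss}}(\varphi\oplus\varphi')$ is the concatenation of the two supports. Hence it suffices to produce a bijection between irreducible continuous $\overline{\varphi}:W_L\to\textnormal{GL}_m(\overline{\mathbf{F}}_{\ell})$ and supercuspidal $\overline{\mathbf{F}}_{\ell}$-representations of $\textnormal{GL}_m(L)$, for all $m$, such that the extended multiset map is compatible with reduction of arbitrary $\ell$-integral $\varphi$. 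Note that this compatibility includes irreducible $\varphi$ with reducible reduction. These correspond to cuspidal but non-supercuspidal reductions, and that case must be handled separately.

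\emph{Step 2: lifting and the cuspidal case.} Every irreducible $\overline{\varphi}$ has the form $\textnormal{Ind}_{W_{L'}}^{W_L}(\overline{\chi}\otimes\overline{\eta})$, with $L'/L$ tame, $\overline{\eta}$ a representation extending an irreducible representation of wild inertia, and $\overline{\chi}$ a tame character. Lifting $\overline{\chi}$ via Teichm\"uller and $\overline{\eta}$ canonically, since wild inertia is pro-$p$ and $p\neq\ell$, gives an $\ell$-integral irreducible lift $\varphi$. Then $\pi(\varphi)$ is supercuspidal, and by Proposition \ref{modlred} its reduction is irreducible and cuspidal.

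To show that the reduction is supercuspidal, and that the answer does not depend on the lift, I would use the Harris--Taylor realisation of $\textnormal{rec}_L$ on supercuspidals through the cohomology of the Lubin--Tate tower. This cohomology has a natural $\mathbf{Z}_{\ell}$-structure, and the $\overline{\mathbf{F}}_{\ell}$-coefficient cohomology realises simultaneously the mod $\ell$ reductions of $\pi$ and of $\textnormal{rec}_L^T(\pi)$. Comparing the $\overline{\mathbf{F}}_{\ell}$-cohomology, one gets: two lifts with isomorphic reductions on one side have isomorphic reductions on the other. An alternative, more elementary route uses mod $\ell$ Rankin--Selberg $\gamma$-factors together with the local converse theorem; I would try the geometric route first.

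\emph{Step 3: bijectivity and the non-supercuspidal cuspidals.} Surjectivity on the automorphic side follows because every supercuspidal $\overline{\mathbf{F}}_{\ell}$-representation lifts to an $\ell$-integral cuspidal one (\cite{Vig96}). Injectivity follows from Step 2 applied symmetrically. The main obstacle is the compatibility for irreducible $\varphi$ with reducible $\overline{\varphi}$. Here $\overline{\varphi}^{\textnormal{ss}}$ decomposes as $\bigoplus_{j=0}^{r-1}\overline{\psi}\otimes\overline{\omega}^j$, with $\overline{\omega}$ the unramified character of order $e$ for suitable $r$ (the order of $q^{\deg\psi}$ mod $\ell$, up to the usual $\ell$-power factors). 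Correspondingly, $r_{\ell}(\pi(\varphi))$ is a cuspidal non-supercuspidal representation with supercuspidal support the segment $\{\overline{\sigma}\nu^j\}_j$. I would establish this by Vign\'eras's description of cuspidals via type theory (reduction of maximal simple types), and compare the resulting segment with the decomposition of $\overline{\varphi}$. I expect this matching, including exact control of the multiplicities, to be the hardest step.
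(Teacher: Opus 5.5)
Your derivation of the equivalence of (i) and (ii) matches the paper's. You apply Proposition~\ref{modlred} to $\pi(\varphi)$ and use \ref{ssLLchar0} to identify its supercuspidal support with $\pi^{\textnormal{ss}}(\varphi^{\textnormal{ss}})$. Everything else the paper does not prove. It takes the existence of the bijection and property (i) from \cite{Vig01}, Theorem~1.6, twisted by the $\ell$-integral character $|\det|_L^{\frac{1-n}{2}}$ to pass from Vign\'eras's normalisation to the Tate normalisation. You replace that citation by a reproof whose load-bearing steps are asserted rather than established, so as it stands the proposal has a genuine gap.

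The missing content is exactly Vign\'eras's theorem. For irreducible $\ell$-integral $\varphi_1,\varphi_2$ you need two facts:
\begin{enumerate}
\item $\overline{\varphi}_1^{\textnormal{ss}}\cong\overline{\varphi}_2^{\textnormal{ss}}$ if and only if $r_{\ell}(\pi(\varphi_1))$ and $r_{\ell}(\pi(\varphi_2))$ have the same supercuspidal support;
\item $\overline{\varphi}$ is irreducible if and only if $r_{\ell}(\pi(\varphi))$ is supercuspidal.
\end{enumerate}
Without these, (ii) does not hold ``by construction'', because it must hold for \emph{every} $\ell$-integral lift, including irreducible lifts of reducible $\overline{\varphi}^{\textnormal{ss}}$. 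Nor does surjectivity follow from lifting supercuspidals, since you must also know that the parameter of the lift has irreducible reduction.

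Your Step 2 does not supply these facts. The $\overline{\mathbf{F}}_{\ell}$-cohomology of the Lubin--Tate tower is not simply the reduction of a $\overline{\mathbf{Z}}_{\ell}$-lattice in middle degree realising $\pi\otimes\textnormal{JL}(\pi)\otimes\textnormal{rec}(\pi)$. The integral complex has torsion. A mod $\ell$ block also contains non-cuspidal $\ell$-adic representations that contribute in other degrees; for instance, the block of a cuspidal non-supercuspidal $\overline{\pi}$ contains generalised Steinberg representations. So reductions cannot just be ``read off''. Controlling this is a substantial theorem in its own right (Dat's $\ell$-integral non-abelian Lubin--Tate theory), not a formal comparison.

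Your Step 3, which you leave open, is the heart of the matter. It requires matching the segment $\{\overline{\sigma}\nu^j\}$ of a cuspidal non-supercuspidal reduction with the decomposition of $\overline{\varphi}^{\textnormal{ss}}$, multiplicities included.

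The efficient fix is to cite \cite{Vig01}, Theorem~1.6, for existence, bijectivity and (i). Include the normalisation twist by $|\det|_L^{\frac{1-n}{2}}$, which is $\ell$-integral because $q\in\mathbf{Z}_{\ell}^{\times}$ and $q^{1/2}$ has been fixed. Then deduce (ii) exactly as you did.
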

\begin{proof}
    The existence of $\overline{\pi}^{\textnormal{ss}}$ such that $\overline{\pi}^{\textnormal{ss}}(\overline{\varphi}^{\textnormal{ss}})$ coincides with i) is \cite{Vig01}, Theorem 1.6. More precisely, we twist the correspondence of \textit{loc. cit.} by the $\ell$-integral character $|\det|_L^{\frac{1-n}{2}}$ so that it is compatible with the Tate-normalised local Langlands correspondence, not the unitary one. Finally, i) is the same as ii) by Proposition~\ref{modlred} and \ref{ssLLchar0}.
\end{proof}

\subsubsection{Moduli of Langlands parameters and their group determinants}
In \cite{Hel20}, Helm constructed a moduli scheme of (framed) $n$-dimensional $\ell$-adically continuous representations of $W_L$ on $\mathbf{Z}_{\ell}$-algebras by discretising the tame inertia group. By fixing a lift $\textnormal{Fr}\in W_L$ of the geometric Frobenius and a pro-generator $s$ of the tame inertia $I_L/P_L$, we set $W^0_L\leq W_L$ to be the preimage of $\langle\textnormal{Fr},s\rangle\leq W_L/P_L$ in $W_L$. We equip it with the topology making $P_L\leq W_L^0$ an open subgroup endowed with its natural profinite topology. Let $\{P^e_L\leq P_L\}$, $e\geq 1$, be an exhaustive filtration of $P_L$ by open normal subgroups.

A representation $\varphi:W_L\to \textnormal{GL}_n(R)$ of an $\ell$-adically separated $\mathbf{Z}_{\ell}$-algebra is called $\ell$-adically continuous if, for every integer $m\geq 1$, $\varphi\mod{\ell^mR}$ is continuous. For a general $\mathbf{Z}_{\ell}$-algebra $R$, $\varphi$ is called $\ell$-adically continuous if it factors through some $\ell$-adically continuous representation $\varphi':W_L\to \textnormal{GL}_n(R')$ for an $\ell$-adically separated $\mathbf{Z}_{\ell}$-algebra $R'$.
Any $\ell$-adically continuous representation $\varphi:W_{L}\to \textnormal{GL}_n(R)$ factors over $W_L/P^e_L$ for some integer $e\geq 1$.
Moreover, with this definition, \cite{Hel20} Proposition 4.9 and Proposition 8.2\footnote{See also \cite{DHKM20}, Theorem 1.6.} show that, for any $\mathbf{Z}_{\ell}$-algebra $R$, there are functorial bijections between the sets
\begin{itemize}
    \item $Z^1_n(W_L)(R):=$\{$\ell$-adically continuous representations $W_L\to\textnormal{GL}_n(R)$\},
    \item $Z^1_n(W_L^0)(R):=$\{continuous\footnote{Continuous when the target is equipped with either the discrete or $\ell$-adic topology.} representations $W_L^{0}\to \textnormal{GL}_n(R)$\}, and
    \item $\bigcup_{e\geq 1}Z^1_n(W_L^0/P^e_L)(R):=\bigcup_{e \geq 1} \{\textnormal{representations }W_L^0/P_L^e\to\textnormal{GL}_n(R)\}$.
\end{itemize}
It is further shown that $Z^1_n(W_L^0/P_L^e)$ is represented by an $\ell$-adically separated, reduced and flat $\mathbf{Z}_{\ell}$-algebra $\mathfrak{R}_{L,n}^e$ that is 
locally a complete intersection (\cite{Hel20}, Proposition 4.2, Corollary 4.7, \cite{DHKM20}, Theorem 1.6). Moreover, the transition maps in the colimit $\varinjlim_{e\geq 1}Z^1_n(W_L^0/P^e_L)$ are simply adjoining connected components.

We now analogously consider the functor
\begin{equation}\label{adequateiso}
    X^e_{L,n}:\textnormal{Alg}_{\mathbf{Z}_{\ell}}\to \textnormal{Sets},
\end{equation}
\begin{equation*}
    R\mapsto \{\textnormal{n-dimensional determinants }R[W^0_L/P_L^e]\to R\}
\end{equation*}
and set $X_{L,n}:=\varinjlim_{e\geq 1}X_{L,n}^e$.
\begin{Prop}\label{GIT}
For $e\geq 1$, $X_{L,n}^e$ is represented by a finitely generated $\mathbf{Z}_{\ell}$-algebra $\mathfrak{R}^{\textnormal{ps},e}_{L,n}$. 

Moreover, the natural map $Z^1_n(W^0_L/P^e_L)\to X_{L,n}^e$ sending a representation to its associated determinant induces an algebra homomorphism
\begin{equation}
    \mathfrak{R}^{\textnormal{ps},e}_{L,n}\to (\mathfrak{R}_{L,n}^{e})^{\textnormal{GL}_n}
\end{equation}
with kernel and cokernel finite modules consisting of $\ell$-torsion nilpotent elements. In particular, $\mathfrak{R}^{\textnormal{ps},e}_{L,n}$ is $\ell$-adically separated.
\end{Prop}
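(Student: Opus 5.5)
The plan is to reduce both claims to Chenevier's theory of determinants \cite{Che09} together with GIT for reductive group schemes over $\mathbf{Z}_{\ell}$ (Seshadri, Alper). The idea is to write $W^0_L/P^e_L$ as a quotient of a finitely generated discrete group. Indeed, $P_L/P^e_L$ is finite and $W^0_L/P_L\cong\langle \textnormal{Fr},s\rangle$ is generated by two elements. So $\Gamma_e:=W^0_L/P^e_L$ is a finitely generated group, with a finite presentation via generators $\textnormal{Fr}$, $s$ and the elements of the finite group $P_L/P^e_L$.

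\textbf{Representability.} Since $\Gamma_e$ is finitely generated, $\mathbf{Z}_{\ell}[\Gamma_e]$ is a finitely generated $\mathbf{Z}_{\ell}$-algebra. By \cite{Che09}, the functor of $n$-dimensional determinants on a finitely generated algebra is represented by a finitely generated algebra. More precisely, it is represented by the quotient of $\Gamma_n(\mathbf{Z}_{\ell}[\Gamma_e])^{\textnormal{ab}}$ (divided powers, universal determinant), and Chenevier shows this is finitely generated when the input algebra is. This gives $\mathfrak{R}^{\textnormal{ps},e}_{L,n}$.

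\textbf{The comparison map.} The universal framed representation over $\mathfrak{R}^e_{L,n}$ has a $\textnormal{GL}_n$-invariant determinant, which gives $\mathfrak{R}^{\textnormal{ps},e}_{L,n}\to(\mathfrak{R}^e_{L,n})^{\textnormal{GL}_n}$. Note that $Z^1_n(\Gamma_e)$ is a closed subscheme of $\textnormal{GL}_n^{N}$ for $N$ the number of generators. Under this embedding, $(\mathfrak{R}^e_{L,n})^{\textnormal{GL}_n}$ is a quotient, up to universal homeomorphism, of $\mathcal{O}(\textnormal{GL}_n^N)^{\textnormal{GL}_n}$, by Seshadri's geometric reductivity over $\mathbf{Z}_{\ell}$: invariants of a quotient surject up to $p$-th powers. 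The relevant statement is \cite{Che09}, Theorem 2.20 / Proposition 2.? and its generalisation by Emerson / Wang-Erickson over $\mathbf{Z}_{\ell}$. For a finitely generated $\mathbf{Z}_{\ell}$-algebra $A$, the map $\textnormal{Det}_n(A)\to \textnormal{Rep}^{\square}_n(A)^{\textnormal{GL}_n}$ is an adequate homeomorphism: it is finite, and its kernel and cokernel are killed by a power of $\ell$ and consist of nilpotents. Applying this with $A=\mathbf{Z}_{\ell}[\Gamma_e]$ gives the claim. After inverting $\ell$, characteristic-zero invariant theory (Procesi) makes the map an isomorphism, so the kernel and cokernel are $\ell$-power torsion. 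Finiteness as modules follows because the map is integral and of finite type. The nilpotence follows from the adequate homeomorphism property, i.e. bijectivity on geometric points plus the universal homeomorphism.

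\textbf{Separatedness.} Here I would use that $\mathfrak{R}^e_{L,n}$ is $\ell$-adically separated and flat over $\mathbf{Z}_{\ell}$, hence so are its invariants. The kernel $I$ of the map is a finite $\ell$-torsion module, so it is killed by some $\ell^k$. Then $\bigcap_m \ell^m\mathfrak{R}^{\textnormal{ps},e}$ maps into $\bigcap_m \ell^m(\mathfrak{R}^e)^{\textnormal{GL}_n}$. To see this, use finiteness of the cokernel: the image is a finite-index-type subring, and Artin–Rees applies since the rings are Noetherian. So that intersection lies in $I$. Finally, $I$ is finite and $\ell$-torsion, and Krull's intersection theorem on the Noetherian ring gives the vanishing.

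\textbf{Main obstacle.} I expect the main obstacle to be the integral GIT comparison in the case where $\ell$ divides the orders involved. In particular, one must show that the kernel consists of nilpotents and not just of $\ell$-torsion. For this I would cite the adequate-moduli results (Alper; Wang-Erickson's comparison of determinants with the GIT quotient) rather than reprove them.
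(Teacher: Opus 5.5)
Your proposal is correct and follows essentially the same route as the paper. Representability and finite generation come from Chenevier's $\Gamma^n_{\mathbf{Z}_\ell}(\mathbf{Z}_\ell[W^0_L/P^e_L])^{\textnormal{ab}}$; the paper cites \cite{Che14}, Propositions 1.6 and 2.38, not \cite{Che09}. The comparison comes from Wang-Erickson's adequate-homeomorphism theorem \cite{WE18}, Theorem 2.20, read off as the kernel/cokernel statement via Noetherianity.

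Your explicit deduction of $\ell$-adic separatedness is correct and fills in a step the paper leaves implicit: $\bigcap_m\ell^m\mathfrak{R}^{\textnormal{ps},e}_{L,n}$ lands in the $\ell^k$-torsion kernel, and Krull's intersection theorem then kills it. Two small slips there: the inclusion into $\bigcap_m \ell^m(\mathfrak{R}^e_{L,n})^{\textnormal{GL}_n}$ is immediate and needs no appeal to the cokernel, and the ``$p$-th powers'' in your Seshadri remark should be $\ell$-th powers.
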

\begin{proof}
    The first part is \cite{Che14}, Proposition 1.6 and Proposition 2.38, using that $\mathbf{Z}_{\ell}[W_L^0/P_L^e]$ is a finitely generated $\mathbf{Z}_{\ell}$-algebra.

    The second part follows from \cite{WE18}, Theorem 2.20. Indeed, since the target and source are Noetherian,  $\textnormal{Spec}\big((\mathfrak{R}_{L,n}^e)^{\textnormal{GL}_n}\big)\to\textnormal{Spec}(\mathfrak{R}^{\textnormal{ps},e}_{L,n})$ being an adequate homeomorphism translates to the claim on the map of global sections.
\end{proof}
The following Proposition is not needed in our arguments and is only included to establish some basic properties of $X_{L,n}$.
\begin{Prop}
    The transition map $X^e_{L,n}\to X^{e+1}_{L,n}$ is given by inclusion of a union of connected components. In particular, $X_{L,n}=\cup_{e\geq 1}X_{L,n}^e$ is representable by an increasing union of affine $\mathbf{Z}_{\ell}$-schemes of finite type.
\end{Prop}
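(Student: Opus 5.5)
The plan is to deduce the statement about $X_{L,n}$ from the known structure of $Z^1_n(W^0_L/P^e_L)$ by passing through group determinants. The key observation is that a determinant $D:R[W^0_L/P^e_L]\to R$ can be pulled back along $W^0_L/P^{e+1}_L\to W^0_L/P^e_L$, and this pullback gives the transition map. I will show this map is an open and closed immersion.

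\textbf{Step 1: $X^e_{L,n}\to X^{e+1}_{L,n}$ is a closed immersion.} The determinants of $W^0_L/P^{e+1}_L$ that factor through $W^0_L/P^e_L$ are exactly those whose characteristic polynomial coefficients satisfy $\Lambda_i(gh)=\Lambda_i(g)$ for all $h\in P^e_L/P^{e+1}_L$. Since $P^e_L/P^{e+1}_L$ is finite, this is a closed condition, cut out by finitely many equations. By Chenevier's results a determinant is determined by its characteristic polynomials, so the map is a monomorphism onto this closed subfunctor.

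\textbf{Step 2: the image is open, i.e.\ the closed condition is locally constant.} The group $P^e_L/P^{e+1}_L$ is a finite $p$-group and $p$ is invertible in $\mathbf{Z}_\ell$. I will restrict the universal determinant $D^{\textnormal{univ}}$ on $X^{e+1}_{L,n}$ to the finite group $H:=P_L/P^{e+1}_L$, whose order is invertible. Determinants of a finite group of invertible order over a connected base are locally constant: they are the determinants of representations of the separable algebra $\mathbf{Z}_\ell[H]$, which are rigid. Concretely, after a finite étale extension $\mathbf{Z}_\ell[H]$ splits as a product of matrix algebras, so an $n$-dimensional determinant of $H$ corresponds to a tuple of multiplicities summing to $n$, which is a discrete invariant. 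Hence $X^{e+1}_{L,n}$ decomposes as a disjoint union of open and closed pieces indexed by these discrete data. The condition "trivial on $P^e_L/P^{e+1}_L$" is a union of such pieces, namely those in which only irreducibles of $H$ trivial on $P^e_L$ occur. Therefore the image of $X^e_{L,n}$ is a union of connected components.

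\textbf{Step 3: conclude.} Combining Step 1, Step 2 and Proposition \ref{GIT}, which gives that each $X^e_{L,n}$ is affine of finite type, $X_{L,n}$ is an increasing union of affine finite type $\mathbf{Z}_\ell$-schemes along open and closed immersions.

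\textbf{Main obstacle.} The hardest part is Step 2, making the rigidity of determinants of finite groups of invertible order precise over arbitrary $\mathbf{Z}_\ell$-algebras rather than over fields. I would handle it as follows. Let $e_\tau$ be the central idempotents of $\mathbf{Z}_\ell[\zeta][H]$. The elements $D^{\textnormal{univ}}$-characteristic-polynomial of $e_\tau$ satisfy $e_\tau^2=e_\tau$, so their ranks are locally constant functions on $\textnormal{Spec}$. This uses Chenevier's Cayley--Hamilton theory, under which an idempotent has characteristic polynomial $(X-1)^r X^{n-r}$ locally. Alternatively, I can compare with the corresponding, already known, statement for $Z^1_n$ via the adequate homeomorphism of Proposition \ref{GIT}, since connected components are a topological notion.
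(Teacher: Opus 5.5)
Your proposal is correct and is essentially the paper's proof. The paper writes $X^e_{L,n}=X^{e+1}_{L,n}\times_{Y}\textnormal{Spec}(\mathbf{Z}_\ell)$, the fibre over the trivial determinant of the restriction map to the scheme $Y$ of $n$-dimensional determinants of $N:=P^e_L/P^{e+1}_L$, and proves $Y$ finite \'etale exactly as in your Step 2: it splits $\mathbf{Z}_\ell[N]$ into matrix algebras over unramified extensions, applies Chenevier's lemmas on determinants of such algebras, and concludes by descent. So working with $P_L/P^{e+1}_L$ instead, and splitting the argument into a closed part and an open part, is only a cosmetic difference.

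Two small remarks. First, the surjectivity onto the closed subfunctor in your Step 1 does not follow from determinants being determined by their characteristic polynomials, since that only gives uniqueness. It follows from the observation that, when $D|_{R[N]}$ is trivial, the central idempotent $1-|N|^{-1}\sum_{h\in N}h$ has characteristic polynomial $X^n$ and hence lies in $\ker(D)$; the paper uses this tacitly as well. Second, your fallback via the adequate homeomorphism of Proposition~\ref{GIT} would only give the topological statement, because a closed immersion onto an open and closed subset need not be an open immersion over non-reduced rings.
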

\begin{proof}
    Let $H:=P^{e}_L/P^{e+1}_L$ and consider the functor
    \begin{equation*}
Y:\textnormal{Alg}_{\mathbf{Z}_{\ell}}\to \textnormal{Sets},
    \end{equation*}
    \begin{equation*}
        R\mapsto \{\textnormal{n-dimensional determinants }R[H]\to R\}.
    \end{equation*}
    We claim that $Y$ is a finite \'etale $\mathbf{Z}_{\ell}$-scheme from which the proof follows by noting that $X^e_{L,n}=X_{L,n}^{e+1}\times_{Y,f}\textnormal{Spec}(\mathbf{Z}_{\ell})$ where $f:\textnormal{Spec}(\mathbf{Z}_{\ell})\to Y$ corresponds to the trivial determinant.

    Since $H$ is a $p$-group, by Maschke, Wedderburn--Artin and Wedderburn's little theorem, we have an isomorphism $\mathbf{Z}_{\ell}[H]\cong \prod_{i=1,...,k}\textnormal{M}_{d_i}(\mathcal{O}_{E_i})$ for some unramified extensions $E_i/\mathbf{Q}_{\ell}$. Therefore, by \cite{Che14}, Lemma 2.2 (iii) and Lemma 2.15, for some finite \'etale local $\mathbf{Z}_{\ell}$-algebra $\mathcal{O}$, $Y\times_{\mathbf{Z}_{\ell}}\mathcal{O}$ becomes a finite product of copies of $\mathcal{O}$. The claim follows by descent.
\end{proof}

\begin{Prop}\label{ArtinianPoints}
    The restriction of $X_{L,n}$ to the subcategory $\textnormal{Art}_{\mathbf{Z}_{\ell}}\subset \textnormal{Alg}_{\mathbf{Z}_{\ell}}$ of Artinian $\mathbf{Z}_{\ell}$-algebras can be identified with the functor sending $A\in \textnormal{Art}_{\mathbf{Z}_{\ell}}$ to the set
    \begin{equation}\label{galoisA}
        \{\textnormal{continuous $n$-dimensional determinants }A[G_L]\to A\}.
    \end{equation}
\end{Prop}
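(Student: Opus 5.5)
Evaluating $X_{L,n}=\varinjlim_e X^e_{L,n}$ on an Artinian $A$ gives the set of $n$-dimensional determinants $A[W^0_L/P^e_L]\to A$ for some $e\geq 1$; no continuity condition enters here, because $W_L^0/P_L^e$ is a discrete group. The target is the set of continuous determinants $A[G_L]\to A$, with $A$ carrying its $\ell$-adic (equivalently, since $\ell^NA=0$ for $N$ large, discrete) topology. So we have to match determinants on the discretised Weil group, trivial on some $P_L^e$, with continuous determinants on $G_L$. The natural route goes through $W_L^0\to W_L\to G_L$ and the corresponding statements for genuine representations recorded from \cite{Hel20} and \cite{DHKM20}.

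\textbf{Step 1: reduce to a finite quotient.} Since $A$ is Artinian and $\ell$ is nilpotent in $A$, I use Chenevier's description of continuous determinants (\cite{Che14}): a determinant $D:A[G_L]\to A$ is continuous exactly when it factors through $A[G_L/H]$ for some open normal subgroup $H$. The key input is that a determinant is determined by the characteristic polynomial coefficients $\Lambda_i(g)$, and continuity means these functions are locally constant. The kernel of $D$ is a two-sided ideal, so $D$ factors through the finite quotient $A[G_L]/\ker(D)$. Restricting to $P_L$ (pro-$p$ with $p\neq\ell$, acting through a finite quotient) then shows that $D$ factors through $G_L/P_L^e$ for some $e$. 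The same argument shows that every determinant of $A[W_L^0/P^e_L]$ is automatically continuous for the topology induced from $W_L$.

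\textbf{Step 2: restrict along $W_L^0/P^e_L\to G_L/P^e_L$.} This map has dense image, so restriction of continuous determinants is injective; surjectivity is the real content. Given a determinant $D$ on $A[W^0_L/P^e_L]$, I consider the Cayley--Hamilton quotient $A[W^0_L/P_L^e]/\ker D$. By \cite{Che14} this is a finite $A$-module, hence finite as a set since $A$ is finite over $\mathbf{Z}_\ell/\ell^N$; this holds after replacing $A$ by its image of $\mathbf{Z}_\ell$-finite type, and I reduce to that case since $D$ has finitely many values on generators. The image of $W_L^0/P^e_L$ in its unit group is then a finite group. Hence $\textnormal{Fr}$ and $s$ act through finite order, and the map factors through a finite quotient of $\langle \textnormal{Fr},s\rangle \ltimes P_L/P^e_L$. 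Such a quotient extends uniquely to the profinite completion of $W_L/P_L^e$, and the profinite completion of $W_L^0/P_L^e$ surjects onto $G_L/P^e_L$.

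\textbf{Main obstacle.} The delicate point is this last step: checking that the finite quotient really factors through $G_L/P^e_L$ and not merely through the profinite completion of the discrete group $\langle\textnormal{Fr},s\rangle$. This requires respecting the relation $\textnormal{Fr}\, s\,\textnormal{Fr}^{-1}=s^{q^{-1}}$ and the fact that tame inertia is $\prod_{p'\neq p}\mathbf{Z}_{p'}$ rather than $\widehat{\mathbf{Z}}$. My plan is to mimic \cite{Hel20}, Proposition 4.9 at the level of Cayley--Hamilton algebras. The image of $s$ has finite order prime to $p$, because conjugation by $\textnormal{Fr}$ raises it to the power $q$, so $s$ permutes with finite orbits. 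The finite image therefore has its $s$-part of order prime to $p$, and the map extends continuously from $W_L^0$ to $W_L$ and then to $G_L$. Functoriality in $A$ is clear from the construction, giving the identification of functors.
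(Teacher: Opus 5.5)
Your proposal is correct and is essentially the paper's argument: Chenevier's Lemma 2.33 makes a continuous determinant factor through a finite quotient $A[G_L/Q]$. This gives the map to $X_{L,n}(A)$, and injectivity follows from surjectivity of $A[W^0_L]\to A[G_L/Q]$. Surjectivity comes from finiteness of the Cayley--Hamilton quotient of $A[W^0_L/P^e_L]$, so $W^0_L$ acts through a finite group and the action extends to $G_L$.

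Your ``main obstacle'' paragraph verifies what the paper uses without comment, namely that every finite-index normal subgroup of $W^0_L/P^e_L$ is open for the topology induced from $G_L$ (i.e.\ $G_L$ is the profinite completion of $W^0_L$). As in the paper, however, one needs $A$ to be a finite ring, and your reduction to a subalgebra of finite type over $\mathbf{Z}_{\ell}$ does not provide this. For $A=\mathbf{F}_{\ell}(t)$, the character $\textnormal{Fr}\mapsto t$ shows the statement genuinely requires this convention.
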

\begin{proof}
     For the proof, we denote by $X$ the functor sending $A\in \textnormal{Art}_{\mathbf{Z}_{\ell}}$ to \ref{galoisA}. We first give the desired map of functors $\psi: X\to X_{L,n}$. Fix $A\in \textnormal{Art}_{\mathbf{Z}_{\ell}}$ and a continuous $n$-dimensional determinant $D:G_L\to A$. By \cite{Che14}, Lemma 2.33, we see that the induced map on algebras $A[G_L]\to A[G_L]/\textnormal{ker}(D)$ factors through $A[G_L/Q]$ for some compact open normal subgroup $Q\subset G_L$, a finite ring. Therefore, the restriction $D|_{W^0_L}$ must factor through $W_L^0/P^e_L$ for some integer $e\geq 1$. This defines the image of $D$ under $\psi_A:X(A)\to X_{L,n}(A)$.

     To see that $\psi$ is an injection of functors we note that $A[W^0_L]\to A[G_L]\to A[G_L/Q]$ is a surjection.

     To see surjectivity, consider an element $D_e:A[W^0_L/P^e_L]\to A$ in $X_{L,n}(A)$. We recall that it factors through a Cayley--Hamilton representation
     \begin{equation*}
         A[W^0_L/P^e_L]\xrightarrow[]{\rho_{D_e}^{\textnormal{CH}}}A^{\textnormal{CH}}:=A[W^0_L/P^e_L]/\textnormal{CH}(D_e)\xrightarrow{D_e^{\textnormal{CH}}}A.
     \end{equation*}
     Moreover, $A[W^0_L/P^e_L]$ being finitely generated over $A$, $A^{\textnormal{CH}}$ is known to be finite over $A$ (cf. \cite{Che14}, Proposition 2.13) and consequently a finite ring itself. In particular, $(A[W^0_L]\to A[W^0_L/P^e_L])\circ \rho_{D_e}^{CH}$ extends uniquely to an algebra homomorphism $A[G_L]\to A^{\textnormal{CH}}$, $G_L$ being the profinite completion of $W^0_L$.
\end{proof}

\subsubsection{Interpolation of the correspondence}
Finally, we state the theorem of Helm, Helm--Moss on the interpolation of the semisimple local Langlands correspondence.
\begin{Th}[\cite{HM18}, Corollary 7.7, \cite{DHKM24}, Corollary 8.4]
    There is a (necessarily unique) isomorphism of rings
    \begin{equation*}
        \Psi_{L,n}:(\mathfrak{R}_{L,n})^{\textnormal{GL}_n}\xrightarrow{\sim} \mathfrak{Z}_{\textnormal{G},\mathbf{Z}_{\ell}}
    \end{equation*}
    interpolating the semisimple local Langlands correspondence $\pi^{\textnormal{ss}}(-)$ from \ref{ssLLchar0}.
\end{Th}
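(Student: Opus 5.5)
The theorem is cited from \cite{HM18} and \cite{DHKM24}, so the plan is to recall how its proof is assembled from those sources rather than to rebuild everything. Everything is first done over $W(\overline{\mathbf{F}}_{\ell})$ and then descended to $\mathbf{Z}_{\ell}$.

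\textbf{Uniqueness.} By Proposition~\ref{GIT} and the results of \cite{Hel20}, $(\mathfrak{R}_{L,n})^{\textnormal{GL}_n}$ is a union of flat, reduced $\mathbf{Z}_{\ell}$-algebras. Hence it embeds into its base change to $\overline{\mathbf{Q}}_{\ell}$, and this base change is a reduced finite-type algebra, so it is determined by its $\overline{\mathbf{Q}}_{\ell}$-points. Those points are semisimple $\ell$-integral parameters $\varphi$. Any map interpolating $\pi^{\textnormal{ss}}$ must send the point $\varphi$ to the point of $\mathfrak{Z}_{\textnormal{G}}$ given by the supercuspidal support $\pi^{\textnormal{ss}}(\varphi)$ in diagram \ref{ssLLchar0}. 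Two ring maps that agree on a Zariski dense set of points of a reduced flat ring coincide, which gives uniqueness.

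\textbf{Existence and isomorphism.}
1. Work block by block. The Bernstein decomposition over $W(\overline{\mathbf{F}}_{\ell})$ (\cite{Hel16}, Theorem 11.8) splits $\mathfrak{Z}_{\textnormal{G},W(\overline{\mathbf{F}}_{\ell})}$ into factors $\mathfrak{Z}_{[\textnormal{M},\pi]}$. The invariant ring $(\mathfrak{R}_{L,n}\otimes W(\overline{\mathbf{F}}_{\ell}))^{\textnormal{GL}_n}$ splits compatibly into factors indexed by the restriction of $\overline{\varphi}$ to inertia. These index sets are matched by Vignéras' mod $\ell$ correspondence (Theorem~\ref{ModEllLL}), suitably twisted.
2. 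On each block, construct the map $\mathfrak{Z}\to \mathcal{O}(Z^1_n)^{\textnormal{GL}_n}$ in the direction used by Helm--Moss. This is the local Langlands in families statement: Helm's co-Whittaker module/Whittaker model argument shows that a family of parameters over a reduced flat base carries a compatible action of the Bernstein centre.
3. Check that this map is an isomorphism after inverting $\ell$. Both sides then decompose over the fibres of $\textnormal{sp}_{\textnormal{G}}$ (\cite{Hel16}, Proposition 12.1), and on each characteristic $0$ block one compares explicit Bernstein centres, which are invariants of tori under finite groups.
4. Check that the map is an isomorphism integrally. Both rings are reduced and flat, and both are normal in the relevant sense, or at least the inclusion is integral. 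Finite-fibre and integrality arguments, such as comparing $\overline{\mathbf{F}}_{\ell}$-points via Theorem~\ref{ModEllLL}, then upgrade the rational isomorphism to an integral one.
5. Descend from $W(\overline{\mathbf{F}}_{\ell})$ to $\mathbf{Z}_{\ell}$ by Galois descent, using the uniqueness established above.

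\textbf{Main obstacle.} The hard step is the integral surjectivity in step 4, that is, showing the invariants are not larger than the Bernstein centre at non-banal $\ell$. Helm--Moss handle this by comparing both sides with an explicit description of the $\ell$-integral Bernstein centre through the Gelfand--Graev representation. \cite{DHKM24} fills the gap between invariants and the reduced quotient. I would cite these results at this step rather than attempt a direct proof.
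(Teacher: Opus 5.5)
The paper gives no argument for this statement: it is imported from \cite{HM18}, Corollary 7.7 and \cite{DHKM24}, Corollary 8.4. You also defer to these references at the decisive step, so your proposal rests on the same input. However, the reconstruction you build around the citations contains false steps, and these should not be offered as part of a proof.

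The serious problem is step 4, which is wrong as written. First, the integral Bernstein centre is not normal at non-banal $\ell$. Already for $\textnormal{GL}_1(L)$ with $\ell\mid q-1$, the block of the trivial character has centre $W(\overline{\mathbf{F}}_{\ell})[T^{\pm 1}]\otimes W(\overline{\mathbf{F}}_{\ell})[\mathbf{Z}/\ell^a\mathbf{Z}]$, where $\ell^a>1$ is the $\ell$-part of $q-1$. This ring is reduced and connected but not a domain, hence not normal, and local class field theory gives the same ring on the Galois side. Second, comparing $\overline{\mathbf{F}}_{\ell}$-points cannot upgrade a rational isomorphism to an integral one. For example, $\mathbf{Z}_{\ell}[\ell\sqrt{\ell}]\subset\mathbf{Z}_{\ell}[\sqrt{\ell}]$ is a finite injective map of reduced flat $\mathbf{Z}_{\ell}$-algebras, an isomorphism after inverting $\ell$ and bijective on $\overline{\mathbf{F}}_{\ell}$-points, yet it is not surjective. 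So your sketch contains no argument for integral surjectivity. That is the content of Helm's theorem in \cite{Hel20}: an interpolating map $\mathfrak{Z}\to(\mathfrak{R}_{L,m})^{\textnormal{GL}_m}$ for all $m\leq n$ is automatically an isomorphism. This is where Curtis homomorphisms and the Gelfand--Graev representation enter. The existence of that map (your step 2) is the main theorem of \cite{HM18}. It is proved through a converse theorem for families of co-Whittaker modules, not as a formal consequence of Whittaker models.

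There are also some smaller errors.
\begin{itemize}
\item The $\overline{\mathbf{Q}}_{\ell}$-points of $(\mathfrak{R}_{L,n})^{\textnormal{GL}_n}$ are \emph{all} semisimple parameters $W_L\to\textnormal{GL}_n(\overline{\mathbf{Q}}_{\ell})$, not just the $\ell$-integral ones. The $\ell$-integral ones are the points factoring through $\overline{\mathbf{Z}}_{\ell}$; compare Remark~\ref{TheMapIRemark}.
\item $\mathfrak{R}_{L,n}=\varprojlim_e\mathfrak{R}^e_{L,n}$ is a product over connected components, not a union. The finite-type argument therefore has to be run on each $(\mathfrak{R}^e_{L,n})^{\textnormal{GL}_n}$ separately.
\item Your uniqueness argument uses reducedness of $(\mathfrak{R}_{L,n})^{\textnormal{GL}_n}$, so it pins down the inverse $\mathfrak{Z}_{\textnormal{G}}\to(\mathfrak{R}_{L,n})^{\textnormal{GL}_n}$, hence $\Psi_{L,n}$ among isomorphisms; that is enough here. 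For uniqueness in the form the paper states it (a ring map determined by the points $x\circ\Psi_{L,n}$), the relevant fact is that the target $\mathfrak{Z}_{\textnormal{G}}$ is $\ell$-torsion free, reduced and blockwise of finite type (\cite{Hel16}, Theorem 12.8). This is the argument used in the proof of Lemma~\ref{LocalEisensteinFunctoriality}.
\item There is no ``reduced quotient'' gap for \cite{DHKM24} to fill, since $\mathfrak{R}^e_{L,n}$ is already reduced (\cite{Hel20}, \cite{DHKM20}).
\item Step 5 is not a formal Galois-descent statement. It needs that $\mathfrak{Z}_{\textnormal{G},\mathbf{Z}_{\ell}}$ is compatible with base change to $W(\overline{\mathbf{F}}_{\ell})$, and that the Tate-normalised correspondence is Galois-equivariant.
\end{itemize}
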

Let us make precise what "interpolating $\pi^{\textnormal{ss}}(-)$" means. Recall that the $\overline{\mathbf{Q}}_{\ell}$-points $x$ of $\mathfrak{Z}_{\textnormal{G},\mathbf{Z}_{\ell}}$ are in bijection with supercuspidal supports $(\textnormal{M}_x,\pi_x)$ of irreducible representations in $\textnormal{Mod}_{\textnormal{sm}}(\overline{\mathbf{Q}}_{\ell}[\textnormal{G}])$. On the other hand, the $\overline{\mathbf{Q}}_{\ell}$-points $y$ of $\mathfrak{R}_{L,n}^{\textnormal{GL}_n}$ are identified with continuous semisimple representations $\varphi_y:W_L\to \textnormal{GL}_n(\overline{\mathbf{Q}}_{\ell})$. The map $\Psi_{L,n}$ is uniquely determined by the property that for any $x:\mathfrak{Z}_{\textnormal{G},\mathbf{Z}_{\ell}}\to\overline{\mathbf{Q}}_{\ell}$, we have 
\begin{equation*}
    \pi^{\textnormal{ss}}(\varphi_{x\circ\Psi_{L,n}})=\pi_x.
\end{equation*}

For our local-global compatibility results, we will in fact need an interpolation of the local Langlands correspondence with source being the ring $\mathfrak{R}_{L,n}^{\textnormal{ps}}:=\varprojlim_{e\geq 1}\mathfrak{R}_{L,n}^{\textnormal{ps},e}$. Such an interpolation is easily obtained using $\Psi_{L,n}$. Indeed, using Proposition~\ref{GIT} we obtain a natural map of rings $\mathfrak{R}^{\textnormal{ps}}_{L,n}\to (\mathfrak{R}_{L,n})^{\textnormal{GL}_n}$. We then define
\begin{equation*}
    \Phi_{L,n}:\mathfrak{R}^{\textnormal{ps}}_{L,n}\to (\mathfrak{R}_{L,n})^{\textnormal{GL}_n}\xrightarrow{\Psi_{L,n}}\mathfrak{Z}_{\textnormal{G},\mathbf{Z}_{\ell}}.
\end{equation*}

\subsection{The key lemmas}
Let $R$ be a commutative $\mathbf{Z}_{\ell}$-algebra, and let $\textnormal{G}$ be the general linear group $\textnormal{GL}_n(L)$. We collect here some technical lemmas regarding objects in $\textnormal{Mod}_{\textnormal{sm}}(R[\textnormal{G}])$, including Lemma~\ref{KeyLemma}, the "Key Lemma" of this article.

We start by discussing Casselman's lift in our context following \cite{Hel16}. Let $\textnormal{Q}=\textnormal{M}\textnormal{N}\leq \textnormal{G}$ be a parabolic subgroup and denote by $\overline{\textnormal{Q}}=\textnormal{M}\overline{\textnormal{N}}$ its opposite parabolic subgroup. Let $\mathcal{Q}\leq \textnormal{G}$ be a compact open subgroup admitting an Iwahori decomposition $\mathcal{Q}=\overline{\textnormal{N}}^1\textnormal{M}^0\textnormal{N}^0=\textnormal{N}^0\textnormal{M}^0\overline{\textnormal{N}}^1$ with respect to $\textnormal{Q}$. Consider the set of \textnormal{positive} elements
\begin{equation}
    \textnormal{M}^+:=\{m\in \textnormal{M}\mid m\textnormal{N}^0m^{-1}\subset\textnormal{N}^0\textnormal{ and }m^{-1}\overline{\textnormal{N}}^1m\subset \overline{\textnormal{N}}^1\}
\end{equation}
and let $z\in \textnormal{M}$ be a central element that is \textit{strongly positive} in the sense of \cite{ACC23}, \S2.1.9, meaning that for every open compact subgroup $U\leq \textnormal{N}$ (respectively $\overline{U}\leq \overline{\textnormal{N}}$), the set $\{z^nUz^{-n}\}_{n\geq 1}$ (respectively $\{z^{-n}\overline{U}z^n\}_{n\geq 1})$ forms a basis of neighbourhoods of the identity in $\textnormal{N}$ (respectively $\overline{\textnormal{N}}$).\footnote{We note that in the terminology of \cite{Hel16}, $\mathcal{Q}$ becomes decomposed with respect to $\textnormal{Q}$ and strongly positive elements are called strictly positive.}

We then define the map of Hecke algebras
\begin{equation*}
    t:\mathcal{H}(\textnormal{M}^+,\textnormal{M}^0)\otimes_{\mathbf{Z}}R\xrightarrow{}\mathcal{H}(\textnormal{G},\mathcal{Q})\otimes_{\mathbf{Z}}R,
\end{equation*}
\begin{equation*}
    [\textnormal{M}^0m\textnormal{M}^0]\mapsto \delta_{\textnormal{Q}}(m)[\mathcal{Q}m\mathcal{Q}],
\end{equation*}
an injective algebra homomorphism (cf. \cite{ACC23}, Lemma 2.1.12). Given a smooth $R[\textnormal{G}]$-module $\pi$, we can view $\pi^{\mathcal{Q}}$ as a $\mathcal{H}(\textnormal{M}^+,\textnormal{M}^0)\otimes_{\mathbf{Z}}R$-module via $t$. 

We then define $(\pi^{\mathcal{Q}})^{z\textnormal{-inv}}\subset \pi^{\mathcal{Q}}$ to be the maximal $\mathcal{H}(\textnormal{M}^+,\textnormal{M}^0)\otimes_{\mathbf{Z}}R$-submodule on which $z$ acts invertibly. Moreover, we denote by $(\pi^{\mathcal{Q}})[z^{\infty}]\subset \pi^{\mathcal{Q}}$ the submodule of $z$-torsion elements.
\begin{Lemma}\label{Casselmanslift}
    Let $\pi$ be a smooth $R[\textnormal{G}]$-module. Then the natural projection $\textnormal{pr}:\pi\to J_{\textnormal{Q}}(\pi)$ induces a $\mathcal{H}(\textnormal{M}^+,\textnormal{M}^0)$-equivariant map
    \begin{equation*}
        \textnormal{pr}_{\mathcal{Q}}:\pi^{\mathcal{Q}}\to (J_{\textnormal{Q}}(\pi))^{\textnormal{M}^0}
    \end{equation*}
    where we view the source as a $\mathcal{H}(\textnormal{M}^+,\textnormal{M}^0)$-module via the map $t$. Moreover, $\textnormal{pr}_{\mathcal{Q}}$ induces an isomorphism $(\pi^{\mathcal{Q}})\otimes_{R[z]}R[z^{\pm1}]\xrightarrow{\sim} (J_{\textnormal{Q}}(\pi))^{\textnormal{M}^0}$ of $\mathcal{H}(\textnormal{M},\textnormal{M}^0)$-modules.

    If we further assume $R$ to be Noetherian and $\pi$ to be admissible, then we have a $\mathcal{H}(\textnormal{M}^+,\textnormal{M}^0)$-equivariant direct sum decomposition
    \begin{equation*}
        \pi^{\mathcal{Q}}\cong (\pi^{\mathcal{Q}})[z^{\infty}]\oplus (\pi^{\mathcal{Q}})^{z\textnormal{-inv}}
    \end{equation*} and $\textnormal{pr}_{\mathcal{Q}}$ induces a series of $\mathcal{H}(\textnormal{M},\textnormal{M}^0)$-equivariant isomorphisms
    \begin{equation*}
        (\pi^{\mathcal{Q}})^{z\textnormal{-inv}}\xrightarrow{\sim} (\pi^{\mathcal{Q}})\otimes_{R[z]}R[z^{\pm1}]  \xrightarrow{\sim} (J_{\textnormal{Q}}(\pi))^{\textnormal{M}^0}.
    \end{equation*}
\end{Lemma}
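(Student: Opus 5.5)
We argue in the standard way for Casselman's lifting lemma, following \cite{Hel16} and \cite{ACC23}, \S2.1. First we check that $\textnormal{pr}_{\mathcal{Q}}$ is well defined and equivariant. For $v\in\pi^{\mathcal{Q}}$, the image $\textnormal{pr}(v)$ is fixed by $\textnormal{M}^0$, because $\mathcal{Q}\supset \textnormal{M}^0$ and $\textnormal{pr}$ is $\textnormal{M}$-equivariant. For equivariance, take $m\in\textnormal{M}^+$ and use the Iwahori decomposition to write $\mathcal{Q}m\mathcal{Q}=\coprod_{n}n\,m\,\mathcal{Q}$ with $n$ running over $\textnormal{N}^0/m\textnormal{N}^0m^{-1}$. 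This uses $m^{-1}\overline{\textnormal{N}}^1m\subset\overline{\textnormal{N}}^1$, and the number of cosets is $[\textnormal{N}^0:m\textnormal{N}^0m^{-1}]=\delta_{\textnormal{Q}}(m)^{-1}$. Then $[\mathcal{Q}m\mathcal{Q}]v=\sum_n nmv$, and applying $\textnormal{pr}$ kills the action of each $n\in\textnormal{N}$. This gives $\delta_{\textnormal{Q}}(m)^{-1}\,m\cdot\textnormal{pr}(v)$. The normalisation of $t$ cancels this factor, so $\textnormal{pr}_{\mathcal{Q}}(t([\textnormal{M}^0m\textnormal{M}^0])v)=[\textnormal{M}^0m\textnormal{M}^0]\textnormal{pr}_{\mathcal{Q}}(v)$. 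The Hecke operator on the right is $\sum_{m'\in \textnormal{M}^0m\textnormal{M}^0/\textnormal{M}^0}m'$; the same computation works with double cosets once one refines the coset decomposition accordingly.

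Next we show that $\textnormal{pr}_{\mathcal{Q}}$ becomes an isomorphism after inverting $z$. On the target, $z$ acts invertibly, so $\textnormal{pr}_{\mathcal{Q}}$ factors through $\pi^{\mathcal{Q}}\otimes_{R[z]}R[z^{\pm1}]$.

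For surjectivity, take $x\in J_{\textnormal{Q}}(\pi)^{\textnormal{M}^0}$, lift it to $w\in\pi$, and pick $k$ such that $w$ is fixed by $z^{-k}\overline{\textnormal{N}}^1z^{k}\cap\ldots$. Strong positivity lets us shrink $\overline{\textnormal{N}}^1$ by conjugation, so that $z^kw$ is fixed by $\overline{\textnormal{N}}^1$ (up to a smaller $\textnormal{M}^0$-averaging). Averaging over $\textnormal{M}^0$ is harmless since $x$ is $\textnormal{M}^0$-invariant; the averaging index is invertible only after care, so we instead use the $\textnormal{M}^0$-fixed lift argument from \cite{Hel16}. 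Then averaging over $\textnormal{N}^0$ gives a $\mathcal{Q}$-fixed vector $v$ with $\textnormal{pr}(v)$ a unit multiple of $z^kx$. Here we use the Jacquet lemma: $\textnormal{N}$-averaging over a large compact open does not change the image in $J_{\textnormal{Q}}$.

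For injectivity, suppose $v\in\pi^{\mathcal{Q}}$ satisfies $\textnormal{pr}(v)=0$. Then $v$ is killed by integration over some compact open $U\subset\textnormal{N}$. Choose $k$ with $z^{-k}Uz^{k}\subset \textnormal{N}^0$. Then $t(z)^kv$, which is an average of $nz^kv$ over $\textnormal{N}^0/z^k\textnormal{N}^0z^{-k}$, is a translate of an integral of $v$ over a group containing $U$, and hence vanishes. So $v$ is $z$-torsion.

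Finally, for $R$ Noetherian and $\pi$ admissible, $\pi^{\mathcal{Q}}$ is a finitely generated $R$-module and $t(z)$ is an $R$-linear endomorphism of it. Fitting's lemma for an endomorphism of a finitely generated module over a Noetherian ring is the key step here, and we expect it to be the main obstacle. The ascending chain $\ker(z^k)$ stabilises by Noetherianity. The descending chain $\im(z^k)$ need not stabilise for general Noetherian $R$, so we argue via $R[z]$-module structure instead, using that the module is finite over $R[z]$ and also finite over $R$. Concretely, $M$ finite over $R$ makes $z$ integral over $R$. Using Cayley--Hamilton, $z$ satisfies a monic polynomial $f(z)=z^kg(z)$ with $g(0)\in R$. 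This is still not a unit in general, so we instead follow \cite{Hel16} and \cite{ACC23}, Lemma 2.1.12. Write $M[z^\infty]=\ker z^{k}$ for large $k$, set $M^{z\textnormal{-inv}}=\bigcap_j z^jM$, and show the sum is direct and exhausts $M$ via the injectivity/surjectivity above applied to the finitely generated localisation. Both pieces are stable under $\mathcal{H}(\textnormal{M}^+,\textnormal{M}^0)$ because $z$ is central in that algebra. The composite $M^{z\textnormal{-inv}}\to M\otimes R[z^{\pm1}]$ is then an isomorphism, since the torsion summand dies. Composing with the isomorphism from the previous step yields the stated chain of $\mathcal{H}(\textnormal{M},\textnormal{M}^0)$-equivariant isomorphisms.
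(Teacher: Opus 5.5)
You follow the paper on the first two assertions. Hecke-equivariance of $\textnormal{pr}_{\mathcal{Q}}$ comes from the same coset computation. The paper carries out the refinement you only gesture at: it uses the bijection $(\tilde m,n)\mapsto n\tilde m$ onto $\mathcal{Q}/(\mathcal{Q}\cap m\mathcal{Q}m^{-1})$, with $\tilde m\in \textnormal{M}^0/(\textnormal{M}^0\cap m\textnormal{M}^0m^{-1})$ and $n\in\textnormal{N}^0/\tilde m m\textnormal{N}^0(\tilde m m)^{-1}$. It then notes that $[\textnormal{N}^0:\tilde m m\textnormal{N}^0(\tilde m m)^{-1}]=[\textnormal{N}^0:m\textnormal{N}^0m^{-1}]$, so the factor $\delta_{\textnormal{Q}}(m)$ cancels uniformly in $\tilde m$. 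The localisation isomorphism is simply quoted from \cite{Hel16}, Lemmas 11.12, 11.13, just as you defer the $\textnormal{M}^0$-fixed lift to \cite{Hel16}. One slip in your injectivity step: $z$ contracts $\textnormal{N}$, so you need $z^kUz^{-k}\subset\textnormal{N}^0$, i.e.\ $U\subset z^{-k}\textnormal{N}^0z^k$. Then $t(z)^kv$ is $z^k$ applied to the average of $v$ over $z^{-k}\textnormal{N}^0z^k\supset U$, which vanishes.

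The genuine gap is in the admissible case. The decomposition $\pi^{\mathcal{Q}}\cong(\pi^{\mathcal{Q}})[z^{\infty}]\oplus(\pi^{\mathcal{Q}})^{z\textnormal{-inv}}$ is not a formal consequence of $R$ being Noetherian and $\pi^{\mathcal{Q}}$ being finitely generated. For instance, take $R=\mathbf{Z}_{\ell}$, the module $\mathbf{Z}_{\ell}$, and $z$ acting by $\ell$: both summands vanish, yet the localisation is $\mathbf{Q}_{\ell}\neq 0$. A Fitting-lemma argument only works when $\pi^{\mathcal{Q}}$ has finite length (e.g.\ $R$ Artinian). Your phrase ``applied to the finitely generated localisation'' assumes exactly what must be proved: that $(\pi^{\mathcal{Q}})\otimes_{R[z]}R[z^{\pm1}]\cong (J_{\textnormal{Q}}(\pi))^{\textnormal{M}^0}$ is a finitely generated $R$-module. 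That is the admissibility of Jacquet modules of admissible representations over Noetherian $\mathbf{Z}_{\ell}$-algebras. It is a deep finiteness theorem, which the paper imports from \cite{DHKM24a}, Corollary 1.5. Your reference \cite{ACC23}, Lemma 2.1.12 is only the statement that $t$ is an algebra homomorphism.

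Once you have that finiteness, the argument closes quickly.
\begin{enumerate}
\item The $R$-submodules $z^{-k}\textnormal{pr}_{\mathcal{Q}}(\pi^{\mathcal{Q}})$ exhaust the Noetherian module $(J_{\textnormal{Q}}(\pi))^{\textnormal{M}^0}$, hence stabilise. So $\textnormal{pr}_{\mathcal{Q}}$ is surjective.
\item Its kernel is $(\pi^{\mathcal{Q}})[z^{\infty}]=\ker(z^k)$ for $k\gg0$.
\item Then $\ker(z^k)\cap z^k\pi^{\mathcal{Q}}=0$ because $\ker(z^{2k})=\ker(z^k)$.
\item Also $\ker(z^k)+z^k\pi^{\mathcal{Q}}=\pi^{\mathcal{Q}}$: for any $x$, surjectivity and invertibility of $z$ on the target give $y$ with $\textnormal{pr}_{\mathcal{Q}}(x)=\textnormal{pr}_{\mathcal{Q}}(z^ky)$.
\item Finally, $z$ is bijective on $z^k\pi^{\mathcal{Q}}$, so this summand is $(\pi^{\mathcal{Q}})^{z\textnormal{-inv}}$.
\end{enumerate}
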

\begin{proof}
    We check below that $\textnormal{pr}_{\mathcal{Q}}$ is Hecke equivariant. The rest of the statement follows from \cite{Hel16}, Lemma 11.12, Lemma 11.13 and \cite{DHKM24a}, Corollary 1.5.\footnote{We note that in \textit{loc. cit.} Lemma 11.12 the statement is stated for $R=W(\overline{\mathbf{F}}_{\ell})$ but the argument goes through for any $\mathbf{Z}_{\ell}$-algebra $R$.}

    To check the compatibility of the Hecke actions, pick a vector $v\in \pi^{\mathcal{Q}}$ and an element $m\in \textnormal{M}^+$. We then compute
    \begin{equation*}
        t([\textnormal{M}^0m\textnormal{M}^0])\ast v= [\textnormal{N}^0:m\textnormal{N}^0m^{-1}]^{-1}\cdot [\mathcal{Q}m\mathcal{Q}]\ast v=
    \end{equation*}
    \begin{equation*}
    [\textnormal{N}^0:m\textnormal{N}^0m^{-1}]^{-1}\sum_{q\in \mathcal{Q}/\mathcal{Q}\cap m\mathcal{Q}m^{-1}}qmv=
    \end{equation*}
    \begin{equation}\label{Heckeeq}
    [\textnormal{N}^0:m\textnormal{N}^0m^{-1}]^{-1}\sum_{\Tilde{m}\in \textnormal{M}^{0}/(\textnormal{M}^0\cap m\textnormal{M}^0m^{-1})}\left(\sum_{n\in \textnormal{N}^0/\Tilde{m}m\textnormal{N}^0(\Tilde{m}m)^{-1}}n\Tilde{m}mv\right).
    \end{equation}
    For the last equality we used that the map of sets
    \begin{equation*}
        \{(\Tilde{m},n)\mid \Tilde{m}\in \textnormal{M}^0/m\textnormal{M}^0m^{-1}\cap \textnormal{M}^0 \textnormal{ and }n\in \textnormal{N}^0/\Tilde{m}m\textnormal{N}^0(\Tilde{m}m)^{-1}\}\xrightarrow{\sim}
    \end{equation*}
    \begin{equation*}
         \mathcal{Q}/(\mathcal{Q}\cap m\mathcal{Q}m^{-1})
    \end{equation*}
    \begin{equation*}
        (\Tilde{m},n)\mapsto n\Tilde{m}
    \end{equation*}
    induces a bijection by the existence of Iwahori decomposition of $\mathcal{Q}.$
    
    Since $\textnormal{pr}$ is $\textnormal{Q}$-equivariant (with the trivial action of $\textnormal{N}$ on the target), after applying $\textnormal{pr}_{\mathcal{Q}}$ to \ref{Heckeeq} we get
    \begin{equation*}
        \textnormal{pr}_{\mathcal{Q}}(t([\textnormal{M}^0m\textnormal{M}^0])\ast v)=
    \end{equation*}
    \begin{equation*}
        [\textnormal{N}^0:m\textnormal{N}^0m^{-1}]^{-1}\left(\sum_{\Tilde{m}\in \textnormal{M}^{0}/(\textnormal{M}^0\cap m\textnormal{M}^0m^{-1})}[\textnormal{N}^0:\Tilde{m}m\textnormal{N}^0(\Tilde{m}m)^{-1}]\Tilde{m}m\textnormal{pr}_{\mathcal{Q}}(v) \right)=
    \end{equation*}
    \begin{equation*}
        \sum_{\Tilde{m}\in \textnormal{M}^{0}/(\textnormal{M}^0\cap m\textnormal{M}^0m^{-1})}\Tilde{m}m\textnormal{pr}_{\mathcal{Q}}(v)=m\ast \textnormal{pr}_{\mathcal{Q}}(v)
    \end{equation*}
    where the last equality uses that, for $\Tilde{m}\in \textnormal{M}^0$, 
    \begin{equation*}
[\textnormal{N}^0:\Tilde{m}m\textnormal{N}^0(\Tilde{m}m)^{-1}]=[\Tilde{m}^{-1}\textnormal{N}^0\Tilde{m}:m\textnormal{N}^0m^{-1}]=[\textnormal{N}^0:m\textnormal{N}^0m^{-1}].
    \end{equation*}
\end{proof}

\begin{Lemma}\label{HomBaseChange}
    Let $R'$ be a commutative $R$-algebra, $K\leq \textnormal{G}$ be a compact open subgroup. Then for every $\pi\in \textnormal{Mod}_{\textnormal{sm}}(R[\textnormal{G}])$, we have a canonical isomorphism
    \begin{equation*}
        (\pi\otimes_RR')^K\xrightarrow{\sim} (\pi^K)\otimes_RR'.
    \end{equation*}
\end{Lemma}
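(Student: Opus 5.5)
The plan is to construct the canonical map first and then reduce the comparison to two kinds of invariants. Those under a pro-$p$ group are harmless. Those under a finite group are the real issue, and that is where I expect the obstacle.

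\emph{The map.} The inclusion $\pi^K\hookrightarrow\pi$ is $R$-linear. Tensoring it with $R'$ gives a map $\pi^K\otimes_RR'\to\pi\otimes_RR'$ whose image consists of $K$-invariant vectors, since $K$ acts on $\pi\otimes_RR'$ through the first factor. This map is natural in $\pi$ and in $R'$, and the claim is that it is an isomorphism onto $(\pi\otimes_RR')^K$. (The statement writes the arrow in the other direction, but the isomorphism is the inverse of this map.)

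\emph{Pro-$p$ groups.} Fix an open normal pro-$p$ subgroup $K'\trianglelefteq K$; this exists since $\textnormal{G}=\textnormal{GL}_n(L)$ with $L/\mathbf{Q}_p$. The pro-order of $K'$ is a power of $p$, which is invertible in $\mathbf{Z}_{\ell}$ and hence in $R$. So there is the averaging idempotent $e_{K'}$ in the $R$-valued Hecke algebra, the normalised Haar measure of $K'$. On any smooth $R[\textnormal{G}]$-module $V$ it acts as a projector with image $V^{K'}$. Because $e_{K'}$ is defined over $R$, it commutes with $-\otimes_RR'$. Hence $(\pi\otimes_RR')^{K'}=e_{K'}(\pi\otimes_RR')=(e_{K'}\pi)\otimes_RR'=\pi^{K'}\otimes_RR'$, as $\pi^{K'}$ is a direct summand of $\pi$ cut out by an $R$-linear idempotent. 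This settles the lemma when $K$ itself is pro-$p$.

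\emph{The finite quotient.} For general $K$, write $\Gamma:=K/K'$, a finite group, and set $M:=\pi^{K'}$, an $R[\Gamma]$-module. By the previous step, $(\pi\otimes_RR')^K=(M\otimes_RR')^{\Gamma}$. So it remains to compare $M^{\Gamma}\otimes_RR'$ with $(M\otimes_RR')^{\Gamma}$. I would present $M^{\Gamma}$ as the kernel of $M\to\bigoplus_{\gamma\in\Gamma}M$, $m\mapsto(\gamma m-m)_{\gamma}$. Tensoring with $R'$ is right exact, so the comparison holds when $R'$ is $R$-flat, and also whenever $|\Gamma|\in R^{\times}$, where the averaging idempotent $e_{\Gamma}$ again splits off the invariants.

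\emph{Main obstacle.} The difficulty is the case where $R'$ is not flat and $\ell$ divides $|\Gamma|$. Here I would try to choose $K'$ so that the $\ell$-part of $\Gamma$ acts through a group for which $M$ is induced or coinduced, so that $H^1(\Gamma,-)$ vanishes on the relevant modules. Then the long exact sequence of $\Gamma$-invariants would give the commutation with base change. I expect this to be the decisive and most delicate point. It needs extra input: that the modules to which the lemma is applied, such as compactly induced modules and cohomology of locally symmetric spaces, are built from permutation modules. I would check that structure before relying on it, because the argument does not go through for arbitrary $\pi$ when $\ell$ divides $|\Gamma|$. For example, with $\ell=2$, $\pi=\mathbf{Z}_2$ with $\textnormal{G}$ acting by $g\mapsto(-1)^{v_L(\det g)}$, and $K$ containing an element of odd determinant valuation, one has $\pi^K=0$ while $(\pi\otimes\mathbf{F}_2)^K=\mathbf{F}_2$.
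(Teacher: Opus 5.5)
Your construction of the map, the pro-$p$ step, and the flat or invertible-order part of the finite-quotient step are correct. Together they prove the lemma exactly in the cases where it holds.

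Because $p\in R^{\times}$, the idempotent $e_{K'}$ is $R$-linear and cuts out $(-)^{K'}$ functorially, so invariants under a pro-$p$ group commute with every base change. With $M=\pi^{K'}$, $\Gamma=K/K'$ and $M^{\Gamma}=\ker\bigl(M\to\bigoplus_{\gamma\in\Gamma}M\bigr)$, you then obtain the lemma whenever $R'$ is flat over $R$ or the pro-order of $K$ is invertible in $R$.

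The paper gives no argument of its own; it only cites \cite{DHKM24}, Lemma A1. Your argument is therefore a self-contained proof of what the paper actually uses. The applications in Corollary~\ref{EquivConditions} with $R'=R_{\mathfrak{m}}$ are flat. For $R'=R/\mathfrak{m}$, the equivalence (v)$\Leftrightarrow$(vi) goes through if $K$ ranges only over pro-$p$ subgroups.

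The remaining case is not a gap that can be filled, by permutation-module structure or otherwise. The lemma as stated is false when $\ell$ divides the pro-order of $K$ and $R'$ is not flat. You are right to suspect this, but your counterexample does not work. Since $\det(K)$ is a compact subgroup of $L^{\times}$, it lies in $\mathcal{O}_L^{\times}$, so $v_L(\det g)=0$ for every $g\in K$ and $K$ acts trivially on your $\pi$.

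Here is a genuine counterexample.
\begin{itemize}
\item Assume $\ell\mid q-1$; for instance $\ell=2$, since $p$ is then odd.
\item Put $R=\mathbf{Z}_{\ell}[\zeta_{\ell}]$ and $R'=R/(1-\zeta_{\ell})\cong\mathbf{F}_{\ell}$.
\item Let $\eta\colon L^{\times}\to\mu_{\ell}\subset R^{\times}$ be trivial on a uniformiser $\varpi_L$, and on $\mathcal{O}_L^{\times}$ equal to a character of exact order $\ell$ factoring through $k_L^{\times}$.
\item Let $\pi=R$ with $g$ acting by $\eta(\det g)$, and take $K=\textnormal{GL}_n(\mathcal{O}_L)$.
\end{itemize}
Then $\pi^K=\{x\in R:(\zeta_{\ell}-1)x=0\}=0$, because $R$ is a domain. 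But $K$ acts trivially on $\pi\otimes_RR'\cong\mathbf{F}_{\ell}$, so $(\pi\otimes_RR')^K\cong\mathbf{F}_{\ell}\neq 0=\pi^K\otimes_RR'$.

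The correct statement therefore needs one of two hypotheses: the pro-order of $K$ is invertible in $R$, or $R'$ is $R$-flat. Under either, your proof is complete. Your speculative last step should be replaced by this restriction, not by an appeal to special properties of the modules you intend to apply it to.
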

\begin{proof}
    This is a special case of \cite{DHKM24}, Lemma A1.
\end{proof}
%\begin{proof}
 %   We can write the $R$-module $R'$ as the cokernel of a map of free $R$-modules. In particular, assuming that $K$ is pro-$p$, the claim follows from the fact that taking $K$-invariants commutes with cokernels and coproducts. For arbitrary $K$ with a pro-$p$ compact open normal subgroup $K'\leq K$, we can write $\pi^K=(\pi^{K'})^{K/K'}$ to reduce the question to commutation of $-\otimes_RR'$ with $(-)^{K/K'}=\Hom_{R[K/K']}(R,-)$. Then it follows easily from the fact that $R$ is finitely presented as an $R[K/K']$-module.
%\end{proof}

\begin{Cor}\label{EquivConditions}
    Let $\pi\in \textnormal{Mod}_{\textnormal{sm}}(\mathbf{Z}_{\ell}[\textnormal{G}])$ and $\mathfrak{m}\leq \mathfrak{Z}_{\textnormal{G}}$ be a maximal ideal. Consider the following conditions.
    \begin{enumerate}
        \item If $\pi_{\mathfrak{m}'}$ is non-trivial for a maximal ideal $\mathfrak{m}'\leq \mathfrak{Z}_{\textnormal{G}}$, then $\mathfrak{m}'=\mathfrak{m}$.
        \item For any compact open subgroup $K\leq \textnormal{G}$, if $(\pi^{K})_{\mathfrak{m}'}$ is non-trivial for a maximal ideal $\mathfrak{m}'\leq \mathfrak{Z}_{\textnormal{G}}$, then $\mathfrak{m}'=\mathfrak{m}$.
        \item The natural map $\pi\to \pi_{\mathfrak{m}}$ is an isomorphism.
        \item For every compact open subgroup $K\leq \textnormal{G}$, the natural map $\pi^{K}\to (\pi^{K})_{\mathfrak{m}}$ is an isomorphism.
        \item If $\pi/\mathfrak{m}'\pi$ is non-trivial for a maximal ideal $\mathfrak{m}'\leq \mathfrak{Z}_{\textnormal{G}}$, then $\mathfrak{m}'=\mathfrak{m}$.
        \item For any compact open subgroup $K\leq \textnormal{G}$, if $(\pi^{\textnormal{K}})/\mathfrak{m}'\pi^{K}$ is non-trivial for a maximal ideal $\mathfrak{m}'\leq \mathfrak{Z}_{\textnormal{G}}$, then $\mathfrak{m}'=\mathfrak{m}$.
    \end{enumerate}
    Then $i)\Leftrightarrow ii)$, $iii)\Leftrightarrow iv)$, and $v)\Leftrightarrow vi)$. If $\pi$ is $\mathbf{Z}_{\ell}$-admissible, then all conditions are equivalent.
\end{Cor}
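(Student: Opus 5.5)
The plan is to compare $\pi$ with its $K$-invariants through the observation that, for every compact open $K$, the $\mathfrak{Z}_{\textnormal{G}}$-action on $\pi^K$ is the restriction of the action on $\pi$. Indeed, elements of $\mathfrak{Z}_{\textnormal{G}}$ commute with $\textnormal{G}$ and hence preserve $\pi^K$, which is also Lemma \ref{BernsteinCentreLemma}. Moreover $\pi=\varinjlim_K \pi^K=\bigcup_K\pi^K$ by smoothness. Two formal facts then do most of the work. First, localisation at a maximal ideal is exact and commutes with filtered colimits, so $\pi_{\mathfrak{m}'}=\varinjlim_K(\pi^K)_{\mathfrak{m}'}$. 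Second, since $\mathfrak{Z}_{\textnormal{G}}$ acts $\textnormal{G}$-equivariantly, localisation and quotients by $\mathfrak{m}'$ commute with $(-)^K$ in the following sense: $(\pi_{\mathfrak{m}'})^K=(\pi^K)_{\mathfrak{m}'}$, because taking $K$-invariants for $K$ containing a pro-$p$ open normal subgroup is exact on $\mathbf{Z}_{\ell}$-modules up to the finite quotient. To avoid that subtlety I would use Lemma \ref{HomBaseChange} with $R'=(\mathfrak{Z}_{\textnormal{G}})_{\mathfrak{m}'}$, respectively $\mathfrak{Z}_{\textnormal{G}}/\mathfrak{m}'$, viewing $\pi$ over $R=\mathfrak{Z}_{\textnormal{G}}$. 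This gives $(\pi\otimes_{\mathfrak{Z}}R')^K\cong\pi^K\otimes_{\mathfrak{Z}}R'$.

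\emph{i)$\Leftrightarrow$ii).} If $(\pi^K)_{\mathfrak{m}'}\neq0$ then $\pi_{\mathfrak{m}'}\supseteq(\pi^K)_{\mathfrak{m}'}\neq0$, by exactness of localisation applied to $\pi^K\subset\pi$. Conversely, $\pi_{\mathfrak{m}'}\neq0$ forces some $(\pi^K)_{\mathfrak{m}'}\neq0$ by the colimit description.

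\emph{iii)$\Leftrightarrow$iv).} The map $\pi\to\pi_{\mathfrak{m}}$ is the colimit of the maps $\pi^K\to(\pi^K)_{\mathfrak{m}}$, and its $K$-invariants are $\pi^K\to(\pi^K)_{\mathfrak{m}}$ by the base change identification. Taking colimits gives one direction and taking $K$-invariants the other.

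\emph{v)$\Leftrightarrow$vi).} Here $(\pi/\mathfrak{m}'\pi)^K=\pi^K/\mathfrak{m}'\pi^K$ by Lemma \ref{HomBaseChange}. A nonzero smooth module has nonzero $K$-invariants for small $K$, and every $K$-invariant vector lies in $\pi$. So $\pi/\mathfrak{m}'\pi\neq0$ if and only if some $\pi^K/\mathfrak{m}'\pi^K\neq0$.

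\emph{Admissible case.} Assume $\pi$ is $\mathbf{Z}_{\ell}$-admissible, so each $\pi^K$ is a finitely generated $\mathbf{Z}_{\ell}$-module. I expect this to be the main step. The image $A_K$ of $\mathfrak{Z}_{\textnormal{G}}$ in $\textnormal{End}_{\mathbf{Z}_{\ell}}(\pi^K)$ is then a finite $\mathbf{Z}_{\ell}$-algebra, hence semilocal and $\ell$-adically complete. Maximal ideals of $\mathfrak{Z}_{\textnormal{G}}$ that act nontrivially on $\pi^K$ pull back from maximal ideals of $A_K$; here one uses that a finite $\mathbf{Z}_{\ell}$-algebra has all maximal ideals of residue characteristic $\ell$, and that a maximal ideal of $\mathfrak{Z}_{\textnormal{G}}$ in the support contains the annihilator. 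As a finitely generated $A_K$-module, $\pi^K$ decomposes as $\pi^K=\prod_{\mathfrak{n}}(\pi^K)_{\mathfrak{n}}$ over the finitely many maximal ideals $\mathfrak{n}$ of $A_K$, because $A_K$ is a product of local rings. Given this decomposition, ii) implies iv), since only the factor at $\mathfrak{m}$ survives. Conversely, iv) implies ii), because a localisation $(\pi^K)_{\mathfrak{m}}$ has no support away from $\mathfrak{m}$. Finally, Nakayama's lemma over the local factors $(A_K)_{\mathfrak{n}}$ shows that $(\pi^K)_{\mathfrak{n}}\neq0$ if and only if $(\pi^K)_{\mathfrak{n}}/\mathfrak{n}\neq0$, which gives ii)$\Leftrightarrow$vi). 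The delicate points are the $\ell$-adic completeness of $A_K$, which holds because $A_K$ is a finite $\mathbf{Z}_{\ell}$-algebra, and the fact that maximal ideals of $\mathfrak{Z}_{\textnormal{G}}$ not lying over $\ell$ do not occur in the support of $\pi^K$.
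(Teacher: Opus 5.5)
Your route is the paper's own: flat base change and colimits for i)$\Leftrightarrow$ii) and iii)$\Leftrightarrow$iv), Lemma~\ref{HomBaseChange} with $R'=\mathfrak{Z}_{\textnormal{G}}/\mathfrak{m}'$ for v)$\Leftrightarrow$vi), and, for admissible $\pi$, the splitting of the finite $\mathbf{Z}_\ell$-algebra $A_K$ into local factors plus Nakayama. However, the one non-flat base change, $(\pi/\mathfrak{m}'\pi)^K\cong\pi^K/\mathfrak{m}'\pi^K$, is not valid for arbitrary compact open $K$, and the paper's proof has the same problem. The subtlety you set aside by citing Lemma~\ref{HomBaseChange} is real. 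If $\ell$ divides the pro-order of $K$, then invariants under the finite quotient $K/K'$ (with $K'$ pro-$p$, open and normal) do not commute with $-\otimes_R R/\mathfrak{m}'$.

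In fact v)$\Rightarrow$vi) is false as stated. Take $\ell=2$ and $\textnormal{G}=\textnormal{GL}_1(L)$. Let $\chi:L^\times\to\{\pm1\}$ be trivial on $\varpi$ and equal to the quadratic character of the residue field $k_L^\times$ on $\mathcal{O}_L^\times$. Put $\pi=(\mathbf{Q}_2/\mathbf{Z}_2)(\chi)$, which is smooth but not admissible.
\begin{itemize}
\item The image of $\mathfrak{Z}_{\textnormal{G}}$ in $\textnormal{End}_{\mathbf{Z}_2}(\pi)=\mathbf{Z}_2$ is $\mathbf{Z}_2$. So the only maximal ideal containing $\textnormal{Ann}(\pi)$ is the preimage $\mathfrak{m}_0$ of $(2)$, and $\mathfrak{m}_0\pi\supseteq2\pi=\pi$ by divisibility.
\item Every other maximal $\mathfrak{m}'$ is comaximal with $\textnormal{Ann}(\pi)$, so $\pi=\mathfrak{m}'\pi$. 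Hence v) holds for every $\mathfrak{m}$.
\item For $K=\mathcal{O}_L^\times$, however, $\pi^K=\pi[2]\cong\mathbf{F}_2$ is killed by $\mathfrak{m}_0$. So $\pi^K/\mathfrak{m}_0\pi^K\neq0$, and vi) fails whenever $\mathfrak{m}\neq\mathfrak{m}_0$.
\end{itemize}
This also shows that Lemma~\ref{HomBaseChange} fails for $R'=\mathfrak{Z}_{\textnormal{G}}/\mathfrak{m}_0$ and this $K$.

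The repair is easy and preserves what the paper uses later.
\begin{itemize}
\item For pro-$p$ $K$, the functor $(-)^K$ is exact on smooth modules and commutes with direct sums. Applying it to a presentation of $R'$ shows it commutes with $-\otimes_RR'$ for every $R'$.
\item For flat $R'$, such as $R_{\mathfrak{m}'}$, base change holds for all $K$. That is all i)$\Leftrightarrow$ii) and iii)$\Leftrightarrow$iv) need.
\item Consequently, v) is equivalent to vi) with $K$ restricted to pro-$p$ subgroups, which are cofinal.
\end{itemize}
For admissible $\pi$, condition ii) for a single $K$ is equivalent to vi) for the same $K$, for every $K$. This is Nakayama: for a finitely generated module $M$ over a commutative ring, $M_{\mathfrak{m}'}\neq0$ if and only if $M/\mathfrak{m}'M\neq0$. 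Since i)$\Leftrightarrow$ii) holds whether $K$ ranges over all compact opens or only pro-$p$ ones, all six conditions are again equivalent in the admissible case.
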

\begin{proof}
    To prove $i)\Leftrightarrow ii)$ and $iii)\Leftrightarrow iv)$, apply Lemma~\ref{HomBaseChange} with $R=\mathfrak{Z}_{\textnormal{G}}$ and $R'=R_{\mathfrak{m}}$.

    To prove $v)\Leftrightarrow vi)$, apply Lemma~\ref{HomBaseChange} with $R=\mathfrak{Z}_{\textnormal{G}}$ and $R'=R/\mathfrak{m}$.

    Now assume that $\pi$ is $\mathbf{Z}_{\ell}$-admissible. Then, for every compact open subgroup $K\leq \textnormal{G}$, there exist finitely many maximal ideals $\mathfrak{m}_i\leq\mathfrak{Z}_{\textnormal{G}}$ and an isomorphism
    \begin{equation*}
        \pi^{K}\cong \oplus_{i=1}^k(\pi^{K})_{\mathfrak{m}_i}
    \end{equation*}
    of $\mathfrak{Z}_{\textnormal{G}}$-modules. Indeed, this is because the image $\mathfrak{Z}$ of $\mathfrak{Z}_{\textnormal{G}}$ in $\textnormal{End}_{\mathbf{Z}_{\ell}}(\pi^{K})$ is a finite $\mathbf{Z}_{\ell}$-algebra, and so it is a finite direct product of local $\mathbf{Z}_{\ell}$-algebras. In particular, we clearly have $ii)\Leftrightarrow iv)$.

    Finally, $ii)\Leftrightarrow vi)$ follows from the Nakayama lemma.
\end{proof}

\begin{Th}\label{TheMapI}
    Let $R$ be $\mathbf{Z}_{\ell}$-flat. Then there is a unique map
    \begin{equation*}
        I_R:\mathfrak{Z}_{\textnormal{G},R}\to \mathfrak{Z}_{\textnormal{M},R}
    \end{equation*}
    making the diagram
    \begin{equation*}
        \begin{tikzcd}
	{\mathfrak{Z}_{\textnormal{G},R}} & {\textnormal{End}_{R[\textnormal{G}]}(\textnormal{Ind}_{\textnormal{Q}}^{\textnormal{G}}\pi)} \\
	{\mathfrak{Z}_{\textnormal{M},R}} & {\textnormal{End}_{R[\textnormal{M}]}(\pi)}
	\arrow["{t_{\textnormal{G},\textnormal{Ind}_{\textnormal{Q}}^{\textnormal{G}}\pi}}", from=1-1, to=1-2]
	\arrow["{I_R}", from=1-1, to=2-1]
	\arrow["{t_{\textnormal{M},\pi}}", from=2-1, to=2-2]
	\arrow["{\textnormal{Ind}_{\textnormal{Q}}^{\textnormal{G}}}", from=2-2, to=1-2]
\end{tikzcd}
    \end{equation*}
for every $\pi\in \textnormal{Mod}_{\textnormal{sm}}(R[\textnormal{M}])$. When $R=\mathbf{Z}_{\ell}$, we use the abbreviation $I:=I_R$.

In particular, the following diagrams are commutative
\begin{equation}\label{JacquetComm}
    \begin{tikzcd}
	{\mathfrak{Z}_{\textnormal{G},R}} & {\textnormal{End}_{R[\textnormal{G}]}(\pi)} \\
	{\mathfrak{Z}_{\textnormal{M},R}} & {\textnormal{End}_{R[\textnormal{M}]}(J_{\textnormal{Q}}(\pi)),}
	\arrow["{t_{\textnormal{G},\pi}}", from=1-1, to=1-2]
	\arrow["{I_R}", from=1-1, to=2-1]
	\arrow["{J_{\textnormal{Q}}}", from=1-2, to=2-2]
	\arrow["{t_{\textnormal{M},J_{\textnormal{Q}}(\pi)}}", from=2-1, to=2-2]
\end{tikzcd}
\end{equation}
\begin{equation}\label{2ndAdjComm}
    \begin{tikzcd}
	{\mathfrak{Z}_{\textnormal{G},R}} & {\textnormal{End}_{R[\textnormal{G}]}(\pi)} \\
	{\mathfrak{Z}_{\textnormal{M},R}} & {\textnormal{End}_{R[\textnormal{M}]}(\delta_{\textnormal{Q}}J_{\overline{\textnormal{Q}}}(\pi))}
	\arrow["{t_{\textnormal{G},\pi}}", from=1-1, to=1-2]
	\arrow["{I_R}", from=1-1, to=2-1]
	\arrow["{\delta_{\textnormal{Q}}J_{\overline{\textnormal{Q}}}}", from=1-2, to=2-2]
	\arrow["{t_{\textnormal{M},\delta_{\textnormal{Q}}J_{\overline{\textnormal{Q}}}(\pi)}}", from=2-1, to=2-2]
\end{tikzcd}
\end{equation}
for every $\pi \in \textnormal{Mod}_{\textnormal{sm}}(R[\textnormal{G}])$.
\end{Th}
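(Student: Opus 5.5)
Uniqueness comes first and is formal. Take $\pi$ ranging over smooth $R[\textnormal{M}]$-modules that form a faithful family for $\mathfrak{Z}_{\textnormal{M},R}$, for instance the compactly induced modules $\textnormal{c-Ind}_{K_M}^{\textnormal{M}}R$ for $K_M$ running through small compact open subgroups. Parabolic induction $\textnormal{Ind}_{\textnormal{Q}}^{\textnormal{G}}$ is faithful on morphisms: restricting functions to a suitable open subset of $\textnormal{Q}\backslash\textnormal{G}$ recovers $\pi$ as a quotient or sub, and the map $\textnormal{End}(\pi)\to\textnormal{End}(\textnormal{Ind}\,\pi)$ is injective. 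Hence $t_{\textnormal{M},\pi}(z)$ is determined by its image in $\textnormal{End}_{R[\textnormal{G}]}(\textnormal{Ind}_{\textnormal{Q}}^{\textnormal{G}}\pi)$. Collecting these for all $\pi$ determines $z\in\mathfrak{Z}_{\textnormal{M},R}$, because an element of the centre is by definition the collection of its components.

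For existence I would not construct $I_R$ from scratch but quote the known result over $W(\overline{\mathbf{F}}_{\ell})$ and $\overline{\mathbf{Q}}_{\ell}$ (\cite{Hel16}, and \cite{DHKM24} for general flat coefficients), and then descend. Concretely, over $\overline{\mathbf{Q}}_{\ell}$ the map exists by Bernstein's description: $\mathfrak{Z}$ is the ring of regular functions on the variety of supercuspidal supports, and $I$ is pullback along the finite map $(\textnormal{M},\sigma)\mapsto(\textnormal{G},\sigma)$ of supports. One checks the diagram on irreducibles and then on all of $\textnormal{Mod}_{\textnormal{sm}}$ via the block decomposition and the generators of each block.

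For $R$ a $\mathbf{Z}_{\ell}$-flat algebra, I would use $\mathfrak{Z}_{\textnormal{G},R}\hookrightarrow\mathfrak{Z}_{\textnormal{G},R[1/\ell]}$ (injectivity of pullback, as recalled from \cite{DHKM24}) together with $\mathfrak{Z}_{\textnormal{G},R}=\mathfrak{Z}_{\textnormal{G}}\otimes R$-type statements. I would define $I_R$ rationally and show that it preserves integral elements. The integrality check uses the characterisation of the centre through its action on the projective generators $\textnormal{c-Ind}_K^{\textnormal{G}}R$, which are $\ell$-torsion free: an endomorphism of the flat module $\textnormal{Ind}\,\pi$ that is defined over $R[1/\ell]$ and preserves $\pi$ integrally is integral.

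The commutativity of \ref{JacquetComm} then follows by adjunction. The unit $\pi\to\textnormal{Ind}_{\textnormal{Q}}^{\textnormal{G}}J_{\textnormal{Q}}(\pi)$ commutes with $t_{\textnormal{G}}(z)$ by \ref{centrecoherence}. On the target, $t_{\textnormal{G}}(z)$ acts as $\textnormal{Ind}(t_{\textnormal{M}}(I(z)))$ by the defining square. Transposing through the adjunction bijection $\Hom(J\pi,J\pi)\cong\Hom(\pi,\textnormal{Ind}J\pi)$ gives $J_{\textnormal{Q}}(t_{\textnormal{G},\pi}(z))=t_{\textnormal{M},J\pi}(I(z))$.

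For \ref{2ndAdjComm} I would use Bernstein's second adjointness, which holds for $\textnormal{GL}_n$ over any $\mathbf{Z}_{\ell}$-algebra by \cite{DHKM24a}. It states that $\delta_{\textnormal{Q}}J_{\overline{\textnormal{Q}}}$ is right adjoint to $\textnormal{Ind}_{\textnormal{Q}}^{\textnormal{G}}$, with the normalisations as in the paper. The same counit argument applies, now using the counit $\textnormal{Ind}\,\delta_{\textnormal{Q}}J_{\overline{\textnormal{Q}}}\pi\to\pi$.

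The main obstacle is existence integrally: verifying that rationally defined $I$ maps $\mathfrak{Z}_{\textnormal{G},R}$ into $\mathfrak{Z}_{\textnormal{M},R}$. My first attempt would be to cite \cite{DHKM24} for the existence of $I_{\mathbf{Z}_{\ell}}$ and obtain $I_R$ by base change. This route requires the compatibility $\mathfrak{Z}_{\textnormal{G},R}\cong\mathfrak{Z}_{\textnormal{G}}\otimes R$, which holds for flat $R$ using finite generation of the blocks.
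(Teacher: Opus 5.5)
Your proof of the two ``in particular'' squares is exactly the paper's. You transpose through the unit $\pi\to\textnormal{Ind}_{\textnormal{Q}}^{\textnormal{G}}J_{\textnormal{Q}}(\pi)$ of Frobenius reciprocity, resp.\ through the counit of second adjointness $\textnormal{Ind}_{\textnormal{Q}}^{\textnormal{G}}\dashv\delta_{\textnormal{Q}}J_{\overline{\textnormal{Q}}}$. You then use naturality of $z$ and the defining square of $I_R$ applied to $J_{\textnormal{Q}}(\pi)$, resp.\ $\delta_{\textnormal{Q}}J_{\overline{\textnormal{Q}}}(\pi)$.

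Your uniqueness argument is also fine. Its cleanest form: $\textnormal{ev}_1:\textnormal{Ind}_{\textnormal{Q}}^{\textnormal{G}}\pi\to\pi$, $f\mapsto f(1)$, is surjective and natural, so $\textnormal{ev}_1\circ\textnormal{Ind}_{\textnormal{Q}}^{\textnormal{G}}(\phi)=\phi\circ\textnormal{ev}_1$ forces $\phi=0$ whenever $\textnormal{Ind}_{\textnormal{Q}}^{\textnormal{G}}(\phi)=0$. The paper does not argue existence or uniqueness at all; it takes the first part from \cite{DHKM24a}, Theorem 4.1 (not \cite{DHKM24}).

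The route you sketch for existence over a general flat $R$ does not work as written, for three reasons.

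First, the identification $\mathfrak{Z}_{\textnormal{G},R}\cong\mathfrak{Z}_{\textnormal{G}}\otimes_{\mathbf{Z}_{\ell}}R$ is false in general. Here $\mathfrak{Z}_{\textnormal{G}}=\prod_{\mathfrak{s}}\mathfrak{Z}_{\mathfrak{s}}$ is an infinite product over Bernstein blocks, and $-\otimes_{\mathbf{Z}_{\ell}}R$ does not commute with it. For $R=\mathbf{Z}_{\ell}[x_1,x_2,\dots]$, the central element acting by $x_i$ on the $i$-th block is not in the image. The most one can hope for is $\mathfrak{Z}_{\textnormal{G},R}\cong\prod_{\mathfrak{s}}(\mathfrak{Z}_{\mathfrak{s}}\otimes_{\mathbf{Z}_{\ell}}R)$. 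Even this needs, block by block, a finitely generated projective generator whose endomorphism ring is finite over a finitely generated centre, so that forming centres commutes with flat base change.

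Second, that same input is what you would need to have $I_{R[1/\ell]}$ over an arbitrary $\mathbf{Q}_{\ell}$-algebra. Bernstein's description lives over an algebraically closed field of characteristic $0$, so it does not supply this by itself.

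Third, your integrality sentence points the wrong way. What you need is that $I(z)$ preserves $\pi$, given that $z$ preserves $\textnormal{Ind}_{\textnormal{Q}}^{\textnormal{G}}\pi$. This does follow from $\textnormal{ev}_1$: for $\ell$-torsion free $\pi$ and $v=f(1)$, one has $I_{R[1/\ell]}(z)v=(zf)(1)\in\pi$. You must still assemble the resulting compatible endomorphisms of the generators $\textnormal{c-Ind}_{K}^{\textnormal{M}}R$ into an element of $\mathfrak{Z}_{\textnormal{M},R}$.

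Either supply these inputs, or simply cite \cite{DHKM24a}, Theorem 4.1, as the paper does. With that replacement, your proposal is the paper's proof.
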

\begin{proof}
    The statement except the commutativity of \ref{JacquetComm} and \ref{2ndAdjComm} is \cite{DHKM24a}, Theorem 4.1.

    To prove the commutativity of \ref{JacquetComm}, let $\pi \in \textnormal{Mod}_{\textnormal{sm}}(R[\textnormal{G}])$ and write
    \begin{equation*}
        A:\textnormal{Hom}_{R[\textnormal{M}]}(J_{\textnormal{Q}}(\pi),J_{\textnormal{Q}}(\pi))\xrightarrow{\sim}\textnormal{Hom}_{R[\textnormal{G}]}(\pi,\textnormal{Ind}_{\textnormal{Q}}^{\textnormal{G}}(J_{\textnormal{Q}}\pi))
    \end{equation*} for the Frobenius reciprocity adjunction. Write $\eta:=A(\textnormal{id}):\pi\to \textnormal{Ind}_{\textnormal{Q}}^{\textnormal{G}}(J_{\textnormal{Q}}(\pi))$ for the unit and let $z\in \mathfrak{Z}_{\textnormal{G},R}$. We then have
    \begin{equation*}
        A(J_{\textnormal{Q}}(t_{\textnormal{G}}(z)))=\eta \circ 
 t_{\textnormal{G}}(z)=
    \end{equation*}
    \begin{equation*}
        t_{\textnormal{G}}(z)\circ \eta= \textnormal{Ind}_{\textnormal{Q}}^{\textnormal{G}}(t_{\textnormal{M}}(I_R(z)))\circ \eta=
    \end{equation*}
    \begin{equation*}
        A(t_{\textnormal{M}}(I_R(z)))
    \end{equation*}
    where for the first and the last equality we use the functoriality of adjunction, for the second equality the defining property of the Bernstein centre and for the third the defining property of $I_R$. We conclude by applying $A^{-1}$ to the obtained equality.

    To prove the commutativity of \ref{2ndAdjComm}, one argues similarly, using that by Bernstein's second adjointness (cf. \cite{Hel16}, Theorem 11.7, \cite{DHKM24a}, Corollary 1.3) $\textnormal{Ind}_{\textnormal{Q}}^{\textnormal{G}}$ is left adjoint to $\delta_{\textnormal{Q}}J_{\overline{\textnormal{Q}}}$.
\end{proof}
\begin{Rem}\label{TheMapIRemark}
    Recall that $R\in\{\overline{\mathbf{F}}_{\ell},\overline{\mathbf{Q}}_{\ell}\}$-points $x:\mathfrak{Z}_{\textnormal{G}}\to R$ are in bijection with supercuspidal supports $(\textnormal{M}_{\textnormal{sc}},\pi_{\textnormal{sc}})$ for $\textnormal{Mod}_{\textnormal{sm}}(R[\textnormal{G}])$. In particular, precomposition by $I:\mathfrak{Z}_{\textnormal{G}}\to\mathfrak{Z}_{\textnormal{M}}$ induces a map $I^{\ast}(-):(\textnormal{M}_{\textnormal{sc}},\pi_{\textnormal{sc}})\mapsto (\widetilde{\textnormal{M}}_{\textnormal{sc}},\widetilde{\pi}_{\textnormal{sc}})$ from supercuspidal supports for $\textnormal{Mod}_{\textnormal{sm}}(R[\textnormal{M}])$ to supercuspidal supports for $\textnormal{Mod}_{\textnormal{sm}}(R[\textnormal{G}])$.

    An easy computation shows that for a supercuspidal support represented by $(\textnormal{M}_{\textnormal{sc}},\pi_{\textnormal{sc}})$ for $\textnormal{Mod}_{\textnormal{sm}}(R[\textnormal{M}])$, the induced supercuspidal support for $\textnormal{Mod}_{\textnormal{sm}}(R[\textnormal{G}])$ is represented by
    \begin{equation*}
(\widetilde{\textnormal{M}}_{\textnormal{sc}},\widetilde{\pi}_{\textnormal{sc}})=(\textnormal{M}_{\textnormal{sc}},\delta_{\textnormal{Q}}^{-1/2}\pi_{\textnormal{sc}}).
    \end{equation*}

    In particular, $I^{\ast}$ sends a supercuspidal support represented by $(\textnormal{M}_{\textnormal{sc}},\pi_{\textnormal{sc}})$ to an $\ell$-integral supercuspidal if and only if $(\textnormal{M}_{\textnormal{sc}},\pi_{\textnormal{sc}})$ itself was $\ell$-integral. In other words, $x:\mathfrak{Z}_{\textnormal{M}}\to \overline{\mathbf{Q}}_{\ell}$ factors through $\overline{\mathbf{Z}}_{\ell}$ if and only if $I\circ x$ does. 
\end{Rem}

\begin{Lemma}\label{LocalEisensteinFunctoriality}
    Consider a partition $n=n_1+...+n_k$ and let $\textnormal{M}=M_{(n_1,...,n_k)}(L)$ be the corresponding standard parabolic subgroup. Define a map
    \begin{equation*}
        I^{\textnormal{Gal}}:\mathfrak{R}^{\textnormal{ps}}_{L,n}\to \mathfrak{R}^{\textnormal{ps}}_{L,n_1}\otimes_{\mathbf{Z}_{\ell}}...\otimes_{\mathbf{Z}_{\ell}}\mathfrak{R}^{\textnormal{ps}}_{L,n_k}
    \end{equation*}
    of $\mathbf{Z}_{\ell}$-algebras by sending an $R$-point $(D_1,...,D_k)$ of the target to the $R$-point
    \begin{equation*}
        D_1(-(n_2+...+n_k))\oplus...\oplus D_{k-1}(-n_k)\oplus D_k
    \end{equation*}
    of the source for $R\in \textnormal{Alg}_{\mathbf{Z}_{\ell}}$. The following diagram is then commutative
    \begin{equation*}
        \begin{tikzcd}
	{\mathfrak{R}^{\textnormal{ps}}_{L,n}} && {\mathfrak{Z}_{\textnormal{G}}} \\
	{\mathfrak{R}^{\textnormal{ps}}_{L,n_1}\otimes_{\mathbf{Z}_{\ell}}...\otimes_{\mathbf{Z}_{\ell}}\mathfrak{R}^{\textnormal{ps}}_{L,n_k}} && {\mathfrak{Z}_{\textnormal{M}}.}
	\arrow["{\Phi_{L,n}}", from=1-1, to=1-3]
	\arrow["{I^{\textnormal{Gal}}}", from=1-1, to=2-1]
	\arrow["I", from=1-3, to=2-3]
	\arrow["{(\Phi_{L,n_1},...,\Phi_{L,n_k})}", from=2-1, to=2-3]
\end{tikzcd}
    \end{equation*}
\end{Lemma}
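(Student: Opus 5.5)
The approach is to reduce commutativity to a check on $\overline{\mathbf{Q}}_{\ell}$-points, using reducedness and flatness. Both compositions are $\mathbf{Z}_{\ell}$-algebra maps $\mathfrak{R}^{\textnormal{ps}}_{L,n}\to \mathfrak{Z}_{\textnormal{M}}$. By Helm's results recalled above, $\mathfrak{Z}_{\textnormal{M}}\cong\mathfrak{Z}_{\textnormal{GL}_{n_1}(L)}\otimes\dots\otimes\mathfrak{Z}_{\textnormal{GL}_{n_k}(L)}$. After base change to $W(\overline{\mathbf{F}}_{\ell})$, each block is reduced, flat and of finite type, and the pullback $\mathfrak{Z}_{\textnormal{M}}\to\mathfrak{Z}_{\textnormal{M},W(\overline{\mathbf{F}}_{\ell})}$ is injective. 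Hence $\mathfrak{Z}_{\textnormal{M}}$ embeds into $\prod_x\overline{\mathbf{Q}}_{\ell}$ over all $\overline{\mathbf{Q}}_{\ell}$-points $x$: flatness embeds it into the generic fibre, which is reduced and of finite type on each block, so the Jacobson property applies. Since $\mathfrak{Z}_{\textnormal{M}}$ is a product of blocks, it suffices to treat one block at a time. Therefore it is enough to show that for every $x:\mathfrak{Z}_{\textnormal{M}}\to\overline{\mathbf{Q}}_{\ell}$ the two determinants $x\circ I\circ\Phi_{L,n}$ and $x\circ(\Phi_{L,n_i})_i\circ I^{\textnormal{Gal}}$ of $W_L$ agree. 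A determinant over a field is determined by its associated semisimple representation, and $\overline{\mathbf{Q}}_{\ell}$-points of $\mathfrak{R}^{\textnormal{ps}}_{L,n}$ correspond to semisimple $\varphi$ via Proposition \ref{GIT} (the map is an isomorphism after inverting $\ell$).

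Fix $x$. It corresponds to a supercuspidal support $(\textnormal{M}_{\textnormal{sc}},\pi_{\textnormal{sc}})$ for $\textnormal{M}$, namely a tuple of supercuspidal supports $\sigma_i$ for $\textnormal{GL}_{n_i}(L)$. The right-hand route sends $x$ to the tuple $(\varphi_i)$ with $\pi^{\textnormal{ss}}(\varphi_i)=\sigma_i$ (the defining property of $\Psi_{L,n_i}$), and then $I^{\textnormal{Gal}}$ gives
\begin{equation*}
\varphi_1(-(n_2+\dots+n_k))\oplus\dots\oplus\varphi_{k-1}(-n_k)\oplus\varphi_k .
\end{equation*}
For the left-hand route, Remark \ref{TheMapIRemark} shows that $x\circ I$ is the supercuspidal support $(\textnormal{M}_{\textnormal{sc}},\delta_{\textnormal{Q}}^{-1/2}\pi_{\textnormal{sc}})$ of $\textnormal{G}$. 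By Remark \ref{ModulusRemark}, on the $i$-th factor we have $\delta_{\textnormal{Q}}^{-1/2}=|\det|^{-\frac{(n-n_i)-2(n_1+\dots+n_{i-1})}{2}}$. We then apply $\pi^{\textnormal{ss}}$ from \ref{ssLLchar0}. The semisimple correspondence is additive in supercuspidal supports and compatible with twists ($\textnormal{rec}(\pi\otimes\chi\circ\det)=\textnormal{rec}(\pi)\otimes\chi\circ\textnormal{Art}^{-1}$). One must also account for the Tate normalisation: $\textnormal{rec}^T$ on $\textnormal{GL}_n$ carries the shift $|\det|^{\frac{1-n}{2}}$, whereas on $\textnormal{GL}_{n_i}$ it carries $|\det|^{\frac{1-n_i}{2}}$.

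The main obstacle is this exponent bookkeeping. On the $i$-th block the total twist relative to $\textnormal{rec}^T_{n_i}(\sigma_i)$ is $|\cdot|$ raised to
\begin{equation*}
\tfrac{1-n}{2}-\tfrac{1-n_i}{2}-\tfrac{(n-n_i)-2(n_1+\dots+n_{i-1})}{2}=-(n-n_i)+(n_1+\dots+n_{i-1})=-(n_{i+1}+\dots+n_k).
\end{equation*}
Under $\textnormal{Art}_L$ normalised geometrically, $|\cdot|^{-1}$ corresponds to the cyclotomic character, so the $i$-th block is $\varphi_i(-(n_{i+1}+\dots+n_k))$ up to the sign convention. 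I would pin down that convention by comparison with the known case $\textnormal{GL}_2$ with the trivial character, where $\textnormal{Ind}_B(1)$ has Tate-normalised parameter $1\oplus\epsilon^{-1}$. If a sign flips, the fix would be to revisit whether $I^{\ast}$ involves $\delta^{-1/2}$ or $\delta^{1/2}$; the statement of Remark \ref{TheMapIRemark} should be consistent with the lemma. Once the block exponents match, the two semisimple parameters coincide, and the density argument from the first paragraph gives commutativity.
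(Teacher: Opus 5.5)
Your proposal follows the paper's proof: reduce to $\overline{\mathbf{Q}}_{\ell}$-points using that each block of the centre is reduced, $\ell$-torsion free and of finite type, then check pointwise using Remark~\ref{TheMapIRemark} and Remark~\ref{ModulusRemark}. Your exponent $-(n_{i+1}+\dots+n_k)$ on the $i$-th block is correct.

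The only slip is the Artin-map convention at the end. Since $\textnormal{Art}_L$ sends uniformisers to geometric Frobenius and $\ell\neq p$, both $|\cdot|_L\circ\textnormal{Art}_L^{-1}$ and the cyclotomic character $\epsilon$ are unramified and take the value $q^{-1}$ on geometric Frobenius. So $|\cdot|_L$ corresponds to $\epsilon$, not $\epsilon^{-1}$. Hence the $i$-th block is exactly $\varphi_i(-(n_{i+1}+\dots+n_k))$, no sign is left to fix, and Remark~\ref{TheMapIRemark} needs no revision. Your $\textnormal{GL}_2$ sanity check, $\textnormal{Ind}_B(1)\mapsto 1\oplus\epsilon^{-1}$, agrees with the corrected convention and contradicts the one you wrote down.
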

\begin{proof}
    We claim that it suffices to check commutativity after postcomposing the diagram with the map
    \begin{equation*}
        \gamma:\mathfrak{Z}_{\textnormal{M}}\to \prod_{x:\mathfrak{Z}_{\textnormal{M}}\to \overline{\mathbf{Q}}_{\ell}}\overline{\mathbf{Q}}_{\ell}
    \end{equation*}
    induced by specialising at all of the $\overline{\mathbf{Q}}_{\ell}$-points of the centre for $\textnormal{M}$. To see that this indeed suffices, we prove that $\gamma$ is injective. It is enough to check that, for every inertial equivalence class $[\textnormal{M}_{\textnormal{sc}},\pi_{\textnormal{sc}}]$ for $\textnormal{Mod}_{\textnormal{sm}}(\mathbf{Z}_{\ell}[\textnormal{M}])$, we have an inclusion $\gamma_{[\textnormal{M}_{\textnormal{sc}},\pi_{\textnormal{sc}}]}:\mathfrak{Z}_{[\textnormal{M}_{\textnormal{sc}},\pi_{\textnormal{sc}}]}\to \prod_{x}\overline{\mathbf{Q}}_{\ell}$ induced by $\overline{\mathbf{Q}}_{\ell}$-specialisations. By \cite{Hel16}, Theorem 12.8, the source is an $\ell$-torsion free, reduced and finite type $\mathbf{Z}_{\ell}$-algebra. In particular, it suffices to check that $\gamma_{[\textnormal{M}_{\textnormal{sc}},\pi_{\textnormal{sc}}]}[1/\ell]$ is injective. This holds for any finite type reduced $\mathbf{Q}_{\ell}$-algebra.

    Finally, an easy computation, using Remark~\ref{TheMapIRemark}, shows that the diagram is commutative after specialising at $\overline{\mathbf{Q}}_{\ell}$-points.
\end{proof}

The following two corollaries of Theorem \ref{TheMapI} will allow us to control the interaction of localisations along the Bernstein centre with parabolic induction and the Jacquet functor.

\begin{Cor}\label{ParabolicLemma}
    Let $\widetilde{\mathfrak{m}}$ be a maximal ideal of $\mathfrak{Z}_{\textnormal{G},\mathbf{Z}_{\ell}}$ and $\pi$ be a  smooth $\mathbf{Z}_{\ell}[\textnormal{M}]$-module. Then the following are equivalent:
    \begin{enumerate}
        \item The localisation $\pi_{\mathfrak{m}}$ is a non-trivial $\mathbf{Z}_{\ell}[\textnormal{M}]$-module for some maximal ideal $\mathfrak{m}$ of $\mathfrak{Z}_{\textnormal{M}}$ satisfying $I^{\ast}(\mathfrak{m})=\widetilde{\mathfrak{m}}$.
        \item The localisation $(\textnormal{Ind}_{\textnormal{Q}}^{\textnormal{G}}\pi)_{\widetilde{\mathfrak{m}}}$ is a non-trivial $\mathbf{Z}_{\ell}[\textnormal{G}]$-module.
    \end{enumerate}
\end{Cor}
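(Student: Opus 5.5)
The plan is to use the defining property of $I$ from Theorem~\ref{TheMapI} to transport localisations through $\textnormal{Ind}_{\textnormal{Q}}^{\textnormal{G}}$, and then to use the finiteness of $I$ to control which maximal ideals occur. By Theorem~\ref{TheMapI}, $\mathfrak{Z}_{\textnormal{G}}$ acts on $\textnormal{Ind}_{\textnormal{Q}}^{\textnormal{G}}\pi$ through $I$ and the action of $\mathfrak{Z}_{\textnormal{M}}$ on $\pi$, the latter transported by the functor $\textnormal{Ind}_{\textnormal{Q}}^{\textnormal{G}}$. Two formal facts follow. First, for any multiplicative set $S\subset\mathfrak{Z}_{\textnormal{G}}$, parabolic induction commutes with the relevant localisations, giving
\begin{equation*}
(\textnormal{Ind}_{\textnormal{Q}}^{\textnormal{G}}\pi)_{\widetilde{\mathfrak{m}}}\cong \textnormal{Ind}_{\textnormal{Q}}^{\textnormal{G}}\big(\pi\otimes_{\mathfrak{Z}_{\textnormal{G}}}(\mathfrak{Z}_{\textnormal{G}})_{\widetilde{\mathfrak{m}}}\big),
\end{equation*}
where $\pi$ is regarded as a $\mathfrak{Z}_{\textnormal{G}}$-module via $I$. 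This holds because localisation is a filtered colimit of multiplication maps by elements $I(s)$, and unnormalised parabolic induction commutes with filtered colimits of smooth representations: it is right adjoint to $J_{\textnormal{Q}}$, and by second adjointness it is also a left adjoint. Second, $\textnormal{Ind}_{\textnormal{Q}}^{\textnormal{G}}$ is faithful, so it sends nonzero objects to nonzero objects. Indeed, the counit $J_{\textnormal{Q}}\textnormal{Ind}_{\textnormal{Q}}^{\textnormal{G}}\sigma\to\sigma$ is surjective, since evaluation at $1$ factors through it.

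It therefore suffices to show that the $\mathfrak{Z}_{\textnormal{M}}$-module $\pi_{\widetilde{\mathfrak{m}}}:=\pi\otimes_{\mathfrak{Z}_{\textnormal{G}}}(\mathfrak{Z}_{\textnormal{G}})_{\widetilde{\mathfrak{m}}}$ is nonzero if and only if $\pi_{\mathfrak{m}}\neq0$ for some $\mathfrak{m}$ with $I^{\ast}(\mathfrak{m})=I^{-1}(\mathfrak{m})=\widetilde{\mathfrak{m}}$. This is commutative algebra, and I would reduce it to the finitely generated situation. Nonvanishing of a localisation of a smooth module can be tested on $\pi^K$ for small compact open $K\le\textnormal{M}$, using Lemma~\ref{HomBaseChange} as in Corollary~\ref{EquivConditions}, and then on finitely generated $\mathfrak{Z}_{\textnormal{M}}$-submodules $N\subset\pi^K$. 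For such an $N$, $N_{\widetilde{\mathfrak{m}}}\neq0$ means that the support $V(\textnormal{Ann}\,N)\subset\textnormal{Spec}\,\mathfrak{Z}_{\textnormal{M}}$ meets the fibre of $\textnormal{Spec}(I)$ over primes contained in $\widetilde{\mathfrak{m}}$.

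The key input, and the main obstacle, is that $I$ makes each Bernstein block of $\mathfrak{Z}_{\textnormal{M}}$ a finite module over $\mathfrak{Z}_{\textnormal{G}}$. I expect this to follow from \cite{DHKM24a} or \cite{Hel16}; the alternative is to deduce it from Lemma~\ref{LocalEisensteinFunctoriality}, since $I^{\textnormal{Gal}}$ is finite on determinant rings and $\Psi$ is an isomorphism. Given finiteness, going-up and lying-over give the result. Suppose a prime $\mathfrak{p}$ in the support has $I^{-1}(\mathfrak{p})\subseteq\widetilde{\mathfrak{m}}$. Then some maximal $\mathfrak{m}\supseteq\mathfrak{p}$ satisfies $I^{-1}(\mathfrak{m})=\widetilde{\mathfrak{m}}$, and $N_{\mathfrak{m}}\neq0$. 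Conversely, if $I^{-1}(\mathfrak{m})=\widetilde{\mathfrak{m}}$, then $(\mathfrak{Z}_{\textnormal{G}}\setminus\widetilde{\mathfrak{m}})\subseteq I^{-1}(\mathfrak{Z}_{\textnormal{M}}\setminus\mathfrak{m})$, so $N_{\mathfrak{m}}$ is a localisation of $N_{\widetilde{\mathfrak{m}}}$. Hence $N_{\mathfrak{m}}\neq0$ forces $N_{\widetilde{\mathfrak{m}}}\neq0$.

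The converse direction needs no finiteness, so (i)$\Rightarrow$(ii) is immediate. For (ii)$\Rightarrow$(i), finiteness lets us pass from a prime lying over a prime inside $\widetilde{\mathfrak{m}}$ to a maximal ideal lying over $\widetilde{\mathfrak{m}}$ itself. Because $\pi$ is infinitely generated, I would restrict to the finitely many blocks of $\mathfrak{Z}_{\textnormal{M}}$ meeting $N$, which is how the finiteness is applied on the finitely generated $N$.
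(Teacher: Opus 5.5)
Your proposal is correct. Its first half is the paper's argument. The paper also obtains $(\textnormal{Ind}_{\textnormal{Q}}^{\textnormal{G}}\pi)_{\widetilde{\mathfrak{m}}}\cong\textnormal{Ind}_{\textnormal{Q}}^{\textnormal{G}}(\pi_{\widetilde{\mathfrak{m}}})$ from Bernstein's second adjointness. It does so by checking the universal property of localisation through a chain of adjunctions, which has the same content as your colimit argument. For (i)$\Rightarrow$(ii) it simply notes that the nonzero map $\textnormal{Ind}_{\textnormal{Q}}^{\textnormal{G}}\pi\to\textnormal{Ind}_{\textnormal{Q}}^{\textnormal{G}}(\pi_{\mathfrak{m}})$ factors through $(\textnormal{Ind}_{\textnormal{Q}}^{\textnormal{G}}\pi)_{\widetilde{\mathfrak{m}}}$.

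The two routes diverge in the commutative algebra.

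You use that $I$ makes the relevant blocks of $\mathfrak{Z}_{\textnormal{M}}$ finite over $\mathfrak{Z}_{\textnormal{G}}$, and then apply going-up to the support of a finitely generated submodule.

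The paper never invokes finiteness. It shows directly that the maximal ideals of $(\mathfrak{Z}_{\textnormal{M}})_{\widetilde{\mathfrak{m}}}=(\mathfrak{Z}_{\textnormal{G}})_{\widetilde{\mathfrak{m}}}\otimes_{\mathfrak{Z}_{\textnormal{G}}}\mathfrak{Z}_{\textnormal{M}}$ are exactly the $\mathfrak{m}$ with $I^{\ast}(\mathfrak{m})=\widetilde{\mathfrak{m}}$. If a closed point maps to a point other than $\widetilde{\mathfrak{m}}$, that image is an $\ell$-integral characteristic-$0$ point. By Remark~\ref{TheMapIRemark} the closed point is then $\ell$-integral as well, and reducing its supercuspidal support mod $\ell$ gives a proper specialisation inside the localised spectrum, a contradiction. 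The paper then concludes with \cite[Lemma 00HN]{stacks-project}.

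The trade-off is as follows. The paper replaces finiteness by a valuative-criterion-type property read off from the pointwise description of the Bernstein centre, so it needs nothing beyond Remark~\ref{TheMapIRemark}. It does, however, tacitly use that closed points of the localised spectrum have residue fields finite over $\mathbf{F}_{\ell}$ or $\mathbf{Q}_{\ell}$. Your route imports a structural finiteness theorem, but it is cleaner. It also makes the semilocality of $(\mathfrak{Z}_{\textnormal{M}})_{\widetilde{\mathfrak{m}}}$ immediate, which is used again in Corollary~\ref{JacquetLemma}.

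If you derive the finiteness from Lemma~\ref{LocalEisensteinFunctoriality} rather than citing \cite{DHKM24a}, $\Psi_{L,n}$ being an isomorphism is not enough on its own, because $\Phi_{L,n}$ passes through $\mathfrak{R}^{\textnormal{ps}}_{L,n}\to(\mathfrak{R}_{L,n})^{\textnormal{GL}_n}$. You also need two further inputs, applied at each finite level $e$:
\begin{itemize}
\item Proposition~\ref{GIT}, giving the finite cokernel of $\mathfrak{R}^{\textnormal{ps},e}_{L,n}\to(\mathfrak{R}^{e}_{L,n})^{\textnormal{GL}_n}$;
\item the integrality of $I^{\textnormal{Gal}}$, which holds because the coefficients of the monic factors of a characteristic polynomial are integral over the coefficients of the product.
\end{itemize}
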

\begin{proof}
    $i)\Rightarrow ii):$ Assume that $\pi_{\mathfrak{m}}$ is non-trivial for some maximal ideal. In particular, $\textnormal{Ind}_{\textnormal{Q}}^{\textnormal{G}}(\pi_{\mathfrak{m}})$ is non-trivial. By the universal property of localisation, the \textit{non-zero} natural map $\textnormal{Ind}_{\textnormal{Q}}^{\textnormal{G}}\pi\to \textnormal{Ind}_{\textnormal{Q}}^{\textnormal{G}}(\pi_{\mathfrak{m}})$ factors through $(\textnormal{Ind}_{\textnormal{Q}}^{\textnormal{G}}\pi)_{\widetilde{\mathfrak{m}}}$, showing that the latter must be non-trivial as well.

    $ii)\Rightarrow i):$ Assume that $\left(\textnormal{Ind}_{\textnormal{Q}}^{\textnormal{G}}\pi\right)_{\widetilde{\mathfrak{m}}}$ is non-trivial. We first note that $\left(\textnormal{Ind}_{\textnormal{Q}}^{\textnormal{G}}\pi\right)_{\widetilde{\mathfrak{m}}}\cong\textnormal{Ind}_{\textnormal{Q}}^{\textnormal{G}}(\pi_{\widetilde{\mathfrak{m}}})$. To see this, we show that the latter satisfies the universal property for the localisation $(\mathfrak{Z}_{\textnormal{G}})_{\widetilde{\mathfrak{m}}}[\textnormal{G}]\otimes_{\mathfrak{Z}_{\textnormal{G}}[\textnormal{G}]}-:\textnormal{Mod}(\mathfrak{Z}_{\textnormal{G}}[\textnormal{G}])\to\textnormal{Mod}((\mathfrak{Z}_{\textnormal{G}})_{\widetilde{\mathfrak{m}}}[\textnormal{G}]) $. To simplify notation, we set $\mathfrak{Z}:=\mathfrak{Z}_{\textnormal{G}}$. Pick arbitrary $N\in\textnormal{Mod}(\mathfrak{Z}_{\widetilde{\mathfrak{m}}}[\textnormal{G}])$ and compute
    \begin{equation*}\label{adjunctions}
        \Hom_{\mathfrak{Z}[\textnormal{G}]}(\textnormal{Ind}_{\textnormal{Q}}^{\textnormal{G}}\pi,N)\cong \Hom_{\mathfrak{Z}[\textnormal{G}]}(\textnormal{Ind}_{\textnormal{Q}}^{\textnormal{G}}\pi,N^{\textnormal{sm}})\cong
    \end{equation*}
    \begin{equation*}
        \Hom_{\mathfrak{Z}[M]}(\pi,\delta_{\textnormal{Q}}J_{\overline{\textnormal{Q}}}(N^{\textnormal{sm}}))\cong  \Hom_{\mathfrak{Z}_{\widetilde{\mathfrak{m}}}[M]}(\pi_{\widetilde{\mathfrak{m}}},\delta_{\textnormal{Q}}J_{\overline{\textnormal{Q}}}(N^{\textnormal{sm}}))
    \end{equation*}
    \begin{equation*}
        \Hom_{\mathfrak{Z}_{\widetilde{\mathfrak{m}}}[\textnormal{G}]}(\textnormal{Ind}_{\textnormal{Q}}^{\textnormal{G}}(\pi_{\widetilde{\mathfrak{m}}}),N^{\textnormal{sm}})\cong \Hom_{\mathfrak{Z}_{\widetilde{\mathfrak{m}}}[\textnormal{G}]}(\textnormal{Ind}_{\textnormal{Q}}^{\textnormal{G}}(\pi_{\widetilde{\mathfrak{m}}}),N).
    \end{equation*}
    In the first and last identification we used the functor of taking smooth vectors $(-)^{\textnormal{sm}}$, right adjoint to the inclusion $\textnormal{Mod}_{\textnormal{sm}}(\mathfrak{Z}[\textnormal{G}])\hookrightarrow\textnormal{Mod}(\mathfrak{Z}[\textnormal{G}])$. The second and fourth identification used Bernstein's second adjointness and the third used the universal property of localisation. We see that, in particular, $\pi_{\widetilde{\mathfrak{m}}}$ is a non-trivial $(\mathfrak{Z}_{\textnormal{M}})_{\widetilde{\mathfrak{m}}}$-module. 
    
    We claim that the maximal ideals of $(\mathfrak{Z}_{\textnormal{M}})_{\widetilde{\mathfrak{m}}}:=(\mathfrak{Z}_{\textnormal{G}})_{\widetilde{\mathfrak{m}}}\otimes_{\mathfrak{Z}_{\textnormal{G}}} \mathfrak{Z}_{\textnormal{M}}$ are exactly the maximal ideals $\mathfrak{m}$ of $\mathfrak{Z}_{\textnormal{M}}$ such that $I^{\ast}(\mathfrak{m})=\widetilde{\mathfrak{m}}$. This follows from Remark \ref{TheMapIRemark}. To see this, consider a closed point $x:(\mathfrak{Z}_{\textnormal{M}})_{\widetilde{\mathfrak{m}}}=(\mathfrak{Z}_{\textnormal{G}})_{\widetilde{\mathfrak{m}}}\otimes_{\mathfrak{Z}_{\textnormal{G}}}\mathfrak{Z}_{\textnormal{M}}\to K(x)$ and note that $K(x)$ is a finite extension of $\mathbf{F}_{\ell}$ or $\mathbf{Q}_{\ell}$. If the induced point $x_{\textnormal{G}}$ of $(\mathfrak{Z}_{\textnormal{G}})_{\widetilde{\mathfrak{m}}}$ is the closed point corresponding to $\widetilde{\mathfrak{m}}$, then the induced point $x_{\textnormal{M}}$ of $\mathfrak{Z}_{\textnormal{M}}$ must also be closed, and $I^{\ast}(\mathfrak{m}_{x_{\textnormal{M}}})=\widetilde{\mathfrak{m}}$. Otherwise, $k(\widetilde{\mathfrak{m}})$ must be of characteristic $\ell$ and $x_{\textnormal{G}}$ has to correspond to an $\ell$-integral supercuspidal support. Therefore, $x_{\textnormal{M}}$ also gives rise to an $\ell$-integral supercuspidal support. But then the mod $\ell$ reduction of said supercuspidal support induces a specialisation of $x_{\textnormal{M}}$ showing that $x$ cannot be closed, a contradiction. 
    
    From the claim it follows that $(\pi_{\widetilde{\mathfrak{m}}})_{\mathfrak{m}}\cong \pi_{\mathfrak{m}}$ cannot be trivial for each of these maximal ideals as that would force $\pi_{\widetilde{\mathfrak{m}}}$ to be trivial as well (cf. \cite[\href{https://stacks.math.columbia.edu/tag/00HN}{Lemma 00HN}]{stacks-project}).
\end{proof}
In a similar vein, we have the following.
\begin{Cor}\label{JacquetLemma}
    Let $\widetilde{\mathfrak{m}}$ be a maximal ideal of $\mathfrak{Z}_{\textnormal{G}}$ and $\pi$ be a smooth $\mathbf{Z}_{\ell}[\textnormal{G}]$-module. Then the following are equivalent:
    \begin{enumerate}
        \item The localisation $J_{\textnormal{Q}}(\pi)_{\mathfrak{m}}$ is a non-trivial $\mathbf{Z}_{\ell}[\textnormal{M}]$-module for some maximal ideal $\mathfrak{m}$ of $\mathfrak{Z}_{\textnormal{M}}$ satisfying $I^{\ast}(\mathfrak{m})=\widetilde{\mathfrak{m}}$.
        \item The localisation $J_{\textnormal{Q}}(\pi_{\widetilde{\mathfrak{m}}})$ is a non-trivial $\mathbf{Z}_{\ell}[\textnormal{M}]$-module.
    \end{enumerate}
\end{Cor}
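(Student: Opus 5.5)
The plan is to mirror the proof of Corollary~\ref{ParabolicLemma}, with the commutative square \ref{JacquetComm} of Theorem~\ref{TheMapI} replacing the defining property of $I$. The main point is an identification
\begin{equation*}
J_{\textnormal{Q}}(\pi_{\widetilde{\mathfrak{m}}})\cong J_{\textnormal{Q}}(\pi)_{\widetilde{\mathfrak{m}}}:=(\mathfrak{Z}_{\textnormal{G}})_{\widetilde{\mathfrak{m}}}\otimes_{\mathfrak{Z}_{\textnormal{G}}}J_{\textnormal{Q}}(\pi),
\end{equation*}
where $\mathfrak{Z}_{\textnormal{G}}$ acts on $J_{\textnormal{Q}}(\pi)$ through $I:\mathfrak{Z}_{\textnormal{G}}\to\mathfrak{Z}_{\textnormal{M}}$. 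By \ref{JacquetComm}, the action of $z\in\mathfrak{Z}_{\textnormal{G}}$ on $J_{\textnormal{Q}}(\pi)$ obtained by functoriality of $J_{\textnormal{Q}}$ agrees with the action of $I(z)$. The Jacquet functor is a coinvariants functor, hence commutes with arbitrary colimits and in particular with tensor products over $\mathfrak{Z}_{\textnormal{G}}$. Writing $\pi_{\widetilde{\mathfrak{m}}}$ as the filtered colimit of $\pi$ along multiplication by elements $s\notin\widetilde{\mathfrak{m}}$, one gets $J_{\textnormal{Q}}(\pi_{\widetilde{\mathfrak{m}}})=\varinjlim_s J_{\textnormal{Q}}(\pi)$, with transition maps given by $J_{\textnormal{Q}}(s)=I(s)$. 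This is exactly $(\mathfrak{Z}_{\textnormal{G}})_{\widetilde{\mathfrak{m}}}\otimes_{\mathfrak{Z}_{\textnormal{G}}}J_{\textnormal{Q}}(\pi)$. Alternatively, one can argue via a universal property as in the proof of Corollary~\ref{ParabolicLemma}, using that $J_{\textnormal{Q}}$ is left adjoint to $\textnormal{Ind}_{\textnormal{Q}}^{\textnormal{G}}$ together with the defining diagram of $I$.

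Given this identification, condition ii) says that the $(\mathfrak{Z}_{\textnormal{M}})_{\widetilde{\mathfrak{m}}}:=(\mathfrak{Z}_{\textnormal{G}})_{\widetilde{\mathfrak{m}}}\otimes_{\mathfrak{Z}_{\textnormal{G}}}\mathfrak{Z}_{\textnormal{M}}$-module $J_{\textnormal{Q}}(\pi)_{\widetilde{\mathfrak{m}}}$ is non-zero. The proof of Corollary~\ref{ParabolicLemma} already shows, using Remark~\ref{TheMapIRemark}, that the maximal ideals of $(\mathfrak{Z}_{\textnormal{M}})_{\widetilde{\mathfrak{m}}}$ are exactly the maximal ideals $\mathfrak{m}\leq\mathfrak{Z}_{\textnormal{M}}$ with $I^{\ast}(\mathfrak{m})=\widetilde{\mathfrak{m}}$.

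For ii)$\Rightarrow$i), I will use that a non-zero module over a ring has a non-zero localisation at some maximal ideal (\cite[\href{https://stacks.math.columbia.edu/tag/00HN}{Lemma 00HN}]{stacks-project}). This gives some such $\mathfrak{m}$ with $(J_{\textnormal{Q}}(\pi)_{\widetilde{\mathfrak{m}}})_{\mathfrak{m}}\cong J_{\textnormal{Q}}(\pi)_{\mathfrak{m}}\neq0$. The isomorphism holds because $I^{-1}(\mathfrak{m})=\widetilde{\mathfrak{m}}$, so elements of $\mathfrak{Z}_{\textnormal{G}}\setminus\widetilde{\mathfrak{m}}$ already act invertibly after localising at $\mathfrak{m}$.

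For i)$\Rightarrow$ii), suppose $J_{\textnormal{Q}}(\pi)_{\mathfrak{m}}\neq0$ with $I^{\ast}(\mathfrak{m})=\widetilde{\mathfrak{m}}$. By the same transitivity of localisation, $J_{\textnormal{Q}}(\pi)_{\mathfrak{m}}$ is a localisation of $J_{\textnormal{Q}}(\pi)_{\widetilde{\mathfrak{m}}}=J_{\textnormal{Q}}(\pi_{\widetilde{\mathfrak{m}}})$, so the latter is non-zero.

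The main obstacle is the first step. One has to check carefully that the $\mathfrak{Z}_{\textnormal{G}}$-module structure on $J_{\textnormal{Q}}(\pi)$ coming from functoriality coincides with the one through $I$, which is exactly \ref{JacquetComm}. One also has to check that the colimit presentation of localisation is compatible with the $\textnormal{M}$-action, so that the identification is $\textnormal{M}$-equivariant. Both checks are routine once \ref{JacquetComm} is in hand. The remaining steps reuse the closed-point analysis already carried out in the proof of Corollary~\ref{ParabolicLemma}.
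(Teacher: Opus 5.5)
Your proposal is correct and follows the paper's proof. You identify $J_{\textnormal{Q}}(\pi_{\widetilde{\mathfrak{m}}})$ with $J_{\textnormal{Q}}(\pi)_{\widetilde{\mathfrak{m}}}$, then use the description of the maximal ideals of $(\mathfrak{Z}_{\textnormal{M}})_{\widetilde{\mathfrak{m}}}$ from Corollary~\ref{ParabolicLemma} and transitivity of localisation; the only cosmetic difference is that you get the identification from $J_{\textnormal{Q}}$ commuting with colimits plus \ref{JacquetComm}, while the paper runs the universal-property argument with Frobenius reciprocity, which is the same fact since $J_{\textnormal{Q}}$ is a left adjoint.
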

\begin{proof}
    $i)\Rightarrow ii):$ The localisation $J_{\textnormal{Q}}(\pi)_{\mathfrak{m}}$ being non-trivial implies that $J_{\textnormal{Q}}(\pi)_{\widetilde{\mathfrak{m}}}$ is non-trivial. Analogously to the argument \ref{ParabolicLemma}, using Frobenius reciprocity in place of Bernstein's second adjointness, one shows that $J_{\textnormal{Q}}(\pi)_{\widetilde{\mathfrak{m}}}\cong J_{\textnormal{Q}}(\pi_{\widetilde{\mathfrak{m}}})$.

    $ii)\Rightarrow i):$ By assumption, $J_{\textnormal{Q}}(\pi_{\widetilde{\mathfrak{m}}})\cong J_{\textnormal{Q}}(\pi)_{\widetilde{\mathfrak{m}}}$ is a non-trivial $(\mathfrak{Z}_{M})_{\widetilde{\mathfrak{m}}}$-module. As we saw in the proof of Corollary~\ref{ParabolicLemma}, $(\mathfrak{Z}_{M})_{\widetilde{\mathfrak{m}}}$ is a semilocal ring with maximal ideals $\mathfrak{m}$ such that $I^{\ast}(\mathfrak{m})=\widetilde{\mathfrak{m}}$. We then conclude just as in the proof of Corollary~\ref{ParabolicLemma}.
\end{proof}

For the rest of the section, let $\textnormal{G}:=\textnormal{GL}_{2n}(L)$ and $\textnormal{Q}:=P_{(n,n)}(L)$ be the standard parabolic subgroup of block upper triangular matrices with Levi factor $\textnormal{M}:=M_{(n,n)}(L)=\textnormal{GL}_n(L)\times \textnormal{GL}_n(L)$. Let $\mathfrak{m}\leq \mathfrak{Z}_{\textnormal{M}}=\mathfrak{Z}_{\textnormal{M},\mathbf{Z}_{\ell}}$ be a maximal ideal with residue field $k(\mathfrak{m})$. Let $(\textnormal{M}_{\textnormal{sc}},\pi_{\textnormal{sc}})$ be a standard representative of $\mathfrak{m}$ and recall that by Remark~\ref{StandardRemark} it is of the form $(\pi_{1,1}\otimes...\otimes\pi_{1,k_1})\otimes(\pi_{2,1}\otimes...\otimes\pi_{2,k_2})$ where $\pi_{i,j}$, $i=1,2$, $1\leq j\leq k_i$ are supercuspidal smooth $\overline{k(\mathfrak{m})}$-representations of auxiliary general linear groups.
\begin{Def}
We say that a maximal ideal $\mathfrak{m}\leq\mathfrak{Z}_{\textnormal{M}}$ is \textit{(n,n)-generic} if for a standard representative $$(\pi_{1,1}\otimes...\otimes\pi_{1,k_1})\otimes(\pi_{2,1}\otimes...\otimes\pi_{2,k_2})$$ of the corresponding supercuspidal support the following is satisfied. For every choice of integers $1\leq i_1\leq k_1$, $1\leq i_2\leq k_2$ and $m\in \mathbf{Z}$ we have
\begin{equation*}
    \pi_{1,i_1}\ncong \pi_{2,i_2}|\det|^m_L.
\end{equation*}
\end{Def}
\begin{Lemma}\label{GeomLemmaLemma}
    Let $\pi$ be a smooth $\mathbf{Z}_{\ell}[\textnormal{M}]$-module such that $\pi_{\mathfrak{m}}$ is non-trivial only for a fixed $(n,n)$-generic maximal ideal $\mathfrak{m}\leq \mathfrak{Z}_{\textnormal{M}}$.

    We then have an isomorphism
    \begin{equation*}
        (J_{\textnormal{Q}}(\textnormal{Ind}_{\textnormal{Q}}^{\textnormal{G}}\pi))_{\mathfrak{m}}\xrightarrow{\sim}\pi_{\mathfrak{m}}
    \end{equation*}
    in $\textnormal{Mod}_{\textnormal{sm}}(\mathbf{Z}_{\ell}[\textnormal{M}])$, natural in $\pi$.
\end{Lemma}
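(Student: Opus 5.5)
The plan is to use the Bernstein--Zelevinsky geometric lemma (Mackey filtration) for $J_{\textnormal{Q}}\circ\textnormal{Ind}_{\textnormal{Q}}^{\textnormal{G}}$. This filtration is available over any $\mathbf{Z}_{\ell}$-algebra, since it comes from the Bruhat filtration of $\textnormal{G}$ by $(\textnormal{Q},\textnormal{Q})$-double cosets. For $\textnormal{Q}=P_{(n,n)}(L)$ the double cosets are indexed by $w\in {}^{\textnormal{Q}}W^{\textnormal{Q}}$, equivalently by integers $0\leq r\leq n$. The resulting natural filtration of $J_{\textnormal{Q}}(\textnormal{Ind}_{\textnormal{Q}}^{\textnormal{G}}\pi)$ has graded pieces
\begin{equation*}
F_w(\pi)=\textnormal{Ind}_{\textnormal{M}\cap w\textnormal{Q}w^{-1}}^{\textnormal{M}}\Big(\chi_w\cdot w\circ J_{\textnormal{M}\cap w^{-1}\textnormal{Q}w}(\pi)\Big),
\end{equation*}
where $\chi_w$ is a character of the relevant Levi subgroup. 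By Remark~\ref{ModulusRemark}, $\chi_w$ is a product of integral powers of $|\det|_L$ on the blocks. The closed orbit $w=1$ contributes the piece $\pi$ itself, which appears as a quotient: there is the natural map $J_{\textnormal{Q}}(\textnormal{Ind}_{\textnormal{Q}}^{\textnormal{G}}\pi)\to\pi$, the counit of Frobenius reciprocity. Localising at $\mathfrak{m}$ is exact, so it suffices to show that $(F_w(\pi))_{\mathfrak{m}}=0$ for every $w\neq 1$. The localised counit is then an isomorphism onto $\pi_{\mathfrak{m}}$, and naturality in $\pi$ is automatic because the counit is natural.

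To kill the other graded pieces I would use the support results already established. Write $\textnormal{M}_w:=\textnormal{M}\cap w^{-1}\textnormal{Q}w\cong \textnormal{GL}_{n-r}\times\textnormal{GL}_r\times\textnormal{GL}_r\times\textnormal{GL}_{n-r}$ (roughly), with $0<r\leq n$.

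First, Corollary~\ref{JacquetLemma}, applied with $\textnormal{M}$ in place of $\textnormal{G}$, shows that $J_{\textnormal{M}_w}(\pi)$ is supported at maximal ideals $\mathfrak{m}'$ of $\mathfrak{Z}_{\textnormal{M}_w}$ with $I^{\ast}(\mathfrak{m}')$ in the support of $\pi$, namely $I^{\ast}(\mathfrak{m}')=\mathfrak{m}$. Here I use the hypothesis that $\pi$ is supported only at $\mathfrak{m}$, and decompose $\pi$ via Corollary~\ref{EquivConditions}, or simply use that $\pi=\pi_{\mathfrak{m}}$ on the relevant Bernstein factor.

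Second, conjugating by $w$ swaps the two middle blocks and twists by $\chi_w$. Corollary~\ref{ParabolicLemma} then shows that $F_w(\pi)$ is supported at maximal ideals of $\mathfrak{Z}_{\textnormal{M}}$ of the form $I^{\ast}(\chi_w\cdot w(\mathfrak{m}'))$.

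Third, I compute on supercuspidal supports, using Remark~\ref{TheMapIRemark} to track the $\delta^{-1/2}$-shifts. Such a maximal ideal has first $\textnormal{GL}_n$-factor whose supercuspidal support consists of some of the $\pi_{1,i}$ together with some of the $\pi_{2,j}$, each twisted by integral powers of $|\det|_L$. When $r>0$ it genuinely contains some $\pi_{2,j}|\det|^m_L$. If it equalled $\mathfrak{m}$, the multiset of supercuspidals in the first factor would have to equal $\{\pi_{1,i}\}$, by uniqueness of supercuspidal support up to permutation (Remark~\ref{StandardRemark}). That would force $\pi_{2,j}|\det|^m_L\cong\pi_{1,i}$ for some $i,j,m$, contradicting $(n,n)$-genericity. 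Hence $(F_w(\pi))_{\mathfrak{m}}=0$ for $w\neq1$.

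I expect the main obstacle to be bookkeeping rather than conceptual. Two points need care. The first is to verify that all twists appearing — modulus characters, $\chi_w$ and the normalisations in $I^{\ast}$ — are integral powers of $|\det|_L$, so that genericity (stated with $m\in\mathbf{Z}$) applies; Remark~\ref{ModulusRemark} should give this. The second is to check that the geometric lemma holds with $\mathbf{Z}_{\ell}$-coefficients with the ordering putting the closed cell as a quotient. For the latter I would cite Bernstein--Zelevinsky's argument, which is purely about $l$-sheaves and works over any coefficient ring, or the integral version in \cite{DHKM24a}.
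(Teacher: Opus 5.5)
Your proposal follows the paper's proof. Both use the integral geometric lemma, with the closed cell giving the quotient $\pi$ via $f\mapsto f(1)$ (the counit). Both then use Corollaries~\ref{ParabolicLemma} and~\ref{JacquetLemma} to turn $(F_w(\pi))_{\mathfrak{m}}\neq 0$ into a maximal ideal $\mathfrak{m}'\leq\mathfrak{Z}_{\textnormal{M}_w}$ with $I^{\ast}(\mathfrak{m}')=\mathfrak{m}=I^{\ast}(\chi_w\cdot w(\mathfrak{m}'))$, and finish with genericity. Your use of the explicit shape of ${}^{\textnormal{M}}W^{\textnormal{M}}$ (two middle blocks of size $r$ swapped) is a cleaner substitute for the paper's bijection $\sigma_w$.

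One bookkeeping correction. The normalisations $\delta^{-1/2}$ in $I^{\ast}$ are in general half-integral powers of $|\det|_L$ on blocks, e.g.\ $|\det|_L^{\pm(n-r)/2}$, so they are not integral individually. What is integral, and all that genericity needs, is the net twist between the two routes. The paper identifies it as $\delta_{\textnormal{Q}}^{1/2}(w^{-1})^{\ast}(\delta_{\textnormal{Q}}^{-1/2})$, which is trivial or $|\det|_L^{\pm n}$ on each block.

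There is, however, a genuine gap in your final step, and the paper's proof has the same one. Over $\mathbf{Z}_{\ell}$, a maximal ideal of $\mathfrak{Z}_{\textnormal{M}}$ determines a supercuspidal support over $\overline{k(\mathfrak{m})}$ only up to an automorphism $\gamma$ of $\overline{k(\mathfrak{m})}$. So $I^{\ast}(\chi_w\cdot w(\mathfrak{m}'))=\mathfrak{m}$ only shows that the first-factor multiset equals $\{\gamma(\pi_{1,i})\}_i$ for some $\gamma$. The definition of $(n,n)$-genericity does not exclude $\gamma(\pi_{1,i})\cong\pi_{2,j}|\det|_L^{m}$.

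This actually happens. Take $n=1$ and choose $\alpha\in\mathbf{F}_{\ell^2}$ with $\alpha^{2(\ell-1)}\neq 1$; then $\alpha^{\ell-1}\notin\mathbf{F}_{\ell}^{\times}$. Set $\beta:=q\alpha^{\ell}$. Let $\pi=\mathbf{F}_{\ell^2}$, with $\textnormal{M}=L^{\times}\times L^{\times}$ acting through the unramified characters $\chi_1,\chi_2$ given by $\chi_1(\varpi_L)=\alpha$ and $\chi_2(\varpi_L)=\beta$.
\begin{itemize}
\item $\mathfrak{m}:=\textnormal{Ann}_{\mathfrak{Z}_{\textnormal{M}}}(\pi)$ is maximal with residue field $\mathbf{F}_{\ell^2}$, and $\pi$ is supported only there.
\item $\mathfrak{m}$ is $(1,1)$-generic, since $\chi_1\cong\chi_2|\cdot|_L^m$ would force $\alpha^{\ell-1}=q^{m-1}\in\mathbf{F}_\ell^\times$.
\item Yet $x\mapsto x^{\ell}$ is a $\mathbf{Z}_{\ell}[\textnormal{M}]$-linear isomorphism from $\pi$ onto the open-cell piece $\delta_{\textnormal{Q}}\cdot w(\pi)$. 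On this piece $(a,b)$ acts by $\chi_2(a)\chi_1(b)|a/b|_L$, and one checks $\alpha^{\ell}=\beta q^{-1}$ and $\beta^{\ell}=q\alpha$.
\end{itemize}
Hence $(J_{\textnormal{Q}}(\textnormal{Ind}_{\textnormal{Q}}^{\textnormal{G}}\pi))_{\mathfrak{m}}$ has length $2$ while $\pi_{\mathfrak{m}}=\pi$ is simple, so the statement fails as literally formulated.

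Your argument goes through once genericity is imposed for all Galois conjugates, i.e.\ $\gamma(\pi_{1,i})\ncong\pi_{2,j}|\det|_L^{m}$ for every $\gamma$. In the application in Lemma~\ref{KeyLemma} this only requires excluding finitely many more unramified $\chi$: those with $\chi(\varphi)^{\widetilde{m}}$ lying in the finite field generated by the $\delta_{i,j}$.
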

\begin{proof}
    Recall that the integral (non-normalised) geometric lemma (cf. \cite{BZ77} Lemma 2.12, \cite{Cas95}, Proposition 6.33, \cite{MT23}, Proposition A.1)\footnote{For the integral version see \cite{MT23}.} asserts that $J_{\textnormal{Q}}(\textnormal{Ind}_{\textnormal{Q}}^{\textnormal{G}}\pi)$ admits a $\mathbf{Z}_{\ell}[\textnormal{M}]$-equivariant filtration with subquotients $(I_w)_{w\in \prescript{\textnormal{M}}{}{}W^{\textnormal{M}}}$ of the form
    \begin{equation*}
        I_w\cong \textnormal{Ind}_{\textnormal{Q}_w^M}^{\textnormal{M}}\left(\delta_{\textnormal{Q}_w}^{1/2}w\circ\left(\delta_{\textnormal{Q}_{w^{-1}}}^{-1/2}J_{\textnormal{Q}_{w^{-1}}^{\textnormal{M}}}(\pi)\right) \right).
    \end{equation*}

    Here $\textnormal{Q}_w^{\textnormal{M}}=\textnormal{M}_{w}\textnormal{N}_w$ (respectively $\textnormal{Q}_{w^{-1}}^{\textnormal{M}}=\textnormal{M}_{w^{-1}}\textnormal{N}_{w^{-1}}$) is the standard parabolic subgroup of $\textnormal{M}$ with standard Levi quotient $\textnormal{M}_{w}:=\textnormal{M}\cap w(\textnormal{M})$ (respectively $w^{-1}(\textnormal{M})\cap \textnormal{M}$). Moreover, $\textnormal{Q}_w:=\textnormal{Q}_{w}^{\textnormal{M}}\textnormal{N}$ and $\textnormal{Q}_{w^{-1}}:=\textnormal{Q}_{w^{-1}}^{\textnormal{M}}\textnormal{N}$, respectively. Furthermore, the filtration can be arranged so that the first graded piece is $I_1\cong \pi$ (with the surjection simply given by sending $f:\textnormal{G}\to \pi$ to $f(1)$). In particular, it suffices to see that $(I_w)_{\mathfrak{m}}$ is non-zero only if $w=1$.

    Therefore, consider $w\in \prescript{M}{}{}W^M$ and assume that $(I_w)_{\mathfrak{m}}$ is non-trivial. By Corollary~\ref{ParabolicLemma}, there is a maximal ideal $\mathfrak{m}_{w}\leq \mathfrak{Z}_{\textnormal{M}_w}$ such that
    \begin{equation*}
        \left(\delta_{\textnormal{Q}_w}^{1/2}w\circ\left(\delta_{\textnormal{Q}_{w^{-1}}}^{-1/2}J_{\textnormal{Q}_{w^{-1}}^{\textnormal{M}}}(\pi)\right) \right)_{\mathfrak{m}_w}\neq 0
    \end{equation*}
   and $I^{\ast}(\mathfrak{m}_w)=\mathfrak{m}$. In particular, if we write $(\textnormal{M}_{\textnormal{sc},w},\pi_{\textnormal{sc},w})$ for a choice of standard representative for $\mathfrak{m}_w$, then $(\textnormal{M}_{\textnormal{sc}},\pi_{\textnormal{sc}}):=(\textnormal{M}_{\textnormal{sc},w},\delta_{\textnormal{Q}_w^{\textnormal{M}}}^{-1/2}\pi_{\textnormal{sc},w})$ is a standard representative for $\mathfrak{m}$ (where we view the group now as a Levi subgroup in $\textnormal{M}$). 

   Set $\textnormal{M}_{\textnormal{sc},w^{-1}}:=w^{-1}\textnormal{M}_{\textnormal{sc}}w=w^{-1}\textnormal{M}_{\textnormal{sc},w}w\leq \textnormal{M}_{w^{-1}}$ and
   \begin{equation*}
\pi_{\textnormal{sc},w^{-1}}:=\delta_{\textnormal{Q}_{w^{-1}}}^{1/2}(w^{-1})^{\ast}\left(\delta_{\textnormal{Q}_w^{\textnormal{M}}}^{1/2}\delta_{\textnormal{Q}_{w}}^{-1/2}\pi_{\textnormal{sc}}\right)=\left(\delta_{\textnormal{Q}_{w^{-1}}^{\textnormal{M}}}^{1/2}\left(\delta_{\textnormal{Q}}^{1/2}(w^{-1})^{\ast}(\delta_{\textnormal{Q}}^{-1/2})\right)\right)(w^{-1})^{\ast}(\pi_{\textnormal{sc}})
   \end{equation*}
   and write $\mathfrak{m}_{w^{-1}}\leq \mathfrak{Z}_{\textnormal{M}_{w^{-1}}}$ for the maximal ideal corresponding to $(\textnormal{M}_{w^{-1}},\pi_{\textnormal{sc},w^{-1}})$. Then we have that $J_{\textnormal{Q}_{w^{-1}}^{\textnormal{M}}}(\pi)_{\mathfrak{m}_{w^{-1}}}$ is non-trivial. 
   
   Therefore, by Corollary~\ref{JacquetLemma}, $\pi_{\widetilde{\mathfrak{m}}}$ is non-trivial for $\widetilde{\mathfrak{m}}:=I^{\ast}(\mathfrak{m}_{w^{-1}})$. We see that $\widetilde{\mathfrak{m}}$ corresponds to the supercuspidal support represented by $(\widetilde{\textnormal{M}}_{\textnormal{sc}},\widetilde{\pi}_{\textnormal{sc}})$ where $\widetilde{\textnormal{M}}_{\textnormal{sc}}=\textnormal{M}_{\textnormal{sc},w^{-1}}\leq \textnormal{M}$ and
   \begin{equation}\label{TildeEquation}
       \widetilde{\pi}_{\textnormal{sc}}=\delta_{\textnormal{Q}_{w^{-1}}^{\textnormal{M}}}^{-1/2}\pi_{\textnormal{sc},w^{-1}}=\left(\delta_{\textnormal{Q}}^{1/2}(w^{-1})^{\ast}(\delta_{\textnormal{Q}}^{-1/2}) \right)(w^{-1})^{\ast}(\pi_{\textnormal{sc}}).
   \end{equation}
On the other hand, $\widetilde{\mathfrak{m}}$ must coincide with $\mathfrak{m}$ by assumption, meaning that $(\textnormal{M}_{\textnormal{sc}},\pi_{\textnormal{sc}})$ must be $\textnormal{M}$-conjugate to $(\widetilde{\textnormal{M}}_{\textnormal{sc}},\widetilde{\pi}_{\textnormal{sc}})$.

To make this more explicit, write
\begin{equation*}
    \pi_{\textnormal{sc}}=(\pi_{1,1}\otimes...\otimes \pi_{1,k_{1}})\otimes(\pi_{2,1}\otimes...\otimes\pi_{2,k_{2}})\textnormal{, and}
\end{equation*}
\begin{equation*}
    \widetilde{\pi}_{\textnormal{sc}}=(\widetilde{\pi}_{1,1}\otimes...\otimes\widetilde{\pi}_{1,l_1})\otimes (\widetilde{\pi}_{2,1}\otimes...\otimes\widetilde{\pi}_{2,l_2}).
\end{equation*}
Then the two representatives being $\textnormal{M}$-conjugate exactly means that we have equalities
\begin{equation}\label{FirstMultisetEquality}
    \{\pi_{i,1},...,\pi_{i,k_i}\}=\{\widetilde{\pi}_{i,1},...,\widetilde{\pi}_{i,l_i}\}
\end{equation}
of multisets of supercuspidal smooth $\overline{k(\mathfrak{m})}$-representations of auxiliary $\textnormal{GL}_m(L)$'s for $i=1,2$. We claim that $\mathfrak{m}$ being $(n,n)$-generic implies that this only happens for $w= 1$.

In order to verify the claim, we spell out the supercuspidal representations $\widetilde{\pi}_{i,j}$'s in terms of the $\pi_{i,j}$'s. To do so, write
\begin{equation*}
    \textnormal{M}_{\textnormal{sc},w^{-1}}=(\textnormal{GL}_{m_{1,1}}(L)\times...\times\textnormal{GL}_{m_{1,l_1}}(L))\times (\textnormal{GL}_{m_{2,1}}(L)\times...\textnormal{GL}_{m_{2,l_2}}(L))\leq
\end{equation*}
\begin{equation*}
 \textnormal{M}=\textnormal{GL}_{n}(L)\times \textnormal{GL}_n(L).
\end{equation*}
Then one computes that, for $i=1,2$ and $1\leq j\leq l_i$, the restriction to $G_{i,j}:=\textnormal{GL}_{m_{i,j}}(L)$ of the character
\begin{equation*}
\left(\delta_{\textnormal{Q}}^{1/2}(w^{-1})^{\ast}(\delta_{\textnormal{Q}}^{-1/2}) \right):\textnormal{M}_{\textnormal{sc},w^{-1}}\to (\mathbf{Z}_{\ell}[q^{1/2}])^{\times}
\end{equation*}
from the RHS of \ref{TildeEquation} is in fact an integral power of $|\det_{G_{i,j}}|_{L}$. To see this, one uses the formula for the modulus character from Remark~\ref{ModulusRemark}. In particular, if we write
\begin{equation*}
    \left(\textnormal{M}_{\textnormal{sc},w^{-1}},(w^{-1})^{\ast}(\pi_{\textnormal{sc}})\right)=
\end{equation*}
\begin{equation*}
    \left((G_{1,1}\times...\times G_{1,l_1})\times (G_{2,1}\times...\times G_{2,l_2}),(\pi_{1,1}'\otimes...\otimes\pi_{1,l_1}')\otimes(\pi_{2,1}'\otimes...\otimes\pi_{2,l_2}')\right),
\end{equation*}
we see that, for arbitrary $i=1,2$, $1\leq j\leq l_i$, we have
\begin{equation}\label{determinantdifference}
\widetilde{\pi}_{i,j}\cong\pi_{i,j}'|\det|_{L}^{q_{i,j}}
\end{equation} for some integer $q_{i,j}$.

Finally, to spell out $(w^{-1})^{\ast}\pi_{\textnormal{sc}}$, note that conjugation by $w^{-1}$ on elements of $\textnormal{G}$ restricts to an isomorphism $\textnormal{M}_{\textnormal{sc},w}\xrightarrow{\sim}\textnormal{M}_{\textnormal{sc},w^{-1}}$. In particular, it determines a bijection of sets
\begin{equation*}
    \sigma_w:\{(1,1),...,(1,l_1),(2,1),...,(2,l_2)\}\xrightarrow{\sim}\{(1,1),...,(1,k_1),(2,1),...,(2,k_2)\}
\end{equation*} 
such that
\begin{equation*}
    \pi_{i,j}'=\pi_{\sigma_w(i,j)}
\end{equation*}
for every $i=1,2$, $1\leq j\leq l_i$. Using that $\prescript{\textnormal{M}}{}{}W^{\textnormal{M}}$ is the set of minimal length representatives for $W_{\textnormal{M}}\setminus W_{\textnormal{G}}/W_{\textnormal{M}}$, we see that if $\sigma_w$ restricts to bijections $\{(i,1),...,(i,l_i)\}\xrightarrow{\sim}\{(i,1),...,(i,k_i)\}$ for $i=1,2$ then $w=1$. Indeed, then there obviously is an element $w_{\textnormal{M}}\in W_{\textnormal{M}}\cong S_n\times S_n$ with $w_{\textnormal{M}}w=1$ and $1$ has the smallest length in $W_{\textnormal{G}}$.

In particular, if $w\neq 1$, then we can pick $1\leq j\leq l_1$ such that $\sigma_w(1,j)=(2,j')$ for some $1\leq j'\leq k_2$. This means that $\pi'_{1,j}=\pi_{2,j'}.$
However, by \ref{FirstMultisetEquality} and \ref{determinantdifference}, $\pi'_{1,j}\cong \widetilde{\pi}_{1,j} |\det|_L^{-q_{1,j}}\cong \pi_{1,j''}|\det|_L^{-q_{1,j}}$ for some $1\leq j''\leq k_1$. This contradicts the $(n,n)$-genericity of $\mathfrak{m}$. Therefore, $w$ can only be $1$ as claimed.
\end{proof}

Finally, we are ready to prove the key lemma of the article. Let $\phi_1,\phi_2:G_L\to \textnormal{GL}_n(\overline{\mathbf{F}}_{\ell})$ be semisimple mod $\ell$ Langlands parameters and $\chi:G_L\to \overline{\mathbf{F}}_{\ell}^{\times}$ be an unramified character. Write $\mathfrak{m}_{\chi}\leq \mathfrak{Z}_{\textnormal{M}}$ for the maximal ideal corresponding to the supercuspidal support $\pi^{\textnormal{ss}}(\phi_1\otimes \chi)\otimes \pi^{\textnormal{ss}}(\phi_2|\det|^{n})$ for $\textnormal{Mod}_{\textnormal{sm}}(\overline{\mathbf{F}}_{\ell}[\textnormal{M}])$.

\begin{Lemma}\label{KeyLemma}
    The following is satisfied for all but finitely many \textit{unramified} characters
    \begin{equation*}
        \chi:G_L\to \overline{\mathbf{F}}_{\ell}^{\times}.
    \end{equation*}
    
    For every continuous $\ell$-adic Galois representation
    \begin{equation*}
        \rho:G_L\to \textnormal{GL}_{2n}(\overline{\mathbf{Q}}_{\ell})
    \end{equation*}
    satisfying
    \begin{equation*}
        \overline{\rho}^{\textnormal{ss}}\cong (\phi_1\otimes \chi)\oplus \phi_2
    \end{equation*}
    we have a direct sum decomposition
    \begin{equation*}
        \rho\cong \rho_1\oplus \rho_2
    \end{equation*}
    with $\overline{\rho}_1^{\textnormal{ss}}\cong \phi_1\otimes\chi$, and $\overline{\rho}_2^{\textnormal{ss}}\cong \phi_2$.

    In particular, there is an isomorphism
    \begin{equation*}
        J_{\textnormal{Q}}(\pi(\rho))_{\mathfrak{m}_{\chi}}\cong \pi(\rho_1)\otimes (\pi(\rho_2)|\det|_L^{n})
    \end{equation*}
of smooth $\overline{\mathbf{Q}}_{\ell}[\textnormal{M}]$-modules.
\end{Lemma}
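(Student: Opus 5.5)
Choose the finitely many excluded characters so that, after twisting, the two residual pieces are "far apart" up to powers of the cyclotomic character. Concretely, $\chi$ is determined by $a:=\chi(\textnormal{Frob}_L)\in\overline{\mathbf{F}}_{\ell}^{\times}$. Let $\phi_{1,i}$ and $\phi_{2,j}$ be the irreducible constituents of $\phi_1$ and $\phi_2$, and let $\bar\epsilon$ be the mod $\ell$ cyclotomic character, whose image is finite. We require that for all $i,j$ and all $m\in\mathbf{Z}$,
\[
\phi_{1,i}\otimes\chi\ncong \phi_{2,j}\otimes\bar\epsilon^m .
\]
Fix $i,j,m$. If $\phi_{1,i}\otimes\chi\cong\phi_{2,j}\bar\epsilon^m$, then comparing determinants gives $\chi^{d}=\det(\phi_{2,j})\bar\epsilon^{md}\det(\phi_{1,i})^{-1}$ with $d=\dim\phi_{1,i}$. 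Since $m$ ranges over finitely many classes, this leaves at most finitely many possible values of $a^d$, hence finitely many $a$. Taking the union over the finitely many $(i,j)$ excludes only finitely many $\chi$. The twist by $\bar\epsilon^m$ is what accounts for $|\det|^m_L$ on the automorphic side.

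For the decomposition of $\rho$, work with the Weil--Deligne representation $\textnormal{WD}(\rho)=(r,N)$. Choose a $G_L$-stable lattice. Decompose $\rho$ (or rather $\textnormal{WD}(\rho)^{F-ss}$) into indecomposables $\textnormal{Sp}_k(\sigma)$. The mod $\ell$ reduction of $\textnormal{Sp}_k(\sigma)$ has semisimplification $\bigoplus_{t<k}\bar\sigma\,\bar\epsilon^{t}$, and the reductions of the constituents of $\bar\sigma$ are all unramified twists of one another. More precisely, the constituents of $\bar\sigma$ form a single orbit under twisting by $\bar\epsilon$-powers when $\sigma$ is irreducible, because $\textnormal{Frob}$ acts on the inertial parts by conjugation, and that conjugation corresponds to a $q$-power twist.

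By the genericity arranged above, the reduction of each indecomposable lies entirely in the "$\phi_1\otimes\chi$ world" or entirely in the "$\phi_2$ world", where each world is closed under twisting by $\bar\epsilon$. Define $\rho_1$ as the sum of the indecomposables of the first kind. But $\rho$ itself need not be Frobenius-semisimple, so the decomposition has to be realised on $\rho$ and not only on its Frobenius semisimplification. I would obtain it as a generalized-eigenspace decomposition. First restrict to an open subgroup of inertia and split $\rho|_{I_L}$ into isotypic pieces for $P_L$ and the tame action. Then group those pieces into $W_L$-orbits. $N$ maps each orbit to its $q$-twist, so a given orbit class stays in the same world. Finally, the decomposition of $\rho|_{W_L}$ into the two worlds is canonical and $W_L$-stable, and $N$-stable. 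It is therefore a decomposition of Weil--Deligne representations, and hence of $\rho$ by Grothendieck's equivalence and continuity. This is the main technical step. The cleanest formulation is: the functor "part whose inertial-by-$\bar\epsilon$-twist class reduces into world 1" is an exact idempotent on $\ell$-integral WD representations. With this in hand, $\overline{\rho}_1^{ss}\cong\phi_1\otimes\chi$ and $\overline{\rho}_2^{ss}\cong\phi_2$ follow by matching constituents.

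For the Jacquet module statement, use $\textnormal{rec}^T$. Since $\rho=\rho_1\oplus\rho_2$ with no linkage between the two worlds, $\pi(\rho)$ is the irreducible normalised parabolic induction of $\pi(\rho_1)\otimes\pi(\rho_2)$, up to the standard twists. Irreducibility holds because the Zelevinsky segments of the two pieces are not linked, by genericity. Thus $\pi(\rho)\cong \textnormal{Ind}_{\textnormal{Q}}^{\textnormal{G}}\big(\pi(\rho_1)\otimes\pi(\rho_2)|\det|_L^{n}\big)$ for the unnormalised induction, using Remark~\ref{ModulusRemark} for the modulus shift (this is exactly where $|\det|^n$ enters). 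The maximal ideal $\mathfrak{m}_\chi$ is $(n,n)$-generic, by the choice of $\chi$ together with Theorem~\ref{ModEllLL}. The argument of Lemma~\ref{GeomLemmaLemma}, run over $\overline{\mathbf{Q}}_{\ell}$ through an integral model, then gives $J_{\textnormal{Q}}(\pi(\rho))_{\mathfrak{m}_\chi}\cong\pi(\rho_1)\otimes\pi(\rho_2)|\det|_L^n$. Here one must check via Theorem~\ref{ModEllLL} and Remark~\ref{TheMapIRemark} that this factor lies over $\mathfrak{m}_\chi$ while the other Weyl terms do not.
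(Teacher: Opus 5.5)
The plan is the same as the paper's: split $\textnormal{WD}(\rho)=(r,N)$ into two $N$-stable pieces, get irreducibility of the parabolic induction from unlinkedness, and finish with Lemma~\ref{GeomLemmaLemma}. The Jacquet-module half is fine. The gap is in the splitting step, and it comes from your weaker choice of $\chi$.

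You only exclude coincidences $\phi_{1,i}\otimes\chi\cong\phi_{2,j}\otimes\bar\epsilon^{m}$. So you need every irreducible constituent $\sigma$ of $r$ to reduce into a single world, i.e.\ the constituents of $\bar\sigma^{\textnormal{ss}}$ must differ by powers of $\bar\epsilon$. Your reason (Frobenius conjugation ``corresponds to a $q$-power twist'') does not establish this. Conjugation by Frobenius raises tame inertial characters to the $q$-th power; that is an automorphism of inertia, not a twist by $\bar\epsilon$. Comparing inertial restrictions only shows the constituents of $\bar\sigma^{\textnormal{ss}}$ are \emph{unramified} twists of one another. The claim is true, but it is a genuine extra input. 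One source is Vign\'eras' result that the reduction of an integral cuspidal is cuspidal with supercuspidal support a $|\det|$-segment, transported through Theorem~\ref{ModEllLL}. Another is a Clifford-theory computation showing the unramified characters involved are restrictions of powers of $\bar\epsilon$.

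Separately, your recipe for realising the splitting on the non-Frobenius-semisimple $r$ does not work. Grouping inertial isotypic pieces into $W_L$-orbits cannot separate the worlds when they differ only by an unramified twist: for $\phi_1=\phi_2$ trivial and $\rho$ unramified there is a single inertial piece. You need either $\textnormal{Ext}^1_{W_L}(\sigma,\sigma')=0$ for non-isomorphic irreducibles with finite inertial image, or a Frobenius-eigenvalue splitting. Frobenius itself is not central in $r(W_L)$, so its generalized eigenspaces are not $W_L$-stable.

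The paper removes both problems with one device:
\begin{itemize}
\item Unramified twisting does not change the mod $\ell$ inertial supercuspidal support. So \cite{Vig98}, IV.6.2 leaves only finitely many inertial types $\tau_1,\dots,\tau_N$ for $r$, independently of $\chi$.
\item Lemma~\ref{WDlemma} then gives a uniform $\widetilde m$, divisible by $\ell-1$, with $r(\varphi^{\widetilde m})$ central in $r(W_L)$.
\item One excludes the finitely many $\chi$ for which the eigenvalues of $(\phi_1\otimes\chi)(\varphi^{\widetilde m})$ and $\phi_2(\varphi^{\widetilde m})$ meet.
\item The generalized eigenspaces of this central element, grouped by residual eigenvalue, give a $W_L$-stable splitting.
\item By Schur, each irreducible constituent of $r$ has a single eigenvalue, so no knowledge of how irreducibles reduce is needed.
\item $N$ moves eigenvalues by $q^{\pm\widetilde m}\equiv 1 \bmod \ell$, so it preserves the splitting.
\end{itemize}
This excludes more characters than your condition, which costs nothing here. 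It still implies the $(n,n)$-genericity of $\mathfrak{m}_\chi$ needed in the second half.
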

We will use in the proof the following simple observation.
\begin{Lemma}\label{WDlemma}
    Let $r:W_{L}\to \textnormal{GL}_m(\overline{\mathbf{Q}}_{\ell})$ be a representation such that $r(I_{L})$ is finite. Write $\tau:=r|_{I_{L}}$. Set $d_{\tau}:=|\tau(I_{L})|$, and $m_{\tau}:=(d_{\tau}^{d_{\tau}})!$. Then, for any lift $\varphi\in W_L$ of the Frobenius, the element $r(\varphi^{m_{\tau}})$ is central in $r(W_{L})$.
\end{Lemma}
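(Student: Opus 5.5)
The plan is to reduce to a statement about the finite group $r(I_L)$ together with the Frobenius action, and to use the pigeonhole principle. Write $G:=r(W_L)$ and $H:=\tau(I_L)=r(I_L)$. Since $I_L$ is normal in $W_L$, $H$ is a finite normal subgroup of $G$ of order $d_\tau$, and $G$ is generated by $H$ together with $r(\varphi)$ for any Frobenius lift $\varphi$. Indeed, $W_L=I_L\rtimes\langle\varphi\rangle$, so every element of $W_L$ is of the form $i\varphi^k$.

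Consider the automorphism $c:=\textnormal{ad}(r(\varphi))|_H\in\textnormal{Aut}(H)$. The group $\textnormal{Aut}(H)$ embeds into the set of self-maps of a set with $d_\tau$ elements, so $|\textnormal{Aut}(H)|\le d_\tau!\le d_\tau^{d_\tau}$. Hence the order $o$ of $c$ satisfies $o\le d_\tau^{d_\tau}$, and therefore $o$ divides $(d_\tau^{d_\tau})!=m_\tau$. It follows that $r(\varphi)^{m_\tau}$ acts trivially by conjugation on $H$, i.e. $r(\varphi^{m_\tau})$ commutes with every element of $r(I_L)$.

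Next, $r(\varphi^{m_\tau})$ is a power of $r(\varphi)$, so it commutes with $r(\varphi)$. Since $r(W_L)$ is generated by $r(I_L)$ and $r(\varphi)$, the element $r(\varphi^{m_\tau})$ commutes with a generating set and is therefore central in $r(W_L)$.

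There is no real obstacle here. The only point needing care is that the bound holds uniformly: $m_\tau$ depends only on $\tau$ and not on the choice of $\varphi$, which is guaranteed because the divisibility $o\mid m_\tau$ holds for every element of $\textnormal{Aut}(H)$.
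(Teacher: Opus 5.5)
Your proposal is correct and is essentially the paper's argument: conjugation by $r(\varphi)$ permutes the finite set $r(I_L)$, so $r(\varphi)$ raised to an exponent dividing $m_\tau$ centralises $r(I_L)$, and hence centralises all of $r(W_L)=\langle r(I_L),r(\varphi)\rangle$. The only difference is cosmetic: you bound the order of this permutation by $|\textnormal{Aut}(H)|\le d_\tau!$, whereas the paper multiplies the individual return times $k_i\le d_\tau$ to get an exponent at most $d_\tau^{d_\tau}$.
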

\begin{proof}
    Denote by $\overline{\varphi}$ the matrix $r(\varphi)$, and set $t_1,...,t_{d_{\tau}}$ to be the list of all elements of $r(I_{L})$. Then $\overline{\varphi}t_1,...,\overline{\varphi}t_{d_{\tau}}$ is the list of the images of all Frobenius lifts in $r(W_{L})$.

    Moreover, there is a permutation $\sigma\in S_{d_{\tau}}$ such that
    \begin{equation*}
\overline{\varphi}t_i=t_{\sigma(i)}\overline{\varphi}
    \end{equation*}
    for every $1\leq i \leq d_{\tau}$. Therefore, for every $1\leq i \leq d_{\tau}$, there is $1\leq k_i\leq d_{\tau}$ such that
    \begin{equation*}
\overline{\varphi}^{k_i}t_i=t_i\overline{\varphi}^{k_i}.\end{equation*}
In particular, $\overline{\varphi}^{k_1\cdot...\cdot k_{d_{\tau}}}$ commutes with any element of $r(W_{L})$. Since $1\leq k_1\cdot...\cdot k_{d_{\tau}}\leq d_{\tau}^{d_{\tau}}$, it divides $m_{\tau}$. In particular, $\overline{\varphi}^{m_{\tau}}$ commutes with any element of $r(W_L)$ as well.
\end{proof}

\begin{proof}[Proof of Lemma~\ref{KeyLemma}]
    Denote by $[\widetilde{M},\widetilde{\pi}]$ the mod $\ell$ inertial supercuspidal support for $\textnormal{Mod}_{\textnormal{sm}}(\overline{\mathbf{F}}_{\ell}[\textnormal{G}])$ corresponding to $\phi_1\oplus \phi_2$. By \cite{Vig98}, IV.6.2, the list of inertial supercuspidal supports for $\textnormal{Mod}_{\textnormal{sm}}(\overline{\mathbf{Q}}_{\ell}[\textnormal{G}])$ lifting $[\widetilde{M},\widetilde{\pi}]$ is finite. Denote by $\tau_1,...,\tau_N:I_{L}\to\textnormal{GL}_{2n}(\overline{\mathbf{Q}}_{\ell})$ the corresponding inertial types under the local Langlands correspondence.

    Consider an unramified character $\chi:G_L\to \overline{\mathbf{F}}_{\ell}^{\times}$, and a continuous $\ell$-adic Galois representation $\rho:G_L\to\textnormal{GL}_{2n}(\overline{\mathbf{Q}}_{\ell})$ with $\overline{\rho}^{\textnormal{ss}}\cong (\phi_1\otimes\chi)\oplus \phi_2$. If we write $(r,N):=\textnormal{WD}(\rho)$, we see that $\rho^{\textnormal{ss}}| _{W_{L}}\cong r^{\textnormal{ss}}$. In particular, by Theorem \ref{ModEllLL}, we have
    \begin{equation*}
        r|_{I_L}\cong \rho^{\textnormal{ss}}|_{I_L}\cong \tau_i
    \end{equation*}
    for some integer $1\leq i\leq N$.

    Write $\widetilde{m}:=m_{\tau_1}\cdot...\cdot m_{\tau_N}\cdot (\ell-1)$ and note that, for every Frobenius lift $\varphi\in W_L$, $r(\varphi^{\widetilde{m}})$ is central in $r(W_L)$ with its eigenvalues $\alpha_1,...,\alpha_d$ lying in $\overline{\mathbf{Z}}_{\ell}^{\times}$. Write $\{\delta_{i,1},...,\delta_{i,h_i}\}$ for the set of eigenvalues of $\phi_i(\varphi^{\widetilde{m}})$ for $i=1,2$. Then the set
    \begin{equation*}
        \{\alpha_j\mod \mathfrak{m}_{\overline{\mathbf{Z}}_{\ell}}\}_{j=1,...,d}
    \end{equation*}
    is given by $\{\delta_{1,j}\chi(\varphi^{\widetilde{m}})\}_{j=1,...,h_1}\cup\{\delta_{2,j}\}_{j=1,...,h_2}$.

    We claim that if the sets $\{\delta_{1,j}\chi(\varphi^{\widetilde{m}})\}_{j=1,...,h_1}$, and $\{\delta_{2,j}\}_{j=1,...,h_2}$ are disjoint, $\rho$ admits a direct sum decomposition as in the statement. It is clear that $r$ decomposes into $W_L$-invariant subspaces labelled by $\{\alpha_j\mod \mathfrak{m}_{\overline{\mathbf{Z}}_{\ell}}\}_{j=1,...,d}$. This means that there is a $W_L$-equivariant decomposition $r\cong r_1\oplus r_2$ with respect to the partition
    \begin{equation*}
        \{\alpha_i\mod \mathfrak{m}_{\overline{\mathbf{Z}}_{\ell}}\}_{j=1,...,d}=\{\delta_{1,j}\chi(\varphi^{\widetilde{m}})\}_{j=1,...,h_1}\bigsqcup\{\delta_{2,j}\}_{j=1,...,h_2}.
    \end{equation*}
    Moreover, given $\widetilde{r}_i\in \textnormal{JH}(r_i)$ for $i=1,2$, we see that $\widetilde{r}_1\ncong \widetilde{r}_2|\cdot|^{m}$ for any $m\in \mathbf{Z}$. Indeed, such an isomorphism would imply that the eigenvalues of $\widetilde{r}_1(\varphi^{\widetilde{m}})$ and $\widetilde{r}_2(\varphi^{\widetilde{m}})$ are congruent modulo $\mathfrak{m}_{\overline{\mathbf{Z}}_{\ell}}$ as $|\cdot|:W_L\to \overline{\mathbf{Z}}_{\ell}^{\times}$ takes values in $\mathbf{Z}_{\ell}^{\times}$ and $\ell-1|\widetilde{m}$. In particular, we obtain a decomposition $(r,N)\cong (r_1,N_1)\oplus (r_2,N_2)$ and hence the desired decomposition $\rho\cong \rho_1\oplus\rho_2$.

    Therefore, it suffices to find $\chi$ such that $\{\delta_{1,i}\chi(\varphi^{\widetilde{m}})\}_{j=1,...,h_1}$, and $\{\delta_{2,j}\}_{j=1,...,h_2}$ are disjoint. This is possible since there are only finitely many unramified characters $\chi:G_L\to \overline{\mathbf{F}}_{\ell}^{\times}$ that make the sets $\{\delta_{1,i}\chi(\varphi^{\widetilde{m}})\}_{j=1,...,h_1}$, and $\{\delta_{2,j}\}_{j=1,...,h_2}$ have a common element.

To prove the final statement, note that $\textnormal{rec}^{-1}_L(\textnormal{WD}(\rho))=\pi(\rho)|\det|^{\frac{1-2n}{2}}$, and $\textnormal{rec}^{-1}_L(\textnormal{WD}(\rho_i))=\pi(\rho_i)|\det|^{\frac{1-n}{2}}$ for $i=1,2$. Moreover, we see that $(\phi_1\otimes\chi,\phi_2)$ corresponds to an $(n,n)$-generic maximal ideal of $\mathfrak{Z}_{\textnormal{M}}$ implying that $\pi(\rho_1)$ and $\pi(\rho_2)$ cannot be twists of each other by an integral power of $|\det|$. Consequently, an application of Proposition 8.5 of \cite{Zel80} shows that we have isomorphisms
\begin{equation*}
    \pi(\rho)\cong \textnormal{n-Ind}_{\textnormal{Q}}^{\textnormal{G}}\left((\pi(\rho_1)|\det|^{\frac{n}{2}})\otimes( \pi(\rho_2)|\det|^{\frac{n}{2}})\right)\cong \textnormal{Ind}_{\textnormal{Q}}^{\textnormal{G}}\left(\pi(\rho_1)\otimes (\pi(\rho_2)|\det|^{n})\right).
\end{equation*}
We can then apply Lemma~\ref{GeomLemmaLemma} to get the isomorphism
\begin{equation*}
    J_{\textnormal{Q}}(\pi(\rho))_{\mathfrak{m}_{\chi}}\cong \left(\pi(\rho_1)\otimes(\pi(\rho_2)|\det|^{n})\right)_{\mathfrak{m}_{\chi}}\cong \pi(\rho_1)\otimes(\pi(\rho_2)|\det|^{n}).
\end{equation*}
\end{proof}

\section{Preliminaries on the cohomology of locally symmetric spaces}

In this section, we collect some results on locally symmetric spaces and their cohomology. For a more in-depth introduction to some of the background material, we invite the reader to visit \cite{ACC23} and \cite{CN23}.
\subsection{The general theory}

\subsubsection{Locally symmetric spaces}\label{sec_locally_symm_spaces} Let $G$ be a connected linear algebraic group over a number field $F$. We denote by $X^G$ the symmetric space associated with $\textnormal{Res}_{F/\mathbf{Q}}G$ in the sense of \cite{BS73}, a homogeneous $G(F\otimes_{\mathbf{Q}}\mathbf{R})$-space.

We call a subgroup $K_G\leq G(\mathbf{A}_F^{\infty})$ \textit{good} if it is a neat compact open subgroup (in the sense of \cite{Pin90}, page 12-13) of the form $K_G=\prod_vK_{G,v}$, the product taken over the set of finite places of $F$. For a good subgroup $K_G\leq G(\mathbf{A}^{\infty}_F)$ we introduce the corresponding locally symmetric space
\begin{equation*}
    X_{K_G}:=G(F)\backslash(X^G\times G(\mathbf{A}_F^{\infty})/K_G),
\end{equation*}
a smooth orientable Riemannian manifold. Let $\overline{X}^G$ denote the Borel--Serre partial compactification of the symmetric space $X^G$ (cf. \cite{BS73}, \textsection7.1). We obtain the Borel--Serre compactification
\begin{equation*}
    \overline{X}_{K_G}:=G(F)\backslash(\overline{X}^G\times G(\mathbf{A}_F^{\infty})/K_G),
\end{equation*}
of $X_{K_G}$,
an orientable compact smooth manifold with corners. The corresponding open embedding $j_{K_G}:X_{K_G}\hookrightarrow \overline{X}_{K_G}$ is known to be a homotopy equivalence and realises the former as its interior. We write $\partial X^G:=\overline{X}^G\setminus X^G$ respectively, $\partial X_{K_G}:=\overline{X}_{K_G}\setminus X_{K_G}$ for the corresponding boundary.

Following \cite{CN23}, we define the "infinite level" locally symmetric spaces
\begin{equation}\label{inflevelspaces}
    \mathfrak{X}_G:=\varprojlim_{K_G}X_{K_G}, \textnormal{ }\overline{\mathfrak{X}}_G:=\varprojlim_{K_G} \overline{X}_{K_G}, \textnormal{ }\partial\mathfrak{X}_G=\varprojlim_{K_G}\partial X_{K_G}
\end{equation}
where the limits are taken over good subgroups of $G(\mathbf{A}_{F}^{\infty})$. We endow these sets with the projective limit topology. In particular, both $\overline{\mathfrak{X}}_G$ and $\partial\mathfrak{X}_G$ become compact Hausdorff spaces and therefore, $\mathfrak{X}_G$ will be a locally compact Hausdorff space. We write $j_G:\mathfrak{X}_G\hookrightarrow \overline{\mathfrak{X}}_G$ for the natural open embedding.

Each of the spaces \ref{inflevelspaces} admits a natural continuous right action of $G(\mathbf{A}_F^{\infty})$ where the latter is equipped with its locally profinite topology. The embedding $j_G$ is equivariant with respect to this action. Moreover, the induced action of any good subgroup $K_G$ turns $\overline{\mathfrak{X}}_G$ and $\partial\mathfrak{X}_G$ into free $K_G$-spaces in the sense of \cite{NT16}, Definition 2.23.

\subsubsection{Cohomology and Hecke actions}\label{sec2.2}
For a (non-empty) finite set of finite places $S$ of $F$, write $G^S:=G(\mathbf{A}_{F}^{S\cup\{\infty\}})$ and $G_S:=G(\mathbf{A}_{F,S})$, respectively. For a good subgroup $K_G$ of $G(\mathbf{A}^{\infty}_F)$, we similarly set $K^S_G:=\prod_{v\notin S}K_{G,v}$ and $K_{G,S}:=\prod_{v\in S}K_{G,v}$.

Let $\Lambda$ be a commutative ring. According to \cite{Sch98}, Lemma 1, the category $\textnormal{Sh}_{G(\mathbf{A}_F^{\infty})}(\mathfrak{X},\Lambda)$ of $G(\mathbf{A}_F^{\infty})$-equivariant sheaves of $\Lambda$-modules (in the sense of \cite{NT16}, Definition 2.22, (2)) has enough injectives for $\mathfrak{X}\in \{\overline{\mathfrak{X}}_G,\mathfrak{X}_G,\partial\mathfrak{X}_G\}$. Moreover, by \cite{NT16}, Lemma 2.26, we have an equivalence of categories $\textnormal{Sh}_{G(\mathbf{A}_F^{\infty})}(\ast,\Lambda)\cong \textnormal{Mod}_{\textnormal{sm}}(\Lambda[G(\mathbf{A}_F^{\infty})])$.

In particular, we can define
\begin{equation*}
    \pi^G(\Lambda):=R\Gamma(\overline{\mathfrak{X}}_G,\Lambda)\in D^+_{\textnormal{sm}}(G(\mathbf{A}_F^{\infty}),\Lambda),
\end{equation*}
\begin{equation*}
    \pi^G_c(\Lambda):=R\Gamma(\overline{\mathfrak{X}}_G,j_{G,!}\Lambda)\in D^+_{\textnormal{sm}}(G(\mathbf{A}_F^{\infty}),\Lambda),
\end{equation*}
\begin{equation*}
    \pi^G_{\partial}(\Lambda):=R\Gamma(\partial\mathfrak{X}_G,\Lambda)\in D^+_{\textnormal{sm}}(G(\mathbf{A}_F^{\infty}),\Lambda).
\end{equation*}
Taking derived $K_G$-invariants then yields objects
\begin{equation*}
    R\Gamma(K_G,\pi^G(\Lambda)),\textnormal{ }R\Gamma(K_G,\pi^G_c(\Lambda)),\textnormal{ }R\Gamma(K_G,\pi^G_{\partial}(\Lambda))
\end{equation*}
in $D^+(\mathcal{H}(G,K_G)\otimes_{\mathbf{Z}}\Lambda)$ where
\begin{equation*}
    \mathcal{H}(G,K_G):=\mathbf{Z}[K_G\backslash G(\mathbf{A}_F^{\infty})/K_G]
\end{equation*}
is the algebra of $\mathbf{Z}$-valued compactly supported $K_G$-bi-invariant functions on $G(\mathbf{A}_F^{\infty})$ equipped with the convolution product.

Let $(-)^{\sim}:D^+(\mathcal{H}(G,K_G)\otimes_{\mathbf{Z}}\Lambda)\to D^+(\Lambda)$ denote the forgetful functor. By \cite{CN23}, Proposition 2.1.3 and base change in the compactly supported case, there are natural isomorphisms
\begin{equation*}
    R\Gamma(X_{K_G},\Lambda)\cong R\Gamma(K_G,\pi^G(\Lambda))^{\sim},
\end{equation*}
\begin{equation*}
     R\Gamma_c(X_{K_G},\Lambda):=R\Gamma(X_{K_G},j_{K_G,!}\Lambda)\cong R\Gamma(K_G,\pi^G_c(\Lambda))^{\sim},
\end{equation*}
\begin{equation*}
    R\Gamma(\partial X_{K_G},\Lambda)\cong R\Gamma(K_G,\pi^G_{\partial}(\Lambda))^{\sim}
\end{equation*}
in $D^+(\Lambda)$. These identifications yield algebra homomorphisms
\begin{equation*}
    Z(\mathcal{H}(G,K_G))\otimes_{\mathbf{Z}}\Lambda\to \textnormal{End}(R\Gamma_{(c)}(X_{K_G},\Lambda)),
\end{equation*}
\begin{equation*}
    Z(\mathcal{H}(G,K_G))\otimes_{\mathbf{Z}}\Lambda\to \textnormal{End}(R\Gamma(\partial X_{K_G},\Lambda)).
\end{equation*}

We record a standard lemma on twists of automorphic Hecke eigensystems. Let $K_G\leq G(\mathbf{A}_F^{\infty})$ be a good subgroup, $S$ be a finite set of finite places of $F$. Let $\mathcal{O}$ be the ring of integers of some finite extension of $\mathbf{Q}_{\ell}$ with a choice of uniformiser $\varpi$. Let $\chi:G_F\to \mathcal{O}^{\times}$ be a continuous character such that $\chi\circ \textnormal{Art}_{F_v}$ is trivial on $\det(K_{G,v})$ for $v\notin S$. We define an isomorphism of $\mathcal{O}$-algebras
\begin{equation}\label{twistingmap}
   f_{\chi}:\mathcal{H}(G^S,K_G^S)\otimes_{\mathbf{Z}}\mathcal{O}\to \mathcal{H}(G^S,K_G^S)\otimes_{\mathbf{Z}}\mathcal{O},
\end{equation}
\begin{equation*}
    [K_G^SgK_G^S]\mapsto \chi(\textnormal{Art}_{F_v}(\det(g)))^{-1}[K_G^SgK_G^S].
\end{equation*}
\begin{Lemma}\label{CharTwistLemma}
    Let $S$ be a finite set of finite places of $F$, $ K_G\leq G(\mathbf{A}_{F}^{\infty})$ be a good subgroup. Let $\chi:G_{F}\to \mathcal{O}^{\times}$ be a continuous character such that $\chi\circ \textnormal{Art}_{F_v}$ is trivial on $\det(K_{G,v})$ for every finite place $v$ of $F$. There are isomorphisms
    \begin{equation*}
        R\Gamma_{(c)}(X_{K_G},\mathcal{O})\xrightarrow{\sim} R\Gamma_{(c)}(X_{K_G},\mathcal{O})
    \end{equation*}
        in $D^+(\mathcal{O})$ that are equivariant for the action of $\mathcal{H}(G^S,K_G^S)\otimes_{\mathbf{Z}}\mathcal{O}$ where this Hecke algebra acts via $f_{\chi}$ on the right-hand side and by the standard Hecke action on the left-hand side.
\end{Lemma}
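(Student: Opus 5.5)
The plan is to realise the twist geometrically on the locally symmetric space, at the level of the equivariant objects $\pi^G_{(c)}(\mathcal{O})$ of \S\ref{sec2.2}, and only afterwards pass to derived $K_G$-invariants.

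\emph{Step 1: a locally constant function.} Via $\textnormal{Art}_F$, regard $\chi$ as a continuous character $\chi\circ\textnormal{Art}_F:\mathbf{A}_F^{\times}/\overline{F^{\times}(F\otimes\mathbf{R})^{\times}_{>0}}\to\mathcal{O}^{\times}$. Consider
\[
\psi:X^G\times G(\mathbf{A}_F^{\infty})\to\mathcal{O}^{\times},\qquad (x,g)\mapsto \chi\big(\textnormal{Art}_F(\det g)\big).
\]
\begin{itemize}
\item It is invariant under left multiplication by $G(F)$, because $\chi\circ\textnormal{Art}_F$ kills $F^{\times}$.
\item It is locally constant.
\item The hypothesis that $\chi\circ\textnormal{Art}_{F_v}$ is trivial on $\det(K_{G,v})$ for every $v$ makes $\psi$ right $K_G$-invariant.
\item One must check that the archimedean component is harmless, i.e.\ that $\chi$ restricted to the image of $\det(G(F\otimes\mathbf{R})^{\circ})$ is trivial. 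This holds because $\chi$ is continuous with values in a profinite group, so it kills connected components.
\end{itemize}
This is where I expect the care to lie. The function $\psi$ depends on $(x,g)$ only through the component group of the real points, and that component group may be nontrivial. So I would rather define $\psi$ on $G(\mathbf{A}_F^{\infty})$ alone and use that $X^G$ is connected for the chosen model, or else absorb the sign character at infinity.

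\emph{Step 2: multiplication by $\psi$.} The function $\psi$ descends to a locally constant function on $X_{K_G}$, and extends to $\overline{X}_{K_G}$ since it factors through $\pi_0$. Multiplication by $\psi$ is then an automorphism of the constant sheaf $\mathcal{O}$ on $\overline{X}_{K_G}$. It preserves $j_!\mathcal{O}$, so it gives isomorphisms
\[
m_\psi: R\Gamma_{(c)}(X_{K_G},\mathcal{O})\xrightarrow{\sim} R\Gamma_{(c)}(X_{K_G},\mathcal{O}).
\]

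\emph{Step 3: Hecke equivariance.} Upstairs on $\overline{\mathfrak{X}}_G$, the function $\psi$ satisfies
\[
\psi(x\cdot h)=\psi(x)\,\chi(\textnormal{Art}_F(\det h))\qquad\text{for } h\in G^S.
\]
Equivalently, multiplication by $\psi$ is a $G(\mathbf{A}_F^{\infty})$-equivariant isomorphism $\pi^G_{(c)}(\mathcal{O})\xrightarrow{\sim}\pi^G_{(c)}(\mathcal{O})\otimes(\chi\circ\textnormal{Art}\circ\det)^{\pm1}$ in $D^+_{\textnormal{sm}}$. Taking derived $K_G$-invariants, a double coset operator $[K_G^SgK_G^S]$ acts on the twisted object as $\chi(\textnormal{Art}(\det g))^{-1}[K_G^SgK_G^S]$. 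This uses that the character is constant on the double coset, again by the triviality of $\chi$ on $\det(K_{G,v})$. That is exactly $f_\chi$.

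The sign convention has to be matched with \ref{twistingmap}. If it comes out wrong, replace $\psi$ by $\psi^{-1}$. The only real obstacle is the bookkeeping of the action (right action, hence inverse) together with the infinite-place issue from Step 1. Everything else is formal from the equivalence $\textnormal{Sh}_{G(\mathbf{A}_F^\infty)}(\ast)\cong\textnormal{Mod}_{\textnormal{sm}}$ and \cite{CN23}, Proposition 2.1.3.
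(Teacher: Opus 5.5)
Your proposal is correct and is the standard argument behind the reference the paper cites instead of giving a proof (\cite{Hev23}, Lemma 2.5): multiplication by the locally constant function $g\mapsto\chi(\textnormal{Art}_F(\det g))$ gives a $G(\mathbf{A}_F^{\infty})$-equivariant isomorphism between $\pi^G_{(c)}(\mathcal{O})$ and its twist by $(\chi\circ\textnormal{Art}_F\circ\det)^{-1}$, and on derived $K_G$-invariants this is exactly the $f_\chi$-twisted Hecke equivariance. The one point to sharpen is the $G(F)$-invariance in Step 1: since $\psi$ only sees the finite-adelic $\det g$, you need $\chi\circ\textnormal{Art}_F$ to kill $(\det\gamma)_\infty$ for every $\gamma\in G(F)$, i.e.\ all of $(F\otimes\mathbf{R})^{\times}$ and not merely $\det(G(F\otimes\mathbf{R})^{\circ})$; this holds because $F$ is CM, so $(F\otimes\mathbf{R})^{\times}$ is connected (with the connected $X^G$ used in the paper, a real place at which $\chi$ is nontrivial would make the statement fail already on $H^0$, so there is no sign to absorb).
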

\begin{proof}
See for instance \cite{Hev23}, Lemma 2.5.
\end{proof}

Finally, we recall the definition of the unnormalised Satake transform. Let $G$ be a reductive group over $F$ and $P=M\rtimes N\leq G$ be a parabolic subgroup with a Levi decomposition. Let $K_G\leq G(\mathbf{A}_F^{\infty})$ be a good subgroup and write $K_P=K_G\cap P(\mathbf{A}^{\infty}_F)$, $K_N= K_G\cap N(\mathbf{A}^{\infty}_F)$ and $K_M=\textnormal{im}(K_P\to M(\mathbf{A}_F^{\infty}))$. Given a finite set of finite places $S$ of $F$, we say that $K_{G}$ is \textit{$S$-decomposed} if $K_{P,v}=K_{M,v}\rtimes K_{N,v}$ for every $v\notin S$. Note that if $S$ is so that $G_{F_v}$ is unramified and $K_{G,v}$ is hyperspecial for $v\notin S$, then $K_{G}$ is $S$-decomposed. For $S$-decomposed $K_{G}$ we have the homomorphisms of Hecke algebras
\begin{equation*}
    r_P:\mathcal{H}(G^S,K_G^S)\to \mathcal{H}(P^S,K_P^S) \textnormal{ and } r_M:\mathcal{H}(P^S,K_P^S)\to \mathcal{H}(M^S,K_M^S)
\end{equation*}
given by "restriction to $P$" and "integration along $N$", respectively (cf. \cite{NT16}, 2.2.3, 2.2.4, 2.2.6). We write $\mathcal{S}:=r_M\circ r_P$, and call it the unnormalised Satake transform.

\subsubsection{Cohomology of the Borel--Serre boundary}
Let $G$ denote a connected reductive group over a number field $F$. Fix a rational prime $\ell$ and let $\Lambda$ be an Artinian $\mathbf{Z}_{\ell}$-algebra.

Recall that we have a $G(\mathbf{A}_{F}^{\infty})$-equivariant decomposition
\begin{equation*}
    \partial \mathfrak{X}_G=\coprod_{Q}\mathfrak{X}_G^{Q}
\end{equation*}
into locally closed subspaces labelled by the set of standard $F$-rational parabolic subgroups $Q\leq G$. Namely, the members of the decomposition are given by the quotient spaces
\begin{equation*}
    \mathfrak{X}_G^Q:=(\mathfrak{X}_Q\times G(\mathbf{A}_{F}^{\infty}))/Q(\mathbf{A}_{F}^{\infty})
\end{equation*}
where $q\in Q(\mathbf{A}_{F}^{\infty})$ acts on $(x,g)\in \mathfrak{X}_Q\times G(\mathbf{A}_{F}^{\infty})$ by the formula $(xq,q^{-1}g)$ and $G(\mathbf{A}_{F}^{\infty})$ is equipped with its profinite topology. This space is further equipped with the continuous right $G(\mathbf{A}_{F}^{\infty})$-action given by the formula $(x,g)\cdot g'=(x,gg')$. 

For a good subgroup $K_G\leq G(\mathbf{A}_{F}^{\infty})$, we obtain the corresponding stratification
\begin{equation*}
    \partial X_{K_G}=\coprod_{Q}X_{K_G}^Q
\end{equation*}
into locally closed subspaces.

In an analogous manner, we introduce the closed counterpart
\begin{equation*}
    \overline{\mathfrak{X}}_{G}^{Q}:=(\overline{\mathfrak{X}}_Q\times G(\mathbf{A}_{F}^{\infty}))/Q(\mathbf{A}_{F}^{\infty})
\end{equation*}
and write $j_G^Q$ for the $G(\mathbf{A}_F^{\infty})$-equivariant open embedding $\mathfrak{X}_G^Q\hookrightarrow\overline{\mathfrak{X}}_G^Q$.

Let $Q=M\rtimes N\leq G$ be an $F$-rational parabolic subgroup, $S$ be a finite set of finite places of $F$ such that for every $v\notin S$ the base change $G\times_FF_v$ admits a reductive model over $\mathcal{O}_{F_v}$. For any compact open subgroup $K_{G,S}\leq G_S$ we have a finite decomposition $G_S=\coprod_{i=1}^rQ_{S}g_iK_{G,S}$ where, without loss of generality, $g_1=1$. This yields a $G^S\times K_{G,S}$-equivariant decomposition
    \begin{equation}\label{decomposition}
        \overline{\mathfrak{X}}_{G}^Q\cong \coprod_{i=1}^r\left(\overline{\mathfrak{X}}_Q\times(G^S\times K_{G,S})\right)/\left(Q^S\times ((g_iK_{G,S}g_{i}^{-1})\cap Q_{S})\right)
    \end{equation}
    where on the $i$-th factor the quotient is taken with respect to the action $(x,g)\cdot q=(xq,g_i^{-1}q^{-1}g_ig)$.
    We will set $\overline{\mathfrak{X}}_{G,K_{G,S}}^{Q}$ to be the factor corresponding to $g_1(=1)$.

    One analogously introduces the open $G^S\times K_{G,S}$-equivariant subspace $\mathfrak{X}_{G,K_{G,S}}^Q\subset \overline{\mathfrak{X}}_{G,K_{G,S}}^{Q}$ and we write $j_{G,K_{G,S}}^{Q}$ for the induced embedding $\mathfrak{X}_{G,K_{G,S}}^{Q}\hookrightarrow\overline{\mathfrak{X}}_{G,K_{G,S}}^{Q}$.
    
    Using the Iwasawa decomposition $G^S=Q^SG(\widehat{\mathcal{O}}_{F}^S)$ and \ref{decomposition}, one easily deduces the $K_G$-equivariant isomorphisms 
    \begin{equation*}
        \overline{\mathfrak{X}}_{G}^Q\cong \coprod_{i=1}^r(\overline{\mathfrak{X}}_Q\times K_G)/K^i_Q,
    \end{equation*}
    \begin{equation}\label{K_Q^1}
        \overline{\mathfrak{X}}_{G,K_{G,S}}^{Q}\cong (\overline{\mathfrak{X}}_Q\times K_G)/K^1_Q
    \end{equation}
    where $K_G:= G(\widehat{\mathcal{O}}_F^S)K_{G,S}$, and $K^i_Q:=(g_iK_Gg_i^{-1})\cap Q(\mathbf{A}_F^{\infty})$. In  particular, both spaces are visibly compact and Hausdorff and therefore become free $K_G$-spaces.
    
    \begin{Lemma}\label{BorelSerreLemma}
        Let $Q=M\rtimes N\leq G$ be an $F$-rational parabolic subgroup, $S$ be a finite set of finite places of $F$ such that $G\times_FF_v$ admits a reductive model over $\mathcal{O}_{F_v}$ for $v\notin S$. Let $K_{G,S}\leq G_S$ be a compact open subgroup. Then we have a commutative diagram
        \begin{equation*}
\begin{tikzcd}
	{R\Gamma(\overline{\mathfrak{X}}_{G,K_{G,S}}^Q,j_{G,K_{G,S},!}^Q\Lambda)} && {\textnormal{Ind}^{G^S\times K_{G,S}}_{Q^S\times K_{Q,S}}R\Gamma(\overline{\mathfrak{X}}_M,j_{M,!}\Lambda)} \\
	{R\Gamma(\overline{\mathfrak{X}}_{G,K_{G,S}}^Q,\Lambda)} && {\textnormal{Ind}^{G^S\times K_{G,S}}_{Q^S\times K_{Q,S}}R\Gamma(\overline{\mathfrak{X}}_M,\Lambda)}
	\arrow["\sim", from=1-1, to=1-3]
	\arrow["{i_{c,G,K_{G,S}}^Q}", from=1-1, to=2-1]
	\arrow["{i_{c,M}}", from=1-3, to=2-3]
	\arrow["\sim", from=2-1, to=2-3]
\end{tikzcd}
    \end{equation*}
    in $D^+_{\textnormal{sm}}(G^S\times K_{G,S},\Lambda)$, where the horizontal maps are isomorphisms and the vertical ones are induced by natural maps $j_{G,K_{G,S},!}^Q\Lambda\to \Lambda$ and $j_{M,!}\Lambda\to \Lambda$, respectively.
    \end{Lemma}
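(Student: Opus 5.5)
We reduce the statement to a comparison of $K_{G}$-equivariant spaces, followed by a Shapiro-type identification. By \ref{K_Q^1}, $\overline{\mathfrak{X}}_{G,K_{G,S}}^{Q}\cong (\overline{\mathfrak{X}}_Q\times K_G)/K^1_Q$, with $K^1_Q=K_G\cap Q(\mathbf{A}_F^{\infty})$. More invariantly, the decomposition \ref{decomposition} exhibits this factor as $(\overline{\mathfrak{X}}_Q\times (G^S\times K_{G,S}))/(Q^S\times K_{Q,S})$, where $K_{Q,S}:=K_{G,S}\cap Q_S$. The open stratum $\mathfrak{X}_{G,K_{G,S}}^Q$ is the analogous quotient built from $\mathfrak{X}_Q$.

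The plan has three steps.

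\emph{Step 1: induction for equivariant sheaves.} For a closed subgroup $H\leq \Gamma$ with $\Gamma/H$ profinite-like, and a free $H$-space $Y$, sheaves on $(Y\times\Gamma)/H$ that are $\Gamma$-equivariant are equivalent to $H$-equivariant sheaves on $Y$. Under this equivalence, $R\Gamma$ becomes smooth induction: $R\Gamma((Y\times\Gamma)/H,\mathcal{F})\cong \textnormal{Ind}_H^{\Gamma}R\Gamma(Y,\mathcal{F}|_Y)$. This is the equivariant Shapiro lemma in the formalism of \cite{NT16}, \S2. Smooth induction from $Q^S\times K_{Q,S}$ to $G^S\times K_{G,S}$ is exact here, since the quotient $G^S/Q^S$ is compact by Iwasawa and $K_{G,S}/K_{Q,S}$ is profinite. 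It also preserves injectives, or at least acyclics, because it is right adjoint to the exact restriction. Hence the formula passes to derived functors. We apply it with $Y=\overline{\mathfrak{X}}_Q$ and with both $\Lambda$ and $j_{Q,!}\Lambda$. The natural map $j^Q_{G,K_{G,S},!}\Lambda\to\Lambda$ corresponds to $j_{Q,!}\Lambda\to\Lambda$, so naturality gives a commutative square with $\overline{\mathfrak{X}}_Q$ in place of $\overline{\mathfrak{X}}_M$.

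\emph{Step 2: pass from $Q$ to $M$.} We have $\overline{\mathfrak{X}}_Q\to\overline{\mathfrak{X}}_M$. Following \cite{NT16} (and Borel--Serre), $\mathfrak{X}_Q$ is a fibration over $\mathfrak{X}_M$ with fibres of the form $N(F)\backslash N(\mathbf{A}_F)/K_N$ at finite level; in the limit this gives $\varprojlim$ of nilmanifolds. This fibration is compatible with the boundary compactifications. Taking $R\Gamma$ as $Q(\mathbf{A}_F^\infty)$-equivariant objects, the pushforward yields $R\Gamma(\overline{\mathfrak{X}}_Q,-)\cong R\Gamma(\overline{\mathfrak{X}}_M, Rp_*(-))$. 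The fibres at infinite level compute $\varinjlim_{K_N}R\Gamma(N(F)\backslash N(\mathbf{A}_F)/K_N,\Lambda)$.

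\emph{Step 3 (the main obstacle): triviality of the $N$-part after induction.} In the formula we need, only $R\Gamma(\overline{\mathfrak{X}}_M,\Lambda)$ appears, inflated to $Q$. I would therefore check that the fibre cohomology in the direct limit over $K_N$ is $\Lambda$ in degree $0$. The infinite-level fibre is $N(F)\backslash N(\mathbf{A}_F)/N(F_\infty)$-type, i.e.\ a profinite set. One then verifies $Rp_*\Lambda=\Lambda$ and $Rp_*j_{Q,!}\Lambda=j_{M,!}\Lambda$: $p$ is proper with profinite fibres, and $\mathfrak{X}_Q=p^{-1}(\mathfrak{X}_M)$, so proper base change gives $p_*$ exact. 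This is where care is needed about the topology and freeness, and I would rely on \cite{NT16}, Lemma 2.26ff.\ and the analogous argument in \cite{CN23}. The two identifications are compatible with $j_!\Lambda\to\Lambda$ by functoriality of $p_*$. Splicing with Step 1 gives the diagram with horizontal isomorphisms in $D^+_{\textnormal{sm}}(G^S\times K_{G,S},\Lambda)$.
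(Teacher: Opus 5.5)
The gap is in your Step 3; Steps 1 and 2 are fine. Your overall shape agrees with the paper. It first proves the square with $\overline{\mathfrak{X}}_Q$ in place of $\overline{\mathfrak{X}}_M$ (pullback along $x\mapsto(x,1)$, base change $\overline{i}_Q^{\ast}j^Q_{G,K_{G,S},!}=j_{Q,!}i_Q^{\ast}$, Frobenius reciprocity, and a check after restriction to $K_G$ as in \cite{CN23}, Lemma 2.3.14). It then identifies $R\Gamma(\overline{\mathfrak{X}}_Q,(j_{Q,!})\Lambda)$ with $\textnormal{Inf}_{M(\mathbf{A}_F^{\infty})}^{Q(\mathbf{A}_F^{\infty})}R\Gamma(\overline{\mathfrak{X}}_M,(j_{M,!})\Lambda)$ by pulling back along $p:\overline{\mathfrak{X}}_Q\to\overline{\mathfrak{X}}_M$.

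Step 3 is where the content of that second identification lies, and it fails. The fibre of $p$ is not profinite. As you say yourself in Step 2, it is $\varprojlim_{K_N}N(F)\backslash N(\mathbf{A}_F)/K_N=N(F)\backslash N(\mathbf{A}_F)$, an inverse limit of compact connected nilmanifolds along finite coverings, hence compact and connected. For $N=\mathbf{G}_a$ over $\mathbf{Q}$ it is the solenoid $\varprojlim_n\mathbf{R}/n\mathbf{Z}$. So ``$p$ is proper with profinite fibres, hence $p_\ast$ is exact'' is false. A symptom is that your argument never uses that $\Lambda$ is $\ell$-power torsion, yet the conclusion fails without it: with $\mathbf{Q}_\ell$-coefficients one has $\varinjlim_nH^1(\mathbf{R}/n\mathbf{Z},\mathbf{Q}_\ell)=\mathbf{Q}_\ell\neq0$, so $Rp_\ast\mathbf{Q}_\ell\neq\mathbf{Q}_\ell$.

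The missing input is the vanishing of $\varinjlim_{K_N}H^j(\overline{X}_{K_N},\Lambda)$ for $j\neq0$ and torsion $\Lambda$ (\cite{CN23}, Lemma 4.1.6). In the abelian case it holds because the covering $\mathbf{R}/\ell^mn\mathbf{Z}\to\mathbf{R}/n\mathbf{Z}$ induces multiplication by $\ell^m$ on $H^1$, which kills $H^1(-,\mathbf{Z}/\ell^m\mathbf{Z})$ in the colimit.

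The paper uses this vanishing as follows. The pullback $p^{\ast}$ gives a morphism from the inflated complex to $R\Gamma(\overline{\mathfrak{X}}_Q,(j_{Q,!})\Lambda)$ in $D^+_{\textnormal{sm}}(Q(\mathbf{A}_F^{\infty}),\Lambda)$. Both sides compute completed cohomology (\cite{CN23}, Lemma 2.1.7), so it suffices to run the Leray--Serre spectral sequence for $\overline{X}_{K_N}\to\overline{X}_{K_Q}\to\overline{X}_{K_M}$ with $K_Q=K_M\rtimes K_N$ and pass to the colimit over $K_N$. For the $j_!$ version, compactness of the fibres gives $j_{K_N,!}\Lambda=\Lambda$, as you also note.

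Your $Rp_\ast$ formulation can be repaired along the same lines. By proper base change and continuity of cohomology, $H^j(p^{-1}(x),\Lambda)=\varinjlim_{K_N}H^j(\overline{X}_{K_N},\Lambda)$. This only gives what you need once the torsion-coefficient vanishing above is supplied.
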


\begin{proof}
    We first prove that we have a commutative diagram
    \begin{equation}\label{G-to-Q}
        \begin{tikzcd}
	{R\Gamma(\overline{\mathfrak{X}}_{G,K_{G,S}}^Q,j_{G,K_{G,S},!}^Q\Lambda)} && {\textnormal{Ind}^{G^S\times K_{G,S}}_{Q^S\times K_{Q,S}}R\Gamma(\overline{\mathfrak{X}}_Q,j_{Q,!}\Lambda)} \\
	{R\Gamma(\overline{\mathfrak{X}}_{G,K_{G,S}}^Q,\Lambda)} && {\textnormal{Ind}^{G^S\times K_{G,S}}_{Q^S\times K_{Q,S}}R\Gamma(\overline{\mathfrak{X}}_Q,\Lambda)}
	\arrow["\sim", from=1-1, to=1-3]
	\arrow["{i_{c,G,K_{G,S}}^Q}", from=1-1, to=2-1]
	\arrow["{i_{c,Q}}", from=1-3, to=2-3]
	\arrow["\sim", from=2-1, to=2-3]
\end{tikzcd}
    \end{equation}
    in $D^+_{\textnormal{sm}}(G^S\times K_{G,S},\Lambda)$.
    
    We follow the proof of \cite{CN23}, Lemma 2.3.14. Consider the $Q^S\times K_{Q,S}$-equivariant maps
    $i_Q:\mathfrak{X}_Q\to\mathfrak{X}_{G,K_{G,S}}^Q$, $\overline{i}_Q:\overline{\mathfrak{X}}_Q\to\overline{\mathfrak{X}}_{G,K_{G,S}}^Q$ defined by the formula $x\mapsto (x,1)$. By base change (cf. \cite{KS94}, Proposition 2.5.11), we have $\overline{i}_Q^{\ast}\circ j_{G,K_{G,S},!}^Q=j_{Q,!}\circ i_Q^{\ast}$. 
    We obtain maps
    \begin{equation*}
        R\Gamma(\overline{\mathfrak{X}}_{G,K_{G,S}}^Q,(j_{G,K_{G,S},!}^Q)\Lambda)\to R\Gamma(\overline{\mathfrak{X}}_Q,(\overline{i}_Q^{\ast}j_{G,K_{G,S},!}^Q)\Lambda)=
    \end{equation*}
    \begin{equation*}
        R\Gamma(\overline{\mathfrak{X}}_Q,(j_{Q,!}i^{\ast}_Q)\Lambda)
    \end{equation*}
    in $D^+_{\textnormal{sm}}(Q^S\times K_{Q,S},\Lambda)$.
    An application of Frobenius reciprocity gives rise to the commutative diagram \ref{G-to-Q}.

    To see that the horizontal maps are isomorphisms, one can apply $\textnormal{Res}_{K_G}^{G^S\times K_{G,S}}$ to \ref{G-to-Q} to reduce the problem to checking that the analogously defined maps
    \begin{equation*}
        R\Gamma((\overline{\mathfrak{X}}_Q\times K_G)/K^1_Q,(j_{G,K_{G,S},!}^Q)\Lambda)\xrightarrow{\sim}\textnormal{Ind}_{K_Q^1}^{K_G}R\Gamma(\overline{\mathfrak{X}}_Q,(j_{Q,!})\Lambda)
    \end{equation*}
    are isomorphisms in $D^+_{\textnormal{sm}}(K_G,\Lambda)$. Here we used that by the prime-to-$S$ Iwasawa decomposition we have the identifications \ref{K_Q^1} and $\textnormal{Res}_{K_G}^{G^S\times K_{G,S}}\circ \textnormal{Ind}_{Q^S\times K_{Q,S}}^{G^S\times K_{G,S}}=\textnormal{Ind}_{K_{Q}^1}^{K_G}$.
    
    We then argue just as in the second part of the proof of \cite{CN23}, Lemma 2.3.14. More precisely, we apply their argument with the choices $X:=\overline{\mathfrak{X}}_Q$, $K_P:=K_{Q}^1$ and $K:=K_G$. Moreover, we run the argument for the sheaves $\Lambda,j_{K_{Q}^1,!}\Lambda\in \textnormal{Sh}(\overline{X}_{K_{Q}^1},\Lambda)$. In the non-compactly supported case it yields the desired isomorphism. In the compactly supported case we obtain that pullback along $\overline{i}_Q$ induces an isomorphism
    \begin{equation*}
        R\Gamma((\overline{\mathfrak{X}}_Q\times K_G)/K_Q^1,\phi_2^{\ast}(j_{K_Q^1,!}\Lambda))\cong \textnormal{Ind}_{K_Q^1}^{K_G}R\Gamma(\overline{\mathfrak{X}}_Q,\phi_1^{\ast}(j_{K_Q^1,!}\Lambda))
    \end{equation*}
    in $D^+_{\textnormal{sm}}(K_G,\Lambda)$ where we write $\phi_1$ and $\phi_2$ for the projections
\begin{equation*}
        \phi_1:\overline{\mathfrak{X}}_Q\to \overline{\mathfrak{X}}_Q/K_Q^1=\overline{X}_{K_Q^1}\textnormal{, and}
    \end{equation*}
    \begin{equation*}
        \phi_2:(\overline{\mathfrak{X}}_{Q}\times K_G)/K^1_Q\to \overline{X}_{K_Q^1},
    \end{equation*} respectively. To conclude, we note that by base change we have $\phi_1^{\ast}(j_{K_Q^1,!}\Lambda)=j_{Q,!}\Lambda$, and $\phi_2^{\ast}(j_{K_Q^1,!}\Lambda)= j_{G,K_{G,S},!}^Q\Lambda$.
    
To finish the proof it suffices to show the existence of isomorphisms
    \begin{equation}\label{inflation}
        R\Gamma(\overline{\mathfrak{X}}_Q,(j_{Q,!})\Lambda)\cong \textnormal{Inf}_{M(\mathbf{A}_{F}^{\infty})}^{Q(\mathbf{A}_{F}^{\infty})}R\Gamma(\overline{\mathfrak{X}}_M,(j_{M,!})\Lambda)
    \end{equation}
    in $D^+_{\textnormal{sm}}(Q(\mathbf{A}_{F}^{\infty}),\Lambda)$ compatible with the natural maps "forgetting the support".

    For cohomology without compact support, this is proved in \cite{CN23}, Lemma 4.1.6. The isomorphism is realised by pullback along the $Q(\mathbf{A}_{F}^{\infty})$-equivariant projection $\overline{\mathfrak{X}}_Q\to \overline{\mathfrak{X}}_M$. It suffices to check that the induced map is an isomorphism on cohomology. However, the cohomology of the source and target recover the corresponding completed cohomology groups (\cite{CN23}, Lemma 2.1.7). In particular, we can reduce the question to the vanishing of $\varinjlim_{K_N}H^j(\overline{X}_{K_N},\Lambda)$ for $j\neq 0$. Indeed, the reduction follows from the Leray--Serre spectral sequence associated with $\overline{X}_{K_N}\to \overline{X}_{K_Q}\to \overline{X}_{K_M}$ for good subgroups of $Q(\mathbf{A}_{F}^{\infty})$ of the form $K_Q=K_M\rtimes K_N$ applied to the sheaves $\Lambda$, and $j_{K_Q,!}\Lambda$ on $\overline{X}_{K_Q}$, respectively. Note that in the case of compactly supported cohomology, this reduction uses that the fibers are compact, giving $j_{K_N,!}\Lambda=\Lambda$. The claimed vanishing is proved in \textit{loc. cit.}, Lemma 4.1.6.

    Finally, the obtained isomorphisms are clearly compatible with natural maps forgetting the support.
\end{proof}

\subsection{The quasi-split unitary group and the general linear group}

\subsubsection{The quasi-split unitary group}\label{Sec2.3}
For the rest of the section, we restrict our setup to the groups of interest for this article, the quasi-split unitary group of rank $2n$ and the general linear group of rank $n$ appearing as its Levi subgroup. Let $n\geq 2$ be an integer and $F$ be an imaginary CM field. Denote by $F^+$ its maximal totally real subfield $F^+\leq F$. We write $c\in \textnormal{Gal}(F/F^+)$ for the complex conjugation on $F$. We define the $2n\times 2n$ matrix
$$J_n:=\begin{pmatrix}
0 & \Psi_n \\
-\Psi_n & 0
\end{pmatrix}$$
where $\Psi_n$ is the $n\times n$ matrix with $1$'s on the anti-diagonal and $0$'s elsewhere. We denote by $\widetilde{G}/\mathcal{O}_{F^+}$ the group scheme with $R$-points
\begin{equation*}
    \widetilde{G}(R)=\{g\in \textnormal{GL}_{2n}(R\otimes_{\mathcal{O}_{F^+}}\mathcal{O}_F)\mid {}^tgJ_ng^c=J_n\}
\end{equation*}
where $^t(-)$ denotes the transpose matrix. This is an $\mathcal{O}_{F^+}$-model of the quasi-split unitary group $U(n,n)/F^+$. Since for any finite place $v$ of $F$ we have an identification $\widetilde{G}_{\mathcal{O}_{F_v}}\cong \textnormal{GL}_{2n}$, $\widetilde{G}$ becomes reductive after base change to $\mathcal{O}_{F^+_{\Bar{v}}}$ for any finite place $\Bar{v}$ of $F^+$ that is unramified in $F$.

We will denote by $P\leq \widetilde{G}$ the so-called Siegel parabolic subgroup given by block upper triangular matrices with blocks of size $n\times n$. Write $P=G\ltimes U$ for its Levi decomposition where $G$ is the closed subgroup of block diagonal matrices. One can then identify $G$ with the group $\textnormal{Res}_{\mathcal{O}_F/\mathcal{O}_{F^+}}\textnormal{GL}_n$. Namely, write $(-)^{\ast}$ for the anti-involution $A^{\ast}=\psi_n^tA^c\psi_n^{-1}$ of $\textnormal{Res}_{\mathcal{O}_{F}/\mathcal{O}_{F^+}}\textnormal{GL}_n$. Then $P\leq \widetilde{G}$ can be identified with the subgroup of $\textnormal{Res}_{\mathcal{O}_F/\mathcal{O}_{F^+}}\textnormal{GL}_{2n}$ of matrices of the form
$$\begin{pmatrix}
A & B \\
C & D
\end{pmatrix} =\begin{pmatrix}
D^{-\ast} & B \\
0 & D
\end{pmatrix}$$
where $D$ and $B$ are points of $ \textnormal{Res}_{\mathcal{O}_F/\mathcal{O}_{F^+}}\textnormal{GL}_n$ so that $B^{\ast}=B$. The subgroup $G\leq P$ is then cut out by the equation $B=0$. In particular, the association $\begin{pmatrix}
D^{-\ast} & B \\
0 & D
\end{pmatrix}\mapsto D$ gives our identification $G\cong \textnormal{Res}_{\mathcal{O}_{F}/\mathcal{O}_{F^+}}\textnormal{GL}_n$.

From now on, we set $\widetilde{X}$ to be the symmetric space $X^{\widetilde{G}}$ and write $\widetilde{X}_{\widetilde{K}}$ for the locally symmetric space associated with a good subgroup $\widetilde{K}\leq \widetilde{G}(\mathbf{A}_{F^+}^{\infty})$. Furthermore, we write $X$ for the symmetric space $X^G$ and $X_K$ for the locally symmetric space attached to a good subgroup $K\leq G(\mathbf{A}^{\infty}_{F^+})=\textnormal{GL}_n(\mathbf{A}_F^{\infty})$. 

%We set $T\leq \widetilde{B}\leq \widetilde{G}$ to be the standard choice of maximal torus and Borel subgroup given by the diagonal and, respectively, the upper triangular matrices of $\widetilde{G}$. Note that $B=\widetilde{B}\cap G\leq G$ is given by the Borel subgroup of upper triangular matrices.

We note that for a place $\Bar{v}$ of $F^+$ splitting in $F$, a choice of place $v|\Bar{v}$ in $F$ gives a canonical isomorphism $\iota_v:\widetilde{G}(F^+_{\Bar{v}})\cong \textnormal{GL}_{2n}(F_v)$. Namely, noting the identification $F^+_{\Bar{v}}\otimes_{F^+}F\cong F_v\times F_{v^c}$, $\iota_v$ is given by the projection to the first factor of the standard inclusion $\widetilde{G}(F^+_{\Bar{v}})\subset \textnormal{GL}_{2n}(F_v)\times \textnormal{GL}_{2n}(F_{v^c})$. 

The inclusion
\begin{equation*}
    G(F^+_{\Bar{v}})=\textnormal{GL}_n(F_v)\times\textnormal{GL}_n(F_{v^c})\hookrightarrow\textnormal{GL}_{2n}(F_v)
\end{equation*}
under $\iota_v$ is given  by the formula
\begin{equation}\label{eq-unitary}
    (A,B)\mapsto \begin{pmatrix}
(\Psi_n {}^{t}B^{-1}\Psi_n)^c & 0 \\
0 & A
\end{pmatrix}.
\end{equation}

\subsubsection{Explicit Hecke operators}\label{sec2.4} We introduce the explicit Hecke operators that we will need later in this article. For each finite place $v$ of $F$, we fix a choice of a uniformiser $\varpi_v\in \mathcal{O}_{F_v}$. Consider a finite place $v$ of $F$, and an integer $1\leq i\leq n$. Denote by $T_{v,i}\in \mathcal{H}(\textnormal{GL}_n(F_v),\textnormal{GL}_n(\mathcal{O}_{F_v}))$ the Hecke operator
\begin{equation*}
    T_{v,i}=[\textnormal{GL}_n(\mathcal{O}_{F_v})\textnormal{diag}(\varpi_v,...,\varpi_v,1,...,1)\textnormal{GL}_n(\mathcal{O}_{F_v})]
\end{equation*}
corresponding to the diagonal matrix with $\varpi_v$ appearing in its diagonal exactly $i$ times. Consider the polynomial
\begin{equation*}
    P_v(X)=X^n-T_{v,1}X^{n-1}+...+(-1)^iq_v^{i(i-1)/2}T_{v,i}X^{n-i}+...
\end{equation*}
\begin{equation*}
    +q_v^{n(n-1)/2}T_{v,n}\in \mathcal{H}(\textnormal{GL}_n(F_v),\textnormal{GL}_n(\mathcal{O}_{F_v}))[X]
\end{equation*}
where $q_v=| \mathcal{O}_{F_v}/\varpi_v|$. Note that $P_v(X)$ evaluated on the Hecke eigenspace $\pi_v^{\textnormal{GL}_n(\mathcal{O}_{F_v})}$ for an unramified representation of $\textnormal{GL}_n(F_v)$ is exactly the characteristic polynomial of the Frobenius element acting on $\textnormal{rec}^T_{F_v}(\pi_v)$.

Let $\Bar{v}$ be a finite place of $F^+$ that is unramified in $F$ and $v$ be a choice of place of $F$ above it. For an integer $1\leq j \leq 2n$ denote by $\widetilde{T}_{v,j}\in \mathcal{H}(\widetilde{G}(F^+_{\Bar{v}}),\widetilde{G}(\mathcal{O}_{F^+_{\Bar{v}}}))\otimes_{\mathbf{Z}}\mathbf{Z}[q_{\Bar{v}}^{-1}]$ the Hecke operator denoted by $T_{G,v,j}$ in \cite{NT16}, Proposition-Definition 5.2. %Concretely, $q_v^{-j(2n-j)/2}\widetilde{T}_{v,j}$ is the image of the $i$th symmetric polynomial in $2n$ variables under the composition of the normalised Satake isomorphism and the dual map on Hecke algebras corresponding to the unramified endoscopic transfer from $\widetilde{G}(F^+_{\Bar{v}})$ to $\textnormal{GL}_{2n}(F_v)$. 
We then set
\begin{equation*}
    \widetilde{P}_v(X)=X^{2n}-\widetilde{T}_{v,1}X^{2n-1}+...+(-1)^jq_v^{j(j-1)/2}\widetilde{T}_{v,j}X^{2n-j}+...
\end{equation*}
\begin{equation*}
    +q_v^{2n(2n-1)/2}\widetilde{T}_{v,2n}\in \mathcal{H}(\widetilde{G}(F^+_{\Bar{v}}),\widetilde{G}(\mathcal{O}_{F^+_{\Bar{v}}}))\otimes_{\mathbf{Z}}\mathbf{Z}[q_{\Bar{v}}^{-1}][X].
\end{equation*}
We note that the evaluation of $\widetilde{P}_v(X)$ on $\pi_v^{\textnormal{GL}_{2n}(F_v)}$ is exactly the characteristic polynomial of the Frobenius element acting on $\textnormal{rec}^T_{F_v}(\pi_v)$ for $\pi_v$ the base change with respect to $F_v/F^{+}_{\Bar{v}}$ of an unramified representation $\sigma_{\Bar{v}}$ of $\widetilde{G}(F^+_{\Bar{v}})$.

To describe the effect of the unnormalised Satake transform (cf. end of \S\ref{sec2.2}) at unramified places we introduce the following notation. For a degree $d$ polynomial $f(X)$ with unit constant $a_0$, write $f^{\vee}(X):=a_0^{-1}X^df(X^{-1})$. It is the monic polynomial with zeroes given by the inverse of the zeroes of $f(X)$.
\begin{Prop}\label{Sataketransform}
Let $\Bar{v}$ be a place of $F^+$ that is unramified in $F$ and let $v\mid \Bar{v}$. Then the unnormalised Satake transform
\begin{equation*}
    \mathcal{S}:\mathcal{H}(\widetilde{G}(F^+_{\Bar{v}}),\widetilde{G}(\mathcal{O}_{F^+_{\Bar{v}}}))\to \mathcal{H}(G(F^+_{\Bar{v}}),G(\mathcal{O}_{F^+_{\Bar{v}}}))
\end{equation*}
maps $\widetilde{P}_v(X)$ to $P_v(X)q_v^{n(2n-1)}P_{v^c}^{\vee}(q_v^{1-2n}X)$.
\end{Prop}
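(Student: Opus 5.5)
Since $\mathcal{S}$ is a ring homomorphism and both sides are monic polynomials in $X$ with coefficients in (a localisation of) the spherical Hecke algebra $\mathcal{H}(G(F^+_{\Bar{v}}),G(\mathcal{O}_{F^+_{\Bar{v}}}))$, it suffices to compare them after evaluating on every unramified character of the diagonal torus. Concretely, via the Satake isomorphism, $\mathcal{H}(G(F^+_{\Bar{v}}),G(\mathcal{O}_{F^+_{\Bar{v}}}))\otimes\mathbf{C}$ embeds into functions on the complex points of the dual torus. So I would take an arbitrary unramified irreducible representation $\sigma=\sigma_v\otimes\sigma_{v^c}$ of $G(F^+_{\Bar{v}})=\textnormal{GL}_n(F_v)\times\textnormal{GL}_n(F_{v^c})$ and compute the scalar by which each coefficient of both polynomials acts on $\sigma^{G(\mathcal{O})}$. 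Injectivity of the Satake map then reduces the claim to an identity of characteristic polynomials.

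The key input is the standard compatibility of the unnormalised Satake transform with unnormalised parabolic induction. For $f\in\mathcal{H}(\widetilde{G},\widetilde{K})$, the action of $f$ on $(\textnormal{Ind}_{P(F^+_{\Bar{v}})}^{\widetilde{G}(F^+_{\Bar{v}})}\sigma)^{\widetilde{K}}$ equals the action of $\mathcal{S}(f)$ on $\sigma^{K_G}$. This follows from the Iwasawa decomposition and the definition of $r_M\circ r_P$ as restriction to $P$ followed by integration along $U$, and it requires no modulus correction precisely because both $\mathcal{S}$ and $\textnormal{Ind}$ are unnormalised. By the defining property of $\widetilde{P}_v$ recalled above (from \cite{NT16}, Proposition-Definition 5.2), the operator $\widetilde{P}_v(X)$ acts on the spherical vector of an unramified constituent $\widetilde{\sigma}$ of this induction by the characteristic polynomial of Frobenius on $\textnormal{rec}^T_{F_v}(\textnormal{BC}(\widetilde{\sigma}))$. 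Hence $\mathcal{S}(\widetilde{P}_v)$ acts on $\sigma^{K_G}$ by this characteristic polynomial.

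It remains to identify $\textnormal{BC}(\widetilde{\sigma})$, which is the main bookkeeping step. Under $\iota_v$ and the embedding \eqref{eq-unitary}, the induction corresponds to $\textnormal{Ind}_{P_{(n,n)}}^{\textnormal{GL}_{2n}(F_v)}(\sigma_{v^c}^{\vee,c}\otimes\sigma_v)$, where the first block is the contragredient of $\sigma_{v^c}$ transported by $c$ and $\Psi_n$-conjugation. Passing to normalised induction introduces $\delta_{P_{(n,n)}}^{-1/2}$; by Remark~\ref{ModulusRemark} this twists the first block by $|\det|^{-n/2}$ and the second by $|\det|^{n/2}$. Combining these twists with the Tate shift $|\det|^{(1-2n)/2}$ on $\textnormal{GL}_{2n}$ versus $|\det|^{(1-n)/2}$ on each $\textnormal{GL}_n$ gives the following. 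The Frobenius eigenvalues from the second block are exactly those of $\textnormal{rec}^T(\sigma_v)$, producing the factor $P_v(X)$. The eigenvalues from the first block are $q_v^{2n-1}$ times the inverses of the Frobenius eigenvalues of $\textnormal{rec}^T(\sigma_{v^c})$, where the exponent is fixed by the twist computation. A monic polynomial whose roots are $q_v^{2n-1}\beta_i^{-1}$, with $\beta_i$ the roots of $P_{v^c}$, is $q_v^{n(2n-1)}P_{v^c}^{\vee}(q_v^{1-2n}X)$, since the leading coefficient of $P^\vee_{v^c}(q_v^{1-2n}X)$ is $q_v^{-n(2n-1)}$. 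This yields the stated formula.

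I expect the main obstacle to be getting the exponent of $q_v$ right, since it depends on several conventions at once: the sign and normalisation in \eqref{eq-unitary}, the unnormalised versus normalised Satake transform, the convention in \cite{NT16} for $T_{G,v,j}$, and the Tate normalisation. To settle it, I would carry out the computation in the degenerate case $n=1$, that is $U(1,1)$ with Levi $\textnormal{Res}\,\textnormal{GL}_1$, checking it directly on the unramified character $|\cdot|^s$. I would then rely on multiplicativity across blocks for general $n$.
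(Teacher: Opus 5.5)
At split places your argument is a valid alternative to the paper's proof. The paper simply invokes the explicit formula for $\mathcal{S}$ in \cite{NT16}, Proposition-Definition 5.3, which is a Satake-parameter computation on the Hecke-algebra side. You instead verify the identity spectrally, using three ingredients: compatibility of the unnormalised $\mathcal{S}$ with unnormalised induction from $P$, the base-change characterisation of $\widetilde{P}_v$, and the normalisations of $\textnormal{rec}^T$.

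Your bookkeeping does give the right exponent once it is carried out. Twisting by $|\det|^s$ multiplies the $\textnormal{rec}$-eigenvalues of geometric Frobenius by $q_v^{-s}$. So an eigenvalue $\beta_i$ of $\textnormal{rec}^T(\sigma_{v^c})$ becomes $\beta_i^{-1}q_v^{(n-1)/2}$ for $\textnormal{rec}$ of the contragredient. It becomes $\beta_i^{-1}q_v^{(2n-1)/2}$ after the twist by $|\det|^{-n/2}$, and $q_v^{2n-1}\beta_i^{-1}$ after the Tate shift $|\det|^{(1-2n)/2}$ on $\textnormal{GL}_{2n}$. Likewise $\alpha_i\mapsto\alpha_i q_v^{(1-n)/2}q_v^{-n/2}q_v^{(2n-1)/2}=\alpha_i$.

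Your version explains where $q_v^{2n-1}$ comes from; the paper's citation is shorter. You should drop the proposed check via $n=1$ plus ``multiplicativity across blocks''. The shift comes from $\delta_{P_{(n,n)}}^{1/2}$ and from the mismatch of Tate shifts between $\textnormal{GL}_{2n}$ and $\textnormal{GL}_n$, and neither factors over blocks. So the case $n=1$, which gives the shift $q_v$, cannot determine the general exponent. The direct computation above is what is needed.

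The genuine omission is the inert case. The statement covers every $\Bar{v}$ unramified in $F$, but your argument uses $G(F^+_{\Bar{v}})=\textnormal{GL}_n(F_v)\times\textnormal{GL}_n(F_{v^c})$, $\iota_v$ and \eqref{eq-unitary}, all of which need $\Bar{v}$ to split. For inert $\Bar{v}$ the situation is different:
\begin{itemize}
\item $G(F^+_{\Bar{v}})=\textnormal{GL}_n(F_v)$ and $\widetilde{G}(F^+_{\Bar{v}})$ is a non-split unramified unitary group;
\item $v^c=v$ and $q_v=q_{\Bar{v}}^2$;
\item the base change is the unramified one defined through the $L$-homomorphism, so there is no $\iota_v$ along which to transport the induction.
\end{itemize}
To run your argument there you need two further inputs. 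First, unramified base change must be compatible with normalised induction from the Siegel parabolic: it should send the unramified constituent of $\textnormal{n-Ind}_P(\tau)$ to that of $\textnormal{n-Ind}_{P_{(n,n)}}^{\textnormal{GL}_{2n}(F_v)}(\tau^{c,\vee}\otimes\tau)$. This follows from compatibility of the Satake isomorphism with Levi subgroups, and of the base-change $L$-homomorphism with the Levi $L$-embeddings. Second, you need $\delta_P(\textnormal{diag}(D^{-\ast},D))=|\det D|_{F_v}^{-n}$, computed on the $n^2$-dimensional $F^+_{\Bar{v}}$-space of matrices $B$ with $B^{\ast}=B$. With these, the same arithmetic yields $P_v(X)q_v^{n(2n-1)}P_v^{\vee}(q_v^{1-2n}X)$. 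As written, your proof establishes only the split case. That case carries a density-one set of places and so suffices for Chebotarev-type applications, but the proposition as stated also covers inert places.
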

\begin{proof}
This can easily be checked using the formula given in \cite{NT16}, Proposition-Definition 5.3 for $\mathcal{S}$.
\end{proof}

\subsubsection{Ramified Hecke algebras}\label{sec2.6}
We set up some notation for the ramified Hecke algebras we will need in this article. 

Fix a rational prime $\ell$. Let $F$ be an imaginary CM field with maximal totally real subfield $F^+$ as in the previous subsection. We fix a choice of uniformiser $\varpi_v$ in $F_v$ for every finite place of $F$. From now on, for every finite place $\Bar{v}$ of $F^+$, we fix a preferred choice of finite place $\Tilde{v}|\Bar{v}$ in $F$.

For a finite set of finite places $\overline{S}$ of $F^+$, and a good subgroup $K\leq G(\mathbf{A}_{F^+}^{\infty})$, we set
\begin{equation*}
\mathbf{T}^{\overline{S}}_{K}:=\left(\otimes'_{\Bar{v}\notin \overline{S}}Z(\mathcal{H}(G(F^+_{\Bar{v}}),K_{\Bar{v}}))\right)\otimes_{\mathbf{Z}}\mathbf{Z}_{\ell}.
\end{equation*}
Similarly, for a good subgroup $\widetilde{K}\leq \widetilde{G}(\mathbf{A}_{F^+}^{\infty})$, we set
\begin{equation*}
\widetilde{\mathbf{T}}^{\overline{S}}_{\widetilde{K}}:=\left(\otimes'_{\Bar{v}\notin \overline{S}}Z(\mathcal{H}(\widetilde{G}(F^+_{\Bar{v}}),\widetilde{K}_{\Bar{v}}))\right)\otimes_{\mathbf{Z}}\mathbf{Z}_{\ell}.
\end{equation*}
In particular, if $\widetilde{K}$ is ${\overline{S}}$-decomposed with respect to $P=GU$, and $\widetilde{K}^{\overline{S}}=\widetilde{G}(\widehat{\mathcal{O}}_{F^+}^{\overline{S}})$, we have the unnormalised Satake transform
\begin{equation*}
\mathcal{S}:\widetilde{\mathbf{T}}^{\overline{S}}_{\widetilde{K}}\to \mathbf{T}_K^{\overline{S}}
\end{equation*}
for $K:=\widetilde{K}\cap G(\mathbf{A}_{F^+}^{\infty})$.
\begin{Def}\label{DefinitionOnPlaces}

\begin{enumerate}
    \item We say that a finite place $\Bar{v}$ of $F^+$ is \textit{nice} if the rational prime $p$ under $\Bar{v}$ is different from $\ell$ and splits in an imaginary quadratic subfield\footnote{In particular, if $F$ admits no imaginary quadratic subfield, then none of the finite places of $F$ are nice!} of $F$.
    \item Given a finite set $\overline{S}_{\textnormal{bad}}$ of finite places of $F^+$, we call it \textit{unconditional} if it contains all $\ell$-adic places and the following holds.
    \begin{itemize}
    \item Given a finite place $\Bar{v}$ of $F^+$ not lying in $\overline{S}_{\textnormal{bad}}$. Then either $\Bar{v}$ is nice or the residue characteristic $p$ of $\Bar{v}$ is unramified in $F$ and $\overline{S}_{\textnormal{bad}}$ contains no $p$-adic places.
\end{itemize}
    Given a subset of an unconditional set $\overline{S}_{\textnormal{ram}}\subset \overline{S}_{\textnormal{bad}}$, we say that the pair $(\overline{S}_{\textnormal{bad}},\overline{S}_{\textnormal{ram}})$ is \textit{allowable} if $\overline{S}_{\textnormal{ram}}$ only consists of nice places. We write $\overline{S}_{\textnormal{avoid}}:=\overline{S}_{\textnormal{bad}}\setminus\overline{S}_{\textnormal{ram}}$.
    \item We say that a good subgroup $\widetilde{K}\leq \widetilde{G}(\mathbf{A}_{F^+}^{\infty})$ is $(\overline{S}_{\textnormal{bad}},\overline{S}_{\textnormal{ram}})$-\textit{parahoric} for an allowable pair if the following are satisfied.
    
    \begin{itemize}
        \item It is decomposed with respect to $P=GU$ (i.e. $\widetilde{K}\cap P(\mathbf{A}_{F^+}^{\infty})=(\widetilde{K}\cap M(\mathbf{A}_{F^+}^{\infty}))\rtimes (\widetilde{K}\cap U(\mathbf{A}_{F^+}^{\infty}))$).
        \item We have $\widetilde{K}^{\overline{S}_{\textnormal{bad}}}=\widetilde{G}(\widehat{\mathcal{O}}_{F^+}^{\overline{S}_{\textnormal{bad}}})$.
        \item For every $\Bar{v}\in \overline{S}_{\textnormal{ram}}$, $\widetilde{K}_{\Bar{v}}$ admits an Iwahori decomposition with respect to $P_{F^+_{\Bar{v}}}=G_{F^+_{\Bar
    v}}U_{F^+_{\Bar{v}}}\leq\widetilde{G}_{F^+_{\Bar
    v}}$ of the form $\overline{U}^{m}_{\Bar{v}}K_{\Bar{v}}U_{\Bar{v}}^0$ for some integer $m\geq 1$ where $\overline{U}_{\Bar{v}}^m:=\ker(\overline{U}(\mathcal{O}_{F^+_{\Bar{v}}})\to \overline{U}(\mathcal{O}_{F^+_{\Bar{v}}}/\varpi_v^m))$ and $U^0_{\Bar
    {v}}:=U(\mathcal{O}_{F^+_{\Bar{v}}})$.
    \end{itemize} 
\end{enumerate}
\end{Def}

Consider an allowable pair $(\overline{S}_{\textnormal{bad}},\overline{S}_{\textnormal{ram}})$ of finite sets of finite places of $F^+$ and an $(\overline{S}_{\textnormal{bad}},\overline{S}_{\textnormal{ram})}$-parahoric subgroup $\widetilde{K}\leq \widetilde{G}(\mathbf{A}_{F^+}^{\infty})$. Consider a place $\bar{v}\in \overline{S}_{\textnormal{ram}}$ and write $\overline{U}^m_{\Bar{v}}K_{\Bar{v}}U^0_{\Bar{v}}$ for the Iwahori decomposition of $\widetilde{K}_{\Bar{v}}$. Note that the formalism of \cite{ACC23}, \textsection2.1.9 applies to $\widetilde{K}_{\Bar{v}}$. We introduce the submonoid of $\widetilde{K}_{\Bar{v}}$-positive elements
\begin{equation*}
    G^+_{\Bar{v}}:=\{g\in G(F^+_{\Bar{v}})\mid gU^0_{\Bar{v}}g^{-1}\subset U^0_{\Bar{v}}\textnormal{, }g^{-1}\overline{U}^m_{\Bar{v}}g\subset\overline{U}^m_{\Bar{v}}  \}=
\end{equation*}
\begin{equation*}
    \{g\in G(F^+_{\Bar{v}})\mid gU^0_{\Bar{v}}g^{-1}\subset U^0_{\Bar{v}}\textnormal{, }g^{-1}\overline{U}^1_{\Bar{v}}g\subset\overline{U}^1_{\Bar{v}}  \}
\end{equation*}
of $G(F^+_{\Bar{v}})$ in the sense of \textit{loc. cit.}

For $K_{\Bar{v}}\leq G^+_{\Bar{v}}$, we can form the subalgebra $\mathcal{H}(G^+_{\Bar{v}},K_{\Bar{v}})$ of $\mathcal{H}(G(F^+_{\Bar{v}}),K_{\Bar{v}})$ of functions supported on $G^+_{\Bar{v}}$. We define the injective map
\begin{equation*}
    t_{\Bar{v}}:\mathcal{H}(G^+_{\Bar{v}},K_{\Bar{v}})\otimes_{\mathbf{Z}}\mathbf{Z}_{\ell}\to \mathcal{H}(\widetilde{G}(F^+_{\Bar{v}}),\widetilde{K}_{\Bar{v}})\otimes_{\mathbf{Z}}\mathbf{Z}_{\ell},
\end{equation*}
\begin{equation*}
    [K_{\Bar{v}}gK_{\Bar{v}}]\mapsto \delta_{P}(g)[\widetilde{K}_{\Bar{v}}g\widetilde{K}_{\Bar{v}}]
\end{equation*}
where we use that the rational number $\delta_{P}(g)$ lies in $\mathbf{Z}_{(p)}\subset \mathbf{Z}_{\ell}$. Thanks to \cite{BK98}, Corollary 6.12 (see \cite{ACC23}, Lemma 2.1.12), $t_{\Bar{v}}$ is in fact an \textit{algebra homomorphism}.

For a finite place $v|\Bar{v}$ of $F$, write
\begin{equation*}
    u_v:=\iota_v^{-1}(\textnormal{diag}(\varpi_v,...,\varpi_v,1,...,1))\in \widetilde{G}(F^+_{\Bar{v}})
\end{equation*}
where, in the diagonal, the first $n$ entries are given by $\varpi_v$ and the rest are all $1$'s. Note that $u_v$ lies in the center of $G(F^+_{\Bar{v}})$ and is in fact a so-called strongly positive element in the sense of \cite{ACC23}, \textsection2.1.9. In particular, $[K_{\Bar{v}}u_vK_{\Bar{v}}]$ is a central element in $\mathcal{H}(G^+_{\Bar{v}},K_{\Bar{v}})$ and \cite{ACC23}, Lemma 2.1.13, (i) shows that
\begin{equation*}
    \mathcal{H}(G(F^+_{\Bar{v}}),K_{\Bar{v}})=\mathcal{H}(G^+_{\Bar{v}},K_{\Bar{v}})[[K_{\Bar{v}}u_vK_{\Bar{v}}]^{-1}].
\end{equation*}

We define the commutative $\mathbf{Z}_{\ell}$-algebras
\begin{equation*}
    \widetilde{\mathbf{T}}_{\widetilde{K}}^{(\overline{S}_{\textnormal{bad}},\overline{S}_{\textnormal{ram}}),+}:=\widetilde{\mathbf{T}}^{\overline{S}_{\textnormal{bad}}}_{\widetilde{K}}\otimes_{\mathbf{Z}}\left(\otimes_{\Bar{v}\in \overline{S}_{\textnormal{ram}}}Z(\mathcal{H}(G^+_{\Bar{v}},K_{\Bar{v}}))\right),
\end{equation*}
\begin{equation*}
    \widetilde{\mathbf{T}}_{\widetilde{K}}^{(\overline{S}_{\textnormal{bad}},\overline{S}_{\textnormal{ram}})}:=\widetilde{\mathbf{T}}^{\overline{S}_{\textnormal{bad}}}_{\widetilde{K}}\otimes_{\mathbf{Z}}\left(\otimes_{\Bar{v}\in \overline{S}_{\textnormal{ram}}}Z(\mathcal{H}(G(F^+_{\Bar{v}}),K_{\Bar{v}}))\right).
\end{equation*}
Note that we have an injective $\mathbf{Z}_{\ell}$-algebra homomorphism
\begin{equation*}
    T_{\overline{S}_{\textnormal{ram}}}:=(\otimes_{\Bar{v}\notin \overline{S}_{\textnormal{bad}}}\textnormal{id})\otimes(\otimes_{\Bar{v}\in \overline{S}_{\textnormal{ram}}}t_{\Bar{v}}):\widetilde{\mathbf{T}}_{\widetilde{K}}^{(\overline{S}_{\textnormal{bad}},\overline{S}_{\textnormal{ram}}),+}\hookrightarrow \widetilde{\mathbf{T}}_{\widetilde{K}}^{\overline{S}_{\textnormal{avoid}}}.
\end{equation*}
Moreover, we have
\begin{equation*}
    \widetilde{\mathbf{T}}_{\widetilde{K}}^{(\overline{S}_{\textnormal{bad}},\overline{S}_{\textnormal{ram}}),+}[[K_{\Bar{v}}u_{\Tilde{v}}K_{\Bar{v}}]^{-1}]_{\overline{v}\in \overline{S}_{\textnormal{ram}}}=\widetilde{\mathbf{T}}_{\widetilde{K}}^{(\overline{S}_{\textnormal{bad}},\overline{S}_{\textnormal{ram}})}.
\end{equation*}
Finally, we extend the unnormalised Satake transform $\mathcal{S}:\widetilde{\mathbf{T}}_K^{\overline{S}_{\textnormal{bad}}}\to \mathbf{T}_K^{\overline{S}_{\textnormal{bad}}}$ to
\begin{equation*}
    \mathcal{S}_{\overline{S}_{\textnormal{ram}}}^{(+)}:\widetilde{\mathbf{T}}_{\widetilde{K}}^{(\overline{S}_{\textnormal{bad}},\overline{S}_{\textnormal{ram}}),(+)}\xrightarrow{\mathcal{S}\otimes(\otimes_{\Bar{v}\in \overline{S}_{\textnormal{ram}}}id)}\mathbf{T}_{K}^{\overline{S}_{\textnormal{avoid}}}.
\end{equation*}

\vspace{5mm}

We can now introduce the various faithful Hecke algebras we will need. In general, given $\mathbf{T}$ any of the abstract Hecke algebras discussed above, $C^{\bullet}\in D^+(\Lambda)$ for some $\mathbf{Z}_{\ell}$-algebra $\Lambda$ and a homomorphism $\mathbf{T}\to \textnormal{End}_{D^+(\Lambda)}(C^{\bullet})$, we write $\mathbf{T}(C^{\bullet}):=\textnormal{im}(\mathbf{T}\to \textnormal{End}_{D^+(\Lambda)}(C^{\bullet}))$. We also introduce some abbreviations.

Let $m\geq 1$ be an integer, $\overline{S}$ be a finite set of finite places of $F^+$, and $ K\leq G(\mathbf{A}_{F^+}^{\infty})$ be a good subgroup. We write
\begin{equation*}
    \mathbf{T}^{\overline{S}}(K,m):=\mathbf{T}^{\overline{S}}_{K}(R\Gamma(X_{K},\mathbf{Z}/\ell^m\mathbf{Z})),
\end{equation*}
\begin{equation*}
    \mathbf{T}_c^{\overline{S}}(K,m):=\mathbf{T}^{\overline{S}}_{K}(R\Gamma_c(X_{K},\mathbf{Z}/\ell^m\mathbf{Z})),
\end{equation*}
\begin{equation*}
    \mathbf{T}_{\partial}^{\overline{S}}(K,m):=\mathbf{T}^{\overline{S}}_{K}(R\Gamma(\partial X_{K},\mathbf{Z}/\ell^m\mathbf{Z}))\textnormal{, and}
\end{equation*}
\begin{equation*}
    \mathbf{T}^{\overline{S}}_!(K,m):=\mathbf{T}^{\overline{S}}_{K}/\textnormal{Ann}_{\mathbf{T}^{\overline{S}}_{K}}(i_{c,K,m})
\end{equation*}
where in the last formula $i_{c,K,m}$ is the $\mathbf{T}^{\overline{S}}_{K}$-equivariant map $R\Gamma_c(X_{K},\mathbf{Z}/\ell^m\mathbf{Z})\to R\Gamma(X_{K},\mathbf{Z}/\ell^m\mathbf{Z})$ in $D^+(\mathbf{Z}/\ell^m\mathbf{Z})$ that forgets the support and we act on it by $\mathbf{T}^{\overline{S}}_{K}$ on the left (or equivalently on the right).

For a good subgroup $ \widetilde{K}\leq \widetilde{G}(\mathbf{A}_{F^+}^{\infty})$, completely analogous definitions give rise to the faithful Hecke algebras $\widetilde{\mathbf{T}}_{?}^{\overline{S}}(\widetilde{K},m)$ where $?\in \{\varnothing,c,\partial,!\}$.

Moreover, if $(\overline{S}_{\textnormal{bad}}:=\overline{S},\overline{S}_{\textnormal{ram}})$ is an allowable pair of finite sets of finite places of $F^+$, and $\widetilde{K}\leq \widetilde{G}(\mathbf{A}_{F^+}^{\infty})$ is an $(\overline{S}_{\textnormal{bad}},\overline{S}_{\textnormal{ram}})$-parahoric subgroup, then we also introduce $\widetilde{\mathbf{T}}^{(\overline{S}_{\textnormal{bad}},\overline{S}_{\textnormal{ram}}),(+)}_{?}(\widetilde{K},m)$
where $?\in \{\varnothing,c,\partial,!\}$ in a completely analogous manner.

\subsubsection{The Borel--Serre boundary for $\widetilde{G}$}
Using Lemma~\ref{BorelSerreLemma}, we relate the boundary cohomology of $\widetilde{G}$ to the cohomology of $G$-locally symmetric spaces.

\begin{Lemma}\label{ImplicationOfBSLemma}
    Let $(\overline{S}_{\textnormal{bad}},\overline{S}_{\textnormal{ram}})$ be an allowable pair, and $K\leq G(\mathbf{A}_{F^+}^{\infty})$ be a good subgroup such that $K^{\overline{S}_{\textnormal{bad}}}=G(\widehat{\mathcal{O}}_{F^+}^{\overline{S}_{\textnormal{bad}}})$. Let $\widetilde{K}\leq \widetilde{G}(\mathbf{A}_{F^+}^{\infty})$ be an $(\overline{S}_{\textnormal{bad}},\overline{S}_{\textnormal{ram}})$-parahoric subgroup such that $\widetilde{K}\cap G(\mathbf{A}_{F^+}^{\infty})=K$. Then, for every subset $\overline{S}\subset \overline{S}_{\textnormal{ram}}$, there is a commutative $\widetilde{\mathbf{T}}^{\overline{S}_{\textnormal{bad}}}_{\widetilde{K}}$-equivariant diagram
    \begin{equation*}
        \begin{tikzcd}
	{\textnormal{Ind}_{P_{\overline{S}}}^{\widetilde{G}_{\overline{S}}}R\Gamma\left(K^{\overline{S}},\pi^G_c(\mathbf{Z}/\ell^m\mathbf{Z})\right)} && {\textnormal{Ind}_{P_{\overline{S}}}^{\widetilde{G}_{\overline{S}}}R\Gamma\left(K^{\overline{S}},\pi^G(\mathbf{Z}/\ell^m\mathbf{Z})\right)} \\
	{R\Gamma\left(\widetilde{K}^{\overline{S}},R\Gamma(\overline{\mathfrak{X}}_{\widetilde{G}}^{P},j_!(\mathbf{Z}/\ell^m\mathbf{Z}))\right)} && {R\Gamma\left(\widetilde{K}^{\overline{S}},R\Gamma(\overline{\mathfrak{X}}_{\widetilde{G}}^{P},\mathbf{Z}/\ell^m\mathbf{Z})\right)}
	\arrow["{i_c}", from=1-1, to=1-3]
	\arrow[from=1-1, to=2-1]
	\arrow[from=1-3, to=2-3]
	\arrow["{i_c}"', from=2-1, to=2-3]
\end{tikzcd}
    \end{equation*}
    in $D^+_{\textnormal{sm}}(\widetilde{G}_{\overline{S}},\mathbf{Z}/\ell^m\mathbf{Z})$ with the vertical maps realising the source as a $\widetilde{\mathbf{T}}^{\overline{S}_{\textnormal{bad}}}_{\widetilde{K}}$-equivariant direct summand\footnote{For the formulation of what this means, see for instance the discussion below \cite{ACC23}, Theorem 4.2.1.} of the target where, by abuse of notation, $j$ denotes the open inclusions induced by $\mathfrak{X}_{\widetilde{G}}\hookrightarrow \overline{\mathfrak{X}}_{\widetilde{G}}$ and the horizontal maps are the maps "forgetting the support".
\end{Lemma}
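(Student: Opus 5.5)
The plan is to deduce the statement from Lemma~\ref{BorelSerreLemma} applied to $\widetilde{G}$ and its Siegel parabolic $P=G\ltimes U$, with the set of places $S:=\overline{S}_{\textnormal{bad}}\setminus\overline{S}$, after taking derived $\widetilde{K}^{\overline{S}}$-invariants. First I will check the hypotheses. For $\Bar{v}\notin \overline{S}_{\textnormal{bad}}$, unconditionality (and $\ell$-adic places lying in $\overline{S}_{\textnormal{bad}}$) shows that $\widetilde{G}_{F^+_{\Bar{v}}}$ is unramified, hence reductive over $\mathcal{O}_{F^+_{\Bar{v}}}$, and $\widetilde{K}_{\Bar{v}}$ is hyperspecial. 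Lemma~\ref{BorelSerreLemma} then gives, in $D^+_{\textnormal{sm}}(\widetilde{G}^{S}\times\widetilde{K}_{S},\mathbf{Z}/\ell^m\mathbf{Z})$, a commutative square identifying $R\Gamma(\overline{\mathfrak{X}}^P_{\widetilde{G},\widetilde{K}_S},j_!\Lambda)\to R\Gamma(\overline{\mathfrak{X}}^P_{\widetilde{G},\widetilde{K}_S},\Lambda)$ with $\textnormal{Ind}_{P^S\times K_{P,S}}^{\widetilde{G}^S\times\widetilde{K}_S}$ of $i_{c,G}\colon\pi^G_c\to\pi^G$, where $\pi^G$ is inflated along $P\to G$. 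Since $\overline{S}\cap S=\varnothing$, the group $\widetilde{G}^S$ contains $\widetilde{G}_{\overline{S}}$. By \ref{decomposition}, $\overline{\mathfrak{X}}^P_{\widetilde{G},\widetilde{K}_S}$ is the $g_1=1$ component of $\overline{\mathfrak{X}}^P_{\widetilde{G}}$, an open and closed $\widetilde{G}^S\times\widetilde{K}_S$-stable piece. Its cohomology, with either support condition, is therefore an equivariant direct summand of that of $\overline{\mathfrak{X}}_{\widetilde{G}}^P$, compatibly with $i_c$.

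Next I take derived invariants under $\widetilde{K}^{\overline{S}}=\widetilde{K}^{\overline{S}_{\textnormal{bad}}}\times\widetilde{K}_S$. I will split $\textnormal{Ind}_{P^S}^{\widetilde{G}^S}=\textnormal{Ind}_{P_{\overline{S}}}^{\widetilde{G}_{\overline{S}}}\circ\textnormal{Ind}^{\widetilde{G}^{\overline{S}_{\textnormal{bad}}}}_{P^{\overline{S}_{\textnormal{bad}}}}$ and then use the Mackey/Iwasawa decomposition $\widetilde{G}^{\overline{S}_{\textnormal{bad}}}=P^{\overline{S}_{\textnormal{bad}}}\widetilde{K}^{\overline{S}_{\textnormal{bad}}}$, together with $\widetilde{K}\cap P=K\ltimes K_U$ from the parahoric decomposition. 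This gives
\[
R\Gamma\bigl(\widetilde{K}^{\overline{S}},\textnormal{Ind}\,\textnormal{Inf}\,\pi^G\bigr)\cong\textnormal{Ind}_{P_{\overline{S}}}^{\widetilde{G}_{\overline{S}}}R\Gamma\bigl(K^{\overline{S}}\ltimes K_U^{\overline{S}},\textnormal{Inf}\,\pi^G\bigr)\cong\textnormal{Ind}_{P_{\overline{S}}}^{\widetilde{G}_{\overline{S}}}R\Gamma\bigl(K^{\overline{S}},\pi^G\bigr).
\]
The last isomorphism holds because $K_U$ is pro-$p$ with $p\neq\ell$, so it acts trivially and has no higher cohomology. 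The same computation applies to $\pi^G_c$. Functoriality of all these maps in the coefficient sheaf gives commutativity of the square with the two $i_c$'s.

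The main obstacle is $\widetilde{\mathbf{T}}^{\overline{S}_{\textnormal{bad}}}_{\widetilde{K}}$-equivariance, together with making sure the direct-summand splitting is Hecke-stable. The Hecke action on the summand should be through the unnormalised Satake transform $\mathcal{S}=r_M\circ r_P$. To prove this I would follow \cite{NT16}, \S2.2 and \cite{ACC23}, Theorem 4.2.1. The restriction $r_P$ arises from the Iwasawa step, and integration along $U$ arises from the $K_U^{\overline{S}}$-invariants step. The splitting is built $\widetilde{G}^{\overline{S}_{\textnormal{bad}}}$-equivariantly at the level of infinite-level sheaves, since the components in \ref{decomposition} are indexed only by the $S$-places. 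Hence the spherical Hecke operators away from $\overline{S}_{\textnormal{bad}}$ preserve both the summand and its complement. This is the step I would write out most carefully, checking compatibility of the Mackey isomorphism with the convolution action.
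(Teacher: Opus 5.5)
Your first step matches the paper: you apply Lemma~\ref{BorelSerreLemma} to $(\widetilde{G},P,G)$ with $S=\overline{S}_{\textnormal{bad}}\setminus\overline{S}$ and split off the $g_1=1$ component of \ref{decomposition}. However, the derived-invariants computation that follows contains a false step. You assert $R\Gamma(K^{\overline{S}}\ltimes K_U^{\overline{S}},\textnormal{Inf}\,\pi^G)\cong R\Gamma(K^{\overline{S}},\pi^G)$ on the grounds that $K_U$ is pro-$p$ with $p\neq\ell$. That holds only for the factor of $K_U^{\overline{S}}$ at places outside $\overline{S}_{\textnormal{bad}}$.

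The set $T:=\overline{S}_{\textnormal{bad}}\setminus\overline{S}$ always contains every $\ell$-adic place of $F^+$: these lie in $\overline{S}_{\textnormal{bad}}$, whereas $\overline{S}\subset\overline{S}_{\textnormal{ram}}$ consists of nice places. At such a place, $\widetilde{K}_{U,\bar{v}}$ is an open compact subgroup of the vector group $U(F^+_{\bar{v}})$, hence isomorphic to $\mathbf{Z}_\ell^d$ with $d>0$. Its $\mathbf{Z}/\ell^m\mathbf{Z}$-cohomology is an exterior algebra, nonzero in positive degrees. So the isomorphism you claim is false, and the ``direct summand'' in the statement does not come from the Mackey decomposition alone.

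The missing idea is that the degree-$0$ part of $R\Gamma(\widetilde{K}_{U,T},\mathbf{Z}/\ell^m\mathbf{Z})$ still splits off equivariantly. Write $\pi:=\textnormal{Ind}_{P_{\overline{S}}}^{\widetilde{G}_{\overline{S}}}R\Gamma(K^{\overline{S}_{\textnormal{bad}}},\pi^G_{(c)}(\mathbf{Z}/\ell^m\mathbf{Z}))$. The paper proceeds in three steps.

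First, it identifies $R\Gamma(K_T,R\Gamma(\widetilde{K}_{U,T},\textnormal{Inf}\,\pi))$ with $R\Gamma(K_T,\pi\otimes^{\mathbf{L}}R\Gamma(\widetilde{K}_{U,T},\mathbf{Z}/\ell^m\mathbf{Z}))$. For this it tensors a bounded flat resolution of $\pi$ (available because $\pi$ has bounded Tor-dimension) with an injective resolution of the trivial representation. Lazard together with \cite{Eme10b}, Proposition 2.1.3 shows the resulting terms are still injective for $\widetilde{K}_{U,T}$.

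Second, it uses Lemma~\ref{DirectSummandLemma}: the map $\mathbf{Z}/\ell^m\mathbf{Z}[0]\to R\Gamma(\widetilde{K}_{U,T},\mathbf{Z}/\ell^m\mathbf{Z})$ is split in $D^+_{\textnormal{sm}}(K_T,\mathbf{Z}/\ell^m\mathbf{Z})$, i.e.\ $K_T$-equivariantly. The retraction is induced by the inclusion of $\widetilde{K}_{U,T}$-invariants into the underlying representation. This equivariance is exactly what lets the splitting survive $R\Gamma(K_T,-)$.

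Third, the K\"unneth formula exhibits $\textnormal{Ind}_{P_{\overline{S}}}^{\widetilde{G}_{\overline{S}}}R\Gamma(K^{\overline{S}},\pi^G_{(c)}(\mathbf{Z}/\ell^m\mathbf{Z}))$ as a summand. This is compatible with $i_c$ by functoriality in the coefficients, and it is stable under $\widetilde{\mathbf{T}}^{\overline{S}_{\textnormal{bad}}}_{\widetilde{K}}$ because the splitting only involves places in $T$.

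With this replacing your isomorphism, your outline proves the lemma: the Mackey summand composed with this second summand gives the required direct summand.
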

\begin{proof}
    We apply Lemma~\ref{BorelSerreLemma} with $G:=\widetilde{G}$, $Q:=P$, $M:=G$ and $S:=\overline{S}_{\textnormal{bad}}\setminus \overline{S}$.
    Applying $R\Gamma(\widetilde{K}^{\overline{S}_{\textnormal{bad}}},-)$ to the members of the obtained diagram, combining it with the Iwasawa decomposition away from $\overline{S}_{\textnormal{bad}}$ and passing to the direct summand corresponding to the representative $1$ in the Mackey formula for $\textnormal{Res}_{\widetilde{K}_{\overline{S}_{\textnormal{bad}}\setminus \overline{S}}}\circ \textnormal{Ind}_{P(\mathcal{O}_{F^+,\overline{S}_{\textnormal{bad}}\setminus\overline{S}})}^{\widetilde{G}(\mathcal{O}_{F^+,\overline{S}_{\textnormal{bad}}\setminus\overline{S}})}$ results in
    \begin{equation*}
        \textnormal{Ind}_{P_{\overline{S}}\times \widetilde{K}_{P,\overline{S}_{\textnormal{bad}}\setminus \overline{S}}}^{\widetilde{G}_{\overline{S}}\times \widetilde{K}_{\overline{S}_{\textnormal{bad}}\setminus \overline{S}}}R\Gamma\left(K^{\overline{S}_{\textnormal{bad}}},\pi_{(c)}^{G}(\mathbf{Z}/\ell^m\mathbf{Z})\right)\in D^+_{\textnormal{sm}}(\widetilde{G}_{\overline{S}}\times \widetilde{K}_{\overline{S}_{\textnormal{bad}}\setminus \overline{S}} ,\widetilde{\mathbf{T}}^{\overline{S}_{\textnormal{bad}}}_{\widetilde{K}}).
    \end{equation*}
Taking $\widetilde{K}_{\overline{S}_{\textnormal{bad}}\setminus \overline{S}}$-invariants, we obtain
\begin{equation}\label{eq_uglyformula1}
    R\Gamma\Bigg(K_{\overline{S}_{\textnormal{bad}}\setminus \overline{S}},R\Gamma\bigg(\widetilde{K}_{U,\overline{S}_{\textnormal{bad}}\setminus \overline{S}},\textnormal{Inf}_{G^{\overline{S}}}^{P^{\overline{S}}}\Big(\textnormal{Ind}_{P_{\overline{S}}}^{\widetilde{G}_{\overline{S}}}R\Gamma\left(K^{\overline{S}_{\textnormal{bad}}},\pi_{(c)}^G(\mathbf{Z}/\ell^m\mathbf{Z})\right)\Big)\bigg)\Bigg).
\end{equation}
We claim that this can be identified with
    \begin{equation}\label{eq_uglyformula2}
        \textnormal{Ind}_{P_{\overline{S}}}^{\widetilde{G}_{\overline{S}}}R\Gamma\left(K^{\overline{S}},\pi_{(c)}^G(\mathbf{Z}/\ell^m\mathbf{Z})\otimes_{\mathbf{Z}/\ell^m\mathbf{Z}}^{\mathbf{L}} R\Gamma(\widetilde{K}_{U,\overline{S}_{\textnormal{bad}}\setminus \overline{S}},\mathbf{Z}/\ell^m\mathbf{Z})\right).
    \end{equation}
To see this, write $T=\overline{S}_{\textnormal{bad}}\setminus \overline{S}$ and $\pi:=\textnormal{Ind}_{P_{\overline{S}}}^{\widetilde{G}_{\overline{S}}}R\Gamma\left(K^{\overline{S}_{\textnormal{bad}}},\pi_{(c)}^G(\mathbf{Z}/\ell^m\mathbf{Z})\right)\in D^+_{\textnormal{sm}}(\widetilde{G}_{\overline{S}}\times G_{T},\mathbf{Z}/\ell^m\mathbf{Z})$. Note that $\pi$ has bounded Tor-dimension (cf. \cite{CN23}, Remark 2.1.10) and so it admits a bounded $\mathbf{Z}/\ell^m\mathbf{Z}$-flat resolution $\mathcal{F}^{\bullet}\to \pi$. Let $\mathbf{Z}/\ell^m\mathbf{Z}[0]\to \mathcal{I}^{\bullet}$ be an injective resolution of the trivial representation of $\widetilde{G}_{\overline{S}}\times P_{T}$. Then $\textnormal{Inf}_{G_T}^{P_T}\mathcal{F}^{\bullet}\otimes_{\mathbf{Z}/\ell^m\mathbf{Z}}\mathcal{I}^{\bullet}$ is a resolution of $\textnormal{Inf}_{G_T}^{P_T}(\pi)$ that is bounded from below. Moreover, by a theorem of Lazard, the members of the complex are direct limits of injective $K_U$-representations. Therefore, they are injectives themselves by \cite{Eme10b}, Proposition 2.1.3. In particular, we obtain that \ref{eq_uglyformula1} is computed by
\begin{equation*}
    R\Gamma\Bigg(K_{\overline{S}_{\textnormal{bad}}\setminus \overline{S}},\Gamma\bigg(\widetilde{K}_{U,\overline{S}_{\textnormal{bad}}\setminus \overline{S}},\textnormal{Inf}_{G_T}^{P_T}\mathcal{F}^{\bullet}\otimes_{\mathbf{Z}/\ell^m\mathbf{Z}}\mathcal{I}^{\bullet}\bigg)\Bigg)=
\end{equation*}
\begin{equation*}
    R\Gamma\Bigg(K_{\overline{S}_{\textnormal{bad}}\setminus \overline{S}},\mathcal{F}^{\bullet}\otimes_{\mathbf{Z}/\ell^m\mathbf{Z}}\Gamma\bigg(\widetilde{K}_{U,\overline{S}_{\textnormal{bad}}\setminus \overline{S}},\mathcal{I}^{\bullet}\bigg)\Bigg)
\end{equation*}
which is exactly \ref{eq_uglyformula2}.
    
By Lemma~\ref{DirectSummandLemma} and the Künneth formula for group cohomology complexes, the complexes \ref{eq_uglyformula2} admit
    \begin{equation*}
        \textnormal{Ind}_{P_{\overline{S}}}^{\widetilde{G}_{\overline{S}}}R\Gamma\left(K^{\overline{S}},\pi_{(c)}^G(\mathbf{Z}/\ell^m\mathbf{Z})\right)
    \end{equation*}
    as $\widetilde{\mathbf{T}}^{\overline{S}_{\textnormal{bad}}}_{\widetilde{K}}$-equivariant direct summands in $D^+_{\textnormal{sm}}(\widetilde{G}_{\overline{S}},\mathbf{Z}/\ell^m\mathbf{Z})$, compatible with the maps $i_c$ forgetting the support.
\end{proof}
\begin{Lemma}\label{DirectSummandLemma}
    Let $\Bar{v}$ be a finite place of $F^+$ and $K_P\leq P(\mathcal{O}_{F^+_{\Bar{v}}})$ be a compact open subgroup admitting a semidirect product decomposition  $K_G\rtimes K_U$ with $K_G=K_P\cap G(F^{+}_{\Bar{v}})$, and $K_U=K_P\cap U(F^+_{\Bar{v}})$. Then the complex
    \begin{equation*}
        R\Gamma(K_U, \mathbf{Z}/\ell^m\mathbf{Z})
    \end{equation*}
    in $D^+_{\textnormal{sm}}(K_G,\mathbf{Z}/\ell^m\mathbf{Z})$
    admits the trivial representation $\mathbf{Z}/\ell^m\mathbf{Z}$ in degree $0$ as a direct summand.
\end{Lemma}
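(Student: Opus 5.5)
The plan is to split into the cases $\ell \neq p$ and $\ell = p$, where $p$ is the residue characteristic of $\Bar{v}$. In the first case the complex is trivial; in the second we produce a $K_G$-equivariant splitting of the augmentation.

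\emph{Case $\ell\neq p$.} Here $K_U$ is a pro-$p$ group. Since $\ell$ is invertible on $K_U$, taking $K_U$-invariants is exact on smooth $\mathbf{Z}/\ell^m\mathbf{Z}[K_U]$-modules. Indeed, averaging over finite quotients $K_U/K_U'$, whose orders are powers of $p$ and hence units, gives a projector onto the invariants. It follows that $R\Gamma(K_U,\mathbf{Z}/\ell^m\mathbf{Z})\simeq \mathbf{Z}/\ell^m\mathbf{Z}[0]$ in $D^+_{\textnormal{sm}}(K_G,\mathbf{Z}/\ell^m\mathbf{Z})$, and the claim holds trivially. I expect this to be the case relevant for $\Bar{v}\in\overline{S}_{\textnormal{ram}}$, which are nice places and so have $p\neq\ell$. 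But Lemma~\ref{ImplicationOfBSLemma} uses $T=\overline{S}_{\textnormal{bad}}\setminus\overline{S}$, which may contain $\ell$-adic places, so the general case is needed.

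\emph{Case $\ell=p$.} We want the augmentation $\mathbf{Z}/\ell^m\mathbf{Z}[0]\to R\Gamma(K_U,\mathbf{Z}/\ell^m\mathbf{Z})$, given by inclusion of constants in degree $0$, to have a $K_G$-equivariant retraction in the derived category. Since $U$ is abelian (it consists of the matrices with $B^\ast=B$), $K_U$ is a compact open subgroup of the $\mathbf{Q}_p$-vector space $U(F^+_{\Bar v})$. The first step is to shrink $K_U$ if necessary (see the obstacle below) so that $K_U\cong\mathbf{Z}_p^d$ is $K_G$-stable. The second step is to write down an explicit $K_G$-equivariant model: for $K_U\cong \mathbf{Z}_p^d$, the continuous cochain complex is quasi-isomorphic, via the Koszul or Lazard description, to $\bigwedge^\bullet \Hom(K_U,\mathbf{Z}/\ell^m\mathbf{Z})$ with zero differential. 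This identification is functorial in automorphisms of $K_U$, so it is $K_G$-equivariant. The complex is therefore formal, $\bigoplus_i H^i[-i]$, and degree $0$ is a direct summand.

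To avoid relying on Lazard's formality, an alternative is to use the $K_G$-equivariant truncation triangle
\[
\tau_{\le 0}\to R\Gamma\to\tau_{\ge1}.
\]
Here $\tau_{\le0}=\mathbf{Z}/\ell^m\mathbf{Z}[0]$. Splitting this triangle amounts to producing a $K_G$-equivariant map $R\Gamma(K_U,\mathbf{Z}/\ell^m\mathbf{Z})\to\mathbf{Z}/\ell^m\mathbf{Z}[0]$ restricting to the identity in degree $0$. A clean way to get this is a transfer or trace argument. Choose a central element $z$ of $G$, for instance a suitable power of $u_v$, which acts on $K_U$ by multiplication by a scalar $p^{k}$. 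Then $z$ acts on $H^i(K_U,\mathbf{Z}/\ell^m)=\bigwedge^i\Hom(K_U,\mathbf{Z}/\ell^m)$ by a power of $p^{k}$ when $i>0$, so this action is nilpotent on $H^{>0}$ and the identity on $H^0$. The endomorphism $z^N$ of $R\Gamma$ commutes with $K_G$ because $z$ is central. For $N$ large it kills $H^{>0}$, so $z^N$, acting on the bounded complex, gives an idempotent-up-to-nilpotent endomorphism. Lifting it to an idempotent (possible since the relevant endomorphism ring is finite over $\mathbf{Z}/\ell^m$) yields the $K_G$-equivariant projection onto $H^0$.

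The main obstacle is the equivariance and compatibility bookkeeping. Does the splitting exist in $D^+_{\textnormal{sm}}(K_G,\cdot)$ rather than only after forgetting the $K_G$-action, and does an element like $z$ with $zK_Uz^{-1}\subset K_U$ really act on the given $K_U$? Conjugation by $z$ maps $K_U$ into a subgroup, inducing a corestriction-type endomorphism rather than an automorphism. I would first try the formality route for uniform abelian $K_U$, since $K_G$ acts by group automorphisms of $K_U$ and the Koszul identification is natural. I would fall back on the truncation-splitting argument only if naturality fails.
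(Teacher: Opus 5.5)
Your treatment of $\ell\neq p$ is correct, and you are right that $\ell=p$ cannot be avoided: the lemma is applied at the places of $T=\overline{S}_{\textnormal{bad}}\setminus\overline{S}$, which include the $\ell$-adic ones.

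\textbf{The gap is in your preferred argument for $\ell=p$.} The Koszul complex of $\mathbf{Z}_p[[K_U]]$ depends on a choice of basis of $K_U$, and automorphisms of $K_U$ act on it only up to homotopy, after choices. Lazard gives $H^\ast(K_U,\mathbf{Z}/p^m\mathbf{Z})\cong\bigwedge^\ast\Hom(K_U,\mathbf{Z}/p^m\mathbf{Z})$ functorially on cohomology. It does not give a $K_G$-equivariant quasi-isomorphism of complexes. Equivariant formality of $R\Gamma(K_U,\mathbf{Z}/p^m\mathbf{Z})$ in $D^+_{\textnormal{sm}}(K_G,\mathbf{Z}/p^m\mathbf{Z})$ is far stronger than the lemma: it would force the Hochschild--Serre spectral sequence of $K_U\rtimes K_G$ to degenerate at $E_2$. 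The paper only uses a decomposition of this kind after shrinking the acting group to a deep congruence subgroup (the appeal to \cite{CN23}, Lemma 2.3.17, in the proof of Lemma~\ref{BSInductionLemma}). You also may not shrink $K_U$, since that changes the complex. This is unnecessary anyway: $K_U$ is already a $K_G$-stable $\mathbf{Z}_p$-lattice in the vector group $U(F^+_{\overline{v}})$.

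\textbf{Your fallback can be completed, and should be the argument.}
\begin{enumerate}
\item Take $z$ a scalar matrix with $p$-power entries in the centre of $G(F^+_{\overline{v}})$, acting on $U(F^+_{\overline{v}})$ by multiplication by some $c\in p^m\mathbf{Z}_p$. (Use this rather than $u_v$, which is only defined at split places.)
\item Then $\phi(x)=zxz^{-1}$ is an endomorphism of $K_P$ that fixes $K_G$ pointwise and maps $K_U$ into $cK_U\subseteq K_U$.
\item Pulling back along $\phi$, for instance on continuous cochains, gives an endomorphism $e$ of $C=R\Gamma(K_U,\mathbf{Z}/p^m\mathbf{Z})$ in $D^+_{\textnormal{sm}}(K_G,\mathbf{Z}/p^m\mathbf{Z})$. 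It is a restriction-type map, and it need not be an automorphism.
\item $e$ is the identity on $H^0$. It is multiplication by $c$ on $H^1=\Hom(K_U,\mathbf{Z}/p^m\mathbf{Z})$, which generates $H^\ast$, so $e$ is zero on $H^{i}$ for $i>0$.
\item Instead of lifting idempotents, use the functorial triangle $\tau_{\leq 0}C\to C\to\tau_{\geq 1}C\xrightarrow{\delta}\tau_{\leq 0}C[1]$. Idempotent lifting would need finiteness of $\textnormal{End}(C)$ and idempotent completeness, neither of which you justify.
\item Since $e$ is the identity on $\tau_{\leq 0}C$, the induced endomorphism $e_{\geq 1}$ satisfies $\delta\circ e_{\geq 1}=\delta$.
\item $e_{\geq 1}$ kills the cohomology of the bounded complex $\tau_{\geq 1}C$, so it is nilpotent. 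Hence $\delta=0$ and the triangle splits.
\end{enumerate}

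\textbf{The paper's route is much shorter and uniform in $\ell$ and $p$.} Because $K_G$ is a subgroup of $K_P$, there is a natural inclusion $\alpha\colon\Gamma(K_U,-)\hookrightarrow\textnormal{Res}_{K_G}$ of functors $\textnormal{Mod}_{\textnormal{sm}}(K_P)\to\textnormal{Mod}_{\textnormal{sm}}(K_G)$. Since $\textnormal{Res}_{K_G}$ is exact, the universal property of the derived functor gives $R\alpha\colon R\Gamma(K_U,-)\to\textnormal{Res}_{K_G}$ with $R\alpha\circ\beta=\alpha$, where $\beta\colon\Gamma(K_U,-)[0]\to R\Gamma(K_U,-)$. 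On the trivial module $\alpha$ is the identity, so $R\alpha$ is the retraction. Concretely, for an injective resolution $I^\bullet$ in $\textnormal{Mod}_{\textnormal{sm}}(K_P)$, the retraction is $(I^\bullet)^{K_U}\hookrightarrow I^\bullet\simeq\mathbf{Z}/\ell^m\mathbf{Z}[0]$. This uses nothing about $U$: neither that it is abelian nor that a contracting central element exists. It uses only the section $K_G\hookrightarrow K_P$, which is genuinely essential. For the non-split extension $1\to p\mathbf{Z}_p\to\mathbf{Z}_p\to\mathbf{Z}/p\to 1$ the conclusion fails, because inflation $H^2(\mathbf{Z}/p,\mathbf{F}_p)\to H^2(\mathbf{Z}_p,\mathbf{F}_p)=0$ is not injective.
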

\begin{proof}
    Consider the map of functors $\alpha:\Gamma(K_U,-)\to \textnormal{Res}_{K_G}(-)$ such that, for $\pi\in\textnormal{Mod}_{\textnormal{sm}}(K_P,\mathbf{Z}/\ell^m\mathbf{Z})$, $\alpha(\pi):\pi^{K_U}\hookrightarrow \textnormal{Res}_{K_G}(\pi)$ is the canonical inclusion. If we write $\beta(\pi):\Gamma(K_U,\pi)[0]\to R\Gamma(K_U,\pi)$ for the morphism in $D^+_{\textnormal{sm}}(K_G,\mathbf{Z}/\ell^m\mathbf{Z})$ provided by the universal property of derived functors, we have $R\alpha(\pi)\circ \beta(\pi) =\alpha(\pi)$. Applying this discussion to $\pi=\mathbf{Z}/\ell^m\mathbf{Z}$ we obtain that the composition
    \begin{equation*}
        \mathbf{Z}/\ell^m\mathbf{Z}[0]\xrightarrow{\beta(\mathbf{Z}/\ell^m\mathbf{Z})}R\Gamma(K_U, \mathbf{Z}/\ell^m\mathbf{Z})\xrightarrow{R\alpha(\mathbf{Z}/\ell^m\mathbf{Z})}\mathbf{Z}/\ell^m\mathbf{Z}[0]
    \end{equation*}
    is the identity.
\end{proof} 
\begin{Prop}\label{TransferProposition}
    Consider the setup of Lemma~\ref{ImplicationOfBSLemma}. There is a $\widetilde{\mathbf{T}}^{(\overline{S}_{\textnormal{bad}},\overline{S}_{\textnormal{ram}}),+}_{\widetilde{K}}$-equivariant diagram
        \begin{equation}\label{transferdiagram}
            \begin{tikzcd}
	& {R\Gamma(\partial\widetilde{X}_{\widetilde{K}},\mathbf{Z}/\ell^m\mathbf{Z})} \\
	{R\Gamma_c(\widetilde{X}^P_{\widetilde{K}},\mathbf{Z}/\ell^m\mathbf{Z})} && {R\Gamma(\widetilde{X}^P_{\widetilde{K}},\mathbf{Z}/\ell^m\mathbf{Z})} \\
	{R\Gamma_c(X_{K},\mathbf{Z}/\ell^m\mathbf{Z})} && {R\Gamma(X_{K},\mathbf{Z}/\ell^m\mathbf{Z})}
	\arrow[from=1-2, to=2-3]
	\arrow[from=2-1, to=1-2]
	\arrow["i_{c,m}^P",from=2-1, to=2-3]
	\arrow["\alpha", from=2-1, to=3-1]
	\arrow["\beta", from=2-3, to=3-3]
	\arrow["{i_{c,m}}", from=3-1, to=3-3]
\end{tikzcd}
        \end{equation}
in $D^+(\mathbf{Z}/\ell^m\mathbf{Z})$  where we act through the map $\mathcal{S}_{\overline{S}_{\textnormal{ram}}}^+$ on the members of the bottom row. Moreover, the maps $\alpha$ and $\beta$ are split in $D^+(\mathbf{Z}/\ell^m\mathbf{Z})$.

In particular, the map $\mathcal{S}_{\overline{S}_{\textnormal{ram}}}$ descends to a surjection 
\begin{equation*}
    \overline{\mathcal{S}}_{\partial}:\widetilde{\mathbf{T}}^{(\overline{S}_{\textnormal{bad}},\overline{S}_{\textnormal{ram}})}_{\partial}(\widetilde{K},m)\twoheadrightarrow \mathbf{T}_!^{(\overline{S}_{\textnormal{bad}},\overline{S}_{\textnormal{ram}})}(K,m).
\end{equation*}
\end{Prop}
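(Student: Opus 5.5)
We will build the diagram \eqref{transferdiagram} by taking derived $\widetilde{K}_{\overline{S}_{\textnormal{ram}}}$-invariants of the diagram of Lemma~\ref{ImplicationOfBSLemma} with $\overline{S}=\overline{S}_{\textnormal{ram}}$, and then use Casselman's lift to reach $K$-level objects on $X_K$.

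\emph{Top triangle.} The closed stratum $\overline{\mathfrak{X}}^P_{\widetilde{G}}\subset\partial\mathfrak{X}_{\widetilde{G}}$ has interior $\mathfrak{X}^P_{\widetilde{G}}$, which is open in $\partial\mathfrak{X}_{\widetilde{G}}$ since $P$ is a maximal parabolic. Hence extension by zero and restriction give maps
\begin{equation*}
R\Gamma(\overline{\mathfrak{X}}^P_{\widetilde{G}},j_!\Lambda)\to R\Gamma(\partial\mathfrak{X}_{\widetilde{G}},\Lambda)\to R\Gamma(\overline{\mathfrak{X}}^P_{\widetilde{G}},\Lambda).
\end{equation*}
These are $\widetilde{G}(\mathbf{A}^{\infty}_{F^+})$-equivariant and compose to the map forgetting supports. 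Taking $R\Gamma(\widetilde{K},-)$ gives the top triangle, equivariant for all of $\mathcal{H}(\widetilde{G},\widetilde{K})$, and in particular for $\widetilde{\mathbf{T}}^{(\overline{S}_{\textnormal{bad}},\overline{S}_{\textnormal{ram}}),+}_{\widetilde{K}}$ acting through $T_{\overline{S}_{\textnormal{ram}}}$.

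\emph{Vertical maps.} Lemma~\ref{ImplicationOfBSLemma} exhibits
\begin{equation*}
\textnormal{Ind}_{P_{\overline{S}}}^{\widetilde{G}_{\overline{S}}}R\Gamma(K^{\overline{S}},\pi^G_{(c)})
\end{equation*}
as a compatible $\widetilde{\mathbf{T}}^{\overline{S}_{\textnormal{bad}}}_{\widetilde{K}}$-equivariant direct summand of the middle-row objects, before taking invariants at $\overline{S}=\overline{S}_{\textnormal{ram}}$. Taking $R\Gamma(\widetilde{K}_{\overline{S}},-)$ produces the retractions $\alpha$ and $\beta$, split by construction.

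It remains to identify
\begin{equation*}
R\Gamma\big(\widetilde{K}_{\overline{S}},\textnormal{Ind}_{P_{\overline{S}}}^{\widetilde{G}_{\overline{S}}}C\big),\qquad C=R\Gamma(K^{\overline{S}},\pi^G_{(c)}),
\end{equation*}
with a complex admitting $R\Gamma(K_{\overline{S}},C)=R\Gamma_{(c)}(X_K)$ as a summand compatibly with the Hecke actions.
\begin{itemize}
\item Using the Iwahori decomposition $\widetilde{K}_{\bar v}=\overline{U}^m_{\bar v}K_{\bar v}U^0_{\bar v}$, the Mackey formula for $\textnormal{Res}_{\widetilde{K}_{\overline S}}\textnormal{Ind}_{P_{\overline S}}$ has a summand indexed by the trivial double coset, namely $\textnormal{Ind}_{\widetilde{K}_{\overline S}\cap P_{\overline S}}^{\widetilde{K}_{\overline S}}C$.
\item Shapiro's lemma turns its derived invariants into $R\Gamma(K_{\overline S}\ltimes U^0,\textnormal{Inf}\,C)$.
\item The flat-resolution argument of Lemma~\ref{ImplicationOfBSLemma}, together with Lemma~\ref{DirectSummandLemma} applied to $K_U=U^0_{\bar v}$, splits off $R\Gamma(K_{\overline{S}},C)$.
\end{itemize}

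\emph{Main obstacle: Hecke equivariance.} The identification above must be shown equivariant for $\mathcal{H}(G^+_{\bar v},K_{\bar v})$, acting via $t_{\bar v}$ on the top and naturally on the bottom. This is precisely a derived version of the computation in the proof of Lemma~\ref{Casselmanslift}.
\begin{itemize}
\item For $g\in G^+_{\bar v}$, decompose $\widetilde{K}_{\bar v}g\widetilde{K}_{\bar v}$ into $\widetilde K_{\bar v}$-cosets, restricted to the trivial Mackey summand, as $\bigsqcup n\tilde m g$ with $n\in U^0/gU^0g^{-1}$.
\item Evaluating at $1$ (the projection to the $P$-stratum summand), the $U^0$-sum contributes the index $[U^0:gU^0g^{-1}]=\delta_P(g)^{-1}$.
\item This index is cancelled exactly by the normalisation in $t_{\bar v}$.
\end{itemize}
To make this rigorous at the level of $D^+$, I would realise Hecke operators as correspondences $R\Gamma(K,-)\to R\Gamma(K\cap gKg^{-1},-)\to R\Gamma(K,-)$ on $\textnormal{Ind}$, and check commutation with the Shapiro isomorphism and the splitting of Lemma~\ref{DirectSummandLemma}. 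For the latter, note that $g$ normalises $U^0$ into itself, so the trivial summand is preserved. Positivity of $g$ is what guarantees that the non-trivial Mackey cosets are not mixed into the trivial one, so this is where $G^+_{\bar v}$ is essential.

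\emph{Final claim.} On the top triangle the maps factor through $R\Gamma(\partial\widetilde X_{\widetilde K})$. If $t$ annihilates $R\Gamma(\partial\widetilde X_{\widetilde K})$, then $t$ kills $i^P_{c,m}$, and by the splittings $\mathcal{S}^+(t)$ kills $i_{c,m}=\beta\, i^P_{c,m}\,\alpha^{-}$, where $\alpha^{-}$ denotes the section of $\alpha$. Hence $\mathcal{S}^+_{\overline{S}_{\textnormal{ram}}}$ induces
\begin{equation*}
\widetilde{\mathbf{T}}^{+}_\partial(\widetilde K,m)\to\mathbf{T}^{(\overline{S}_{\textnormal{bad}},\overline{S}_{\textnormal{ram}})}_!(K,m).
\end{equation*}
This map extends to the localised algebras because $\mathcal{S}(u_{\tilde v})$ acts invertibly on the target, being the central operator $[K_{\bar v}u_{\tilde v}K_{\bar v}]$ of $\mathcal{H}(G(F^+_{\bar v}),K_{\bar v})$. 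Surjectivity follows since $\mathcal{S}_{\overline{S}_{\textnormal{ram}}}$ is surjective onto $\mathbf{T}_K^{\overline{S}_{\textnormal{avoid}}}$ up to these inverses. Away from $\overline S_{\textnormal{bad}}$ this uses the standard fact that the unramified Satake image generates, up to invertible modulus factors in $\mathbf{Z}_\ell$.
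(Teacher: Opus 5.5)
Your construction of the diagram and of the descent is correct and is essentially the paper's:
\begin{itemize}
\item the top triangle comes from the open stratum $\widetilde X^P_{\widetilde K}\subset\partial\widetilde X_{\widetilde K}$;
\item the bottom square comes from Lemma~\ref{ImplicationOfBSLemma} with $\overline S=\overline S_{\textnormal{ram}}$, followed by $\widetilde K_{\overline S_{\textnormal{ram}}}$-invariants and projection to the trivial Mackey coset, which is just $f\mapsto f(1)$;
\item Hecke-equivariance follows from $\widetilde K_{\bar v}g\widetilde K_{\bar v}=U^0_{\bar v}K_{\bar v}g\widetilde K_{\bar v}$ for positive $g$ together with the factor $\delta_P(g)$ in $t_{\bar v}$;
\item the annihilator argument correctly uses the section of $\alpha$ only on the right.
\end{itemize}

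The step that fails is your justification of surjectivity. The unnormalised Satake transform $\mathcal{H}(\widetilde G(F^+_{\bar v}),\widetilde G(\mathcal{O}_{F^+_{\bar v}}))\to\mathcal{H}(G(F^+_{\bar v}),G(\mathcal{O}_{F^+_{\bar v}}))$ is not surjective, even after inverting modulus factors.
\begin{itemize}
\item At a split $\bar v$, Proposition~\ref{Sataketransform} shows the image is generated by the coefficients of $P_v(X)\,q_v^{n(2n-1)}P^{\vee}_{v^c}(q_v^{1-2n}X)$. It is therefore fixed by the automorphism exchanging the Satake parameters at $v$ with the inverted, twisted ones at $v^c$, whereas $T_{v,1}$ is not fixed.
\item At an inert $\bar v$ the same happens, because the Weyl group of $\widetilde G$ is strictly larger than that of $G$.
\end{itemize}
So there is no ``Satake image generates'' fact to appeal to. The surjectivity in the statement is purely formal: $\mathbf{T}_!^{(\overline S_{\textnormal{bad}},\overline S_{\textnormal{ram}})}(K,m)$ has to be read as the image of $\mathcal{S}_{\overline S_{\textnormal{ram}}}(\widetilde{\mathbf T}^{(\overline S_{\textnormal{bad}},\overline S_{\textnormal{ram}})}_{\widetilde K})$ in $\mathbf{T}^{\overline S_{\textnormal{avoid}}}_K/\textnormal{Ann}(i_{c,K,m})$. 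The paper's proof only checks that the map is well defined. Your argument does not give surjectivity onto the full $\mathbf{T}^{\overline S_{\textnormal{avoid}}}_!(K,m)$.

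Two smaller points where the paper's route is simpler.

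First, you need not verify Hecke-equivariance in $D^+$ through correspondences and Shapiro isomorphisms. The underived natural transformation $\Gamma(\widetilde K_{\overline S_{\textnormal{ram}}},\textnormal{Ind}_{P_{\overline S_{\textnormal{ram}}}}^{\widetilde G_{\overline S_{\textnormal{ram}}}}\pi)\to\Gamma(K_{\overline S_{\textnormal{ram}}},\pi)$, $f\mapsto f(1)$, is $\mathcal{H}(G^+_{\overline S_{\textnormal{ram}}},K_{\overline S_{\textnormal{ram}}})$-equivariant via $t$ by exactly your coset computation. It also has a functorial $\mathbf{Z}/\ell^m\mathbf{Z}$-linear splitting from Mackey. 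Since unnormalised $\textnormal{Ind}_P^{\widetilde G}$ has the exact left adjoint $J_P$, it preserves injectives. Evaluating on an injective resolution of $\pi$ therefore gives the equivariant morphism in $D^+$ at once.

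Second, the detour through $R\Gamma(K_{\overline S}\ltimes U^0,\textnormal{Inf}\,C)$ and Lemma~\ref{DirectSummandLemma} is unnecessary here. Places in $\overline S_{\textnormal{ram}}$ are nice, so $U^0_{\bar v}$ is pro-$p$ with $p\neq\ell$, and $R\Gamma(U^0_{\bar v},\mathbf{Z}/\ell^m\mathbf{Z})=\mathbf{Z}/\ell^m\mathbf{Z}[0]$. That lemma matters only at the $\ell$-adic places already handled inside Lemma~\ref{ImplicationOfBSLemma}.
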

\begin{proof}
 The open Borel--Serre stratum $\widetilde{X}_{\widetilde{K}}^{P}\subset \partial \widetilde{X}_{\widetilde{K}}$ induces $\widetilde{\mathbf{T}}^{(\overline{S}_{\textnormal{bad}},\overline{S}_{\textnormal{ram}}),+}_{\widetilde{K}}$-equivariant maps $R\Gamma_c(\widetilde{X}^P_{\widetilde{K}},\mathbf{Z}/\ell^m\mathbf{Z})\to R\Gamma(\partial\widetilde{X}_{\widetilde{L}},\mathbf{Z}/\ell^m\mathbf{Z})\to R\Gamma(\widetilde{X}^P_{\widetilde{K}},\mathbf{Z}/\ell^m\mathbf{Z})$ in $D^+(\mathbf{Z}/\ell^m\mathbf{Z})$ with the composition being the natural map that forgets the support. This gives the top triangle of the diagram.

To obtain the bottom square, we apply Lemma~\ref{ImplicationOfBSLemma} with $\overline{S}=\overline{S}_{\textnormal{ram}}$. By applying $\widetilde{K}_{\overline{S}_{\textnormal{ram}}}$-invariants to the obtained diagram, we have to show the existence of Hecke equivariant maps
\begin{equation*}
    R\Gamma\left(\widetilde{K}_{\overline{S}_{\textnormal{ram}}},\textnormal{Ind}_{P_{\overline{S}_{\textnormal{ram}}}}^{\widetilde{G}_{\overline{S}_{\textnormal{ram}}}}\left(R\Gamma(K^{\overline{S}_{\textnormal{ram}}},\pi_{(c)}^G(\mathbf{Z}/\ell^m\mathbf{Z}))\right)\right)\to 
\end{equation*}
\begin{equation*}
    R\Gamma_{(c)}(X_K,\mathbf{Z}/\ell^m\mathbf{Z})
\end{equation*}
in $D^+(\mathbf{Z}/\ell^m\mathbf{Z})$, admitting a splitting and compatible with the maps forgetting the support. Given any representation $\pi\in \textnormal{Mod}_{\textnormal{sm}}(\mathbf{Z}/\ell^m\mathbf{Z}[G_{\overline{S}_{\textnormal{ram}}}])$, the association $\textnormal{Ind}_{P_{\overline{S}_{\textnormal{ram}}}}^{\widetilde{G}_{\overline{S}_{\textnormal{ram}}}}\pi\to \pi$, $f\mapsto f(1)$ induces a map
    \begin{equation*}
\Gamma(\widetilde{K}_{\overline{S}_{\textnormal{ram}}},\textnormal{Ind}_{P_{\overline{S}_{\textnormal{ram}}}}^{\widetilde{G}_{\overline{S}_{\textnormal{ram}}}}\pi)\to \Gamma(K_{\overline{S}_{\textnormal{ram}}},\pi)
    \end{equation*}
    and, using that $\widetilde{K}_{\overline{S}_{\textnormal{ram}}}$ admits an Iwahori decomposition, it is a simple exercise to check that this map is $\mathcal{H}(G_{\overline{S}_{\textnormal{ram}}}^+,K_{\overline{S}_{\textnormal{ram}}})$-equivariant when we act on the former via $t:\mathcal{H}(G_{\overline{S}_{\textnormal{ram}}}^+,K_{\overline{S}_{\textnormal{ram}}})\to \mathcal{H}(\widetilde{G}_{\overline{S}_{\textnormal{ram}}},\widetilde{K}_{\overline{S}_{\textnormal{ram}}})$. Moreover, by Mackey's formula, we see that it admits a splitting (functorial in $\pi$) as a $\mathbf{Z}/\ell^m\mathbf{Z}$-module (cf. \cite{ACC23}, Lemma 2.1.14). As $\textnormal{Ind}_{P_{\overline{S}_{\textnormal{ram}}}}^{\widetilde{G}_{\overline{S}_{\textnormal{ram}}}}(-): \textnormal{Mod}_{\textnormal{sm}}(\mathbf{Z}/\ell^m\mathbf{Z}[G_{\overline{S}_{\textnormal{ram}}}])\to \textnormal{Mod}_{\textnormal{sm}}(\mathbf{Z}/\ell^m\mathbf{Z}[\widetilde{G}_{\overline{S}_{\textnormal{ram}}}])$ admits an exact left adjoint, it preserves injectives and so, for $\pi\in D^+_{\textnormal{sm}}(G_{\overline{S}_{\textnormal{ram}}},\mathbf{Z}/\ell^m\mathbf{Z})$ we get a morphism
    \begin{equation*}
        R\Gamma(\widetilde{K}_{\overline{S}_{\textnormal{ram}}},\textnormal{Ind}_{P_{\overline{S}_{\textnormal{ram}}}}^{\widetilde{G}_{\overline{S}_{\textnormal{ram}}}}\pi)\cong R(\Gamma(\widetilde{K}_{\overline{S}_{\textnormal{ram}}},-)\circ\textnormal{Ind}_{P_{\overline{S}_{\textnormal{ram}}}}^{\widetilde{G}_{\overline{S}_{\textnormal{ram}}}}(-))(\pi)\to R\Gamma(K_{\overline{S}_{\textnormal{ram}}},\pi)
    \end{equation*}
    in $D^+(\mathcal{H}(G_{\overline{S}_{\textnormal{ram}}},K_{\overline{S}_{\textnormal{ram}}})\otimes_\mathbf{Z}\mathbf{Z}_{\ell})$, natural in $\pi$. Applying this discussion to 
    \begin{equation*}
        R\Gamma(K^{\overline{S}_{\textnormal{bad}}},\pi_{(c)}^G(\mathbf{Z}/\ell^m\mathbf{Z}))\in D^+_{\textnormal{sm}}(G_{\overline{S}_{\textnormal{ram}}},\mathbf{Z}/\ell^m\mathbf{Z})
    \end{equation*} gives the desired maps.

    To see the final claim, we have to show that if $t\in \textnormal{Ann}_{\widetilde{\mathbf{T}}^{(\overline{S}_{\textnormal{bad}},\overline{S}_{\textnormal{ram}}),+}_{\widetilde{K}}}(i_{c,\widetilde{K},m}^P)$, then $t\in\textnormal{Ann}_{\widetilde{\mathbf{T}}^{(\overline{S}_{\textnormal{bad}},\overline{S}_{\textnormal{ram}}),+}_{\widetilde{K}}}(i_{c,K,m})$ i.e., $i_{c,K,m}\circ t=0$ in $D^+(\mathbf{Z}/\ell^m\mathbf{Z})$. By \ref{transferdiagram} and assumption, we have $i_{c,K,m}\circ t\circ \alpha =\beta\circ i_{c,\widetilde{K},m}^P\circ t=0$. However, if we write $\gamma$ for the map splitting $\alpha$, we have 
    \begin{equation*}
    i_{c,K,m}\circ t\circ \alpha\circ \gamma= i_{c,K,m}\circ t
    \end{equation*}
    in $\textnormal{Hom}_{D^+(\mathbf{Z}/\ell^m\mathbf{Z})}(R\Gamma_c(X_K,\mathbf{Z}/\ell^m\mathbf{Z}),R\Gamma(X_K,\mathbf{Z}/\ell^m\mathbf{Z}))$.
\end{proof}

\subsubsection{The Borel--Serre boundary for $G$}\label{sec_BSboundary_for_G}
To describe the cohomology of the Borel--Serre boundary for $G$, it will be more convenient for us to work with the prime-to-$\overline{S}$ Bernstein centres instead of $\mathbf{T}^{\overline{S}}_K$. Therefore, for a finite set $\overline{S}$ of finite places of $F^+$ containing all $\ell$-adic places, we set
\begin{equation*}
    \mathfrak{Z}_G^{\overline{S}}:=\otimes_{\Bar{v}\notin \overline{S},\mathbf{Z}_{\ell}}'\mathfrak{Z}_{G(F^+_{\Bar{v}})}.
\end{equation*}
Note that we have a map
\begin{equation*}
    t_{G,K}^{\overline{S}}:\mathfrak{Z}_G^{\overline{S}}\to \mathbf{T}_K^{\overline{S}}
\end{equation*}
of $\mathbf{Z}_{\ell}$-algebras
given by the product of the maps $t_{G(F^{+}_{\Bar{v}}),K_{\Bar{v}}}$, $\Bar{v}\notin \overline{S}$.

Given an $F^+$-rational standard parabolic subgroup $Q=MN\subset G$ with its standard Levi factorisation, we define $\mathfrak{Z}_M^{\overline{S}}$ and $t_{M,K_M}^{\overline{S}}$ in analogous manner. We then further have a map
\begin{equation*}
    I^{\overline{S}}:\mathfrak{Z}_G^{\overline{S}}\to \mathfrak{Z}_M^{\overline{S}}
\end{equation*}
of $\mathbf{Z}_{\ell}$-algebras
given by the product of the maps $I:\mathfrak{Z}_{G(F^+_{\Bar{v}})}\to \mathfrak{Z}_{M(F^{+}_{\Bar{v}})}$ from Theorem~\ref{TheMapI}.

Given another (possibly empty) finite set $\overline{T}$ of finite places of $F^+$ with $\overline{S}\cap \overline{T}=\emptyset$, we have natural algebra maps
\begin{equation*}
    \mathfrak{Z}_G^{\overline{S}}\to \textnormal{End}_{D^+_{\textnormal{sm}}( G_{\overline{T}},\mathbf{Z}/\ell^m\mathbf{Z})}\left(R\Gamma\left(K^{\overline{T}},\pi_{(c)}^G(\mathbf{Z}/\ell^m\mathbf{Z})\right)\right),
\end{equation*}
\begin{equation*}
    \mathfrak{Z}_G^{\overline{S}}\to \textnormal{End}_{D^+_{\textnormal{sm}}( G_{\overline{T}},\mathbf{Z}/\ell^m\mathbf{Z})}\left(R\Gamma\left(K^{\overline{T}},\pi_{\partial}^G(\mathbf{Z}/\ell^m\mathbf{Z})\right)\right),
\end{equation*}
\begin{equation*}
    \mathfrak{Z}_G^{\overline{S}}\to \textnormal{End}_{D^+_{\textnormal{sm}}( G_{\overline{T}},\mathbf{Z}/\ell^m\mathbf{Z})}\left(R\Gamma\left(K^{\overline{T}},R\Gamma(\overline{\mathfrak{X}}_G^Q,(j_!)\mathbf{Z}/\ell^m\mathbf{Z})\right)\right)\textnormal{, and}
\end{equation*}
\begin{equation*}
    \mathfrak{Z}_M^{\overline{S}}\to \textnormal{End}_{D^+_{\textnormal{sm}}( M_{\overline{T}},\mathbf{Z}/\ell^m\mathbf{Z})}\left(R\Gamma\left(K^{\overline{T}}_M,\pi_{(c)}^M(\mathbf{Z}/\ell^m\mathbf{Z})\right)\right)
\end{equation*}
with images denoted by $\mathfrak{Z}_{G,(c)}^{\overline{S}}(K^{\overline{T}},m)$, $\mathfrak{Z}_{G,\partial}^{\overline{S}}(K^{\overline{T}},m)$, $\mathfrak{Z}_{G,(c)}^{\overline{S},Q}(K^{\overline{T}},m)$ and $\mathfrak{Z}_{M,(c)}^{\overline{S}}(K^{\overline{T}}_M, m)$, respectively.
Finally, we define
\begin{equation*}
    \mathfrak{Z}_{G,(c)}^{\overline{S}}(K_M^{\overline{T}},m):=\im\left(\mathfrak{Z}_G^{\overline{S}}\xrightarrow{I^{\overline{S}}}\mathfrak{Z}_M^{\overline{S}}\twoheadrightarrow\mathfrak{Z}_{M,(c)}^{\overline{S}}(K^{\overline{T}}_M, m)\right).
\end{equation*}

\begin{Lemma}\label{BSInductionLemma}
    Let $\overline{S}$ be a finite set of finite places of $F^+$ with a decomposition $\overline{S}=\overline{S}_1\coprod\overline{S}_2$ and assume that $\overline{S}_2$ contains all $\ell$-adic places of $F^+$. Let $K^{\overline{S}_1}\leq G(\widehat{\mathcal{O}}_{F^+}^{\overline{S}_1})$ be a good subgroup such that $K^{\overline{S}}=G(\widehat{\mathcal{O}}_{F^+}^{\overline{S}})$ and set $K_M^{\overline{S}}:=K^{\overline{S}}\cap M(\mathbf{A}_{F^+}^{\infty})$. There are nilpotent ideals $J_{(c)}^Q\subset \mathfrak{Z}_{G,(c)}^{\overline{S}_2,Q}(K^{\overline{S}_1},m)$ with nilpotence degree bounded in terms of $n$ and $[F^+:\mathbf{Q}]$ and commutative diagrams
    \begin{equation*}
    \begin{tikzcd}
	&& {\mathfrak{Z}_{G,(c)}^{\overline{S}_2,Q}(K^{\overline{S}_1},m)}/J_{(c)}^Q \\
	{\mathfrak{Z}_G^{\overline{S}_2}} \\
	{} && {\mathfrak{Z}^{\overline{S}_2}_{G,(c)}(K_M^{\overline{S}}K'_{M,\overline{S}_2},m)}.
	\arrow[two heads, from=2-1, to=1-3]
	\arrow[two heads, from=2-1, to=3-3]
	\arrow[dashed, two heads, from=3-3, to=1-3]
\end{tikzcd}
\end{equation*}
    where $K'_{M,\overline{S}_2}\leq K_{M,\overline{S}_2}$ is some suitably deep normal compact open subgroup depending on $K_{\overline{S}_2}$ and $m$.
\end{Lemma}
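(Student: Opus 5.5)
We will compare the Bernstein-centre action on the stratum $\overline{\mathfrak{X}}_G^Q$ with the action on $M$-locally symmetric spaces, using Lemma~\ref{BorelSerreLemma} for the pair $(G,Q)$. Apply it with $S:=\overline{S}_1$ (places away from which $G$ is unramified and $K$ hyperspecial). This gives a $G^{\overline{S}_1}\times K_{G,\overline{S}_1}$-equivariant identification
\begin{equation*}
R\Gamma(\overline{\mathfrak{X}}^Q_{G,K_{\overline{S}_1}},(j_!)\mathbf{Z}/\ell^m\mathbf{Z})\cong \textnormal{Ind}^{G^{\overline{S}_1}\times K_{\overline{S}_1}}_{Q^{\overline{S}_1}\times K_{Q,\overline{S}_1}}\textnormal{Inf}_{M}^{Q}R\Gamma(\overline{\mathfrak{X}}_M,(j_!)\mathbf{Z}/\ell^m\mathbf{Z}).
\end{equation*}
The stratum $\overline{\mathfrak{X}}_G^Q$ is a finite disjoint union of such pieces indexed by $Q_{\overline{S}_1}\backslash G_{\overline{S}_1}/K_{\overline{S}_1}$. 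Each piece has the same shape after conjugating by $g_i$, so it suffices to treat one of them.

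Next, take derived $K^{\overline{S}_1}$-invariants. Via the Iwasawa decomposition away from $\overline{S}$, this turns the induction away from $\overline{S}$ into an unramified one, so the action of $\mathfrak{Z}_{G(F^+_{\bar v})}$ for $\bar v\notin\overline{S}_2$ is computed on a smooth induction $\textnormal{Ind}_{Q(F^+_{\bar v})}^{G(F^+_{\bar v})}$. For $\bar v\in\overline{S}_2$ the group $K_{\bar v}$ is fixed, and only $\mathfrak{Z}_G^{\overline{S}_2}$ acts, so these places impose no constraint. By the defining property of $I$ (Theorem~\ref{TheMapI}), $z\in\mathfrak{Z}_{G(F^+_{\bar v})}$ acts on the induced object through $\textnormal{Ind}(t_{M}(I(z)))$. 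Hence the action of $\mathfrak{Z}_G^{\overline{S}_2}$ on the $G$-side complex factors through $I^{\overline{S}_2}$ followed by the action of $\mathfrak{Z}_M^{\overline{S}_2}$ on
\begin{equation*}
R\Gamma\big(K_Q^{\overline{S}_1}, \textnormal{Inf}\,R\Gamma(\overline{\mathfrak{X}}_M,(j_!)\mathbf{Z}/\ell^m\mathbf{Z})\big).
\end{equation*}
The inflation is then handled as in Lemma~\ref{ImplicationOfBSLemma}: the invariants under $K_N$ at places of $\overline{S}_2$ produce a tensor factor $R\Gamma(K_{N,\overline{S}_2},\mathbf{Z}/\ell^m\mathbf{Z})$. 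On this factor $K_{M,\overline{S}_2}$ acts, but the action is trivial on some deep normal subgroup $K'_{M,\overline{S}_2}$, which depends on $m$ and $K_{\overline{S}_2}$. Consequently the complex is built, through a finite filtration or a Hochschild--Serre spectral sequence for $K_{M,\overline{S}_2}/K'_{M,\overline{S}_2}$, out of $R\Gamma(K^{\overline{S}}_MK'_{M,\overline{S}_2},\pi^M_{(c)})$ tensored with finite modules, with $\mathfrak{Z}_M^{\overline{S}_2}$ acting through the first factor.

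The main obstacle is the passage from the derived level to endomorphisms. An element that kills $R\Gamma(X_{K_MK'_M},-)$ need not kill the complex assembled from it; it only kills the graded pieces of a filtration. Its action on the assembled complex is therefore nilpotent, with exponent bounded by the length of the filtration. That length is controlled by the cohomological amplitude: the dimension of the locally symmetric spaces, the number of strata, and the spectral sequence. All of these depend only on $n$ and $[F^+:\mathbf{Q}]$. I would follow the standard argument of \cite{Sch15} and \cite{NT16} that an endomorphism acting by zero on the cohomology of a perfect complex of amplitude $d$ is nilpotent of degree $\le d+1$.

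Accordingly, define $J^Q_{(c)}$ as the image in $\mathfrak{Z}^{\overline{S}_2,Q}_{G,(c)}(K^{\overline{S}_1},m)$ of the kernel of $\mathfrak{Z}_G^{\overline{S}_2}\to\mathfrak{Z}^{\overline{S}_2}_{G,(c)}(K_M^{\overline{S}}K'_{M,\overline{S}_2},m)$. Its nilpotence is exactly the bound just described, and this yields the dashed surjection. Surjectivity of the two solid arrows holds by definition.
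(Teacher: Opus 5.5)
Your overall route is the paper's: Lemma~\ref{BorelSerreLemma} to realise the stratum as induced from $M$, $\mathfrak{Z}_G^{\overline{S}_2}$ acting through $I^{\overline{S}_2}$, a deep $K'_{M,\overline{S}_2}$ to neutralise the $N$-cohomology at $\overline{S}_2$, a Hochschild--Serre filtration of length bounded by the dimension, then \cite{ACC23}, Lemma 2.2.3. However, your first step is set up at the wrong places and, taken literally, it fails. Applying Lemma~\ref{BorelSerreLemma} with $S:=\overline{S}_1$ fixes a compact open $K_{\overline{S}_1}\le G_{\overline{S}_1}$. That is not part of the data: you are only given $K^{\overline{S}_1}$, and $K$ is hyperspecial away from $\overline{S}$, not away from $\overline{S}_1$. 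It also cuts the stratum into pieces indexed by $Q_{\overline{S}_1}\backslash G_{\overline{S}_1}/K_{\overline{S}_1}$, which are only $G^{\overline{S}_1}\times K_{\overline{S}_1}$-stable. But $\mathfrak{Z}^{\overline{S}_2,Q}_{G,(c)}(K^{\overline{S}_1},m)$ lives inside $\textnormal{End}_{D^+_{\textnormal{sm}}(G_{\overline{S}_1},\mathbf{Z}/\ell^m\mathbf{Z})}$. The factors $\mathfrak{Z}_{G(F^+_{\bar v})}$, $\bar v\in\overline{S}_1$, need the full $G_{\overline{S}_1}$-action and do not preserve such a decomposition, so ``it suffices to treat one of them'' is false. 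Your second paragraph in fact silently switches to infinite level at $\overline{S}_1$ and fixed level at $\overline{S}_2$. The correct setup, which is the paper's, keeps infinite level at $\overline{S}_1$. Write the complex as $R\Gamma\big(K^{\overline{S}_1},\textnormal{Ind}_{Q(\mathbf{A}_{F^+}^{\infty})}^{G(\mathbf{A}_{F^+}^{\infty})}R\Gamma(\overline{\mathfrak{X}}_M,(j_!)\mathbf{Z}/\ell^m\mathbf{Z})\big)$, use Iwasawa away from $\overline{S}$, and apply Mackey only at $\overline{S}_2$. This gives a $\mathfrak{Z}_M^{\overline{S}_2}$-equivariant direct sum, over the finite set $\prod_{\bar v\in\overline{S}_2}Q(\mathcal{O}_{F^+_{\bar v}})\backslash\prod_{\bar v\in\overline{S}_2}G(\mathcal{O}_{F^+_{\bar v}})/K_{\overline{S}_2}$, of terms $\textnormal{Ind}_{Q_{\overline{S}_1}}^{G_{\overline{S}_1}}R\Gamma\big(gK_{\overline{S}_2}g^{-1}\cap Q_{\overline{S}_2},R\Gamma(K_M^{\overline{S}},R\Gamma(\overline{\mathfrak{X}}_M,(j_!)\mathbf{Z}/\ell^m\mathbf{Z}))\big)$. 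Each term is stable under $G_{\overline{S}_1}$ and the centre.

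With that fixed, two points in your treatment of $\overline{S}_2$ still need repair. First, $K^g:=gK_{\overline{S}_2}g^{-1}\cap Q_{\overline{S}_2}$ is in general not of the form $K_M\ltimes K_N$, unlike the parahoric level in Lemma~\ref{ImplicationOfBSLemma}, so ``as in Lemma~\ref{ImplicationOfBSLemma}'' does not apply directly. The various $K^g$ are also genuinely different subgroups of $Q_{\overline{S}_2}$. The paper chooses $c$ with $M^c_{\overline{S}_2}N^c_{\overline{S}_2}\le K^g$ for every $g$, passes to the normal subgroup $M^b_{\overline{S}_2}N^c_{\overline{S}_2}$ with $b\ge c$, and takes cohomology of the finite quotient $K^g/M^b_{\overline{S}_2}N^c_{\overline{S}_2}$ last. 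This makes $K'_{M,\overline{S}_2}=M^b_{\overline{S}_2}$ uniform in $g$. Because that finite group can have order divisible by $\ell$, an endomorphism vanishing in $D^+_{\textnormal{sm}}(M_{\overline{S}_1},\mathbf{Z}/\ell^m\mathbf{Z})$ need not vanish equivariantly. So you really must argue on cohomology groups via Hochschild--Serre before invoking \cite{ACC23}, Lemma 2.2.3, as you indicate.

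Second, the triviality of the $K'_{M,\overline{S}_2}$-action on $R\Gamma(N^c_{\overline{S}_2},\mathbf{Z}/\ell^m\mathbf{Z})$ only has content at the $\ell$-adic places of $\overline{S}_2$; elsewhere $N^c$ is pro-prime-to-$\ell$. The paper quotes \cite{CN23}, Lemma 2.3.17: for $b-c\gg_m 0$ this complex splits in $D^+_{\textnormal{sm}}(M^b_{\overline{S}_2},\mathbf{Z}/\ell^m\mathbf{Z})$ as $\bigoplus_i H^i[-i]$, with each $H^i$ a finite sum of trivial representations. Your weaker input also suffices: an open subgroup acts trivially on the finitely many finite groups $H^i(N^c_{\overline{S}_2},\mathbf{Z}/\ell^m\mathbf{Z})$, and you then filter by truncations. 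The cost is a longer but still bounded filtration.
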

\begin{proof}
    By Lemma~\ref{BorelSerreLemma}, we have $\mathfrak{Z}^{\overline{S}_2}_G$-equivariant isomorphisms
    \begin{equation*}
        R\Gamma(K^{\overline{S}_1},R\Gamma(\overline{\mathfrak{X}}_G^Q,(j_!)\mathbf{Z}/\ell^m\mathbf{Z}))\cong
    \end{equation*}
    \begin{equation*}
R\Gamma(K^{\overline{S}_1},\textnormal{Ind}_{Q(\mathbf{A}_{F^+}^{\infty})}^{G(\mathbf{A}_{F^{+}}^{\infty})}R\Gamma(\overline{\mathfrak{X}}_M,(j_!)\mathbf{Z}/\ell^m\mathbf{Z}))\cong
    \end{equation*}
    \begin{equation}\label{Lemma4.4Eqn}
        R\Gamma\left(K_{\overline{S}_2},\textnormal{Ind}_{Q_{\overline{S}}}^{G_{\overline{S}}}R\Gamma\left(K_M^{\overline{S}},R\Gamma(\overline{\mathfrak{X}}_M,(j_!)\mathbf{Z}/\ell^m\mathbf{Z})\right)\right)
    \end{equation}
    in $D^+_{\textnormal{sm}}(G_{\overline{S}_1},\mathbf{Z}/\ell^m\mathbf{Z})$. Here, $\mathfrak{Z}_G^{\overline{S}_2}$ acts on\footnote{We note that the restriction of this action to $\mathfrak{Z}^{\overline{S}}_G$ on the third complex is the same as the spherical action via $t_G^{\overline{S}}$ by Lemma~\ref{BernsteinCentreLemma}.} \ref{Lemma4.4Eqn} by acting on $R\Gamma(\overline{\mathfrak{X}}_M,(j_!)\mathbf{Z}/\ell^m\mathbf{Z})$ via $I^{\overline{S}_2}$. 
    By Mackey's formula, \ref{Lemma4.4Eqn} is further $\mathfrak{Z}_M^{\overline{S}_2}$-equivariantly isomorphic to
    \begin{equation}\label{Lemma4.4Eqn3}
        \bigoplus_g \textnormal{Ind}_{Q_{\overline{S}_1}}^{G_{\overline{S}_1}}R\Gamma\left(gK_{\overline{S}_2}g^{-1}\cap Q_{\overline{S}_2},R\Gamma(K_M^{\overline{S}},R\Gamma(\overline{\mathfrak{X}}_M,(j_!)\mathbf{Z}/\ell^m\mathbf{Z}))\right)
    \end{equation}
    in $D^+_{\textnormal{sm}}(G_{\overline{S}_1},\mathbf{Z}/\ell^m\mathbf{Z})$ where $g$ runs over a set of representatives of the \textit{finite} double coset 
    \begin{equation}\label{Lemma4.4Eqn2}
        (\prod_{\Bar{v}\in \overline{S}_2}Q(\mathcal{O}_{F^+_{\Bar{v}}}))\setminus (\prod_{\Bar{v}\in \overline{S}_2}G(\mathcal{O}_{F^+_{\Bar{v}}}))/K_{\overline{S}_2}.
    \end{equation}
Set $K^g_{\overline{S}_2}:=gK_{\overline{S}_2}g^{-1}\cap Q_{\overline{S}_2}\leq \prod_{\Bar{v}\in \overline{S}_2}Q(\mathcal{O}_{F^+_{\Bar{v}}})$ and choose an integer $c\geq 1$ such that\footnote{Recall that for a linear algebraic group $H$ over the ring of integers of an $\ell$-adic field $L/\mathbf{Q}_{\ell}$, we defined $H^c:=\ker\left(H(\mathcal{O}_L)\to H(\mathcal{O}_L/\varpi_L^c)\right)$.} $Q_{\overline{S}_2}(c,c):=M_{\overline{S}_2}^cN_{\overline{S}_2}^c\leq K^g_{\overline{S}_2}$ for every representative $g$ in \ref{Lemma4.4Eqn2}. For another integer $b\geq c\geq 1$, set $Q_{\overline{S}_2}(b,c):=M^b_{\overline{S}_2}N^c_{\overline{S}_2}\leq K^g_{\overline{S}_2}$ and note that it is a normal subgroup. Arguing as in Lemma \ref{ImplicationOfBSLemma}, we can further rewrite \ref{Lemma4.4Eqn3} (in a $\mathfrak{Z}_M^{\overline{S}_2}$-equivariant way in $D^+_{\textnormal{sm}}(G_{\overline{S}_1},\mathbf{Z}/\ell^m\mathbf{Z})$) as follows
\begin{equation*}
    \bigoplus_g R\Gamma\left(K^g_{\overline{S}_2}/Q_{\overline{S}_2}(b,c),\textnormal{Ind}_{Q_{\overline{S}_1}}^{G_{\overline{S}_1}}R\Gamma\left(K_M^{\overline{S}}M_{\overline{S}_2}^b,\pi_{(c)}^M(\mathbf{Z}/\ell^m\mathbf{Z})\otimes_{\mathbf{Z}/\ell^m\mathbf{Z}}^{\mathbf{L}} R\Gamma(N_{\overline{S}_2}^c,\mathbf{Z}/\ell^m\mathbf{Z})\right)\right).
\end{equation*}

If we choose $b$ so that $b-c$ is large enough (depending only on $m$), then $R\Gamma(N_{\overline{S}_2}^c,\mathbf{Z}/\ell^m\mathbf{Z})\cong \oplus_i H^i(N_{\overline{S}_2}^c,\mathbf{Z}/\ell^m\mathbf{Z})[-i]$ in $D^+_{\textnormal{sm}}(M_{\overline{S}_2}^b,\mathbf{Z}/\ell^m\mathbf{Z})$ with each cohomology group being given by finite direct sums of the trivial representation (cf. \cite{CN23}, Lemma 2.3.17). For such a choice of pairs of integers $b\gg_m c\geq 1$, using the Hochschild--Serre spectral sequence, we obtain a Hecke equivariant filtration of the cohomology of \ref{Lemma4.4Eqn} with subquotients given by Hecke equivariant subquotients of (finite group cohomology of) the cohomology groups
\begin{equation*}
    \textnormal{Ind}_{Q_{\overline{S}_1}}^{G_{\overline{S}_1}}\bigg(\varinjlim_{K_{M,\overline{S}_1}} H^{\ast}_{(c)}(X^M_{K_{M}^{\overline{S}_2}M^b_{\overline{S}_2}},\mathbf{Z}/\ell^m\mathbf{Z})\bigg).
\end{equation*} 
We also note that the lengths of these filtrations are all bounded by the cohomological degree of the ambient spaces, which is no more than $[F^+:\mathbf{Q}](n^2-1)$. In particular, using \cite{ACC23}, Lemma 2.2.3 to obtain a statement for the derived Hecke algebras, we finish the proof of the lemma by setting $K'_{M,\overline{S}_2}:=M^b_{\overline{S}_2}$.
\end{proof}

\subsection{Automorphic Galois representations}\label{AutGaloisReps}
Here we collect the necessary results on the existence of automorphic Galois representations. Throughout the subsection, we make the following assumption.
\begin{assumption}\label{runningassumption}
    The number field $F$ is CM containing an imaginary quadratic subfield.
\end{assumption}

The following result on cohomological cusp forms for $\widetilde{G}$ is proved in \cite{ACC23}, Theorem 2.3.5 and is a direct consequence of reciprocity for conjugate self-dual regular algebraic automorphic representations and the base change result \cite{Shi14} of Shin.

\begin{Th}\label{SelfdualLGC} Assume that $F$ contains an imaginary quadratic field. Let $\xi$ be an irreducible algebraic representation of $\widetilde{G}_{\mathbf{C}}$ and $\widetilde{\pi}$ be a $\xi$-cohomological cuspidal automorphic representation of $\widetilde{G}(\mathbf{A}_{F^+})$. For any choice of field isomorphism $\iota:\overline{\mathbf{Q}}_{\ell}\xrightarrow{\sim}\mathbf{C}$, there is a semisimple continuous Galois representation
\begin{equation*}
    r_{\iota}(\widetilde{\pi}):\textnormal{Gal}(\overline{F}/F)\to\textnormal{GL}_{2n}(\overline{\mathbf{Q}}_{\ell})
\end{equation*}
satisfying the following properties:
\begin{enumerate}
    \item Let $p$ be a prime different from $\ell$ that is unramified in $F$ and above which $\Tilde{\pi}$ is unramified as well. Then, for every place $v| p$ of $F$, $r_{\iota}(\Tilde{\pi})|_{G_{F_v}}$ is unramified and the characteristic polynomial of $r_{\iota}(\textnormal{Frob}_v)$ is given by the image in $\overline{\mathbf{Q}}_{\ell}[X]$ of $\widetilde{P}_v(X)$ evaluated on $(\iota^{-1}\Tilde{\pi}_{\Bar{v}})^{\widetilde{G}(\mathcal{O}_{F^+_{\Bar{v}}})}$.
    \item If $p$ is a prime that splits in some imaginary quadratic subfield $F_0\subset F$, then for every place $\Bar{v}|p$ of $F^+$ and a place $v|\bar{v}$ of $F$, there is an isomorphism
    \begin{equation*}
        \textnormal{WD}(r_{\iota}(\Tilde{\pi})|_{G_{F_v}})^{F-ss}\cong \textnormal{rec}^T(\Tilde{\pi}_{\Bar{v}}\circ \iota_v).
    \end{equation*}
\end{enumerate}
\end{Th}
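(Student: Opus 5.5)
This theorem is quoted from \cite{ACC23}, Theorem 2.3.5, so the plan is to assemble it from known inputs rather than prove anything from scratch. The route is base change from $\widetilde{G}$ to $\textnormal{GL}_{2n}$, followed by the Galois representations attached to conjugate self-dual regular algebraic representations.

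\textbf{Step 1 (base change).} By Shin's unconditional base change \cite{Shi14}, which is available because $F$ contains an imaginary quadratic subfield, the $\xi$-cohomological cuspidal $\widetilde{\pi}$ admits a weak base change $\Pi=\boxplus_i\Pi_i$ to $\textnormal{GL}_{2n}(\mathbf{A}_F)$. This $\Pi$ is an isobaric sum of conjugate self-dual cuspidal representations. The $\Pi_i$ are regular algebraic after suitable twists by powers of $|\det|^{1/2}$, and their infinitesimal characters are determined by $\xi$. Compatibility holds at two kinds of places. At places $\bar v$ that are unramified for $\widetilde{\pi}$ and for $F/F^+$, it is the unramified endoscopic transfer. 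At places $\bar v$ that split in $F$, we have $\Pi_v\cong\widetilde{\pi}_{\bar v}\circ\iota_v$ by construction, since there $\widetilde{G}(F^+_{\bar v})\cong \textnormal{GL}_{2n}(F_v)$. Shin's result gives this local identification at all places split over an imaginary quadratic subfield, including ramified ones, and this is where the hypothesis in (ii) enters.

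\textbf{Step 2 (Galois side).} Attach to each $\Pi_i$ its Galois representation $r_\iota(\Pi_i)$ from the conjugate self-dual case, as completed in \cite{Car12}. This representation satisfies full local-global compatibility $\textnormal{WD}(r_\iota(\Pi_i)|_{G_{F_v}})^{F-ss}\cong\textnormal{rec}^T_{F_v}(\iota^{-1}\Pi_{i,v})$ at all $v\nmid\ell$. Set $r_\iota(\widetilde{\pi}):=\bigoplus_i r_\iota(\Pi_i)$, twisted appropriately so that the isobaric sum is matched. Since $\textnormal{rec}^T$ is compatible with isobaric sums, that is with parabolic induction and the Langlands quotient, the direct sum satisfies the compatibility for $\Pi_v$.

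\textbf{Step 3 (conclusion).} For (i): at $v\mid p$ with $p$ unramified in $F$ and $\widetilde{\pi}$ unramified above $p$, $\Pi_v$ is unramified, so $r_\iota(\widetilde{\pi})|_{G_{F_v}}$ is unramified. The characteristic polynomial of Frobenius is the Satake polynomial of $\Pi_v$. By the defining property of the operators $\widetilde{T}_{v,j}$ from \cite{NT16}, this polynomial is $\widetilde{P}_v(X)$ evaluated on the spherical vectors of $\iota^{-1}\widetilde{\pi}_{\bar v}$; here I use the property recalled in \S\ref{sec2.4}. For (ii): combine $\Pi_v\cong\widetilde{\pi}_{\bar v}\circ\iota_v$ with the compatibility from Step 2.

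The main obstacle is the normalisation bookkeeping. The twists by $|\det|^{1/2}$ and the Tate normalisation must be tracked so that the Galois representation of the isobaric sum is exactly $\textnormal{rec}^T(\widetilde{\pi}_{\bar v}\circ\iota_v)$, including the monodromy $N$ and not only the semisimplification. This requires Frobenius-semisimple compatibility for each $\Pi_i$ individually, which is exactly what \cite{Car12} supplies. I would resolve it by checking the normalisation on unramified principal series first and then invoking the uniqueness of $\textnormal{rec}$.
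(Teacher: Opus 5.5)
Your route is the one behind the statement. The paper gives no proof of its own: it quotes \cite{ACC23}, Theorem 2.3.5, which is deduced exactly from Shin's base change \cite{Shi14} plus reciprocity for conjugate self-dual regular algebraic representations. However, Step 1 misstates what \cite{Shi14} produces, and Step 2 as written does not handle the actual output.

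Shin's theorem gives $\Pi=\Pi_1\boxplus\cdots\boxplus\Pi_k$ with each $\Pi_i$ a \emph{discrete} conjugate self-dual automorphic representation, not necessarily a cuspidal one. Non-cuspidal constituents cannot be ruled out: they arise when the Arthur parameter of $\widetilde{\pi}$ is non-tempered. Since \cite{Car12} and its companions concern cuspidal representations, you cannot attach $r_\iota(\Pi_i)$ to such a $\Pi_i$ directly. The missing step runs as follows.
\begin{enumerate}
\item Use Moeglin--Waldspurger to write $\Pi_i=\textnormal{Speh}(\sigma_i,m_i)$ with $\sigma_i$ cuspidal and conjugate self-dual.
\item Regularity of the infinitesimal character of $\Pi_\infty$ forces $\sigma_i$ to be regular.
\item Attach a Galois representation to (a half-integral twist of) $\sigma_i$.
\item Define $r_\iota(\Pi_i)$ as the direct sum of the $m_i$ twists of that representation by consecutive, suitably normalised powers of the cyclotomic character.
\end{enumerate}
Local-global compatibility for $\Pi_i$ then rests on one local fact. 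The $v$-component of $\textnormal{Speh}(\sigma_i,m_i)$ has $L$-parameter $\bigoplus_{j=0}^{m_i-1}\textnormal{rec}(\sigma_{i,v})|\cdot|^{\frac{m_i-1}{2}-j}$, with monodromy coming only from $\textnormal{rec}(\sigma_{i,v})$. This is what makes the block-diagonal Galois side match, including $N$.

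There are also smaller inaccuracies.
\begin{itemize}
\item When a twist by $|\det|^{1/2}$ is needed to make $\sigma$ regular algebraic, the twist $\tau=\sigma|\det|^{1/2}$ only satisfies $\tau^{c}\cong\tau^{\vee}\otimes|\det|$. So you need the essentially conjugate self-dual form of reciprocity, obtained from the conjugate self-dual case by a twisting argument, rather than \cite{Car12} verbatim.
\item Your claim that $\Pi_v\cong\widetilde{\pi}_{\bar v}\circ\iota_v$ holds ``by construction'' at every $\bar v$ split in $F$ is not what \cite{Shi14} provides. That identification is part of the trace-formula comparison and is available only above primes $p$ split in the auxiliary imaginary quadratic field $F_0$. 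Your next sentence states the correct scope, and this is why (ii) is phrased as it is.
\item To quantify over all imaginary quadratic $F_0\subset F$ in (ii), note that $\Pi$ is determined by its unramified components (strong multiplicity one for isobaric sums). Hence the base changes obtained from different choices of $F_0$ coincide, and so does the resulting $r_\iota(\widetilde{\pi})$.
\end{itemize}
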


The following is a direct consequence of a theorem of Scholze (cf. \cite{Sch15}, Theorem IV.3.1) and is going to be our source of congruences between Eisenstein series coming from $G$ and cusp forms for $\widetilde{G}$.

\begin{Th}[Scholze]\label{EisensteinCongCusp}
    Let $\iota:\overline{\mathbf{Q}}_{\ell}\xrightarrow{\sim}\mathbf{C}$ be a field isomorphism, $m\geq 1$ be an integer, $(\overline{S}_{\textnormal{bad}},\overline{S}_{\textnormal{ram}})$ be an allowable pair, and $ \widetilde{K}\leq \widetilde{G}(\mathbf{A}_{F^+}^{\infty})$ be a $(\overline{S}_{\textnormal{bad}},\overline{S}_{\textnormal{ram}})$-parahoric subgroup. There is an integer $N$ depending only on $n$ and $[F^+:\mathbf{Q}]$ and an ideal $\widetilde{J}_{\partial}^{(\overline{S}_{\textnormal{bad}},\overline{S}_{\textnormal{ram}})}\leq \widetilde{\mathbf{T}}_{\partial}^{(\overline{S}_{\textnormal{bad}},\overline{S}_{\textnormal{ram}})}(\widetilde{K},m)$ with $(\widetilde{J}_{\partial}^{(\overline{S}_{\textnormal{bad}},\overline{S}_{\textnormal{ram}})})^{N}=0$ such that the following is satisfied. The natural surjection
    \begin{equation}\label{NaturalSurjection}
\widetilde{\mathbf{T}}^{(\overline{S}_{\textnormal{bad}},\overline{S}_{\textnormal{ram}})}_{\widetilde{K}}\twoheadrightarrow \widetilde{\mathbf{T}}^{(\overline{S}_{\textnormal{bad}},\overline{S}_{\textnormal{ram}})}_{\partial}(\widetilde{K},m)/\widetilde{J}_{\partial}^{(\overline{S}_{\textnormal{bad}},\overline{S}_{\textnormal{ram}})}
    \end{equation}
    factors through a surjection
    \begin{equation*}
    \widetilde{\mathbf{T}}_{\widetilde{K}}^{(\overline{S}_{\textnormal{bad}},\overline{S}_{\textnormal{ram}})}\twoheadrightarrow \widetilde{A}_{\widetilde{K},m,\partial}^{(\overline{S}_{\textnormal{bad}},\overline{S}_{\textnormal{ram}})}
    \end{equation*}
    where $\widetilde{A}_{\widetilde{K},m,\partial}^{(\overline{S}_{\textnormal{bad}},\overline{S}_{\textnormal{ram}})}$ is a $\widetilde{\mathbf{T}}_{\widetilde{K}}^{(\overline{S}_{\textnormal{bad}},\overline{S}_{\textnormal{ram}})}$-algebra, finite flat over $\mathbf{Z}_{\ell}$, of the form
    \begin{equation*}
        \textnormal{im}(\widetilde{\mathbf{T}}_{\widetilde{K}}^{(\overline{S}_{\textnormal{bad}},\overline{S}_{\textnormal{ram}})}\to \textnormal{End}_{\overline{\mathbf{Q}}_{\ell}}(\oplus_{\widetilde{\pi},\widetilde{K}^{\circ}_{\ell}}(\iota^{-1}\widetilde{\pi}^{\infty})^{\widetilde{K}^{\ell}\widetilde{K}^{\circ}_{\ell}})),
    \end{equation*}
with the direct sum running over some finite multiset of pairs $(\widetilde{\pi},\widetilde{K}_{\ell}^{\circ})$ of compact open subgroups $\widetilde{K}_{\ell}^{\circ}\leq \prod_{\Bar{v}|\ell}\widetilde{G}(F^+_{\Bar{v}})$ and cohomological cuspidal automorphic representations $\widetilde{\pi}$ of $\widetilde{G}(\mathbf{A}_{F^+})$ of level $\widetilde{K}^{\ell}\widetilde{K}_{\ell}^{\circ}$.
\end{Th}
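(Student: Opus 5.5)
The statement to prove is Theorem~\ref{EisensteinCongCusp}. It is a repackaging of \cite{Sch15}, Theorem IV.3.1, adapted to our Hecke algebras with ramified parahoric factors at $\overline{S}_{\textnormal{ram}}$.

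\textbf{Step 1: Scholze's result at a deeper $\ell$-level.} I would first invoke Scholze's theorem in the form used in \cite{ACC23}, Theorem 2.4.9 / \S2.4. For a good subgroup $\widetilde{K}$, the boundary cohomology $R\Gamma(\partial\widetilde{X}_{\widetilde{K}},\mathbf{Z}/\ell^m\mathbf{Z})$ is related, via the perfectoid Shimura variety for $U(n,n)$ and the Hodge--Tate period map, to the cohomology of $\widetilde{X}_{\widetilde{K}^{\ell}\widetilde{K}_\ell'}$ for some deeper $\widetilde{K}'_\ell$.

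Concretely, there is a nilpotent ideal (of nilpotence degree bounded in terms of $n$ and $[F^+:\mathbf{Q}]$) such that the boundary Hecke algebra modulo this ideal is a quotient of the Hecke algebra acting on $H^{\ast}_{\textnormal{cusp}}$ in degree $0$ coherent cohomology. The latter is finite flat over $\mathbf{Z}_\ell$ and is spanned by $\overline{\mathbf{Q}}_\ell$-eigensystems of cohomological cusp forms $\widetilde{\pi}$ of level $\widetilde{K}^{\ell}\widetilde{K}^{\circ}_\ell$.

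The essential point is that all these constructions happen only at $\ell$ and at the infinite place. They are therefore equivariant for the full prime-to-$\ell$ Hecke algebra $\mathcal{H}(\widetilde{G}(\mathbf{A}^{\infty,\ell}_{F^+}),\widetilde{K}^\ell)$, and not merely for the spherical part. This is how \cite{Sch15}, Chapter IV and \cite{ACC23} proceed: the tower is over $\ell$-level, and the prime-to-$\ell$ Hecke action is by correspondences commuting with everything.

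\textbf{Step 2: Passing to our Hecke algebras.} Since $\widetilde{\mathbf{T}}^{(\overline{S}_{\textnormal{bad}},\overline{S}_{\textnormal{ram}})}_{\widetilde{K}}$ is a subalgebra of the tensor product of the spherical algebras away from $\overline{S}_{\textnormal{bad}}$ and of $Z(\mathcal{H}(G(F^+_{\bar v}),K_{\bar v}))$ at $\bar v\in\overline{S}_{\textnormal{ram}}$, I need to arrange that it maps into the prime-to-$\ell$ Hecke algebra. I would do this as follows:
\begin{itemize}
\item At $\bar v\in\overline{S}_{\textnormal{ram}}$, use $t_{\bar v}$ to send $Z(\mathcal{H}(G^+_{\bar v},K_{\bar v}))$ into $\mathcal{H}(\widetilde{G}(F^+_{\bar v}),\widetilde{K}_{\bar v})$.
\item Observe that $t_{\bar v}([K_{\bar v}u_{\tilde v}K_{\bar v}])$ acts invertibly on the relevant objects. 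On the boundary side this holds after passing to the ordinary/$u$-invertible part via Lemma~\ref{Casselmanslift}. On the cusp form side, the element should be checked to act invertibly on the finite-dimensional spaces $(\iota^{-1}\widetilde{\pi}^\infty)^{\widetilde{K}}$ after restricting to the $u$-invertible summand. I would handle this by replacing the direct sum of $\widetilde{\pi}$'s by its $u$-invertible part, which is again a finite flat $\mathbf{Z}_\ell$-algebra quotient of the same shape.
\item Having established invertibility, extend the action to the localisation $\widetilde{\mathbf{T}}^{(\overline{S}_{\textnormal{bad}},\overline{S}_{\textnormal{ram}})}_{\widetilde{K}}$.
\end{itemize}

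\textbf{Step 3: The finite flat algebra $\widetilde{A}$.} Define $\widetilde{A}_{\widetilde{K},m,\partial}$ as the image of $\widetilde{\mathbf{T}}^{(\overline{S}_{\textnormal{bad}},\overline{S}_{\textnormal{ram}})}_{\widetilde{K}}$ in $\textnormal{End}$ of the resulting finite direct sum. It is finite flat over $\mathbf{Z}_\ell$ because it is a $\mathbf{Z}_\ell$-lattice inside a finite-dimensional $\overline{\mathbf{Q}}_\ell$-algebra: each $\widetilde{\pi}$ is $\ell$-integral with a stable lattice given by the integral coherent cohomology. Finally, set $\widetilde{J}_\partial$ to be the kernel of the induced map on the boundary Hecke algebra, inherited from Scholze's nilpotent ideal.

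\textbf{Main obstacle.} The main difficulty is the factorization in Step 2 at the ramified places. Scholze's statement is phrased for spherical Hecke operators away from a bad set, so I must verify that his comparison maps (the tower over $\ell$-level and the map to coherent cohomology) are equivariant for the ramified centre operators. I would also need the nilpotence bound $N$ to be independent of the level at $\overline{S}_{\textnormal{ram}}$. I expect this follows because the nilpotence arises only from the length of spectral sequences and the number of strata, both bounded in terms of dimension, exactly as in \cite{ACC23}, Theorem 2.4.9, whose proof I would follow verbatim with the enlarged Hecke algebra.
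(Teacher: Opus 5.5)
Your treatment of the ramified places is sound, and it makes explicit something the paper leaves implicit. Scholze's constructions only involve the $\ell$-adic tower and the archimedean place, so they are equivariant for the whole prime-to-$\ell$ Hecke algebra. That is what lets the operators $t_{\Bar{v}}(\cdot)$ at $\overline{S}_{\textnormal{ram}}$ in, after which one keeps the part on which $t_{\Bar{v}}([K_{\Bar{v}}u_{\Tilde{v}}K_{\Bar{v}}])$ is invertible.

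The gap is in Step 1, which is precisely where the boundary enters. Scholze's Theorem IV.3.1 concerns the compactly supported cohomology $R\Gamma_c(\widetilde{X}_{\widetilde{K}},\mathbf{Z}/\ell^m\mathbf{Z})$ of the $U(n,n)$ Shimura variety. There the Hodge--Tate period map and fake Hasse invariants are used on the minimal compactification with $j_!$-coefficients, which is why cusp forms appear. The Borel--Serre boundary is not what that argument computes. So your description of $R\Gamma(\partial\widetilde{X}_{\widetilde{K}},\mathbf{Z}/\ell^m\mathbf{Z})$ as being related to deeper $\ell$-level ``via the perfectoid Shimura variety and the Hodge--Tate period map'' is not accurate. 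If you instead mean to cite a ready-made boundary statement, then for the spherical algebra Step 1 is the theorem itself, and the actual deduction is missing.

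That deduction is the paper's proof:
\begin{enumerate}
\item Scholze's theorem, with the argument of \cite{Sch15}, Corollary V.1.11 and \cite{HLTT16}, Lemma 5.11, gives the claim for $R\Gamma_c$.
\item Poincar\'e duality gives it for $R\Gamma$.
\item The excision triangle $R\Gamma_c\to R\Gamma\to R\Gamma(\partial\widetilde{X}_{\widetilde{K}})\to R\Gamma_c[1]$ transfers it to the boundary.
\end{enumerate}
Concretely, suppose a Hecke operator $T$ vanishes on $R\Gamma_c$ and on $R\Gamma$. Then $T$ on the boundary term composes to zero with the connecting map, so it factors through $R\Gamma\to R\Gamma(\partial\widetilde{X}_{\widetilde{K}})$, and hence $T^2=0$ there. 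Now take $\widetilde{A}$ to be the image of the Hecke algebra in the product of the two finite flat algebras attached to $R\Gamma_c$ and $R\Gamma$. Take $\widetilde{J}_{\partial}$ to be the image of the kernel in the boundary Hecke algebra. This gives the statement with $N$ at most twice the larger of the bounds for $R\Gamma_c$ and $R\Gamma$; no count of strata is involved, contrary to your expectation in the last paragraph.

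Your Step 2 observation is exactly what makes this deduction work at the ramified places. Poincar\'e duality identifies the action of $[\widetilde{K}g\widetilde{K}]$ on $R\Gamma$ with the transpose of $[\widetilde{K}g^{-1}\widetilde{K}]$ on $R\Gamma_c$. So the positive operators $t_{\Bar{v}}(\cdot)$ become operators supported on negative double cosets, and on the $R\Gamma$ side the representations $\widetilde{\pi}$ are replaced by $\widetilde{\pi}^{\vee}$, which are again cohomological and cuspidal. This is harmless because Scholze's argument applies to any commutative subalgebra of the prime-to-$\ell$ Hecke algebra, not only the spherical one. That is the equivariance you identified, and it should be invoked at this step rather than on a boundary statement taken as given.
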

\begin{proof}
    The analogous statement for compactly supported cohomology follows from \cite{Sch15}, Theorem IV.3.1, the argument of \textit{loc. cit.} Corollary V.1.11 and \cite{HLTT16}, Lemma 5.11. By Poincar\'e duality, we obtain the statement for cohomology without compact support. In particular, we deduce the theorem using the excision long exact sequence.
\end{proof}

Finally, we recall the main theorem of \cite{Sch15} on the existence of automorphic determinants for $G$.

\begin{Th}[Scholze]\label{ScholzeTHM}
    Let $m\geq 1$ be an integer, $(\overline{S}_{\textnormal{bad}},\overline{S}_{\textnormal{ram}})$ be an allowable pair, $K\leq G(\mathbf{A}_{F^+}^{\infty})$ be a good subgroup satisfying $K^{\overline{S}_{\textnormal{bad}}}=G(\widehat{\mathcal{O}}_{F^+}^{\overline{S}_{\textnormal{bad}}})$.
    
    %Then there exists an ideal $J_!^{\overline{S}_{\textnormal{avoid}}}\leq \mathbf{T}_!^{\overline{S}_{\textnormal{avoid}}}(K,L,m)$ such that $(J^{\overline{S}_{\textnormal{avoid}}}_!)^4=0$ and an $n$-dimensional continuous determinant
    %\begin{equation*}
    %    D_!:G_{F,S_{\textnormal{bad}}}\to \mathbf{T}_!^{\overline{S}_{\textnormal{avoid}}}(K,L,m)/J_!^{\overline{S}_{\textnormal{avoid}}}
    %\end{equation*}
    %with $D_!(1-X\textnormal{Frob}_v)=P_v(X)$ for every $v\notin S_{\textnormal{bad}}$.

    For $?\in\{\emptyset, c,!\}$, there exists an integer $N_?$ depending only on $n$ and $[F^+:\mathbf{Q}]$, an ideal $J_{?}^{\overline{S}_{\textnormal{avoid}}}\leq \mathbf{T}_{?}^{\overline{S}_{\textnormal{avoid}}}(K,m)$ with $(J_{?}^{\overline{S}_{\textnormal{avoid}}})^{N_{?}}=0$ and a continuous determinant
    \begin{equation*}
        D_{?}:G_{F,S_{\textnormal{bad}}}\to \mathbf{T}_{?}^{\overline{S}_{\textnormal{avoid}}}(K,m)/J_{?}^{\overline{S}_{\textnormal{avoid}}}
    \end{equation*}
    with $D_{?}(1-X\textnormal{Frob}_v)=P_v(X)$ for every $v\notin S_{\textnormal{bad}}$.
\end{Th}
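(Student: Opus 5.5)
The plan is to reduce to Scholze's theorem for the spherical Hecke algebra $\mathbf{T}^{\overline{S}_{\textnormal{bad}}}$, and then push the resulting determinant forward along the inclusion of Hecke algebras into $\mathbf{T}^{\overline{S}_{\textnormal{avoid}}}$. The observation that makes this work is that the identity $D(1-X\textnormal{Frob}_v)=P_v(X)$ only involves places $v\notin S_{\textnormal{bad}}$. So no new local-global input at $\overline{S}_{\textnormal{ram}}$ is needed: the extra central Hecke operators at $\overline{S}_{\textnormal{ram}}$ only enlarge the target ring.

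\emph{Step 1 (the case $?=\emptyset$ with $\overline{S}_{\textnormal{bad}}$).} Since $(\overline{S}_{\textnormal{bad}},\overline{S}_{\textnormal{ram}})$ is allowable, $\overline{S}_{\textnormal{bad}}$ is unconditional. Together with Assumption~\ref{runningassumption}, this is exactly the setting of \cite{Sch15}, Theorem V.3.1, where unconditionality gives access to the base change of \cite{Shi14}. That theorem, applied with level $K$ where $K^{\overline{S}_{\textnormal{bad}}}$ is hyperspecial, yields an integer $N$ depending only on $n$ and $[F^+:\mathbf{Q}]$, a nilpotent ideal $J\leq \mathbf{T}^{\overline{S}_{\textnormal{bad}}}(K,m)$ with $J^{N}=0$, and a continuous determinant $D:G_{F,S_{\textnormal{bad}}}\to \mathbf{T}^{\overline{S}_{\textnormal{bad}}}(K,m)/J$ satisfying $D(1-X\textnormal{Frob}_v)=P_v(X)$ for $v\notin S_{\textnormal{bad}}$.

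The construction itself, which is the hard part but which we take as given, goes as follows:
\begin{itemize}
\item realise $R\Gamma(X_K)$ as a direct summand of the boundary cohomology of $\widetilde{G}$, as in Lemma~\ref{ImplicationOfBSLemma}, Proposition~\ref{TransferProposition} and the Satake compatibility of Proposition~\ref{Sataketransform};
\item apply Theorem~\ref{EisensteinCongCusp} to find the boundary Hecke algebra, up to nilpotents, inside a finite flat algebra cut out by cusp forms on $\widetilde{G}$;
\item use Theorem~\ref{SelfdualLGC}(i) to glue the $2n$-dimensional determinants;
\item split off the $n$-dimensional factor $D$ via the factorisation $P_v(X)\cdot q_v^{n(2n-1)}P^{\vee}_{v^c}(q_v^{1-2n}X)$.
\end{itemize}

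\emph{Step 2 (the cases $?=c$ and $?=!$).} For $?=c$, I would use Poincaré duality on the manifold $X_K$. It identifies $R\Gamma_c(X_K,\mathbf{Z}/\ell^m\mathbf{Z})$ with the dual of $R\Gamma(X_K,\mathbf{Z}/\ell^m\mathbf{Z})$, shifted. Under this identification the Hecke action of $T_{v,i}$ is transported to that of its transpose, so at the level of $P_v$ it becomes a twisted dual. Applying Step 1 and then twisting the determinant by the dual and the cyclotomic character gives $D_c$; this is the argument in \cite{Sch15}, Corollary V.1.11. Alternatively, one can run Step 1 directly for $R\Gamma_c$ using the compactly supported version of Theorem~\ref{EisensteinCongCusp}.

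For $?=!$, note that $\mathbf{T}_!(K,m)$ is a quotient of both $\mathbf{T}(K,m)$ and $\mathbf{T}_c(K,m)$. Indeed, anything killing $R\Gamma(X_K)$ kills the map $i_{c,K,m}$. So we push $D$ forward along this quotient map, taking $J_!$ to be the image of $J$.

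\emph{Step 3 (passing to $\overline{S}_{\textnormal{avoid}}$).} Let $A\subset B:=\mathbf{T}^{\overline{S}_{\textnormal{avoid}}}_?(K,m)$ be the image of $\mathbf{T}^{\overline{S}_{\textnormal{bad}}}_K$. Then $A$ equals $\mathbf{T}^{\overline{S}_{\textnormal{bad}}}_?(K,m)$, since both are images of the same algebra in the same endomorphism ring. Set $J_?^{\overline{S}_{\textnormal{avoid}}}:=J_A B$. Because $B$ is commutative, we get $(J_AB)^{N}=J_A^{N}B=0$. Composing $D_A$ with $A/J_A\to B/J_AB$ gives a continuous determinant with the same characteristic polynomials at $v\notin S_{\textnormal{bad}}$. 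Continuity holds because $B$ is finite over $\mathbf{Z}/\ell^m$, and the determinant factors through $G_{F,S_{\textnormal{bad}}}$ by construction.

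The main obstacle is Step 1, namely Scholze's construction via perfectoid Shimura varieties and the bound on nilpotence. Here it is invoked as a black box. The only points to check are that the allowability hypothesis matches Scholze's conditions on $S_{\textnormal{bad}}$, and that the nilpotence exponent $N_?$ depends only on $n$ and $[F^+:\mathbf{Q}]$.
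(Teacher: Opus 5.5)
Your proposal is correct and takes essentially the paper's approach. The paper gives no argument beyond quoting \cite{Sch15}. Your Step~3 (base-changing the determinant along $\mathbf{T}^{\overline{S}_{\textnormal{bad}}}_?(K,m)\hookrightarrow\mathbf{T}^{\overline{S}_{\textnormal{avoid}}}_?(K,m)$ and taking $J_AB$, which is nilpotent because $B$ is commutative) is exactly the implicit reduction behind the introduction's remark that Scholze's determinants may be taken with values in $\mathbf{T}^{S'_{\textnormal{avoid}}}(K,m)_{\mathfrak{m}}/I$. Your Poincar\'e duality step for $?=c$ mirrors the proof of Theorem~\ref{EisensteinCongCusp}, and your quotient argument for $?=!$ is fine.

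As for the point you flag: if one starts from a degree-by-degree form of Scholze's theorem, the passage to the derived Hecke algebras $\mathbf{T}_?(K,m)$ is \cite{ACC23}, Lemma 2.2.3 (as in the proof of Lemma~\ref{BSInductionLemma}). This costs a nilpotence exponent bounded by the cohomological amplitude of $R\Gamma(X_K,-)$, hence by a function of $n$ and $[F^+:\mathbf{Q}]$ only.
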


\section{Local-global compatibility at $p\neq \ell$}
In this section we prove our main result on local-global compatibility. After formulating the theorem using interpolation of the semisimple local Langlands correspondence, we proceed with the proof in several steps. First we verify compatibility with the mod $\ell$ semisimple local Langlands correspondence of Vigneras. Then we treat the case of interior cohomology using the Borel--Serre boundary of $\textnormal{Res}_{F^+/\mathbf{Q}}\widetilde{G}$-locally symmetric spaces. Finally, we handle boundary cohomology of the $\textnormal{Res}_{F/\mathbf{Q}}\textnormal{GL}_n$-locally symmetric space by induction on $n\geq 1$.

\subsection{Formulation of local-global compatibility}
We continue with the setup of \S\ref{AutGaloisReps}. In particular, we remind the reader of our running assumption \ref{runningassumption} about $F$ containing an imaginary quadratic field.

Let $(\overline{S}_{\textnormal{bad}},\overline{S}_{\textnormal{ram}})$ be an allowable pair of finite sets of finite places of $F^+$ in the sense of Definition~\ref{DefinitionOnPlaces} and denote by $(S_{\textnormal{bad}},S_{\textnormal{ram}})$ the pair of finite sets of finite places in $F$ above them. In particular, $S_{\textnormal{avoid}}:=S_{\textnormal{bad}}\setminus S_{\textnormal{ram}}$ contains all $\ell$ -adic places of $F$. Let $K\leq G(\mathbf{A}_{F^+}^{\infty})$ be a good subgroup such that $K^{\overline{S}_{\textnormal{bad}}}=G(\widehat{\mathcal{O}}_{F^+}^{\overline{S}_{\textnormal{bad}}})$. For every integer $m\geq 1$ and $?\in \{\emptyset,c,!\}$, Theorem~\ref{ScholzeTHM} provides a nilpotent ideal $J_?^{\overline{S}_{\textnormal{avoid}}}\leq\mathbf{T}_?^{\overline{S}_{\textnormal{avoid}}}(K,m)$ and a continuous group determinant
\begin{equation*}
    D_{?}:G_{F,S_{\textnormal{bad}}}\to \mathbf{T}_?^{\overline{S}_{\textnormal{avoid}}}(K,m)/J_{?}^{\overline{S}_{\textnormal{avoid}}}
\end{equation*}
satisfying semisimple local-global compatibility at $v\notin S_{\textnormal{bad}}$. This can be phrased as follows. By Proposition~\ref{ArtinianPoints}, $D_{?}$ induces a map
\begin{equation*}
    \theta_v^{\textnormal{Gal}}:\mathfrak{R}^{\textnormal{ps}}_{F_v,n}\to \mathbf{T}_{?}^{\overline{S}_{\textnormal{avoid}}}(K,m)/J_{?}^{\overline{S}_{\textnormal{avoid}}}.
\end{equation*}
On the other hand, the integral Bernstein centre at $v$ admits a map
\begin{equation*}
    \mathfrak{Z}_{\textnormal{GL}_n(F_v)}\xrightarrow{t_{\textnormal{GL}_n(F_v),K_v}}\mathbf{T}_{?}^{\overline{S}_{\textnormal{avoid}}}(K,m)/J_{?}^{\overline{S}_{\textnormal{avoid}}}
\end{equation*}
induced by the map
\begin{equation*}
    t_{\textnormal{GL}_n(F_v),K_v}:\mathfrak{Z}_{\textnormal{GL}_n(F_v)}\to Z\left(\mathcal{H}(\textnormal{GL}_n(F_v),K_v)\otimes_{\mathbf{Z}}\mathbf{Z}_{\ell}\right)\left(= \mathcal{H}(\textnormal{GL}_n(F_v),\textnormal{GL}_n(\mathcal{O}_{F_v}))\otimes_{\mathbf{Z}}\mathbf{Z}_{\ell})\right).
\end{equation*}
Therefore, postcomposition with the map $\Phi_{F_v,n}$ interpolating the semisimple local Langlands correspondence yields a map
\begin{equation*}
    \theta_v^{\textnormal{Aut}}:\mathfrak{R}_{F_v,n}^{\textnormal{ps}}\to \mathbf{T}_?^{\overline{S}_{\textnormal{avoid}}}(K,m)/J_{?}^{\overline{S}_{\textnormal{avoid}}}.
\end{equation*}
Then semisimple local-global compatibility at $v$ is equivalent to the assertion that
\begin{equation*}
    \theta_v^{\textnormal{Gal}}=\theta_v^{\textnormal{Aut}}.
\end{equation*}
Indeed, for a lift $\widetilde{\textnormal{Frob}}_v\in W_{F_v}^0$ of the geometric Frobenius, the universal characteristic polynomial $D^{\textnormal{univ}}(1-X\widetilde{\textnormal{Frob}}_v)\in \mathfrak{R}_{F_v,n}^{\textnormal{ps}}[X]$ under $t_{\textnormal{GL}_n(F_v),\textnormal{GL}_n(\mathcal{O}_{F_v})}\circ\Phi_{F_v,n}$ is sent to $P_v(X)\in (\mathcal{H}(\textnormal{GL}_n(F_v),\textnormal{GL}_n(\mathcal{O}_{F_v}))\otimes_{\mathbf{Z}}\mathbf{Z}_{\ell})[X]$.

Our main theorem states that the equality $\theta_v^{\textnormal{Gal}}=\theta_v^{\textnormal{Aut}}$ holds at $v\in S_{\textnormal{ram}}$ as well. More precisely, we have the following.

\begin{Th}\label{MainTHM}
    Let $(\overline{S}_{\textnormal{bad}},\overline{S}_{\textnormal{ram}})$ be an allowable pair of finite sets of finite places of $F^+$. Let $K\leq G(\mathbf{A}_{F^+}^{\infty})$ be a good subgroup such that $K^{\overline{S}_{\textnormal{bad}}}=G(\widehat{\mathcal{O}}_{F^+}^{\overline{S}_{\textnormal{bad}}})$. Let $m\geq 1$ be an integer and $?\in\{\emptyset,c,!\}$. There is an integer $M_?\geq 1$ depending only on $n$ and $[F^+:\mathbf{Q}]$ and an ideal $J_?^{\overline{S}_{\textnormal{avoid}}}\leq I_?^{\overline{S}_{\textnormal{avoid}}}\leq \mathbf{T}^{\overline{S}_{\textnormal{avoid}}}_?(K,m)$ with $(I_?^{\overline{S}_{\textnormal{avoid}}})^{M_?}=0$ such that the following holds. 
    
    The associated continuous group determinant
    \begin{equation*}
D_?:G_{F,S_{\textnormal{bad}}}\to\mathbf{T}^{\overline{S}_{\textnormal{avoid}}}_?(K,m)/I_?^{\overline{S}_{\textnormal{avoid}}}
    \end{equation*}
    of Theorem~\ref{ScholzeTHM} satisfies local-global compatibility at every $ v\notin S_{\textnormal{\textnormal{avoid}}}$ in the following sense.
    \begin{itemize}
        \item If $K_v = \textnormal{GL}_n(\mathcal{O}_{F_v})$, then $D_{?}$ factors through $G_{F,S_{\textnormal{bad}}\setminus\{v\}}$.
        \item The map
    \begin{equation*}
\theta_v^{\textnormal{Gal}}:\mathfrak{R}^{\textnormal{ps}}_{F_v,n}\to \mathbf{T}_{?}^{\overline{S}_{\textnormal{avoid}}}(K,m)/I_?^{\overline{S}_{\textnormal{avoid}}}
    \end{equation*}
    induced by $D_?|_{G_{F_v}}$ coincides with the map
    \begin{equation*}
        \theta_v^{\textnormal{Aut}}:\mathfrak{R}^{\textnormal{ps}}_{F_v,n}\to \mathbf{T}_{?}^{\overline{S}_{\textnormal{avoid}}}(K,m)/I_?^{\overline{S}_{\textnormal{avoid}}}
    \end{equation*}
    induced by the Hecke action at $v$ via $t_{\textnormal{GL}_n(F_v),K_v}\circ\Phi_{F_v,n}$.
    \end{itemize}
\end{Th}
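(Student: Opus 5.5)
Since the statement is local at each $v\notin S_{\textnormal{avoid}}$ and there are finitely many such $v$ in $S_{\textnormal{ram}}$ (places outside $S_{\textnormal{bad}}$ being already covered by Theorem~\ref{ScholzeTHM}), I will fix $\bar v\in\overline{S}_{\textnormal{ram}}$ and $v\mid\bar v$. The required nilpotent ideals for the different places will be multiplied together at the end, which keeps the nilpotence exponent bounded in terms of $n$ and $[F^+:\mathbf{Q}]$. I will also work after localising at a maximal ideal $\mathfrak{m}$ of $\mathbf{T}^{\overline{S}_{\textnormal{avoid}}}_?(K,m)$, which is harmless because this Hecke algebra is finite over $\mathbf{Z}/\ell^m$ and hence a finite product of its localisations. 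The proof then follows the three steps of the introduction, in the order $?=!$, then $?=\partial$ for $G$, then $?\in\{\emptyset,c\}$ via excision.

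\emph{Step 1 (mod $\ell$ support).} Let $\overline{\rho}_{\mathfrak{m}}$ be the residual representation attached to $D_!$. I claim that $H_!(K^{\bar v})_{\mathfrak{m}}$ is supported at the maximal ideal $(\mathfrak{m}_v,\mathfrak{m}_{v^c})$ of $\mathfrak{Z}_{G(F^+_{\bar v})}$.
\begin{itemize}
\item Using Lemma~\ref{ImplicationOfBSLemma}, realise $\textnormal{Ind}_{P}^{\widetilde G}$ of this module as a Hecke-equivariant subquotient of the boundary cohomology of $\widetilde G$.
\item By Theorem~\ref{EisensteinCongCusp} combined with Theorem~\ref{SelfdualLGC}(ii), the boundary cohomology is supported, for the $\mathfrak{Z}_{\widetilde G(F^+_{\bar v})}$-action, at the point attached to $\overline{\rho}_{\mathfrak{m}}\oplus\overline{\rho}_{\mathfrak{m}}^{\vee,c}(1-2n)$ restricted to $G_{F_v}$.
\item Corollary~\ref{ParabolicLemma} transfers this to the statement that the $\mathfrak{Z}_{G}$-support lies in $I^{*,-1}$ of that point. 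This is a finite set: pairs of submultisets up to twist.
\item To single out the correct pair, twist $\mathfrak{m}$ by a global character $\chi$ using Lemma~\ref{CharTwistLemma}. The character should be unramified at $v$, with $\chi|_{G_{F_v}}$ avoiding finitely many values, and have trivial $\chi\chi^c$-behaviour so that the $v$ and $v^c$ components move differently. Varying $\chi$ then isolates $(\mathfrak{m}_v,\mathfrak{m}_{v^c})$.
\end{itemize}

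\emph{Step 2 (interior cohomology).} Choose $\chi$ as in the Key Lemma~\ref{KeyLemma}, making the localised maximal ideal $(n,n)$-generic. I will use the surjection $\overline{\mathcal{S}}_\partial$ of Proposition~\ref{TransferProposition}, noting that the Hecke algebra at $\bar v$ factors through the centre of $\mathcal{H}(G^+,K)$ via $t_{\bar v}$ and Casselman's lift (Lemma~\ref{Casselmanslift}). This lets me lift $\mathbf{T}_!(K,m)_{\mathfrak{m}}$, modulo a nilpotent ideal, to the characteristic-zero algebra $\widetilde A_{\partial}$ of Theorem~\ref{EisensteinCongCusp}, localised at $(\mathfrak{m}_v,\mathfrak{m}_{v^c})$. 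For each cuspidal $\widetilde\pi$ appearing there, local-global compatibility at $\bar v$ (Theorem~\ref{SelfdualLGC}(ii)) together with the Key Lemma splits $r_\iota(\widetilde\pi)|_{G_{F_v}}=\rho_1\oplus\rho_2$. It also identifies the localised Jacquet module with $\pi(\rho_1)\otimes\pi(\rho_2)|\det|^n$. Hence on this lift the $\mathfrak{Z}_{\textnormal{GL}_n(F_v)}$-action factors through $\Phi_{F_v,n}$ applied to the determinant of $\rho_1$. The determinant of $\rho_1$ is the $v$-component of $D_!$, which at unramified places is read off from Proposition~\ref{Sataketransform} and Chebotarev. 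The equality $\theta^{\textnormal{Gal}}_v=\theta^{\textnormal{Aut}}_v$ then follows by injectivity into products of $\overline{\mathbf{Q}}_\ell$ (as in Lemma~\ref{LocalEisensteinFunctoriality}). Unramifiedness when $K_v$ is hyperspecial comes from the same argument, since $\rho_1$ is then unramified. Finally, untwist by $\chi$.

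\emph{Step 3 (boundary and full cohomology).} For $\partial X_K$, I will induct on $n$ using Lemma~\ref{BSInductionLemma}. The Hecke action on each stratum $Q=MN$ factors, up to bounded nilpotents, through interior or full cohomology of $M=\prod\textnormal{GL}_{n_i}$, where the claim holds by induction. Lemma~\ref{LocalEisensteinFunctoriality} then matches $I^{\textnormal{Gal}}$ with $I$, giving the compatible determinant for $G$. Combining $!$ with $\partial$ through the excision triangle, with nilpotence controlled as in \cite{ACC23} Lemma 2.2.3, yields $?=\emptyset,c$.

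I expect Step 1, the twisting argument that pins down the support, to be the main obstacle. It requires choosing a global character with prescribed local behaviour at $v$ and $v^c$, trivial on $\det K$, and careful bookkeeping with the $(1-2n)$ twist and $I^*$.
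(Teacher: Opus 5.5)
Your overall architecture matches the paper: mod $\ell$ support via the $\widetilde G$-boundary plus twisting, the interior case via Lemma~\ref{KeyLemma}, Casselman's lift and $\overline{\mathcal S}_\partial$, and the boundary by induction on $n$. However, Step~2 omits the step that ties the characteristic-zero lift to $D_!$ at the ramified place. On the finite flat algebra $\widetilde A$ you can build $\widetilde D$ by descending $\prod_{\widetilde\pi}D_{r_\iota(\widetilde\pi)}$ from $\widetilde A[1/\ell]$, with $\widetilde D|_{G_{F_v}}=D_1D_2$. Injectivity of $\widetilde A\to\prod\overline{\mathbf Q}_\ell$ then gives $D_1=\textnormal{nat}_{\widetilde A}\circ\Phi_{F_v,n}$, which is also why $D_1$ is $\widetilde A$-valued. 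What Chebotarev and Proposition~\ref{Sataketransform} give on the torsion quotient $A_!$ is only $\widetilde D\otimes_{\widetilde A}A_!=D_!\,D_!^{\vee,c}(1-2n)$, an identity of \emph{products}. Your sentence ``the determinant of $\rho_1$ is the $v$-component of $D_!$'' is exactly what must be proved. It cannot come from unramified places, nor from $\overline{\mathbf Q}_\ell$-points, since $A_!$ is $\ell$-power torsion. The missing ingredient is Hensel's lemma for determinants \cite[Lemma 3.2.4]{ACC23}: over the local Artinian ring $A_!$, a factorisation into pieces with residually disjoint Jordan--H\"older constituents is unique. The needed disjointness of $\overline D_{\mathfrak n}|_{G_{F_v}}$ and $\overline D_{\mathfrak n}^{\vee,c}(1-2n)|_{G_{F_v}}$ is also supplied by the Key Lemma twist. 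This yields $D_1\otimes_{\widetilde A}A_!=D_{\mathfrak n}|_{G_{F_v}}$, and $\overline{\mathcal S}\circ\textnormal{nat}_{\widetilde A}=\textnormal{nat}_{A_!}$ then finishes. The same step is needed to transport unramifiedness of $\rho_1$ to $D_!$ when $K_v$ is hyperspecial.

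Your control of the nilpotence exponent also fails as stated. Multiplying the per-place ideals gives a quotient in which the individual compatibilities need not hold. Summing them gives an exponent that grows with $|\overline S_{\textnormal{ram}}|$, which is not bounded in terms of $n$ and $[F^+:\mathbf Q]$. Since you choose the twist place by place, the resulting ideals genuinely differ. The paper instead chooses a single global $\overline\chi$ by Grunwald--Wang, trivial at $v^c$ and Key-Lemma-generic at $v$ for all $\bar v\in\overline S_{\textnormal{ram}}$ at once. It then works modulo the single ideal $I_!=\mathcal S_{\overline S_{\textnormal{ram}}}(\widetilde J_\partial)_{\mathfrak n}+(J_!)_{\mathfrak n}$, where $\widetilde J_\partial$ from Theorem~\ref{EisensteinCongCusp} already involves all of $\overline S_{\textnormal{ram}}$.

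Two smaller points. First, in Step~1, ``$\chi\chi^c$ trivial'' is the wrong condition. It twists both summands $\phi_{\mathfrak m_v}$ and $\phi^{\vee}_{\mathfrak m_{v^c}}(1-2n)$ of the $\widetilde G$-parameter by the same $\chi_v$, so it separates nothing; you need $\chi|_{G_{F_{v^c}}}$ trivial. Second, Step~1 only controls $H_!$, whereas $\mathbf T_!(K,m)$ is defined via the annihilator of $i_c$ in the derived category and is a quotient of $\mathbf T(K,m)$. The paper proves the support statement for $R\Gamma$ and $R\Gamma_c$ (Theorem~\ref{modellLGC}, with the boundary handled by induction) precisely so that $\mathbf T_!(K,m)_{\mathfrak n}$ is already localised at $\mathfrak n_{\bar v}$.
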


\subsection{ The proof}

\subsubsection{Compatibility with the mod $\ell$ local Langlands correspondence}
We first prove a weaker theorem on local-global compatibility with the mod $\ell$ local Langlands correspondence of Vigneras. Let $\mathfrak{m}\leq \mathbf{T}^{\overline{S}_{\textnormal{bad}}}_K$ be a maximal ideal in the support of $R\Gamma_{(c)}(X_K,\mathbf{Z}/\ell^m\mathbf{Z})$ and write $\overline{D}_{\mathfrak{m}}:G_{F,S_{\textnormal{bad}}}\to\overline{\mathbf{F}}_{\ell}$ for the corresponding group determinant. For $v\in S_{\textnormal{ram}}$, the restriction $\overline{D}_{\mathfrak{m}}|_{G_{F_v}}$ corresponds to an $\overline{\mathbf{F}}_{\ell}$-point of $(\mathfrak{R}_{F_v,n})^{\textnormal{GL}_n}$ and consequently, under $\Psi_{F_v,n}$, to a maximal ideal $\mathfrak{m}_v\leq \mathfrak{Z}_{\textnormal{GL}_n(F_v)}$. We define the corresponding maximal ideal $\mathfrak{m}_{\Bar{v}}:=(\mathfrak{m}_v,\mathfrak{m}_{v^c})\leq  \mathfrak{Z}_{G(F^+_{\Bar{v}})}\cong \mathfrak{Z}_{\textnormal{GL}_n(F_v)}\otimes_{\mathbf{Z}_{\ell}}
 \mathfrak{Z}_{\textnormal{GL}_n(F_{v^c})}$.
\begin{Th}\label{modellLGC}
    Let $\mathfrak{m}_{\Bar{v}}'=(\mathfrak{m}_v',\mathfrak{m}_{v^c}')\leq \mathfrak{Z}_{G(F^+_{\Bar{v}})}\cong\mathfrak{Z}_{\textnormal{GL}_n(F_v)}\otimes_{\mathbf{Z}_{\ell}}
 \mathfrak{Z}_{\textnormal{GL}_n(F_{v^c})}$ be a maximal ideal. If the localisation
 \begin{equation}\label{Localisation}
     (R\Gamma_{(c)}(X_{K^{\Bar{v}}K'_{\Bar{v}}},\mathbf{Z}/\ell^m\mathbf{Z})_{\mathfrak{m}})_{\mathfrak{m}_{\Bar{v}}'}
 \end{equation} 
 is non-trivial for some compact open subgroup $K'_{\Bar{v}}\leq G(\mathcal{O}_{F^+_{\overline{v}}})$, then the maximal ideal $\mathfrak{m}_{\Bar{v}}'$ coincides with $\mathfrak{m}_{\Bar{v}}$.
 
In other words, for every $K'_{\overline{v}}$, localisation along $\mathfrak{m}_{\Bar{v}}$ induces an isomorphism
    \begin{equation*}
        R\Gamma_{(c)}(X_{K^{\Bar{v}}K'_{\Bar{v}}},\mathbf{Z}/\ell^m\mathbf{Z})_{\mathfrak{m}}\xrightarrow{\sim} (R\Gamma_{(c)}(X_{K^{\Bar{v}}K'_{\Bar{v}}},\mathbf{Z}/\ell^m\mathbf{Z})_{\mathfrak{m}})_{\mathfrak{m}_{\Bar{v}}}.
    \end{equation*}
\end{Th}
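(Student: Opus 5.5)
The plan is to reduce to a statement about an admissible smooth representation of $G(F^+_{\Bar v})=\textnormal{GL}_n(F_v)\times \textnormal{GL}_n(F_{v^c})$ and then split it into an interior part and a boundary part.

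\textbf{Reduction.} Consider
\begin{equation*}
H_{(c)}:=\varinjlim_{K'_{\Bar v}}H^{\ast}_{(c)}(X_{K^{\Bar v}K'_{\Bar v}},\mathbf{Z}/\ell^m\mathbf{Z})_{\mathfrak{m}}\in \textnormal{Mod}_{\textnormal{sm}}(\mathbf{Z}_\ell[G(F^+_{\Bar v})]),
\end{equation*}
which is admissible by compactness of the Borel--Serre compactification. By Corollary~\ref{EquivConditions}, applied to $\mathfrak{Z}_{G(F^+_{\Bar v})}$, it suffices to show that $H_{(c)}/\mathfrak{m}'_{\Bar v}H_{(c)}\neq 0$ forces $\mathfrak{m}'_{\Bar v}=\mathfrak{m}_{\Bar v}$. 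Filtering $\mathbf{Z}/\ell^m$ by $\ell$-adic powers shows we may take $m=1$. The excision sequences for $X_K\subset \overline{X}_K\supset \partial X_K$ then reduce the claim to two cases: the interior image $H_!$, and the boundary cohomology $H_\partial$.

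\textbf{Interior part.} Using Lemma~\ref{ImplicationOfBSLemma} and Proposition~\ref{TransferProposition} with $\overline{S}=\{\Bar v\}\subset \overline{S}_{\textnormal{ram}}$, I would realise $\textnormal{Ind}_{P(F^+_{\Bar v})}^{\widetilde G(F^+_{\Bar v})}H_!$ as a $\widetilde G(F^+_{\Bar v})$-equivariant subquotient of $H_\partial(\widetilde K^{\Bar v})_{\widetilde{\mathfrak{m}}}$, where $\widetilde{\mathfrak m}=\mathcal{S}^{-1}(\mathfrak m)$. By Theorem~\ref{EisensteinCongCusp}, the Hecke action on the latter is controlled, modulo a nilpotent ideal, by cohomological cuspidal $\widetilde\pi$. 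Since $\Bar v$ is nice, Theorem~\ref{SelfdualLGC}(ii) applies to these $\widetilde\pi$, and Theorem~\ref{ModEllLL} then shows that the $\mathfrak{Z}_{\widetilde G(F^+_{\Bar v})}$-support is the single ideal $\widetilde{\mathfrak m}_{\Bar v}$ attached to
\begin{equation*}
(\overline\rho_{\mathfrak m}|_{G_{F_v}})^{\textnormal{ss}}\oplus(\overline\rho_{\mathfrak m}^{\vee,c}(1-2n)|_{G_{F_v}})^{\textnormal{ss}}.
\end{equation*}
Corollary~\ref{ParabolicLemma} then says every $\mathfrak m'_{\Bar v}$ in the support of $H_!$ satisfies $I^\ast(\mathfrak m'_{\Bar v})=\widetilde{\mathfrak m}_{\Bar v}$. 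I would make this explicit with \eqref{eq-unitary} and Lemma~\ref{LocalEisensteinFunctoriality}: the multiset union of the parameters of $\mathfrak m'_v$ and the dual–conjugate twist of $\mathfrak m'_{v^c}$ equals that of $\mathfrak m_v$ and $\mathfrak m_{v^c}$.

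\textbf{Twisting (main obstacle).} This only pins $\mathfrak m'_{\Bar v}$ down up to a redistribution of supercuspidal constituents between the two blocks. To remove the ambiguity I would use Lemma~\ref{CharTwistLemma} with a global character $\chi:G_F\to\overline{\mathbf F}_\ell^\times$ (lifted to $\mathcal{O}^\times$) that is unramified at $v$ and $v^c$, ramified only at auxiliary places where $K$ is shrunk, and such that $\chi\chi^{c}(\textnormal{Frob}_v)$ avoids a finite bad set. Such $\chi$ exist by class field theory; the splitting of $p$ in an imaginary quadratic subfield should allow independent prescription at $v$ and $v^c$. Twisting by $\chi$ multiplies the $\rho$-part by $\chi$ and the dual–conjugate part by $\chi^{-c}$. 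The multiset equality must then hold simultaneously for $\mathfrak m$ and $\mathfrak m\otimes\chi$, with $\mathfrak m'_{\Bar v}$ twisted accordingly. For generic values (a finite exclusion, as in the proof of Lemma~\ref{KeyLemma}), the two halves become distinguishable by Frobenius eigenvalues up to powers of $q$ that are constant mod $\ell$, and this should force $\mathfrak m'_v=\mathfrak m_v$ and $\mathfrak m'_{v^c}=\mathfrak m_{v^c}$. The delicate points are keeping the level good and the auxiliary ramification away from $\Bar v$.

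\textbf{Boundary part.} For $H_\partial$ I would induct on $n$. The stratification of $\partial\mathfrak X_G$ by standard parabolics $Q=MN$ of $G$, together with Lemma~\ref{BSInductionLemma}, bounds the support of each stratum at $\Bar v$, through $I^{\overline S_2}$, by the support of $M$-cohomology with $M=\prod\textnormal{Res}\,\textnormal{GL}_{n_i}$. The induction hypothesis applies to each $\textnormal{GL}_{n_i}$, including at deeper level $K'_{M,\overline S_2}$. Lemma~\ref{LocalEisensteinFunctoriality}, combined with the compatibility of the global determinants (the boundary determinant is the twisted sum of the Levi ones, as in \cite{Sch15}), then identifies the resulting $I^\ast(\mathfrak m_{M})$ with $\mathfrak m_{\Bar v}$.
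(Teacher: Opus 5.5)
Your proposal follows the paper's proof essentially step for step. It uses the same reduction via Corollary~\ref{EquivConditions} and excision to $H_!$ and $H_\partial$, and the same realisation of $\textnormal{Ind}\,H_!$ inside the $\widetilde G$-boundary to obtain $I^\ast(\mathfrak m'_{\Bar v})=\widetilde{\mathfrak m}_{\Bar v}$ via Theorems~\ref{EisensteinCongCusp}, \ref{SelfdualLGC} and Corollary~\ref{ParabolicLemma}. It then removes the redistribution ambiguity by a global-character twist and handles the boundary by the same induction on $n$ through Lemma~\ref{BSInductionLemma}. The only difference is bookkeeping in the twisting step: the paper first twists so that $\phi_{\mathfrak m_v}$ and $\phi^{\vee}_{\mathfrak m_{v^c}}(1-2n)$ share no Jordan--H\"older factors, then argues by contradiction with a second twist having $\chi_{v^c}$ trivial, whereas you compare the untwisted identity with one twisted by a generic relative character $\chi\chi^c|_{G_{F_v}}$, which works equally well (the appeal to powers of $q$ is unnecessary there).
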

\begin{proof}
We start by discussing some reduction steps. 
\begin{enumerate}
    \item By passing to cohomology, we see that \ref{Localisation} being non-trivial is equivalent to the localisation $(H^{\ast}_{(c)}(X_{K^{\Bar{v}}K'_{\Bar{v}}},\mathbf{F}_{\ell})_{\mathfrak{m}})_{\mathfrak{m}'_v}$ being non-trivial.
    \item For $?\in\{\emptyset,c,!\}$, set
\begin{equation*}
    H^{?}:=\varinjlim_{K_{\Bar{v}}'}H_?^{\ast}(X_{K^{\Bar{v}}K'_{\Bar{v}}},\overline{\mathbf{F}}_{\ell})\in \textnormal{Mod}_{\textnormal{sm}}(\mathbf{Z}_{\ell}[G(F^+_{\Bar{v}})]),
\end{equation*}
and
\begin{equation*}
    H^{\partial}:=\varinjlim_{K_{\Bar{v}}'}H^{\ast}(\partial X_{K^{\Bar{v}}K'_{\Bar{v}}},\overline{\mathbf{F}}_{\ell})\in \textnormal{Mod}_{\textnormal{sm}}(\mathbf{Z}_{\ell}[G(F^+_{\Bar{v}})]).
\end{equation*}
By Corollary~\ref{EquivConditions}, we can equivalently show that, for $?\in\{\emptyset,c\}$, $(H^{^{?}}_{\mathfrak{m}})_{\mathfrak{m}_{\Bar{v}}'}$ can only be non-trivial for $\mathfrak{m}_{\Bar{v}}'=\mathfrak{m}_{\Bar{v}}$.
\item By the excision long exact sequence for the Borel--Serre boundary, it suffices to prove the claim of the previous point for $?\in\{!,\partial\}$.
\item We can enlarge $\overline{S}_{\textnormal{bad}}$, provided that it is still unconditional in the sense of Definition~\ref{DefinitionOnPlaces}.
\item By a standard argument with the Hochschild--Serre spectral sequence, it suffices to prove the claim after passing to a deeper level subgroup at $\overline{S}_{\textnormal{bad}}\setminus\{\overline{v}\}$.
\item Finally, given an almost everywhere unramified continuous character $\chi:G_F\to \overline{\mathbf{F}}_{\ell}^{\times}$, after possibly enlarging $\overline{S}_{\textnormal{bad}}$ and deepening the level at $\overline{S}_{\textnormal{bad}}$, we can ensure that $\chi_w:=\chi|_{G_{F_w}}$ is trivial on $\textnormal{Art}_{F_w}(\det(K_{w}))$ for every $w\nmid \Bar{v}$. Denote by $\mathfrak{m}(\chi)$ and $\mathfrak{m}_v(\chi_v)$, the corresponding maximal ideals. Using Lemma~\ref{CharTwistLemma}, we see that one can equivalently prove that, 
$((H^{?})_{\mathfrak{m}(\chi)})_{\mathfrak{m}_{\Bar{v}}'}$ is trivial unless $\mathfrak{m}_{\Bar{v}}'=\mathfrak{m}_{\Bar{v}}(\chi_{\Bar{v}})=(\mathfrak{m}_v(\chi_v),\mathfrak{m}_{v^c}(\chi_{v^c}))$.

In particular, if we write $\phi_{\mathfrak{m}_v}$, and $\phi_{\mathfrak{m}_{v^c}}$ for the mod $\ell$ semisimple Langlands parameters associated with $\mathfrak{m}_{v}$, and $\mathfrak{m}_{v^c}$, respectively, after twisting, we can and \textit{do assume} that $\phi_{\mathfrak{m}_v}$ and $\phi_{\mathfrak{m}_{v^c}}^{\vee}(1-2n)$ share no common Jordan--H\"older factors. Indeed, by the Grunwald--Wang theorem (cf. \cite{AT61}, Chapter X, Theorem 5) we can construct $\chi$ with prescribed local behavior at an arbitrary finite set of finite places.
\end{enumerate}

We start with the case $? = !$. Let $\widetilde{K}$ be an $(\overline{S}_{\textnormal{bad}},\overline{S}_{\textnormal{ram}})$-parahoric subgroup satisfying $\widetilde{K}\cap G(\mathbf{A}_{F^+}^{\infty})=K$. Such a level subgroup always exists. Let $\widetilde{\mathfrak{m}}\leq \widetilde{\mathbf{T}}^{\overline{S}_{\textnormal{bad}}}_{\widetilde{K}}$ be the pullback of $\mathfrak{m}$ along the unnormalised Satake transform and let $\widetilde{\mathfrak{m}}_{\Bar{v}}\leq \mathfrak{Z}_{\widetilde{G}(F^+_{\Bar{v}})}$ be the maximal ideal associated with
    \begin{equation*}
        \overline{D}_{\widetilde{\mathfrak{m}}}|_{G_{F_v}}\cong \overline{D}_{\mathfrak{m}}|_{G_{F_v}}\oplus \overline{D}_{\mathfrak{m}}^{\vee,c}(1-2n)|_{G_{F_v}}.
    \end{equation*}
    Note that $\widetilde{\mathfrak{m}}_{\Bar{v}}$ is the pullback of
    \begin{equation*}
        \mathfrak{m}_{\Bar{v}}=(\mathfrak{m}_v,\mathfrak{m}_{v^c})\leq \mathfrak{Z}_{\textnormal{GL}_n(F_v)}\otimes_{\mathbf{Z}_{\ell}}\mathfrak{Z}_{\textnormal{GL}_n(F_{v^c})}=\mathfrak{Z}_{G(F^+_{\Bar{v}})}\cong^{\iota_v}\mathfrak{Z}_{\textnormal{GL}_n(F_v)}\otimes_{\mathbf{Z}_{\ell}}\mathfrak{Z}_{\textnormal{GL}_n(F_v)}
    \end{equation*}
    along the map $I$ from Theorem~\ref{TheMapI}.

We first prove the following lemma.
\begin{Lemma}\label{pullbackcompatibility}
    The pullback $I^{\ast}(\mathfrak{m}'_{\Bar{v}})$ coincides with $\widetilde{\mathfrak{m}}_{\Bar{v}}$.
\end{Lemma}
\begin{proof}
We set
\begin{equation*}
    \widetilde{H}^{\partial}:=\varinjlim_{\widetilde{K}_{\Bar{v}}}H^{\ast}(\partial\widetilde{X}_{\widetilde{K}^{\Bar{v}}\widetilde{K}_{\Bar{v}}},\overline{\mathbf{F}}_{\ell})\in \textnormal{Mod}_{\textnormal{sm}}(\mathbf{Z}_{\ell}[\widetilde{G}(F^+_{\Bar{v}})]).
\end{equation*}
We first prove that the localisation $(\widetilde{H}^{\partial}_{\widetilde{\mathfrak{m}}})_{\widetilde{\mathfrak{m}}_{\Bar{v}}'}$ can only be non-trivial for $\widetilde{\mathfrak{m}}_{\Bar{v}}'=\widetilde{\mathfrak{m}}_{\Bar{v}}$. To see this, note that by Corollary~\ref{EisensteinCongCusp}, the natural map
    \begin{equation*}
\widetilde{\mathbf{T}}^{\overline{S}_{\textnormal{avoid}}}_{\widetilde{K}}\to (\widetilde{\mathbf{T}}_{\partial}^{\overline{S}_{\textnormal{avoid}}}(\widetilde{K},m)/\widetilde{J}_{\partial}^{(\overline{S}_{\textnormal{bad}},\overline{S}_{\textnormal{ram}})})_{\widetilde{\mathfrak{m}}}
    \end{equation*}
     factors through the map
    \begin{equation*}
        \widetilde{\mathbf{T}}^{\overline{S}_{\textnormal{avoid}}}_{\widetilde{K}}\to (\widetilde{A}_{\widetilde{K},m,\partial}^{\overline{S}_{\textnormal{avoid}}})_{\widetilde{\mathfrak{m}}}.
    \end{equation*}
    Moreover, by Theorem~\ref{SelfdualLGC} on local-global compatibility for cusp forms for $\widetilde{G}$, we have an isomorphism
    \begin{equation*}
(\widetilde{A}_{\widetilde{K},m,\partial}^{\overline{S}_{\textnormal{avoid}}})_{\widetilde{\mathfrak{m}}}\cong \left((\widetilde{A}_{\widetilde{K},m,\partial}^{\overline{S}_{\textnormal{avoid}}})_{\widetilde{\mathfrak{m}}}\right)_{\widetilde{\mathfrak{m}}_{\Bar{v}}}.
    \end{equation*}
    Combining this with Corollary~\ref{EquivConditions}, we obtain the claim.

    An application of Lemma~\ref{ImplicationOfBSLemma} with $\overline{S}=\{\Bar{v}\}$ yields a commutative diagram
    \begin{equation*}
        \begin{tikzcd}
	& {(\widetilde{H}^{\partial}_{\widetilde{\mathfrak{m}}})_{\widetilde{\mathfrak{m}}_{\Bar{v}}'}} \\
	{\left((\textnormal{Ind}_{P_{\Bar{v}}}^{\widetilde{G}_{\Bar{v}}}H^c)_{\widetilde{\mathfrak{m}}}\right)_{\widetilde{\mathfrak{m}}_{\Bar{v}}'}} && {\left((\textnormal{Ind}_{P_{\Bar{v}}}^{\widetilde{G}_{\Bar{v}}}H)_{\widetilde{\mathfrak{m}}}\right)_{\widetilde{\mathfrak{m}}_{\Bar{v}}'}}
	\arrow[from=1-2, to=2-3]
	\arrow[from=2-1, to=1-2]
	\arrow["{i_c}"', from=2-1, to=2-3]
\end{tikzcd}
    \end{equation*}
    where the bottom horizontal map is the one induced by forgetting the support.
    Therefore, we see that $\im(i_c)=\left((\textnormal{Ind}_{P_{\Bar{v}}}^{\widetilde{G}_{\Bar{v}}}H^!)_{\widetilde{\mathfrak{m}}}\right)_{\widetilde{\mathfrak{m}}_{\Bar{v}}'}$ can only be non-trivial for $\widetilde{\mathfrak{m}}_{\Bar{v}}'=\widetilde{\mathfrak{m}}_{\Bar{v}}$. In particular, $(\textnormal{Ind}_{P_{\Bar{v}}}^{\widetilde{G}_{\Bar{v}}}H^!_{\mathfrak{m}})_{\widetilde{\mathfrak{m}}_{\Bar{v}}'}$ can only be non-trivial for $\widetilde{\mathfrak{m}}_{\Bar{v}}'=\widetilde{\mathfrak{m}}_{\Bar{v}}$. However, we assumed that $(H^!_{\mathfrak{m}})_{\mathfrak{m}'}$ is non-trivial and so, by Corollary~\ref{ParabolicLemma}, we see that $(\textnormal{Ind}_{P_{\Bar{v}}}^{\widetilde{G}_{\Bar{v}}}H^!_{\mathfrak{m}})_{I^{\ast}(\mathfrak{m}_{\Bar{v}}')}$ is non-trivial, yielding $I^{\ast}(\mathfrak{m}'_{\Bar{v}})=\widetilde{\mathfrak{m}}_{\Bar{v}.}$
\end{proof}

    Consider an almost everywhere unramified continuous character $\chi:G_F\to \overline{\mathbf{F}}_{\ell}^{\times}$ with trivial restriction to $\textnormal{Art}_{F_w}^{-1}( \det(K_w))$ for all finite places $w$ of $F$. A combination of Lemma~\ref{CharTwistLemma} and Lemma~\ref{pullbackcompatibility} shows that
    \begin{equation}\label{twistedideal}
        I^{\ast}(\mathfrak{m}_{\Bar{v}}'(\chi_{\Bar{v}}))=\widetilde{\mathfrak{m}(\chi)}_{\Bar{v}}.
    \end{equation}
    Denote by $\phi_{\mathfrak{m}}:G_{F^+_{\Bar{v}}}\to \textnormal{GL}_n(\overline{\mathbf{F}}_{\ell})$ the semisimple Langlands parameter associated with $\mathfrak{m}\in\{\mathfrak{m}_v,\mathfrak{m}_{v^c},\mathfrak{m}_v',\mathfrak{m}_{v^c}'\}$.
    By \ref{twistedideal} we have
    \begin{equation}\label{twistedequation}
        (\phi_{\mathfrak{m}_v'}\otimes\chi_v)\oplus (\phi_{\mathfrak{m}_{v^c}'}^{\vee}(1-2n)\otimes \chi_{v^c}^{-1})\cong(\phi_{\mathfrak{m}_v}\otimes\chi_v)\oplus (\phi_{\mathfrak{m}_{v^c}}^{\vee}(1-2n)\otimes \chi_{v^c}^{-1}).
    \end{equation}
    
To reach a contradiction, we assume that there is $\phi\in \textnormal{JH}(\phi_{\mathfrak{m}_v'})\cap \textnormal{JH}(\phi_{\mathfrak{m}_{v^c}}^{\vee}(1-2n))$. (We remind the reader that we have arranged at the start of the proof that $\phi_{\mathfrak{m}_v}$ and $\phi_{\mathfrak{m}_{v^c}}^{\vee}(1-2n)$ share no Jordan--H\"older factors and so $\phi\notin \textnormal{JH}(\phi_{\mathfrak{m}_v})$.)

Note that for all but finitely many unramified characters $\psi_v:G_{F_v}\to \overline{\mathbf{F}}_{\ell}^{\times}$, we have
\begin{equation}\label{characterequation}
    \textnormal{JH}(\phi_{\mathfrak{m}_{v^c}}^{\vee}(1-2n)\otimes \psi_v)\cap \textnormal{JH}(\phi_{\mathfrak{m}_{v^c}}^{\vee}(1-2n))=\emptyset.
\end{equation}
Pick such a $\psi_v$ and, using the theorem of Grunwald--Wang
(cf. \cite{AT61}, Chapter X, Theorem 5), construct a continuous character $\chi:G_F\to \overline{\mathbf{F}}_{\ell}^{\times}$ with $\chi_{v}=\psi_v$, and $\chi_{v^c}$ being trivial. After possibly enlarging $\overline{S}_{\textnormal{bad}}$, and deepening the level at $\overline{S}_{\textnormal{bad}}$, we can further assume that $\chi$ is trivial on $\textnormal{Art}_{F_w}^{-1}(\det(K_{w}))$ for all finite places of $F$. Therefore, by \ref{twistedequation} using that $\phi\otimes\psi_v\notin \textnormal{JH}(\phi_{\mathfrak{m}_v}\otimes\psi_v)$, we obtain that $\phi\otimes\psi_v\in \textnormal{JH}(\phi_{\mathfrak{m}_{v^c}}^{\vee}(1-2n))$. However,
we also have $\phi\otimes\psi_v\in \textnormal{JH}(\phi_{\mathfrak{m}_{v^c}}^{\vee}(1-2n)\otimes \psi_v)$ which contradicts \ref{characterequation}. This finishes the proof of the case of interior cohomology.

\vspace{5mm}

To prove the claim for the boundary cohomology and finish the proof of the theorem, we run an induction argument on $n\geq 1$. As for $n=1$ there is no boundary, the theorem is already proved in this case. Assume that $n>1$ and that the theorem has already been established for $1\leq m <n$. By looking at the excision long exact sequence for the stratification of the Borel--Serre boundary, we see that it suffices to prove that the following holds for an arbitrary $F^+$-rational standard parabolic subgroup $Q=MN\leq G$
\begin{itemize}
    \item $((H^{Q}_c)_{\mathfrak{m}})_{\mathfrak{m}_{\Bar{v}}'}\neq 0$ only if $\mathfrak{m}_{\Bar{v}}'=\mathfrak{m}_{\Bar{v}}$
\end{itemize}
where
\begin{equation*}
H^Q_c:=\varinjlim_{K_{\Bar{v}}'}H^{\ast}_c(X_{K^{\Bar{v}}K_{\Bar{v}}'}^Q,\mathbf{F}_{\ell})\in \textnormal{Mod}_{\textnormal{sm}}(\mathbf{Z}_{\ell}[G(F^+_{\Bar{v}})]).
\end{equation*}
By Lemma~\ref{BSInductionLemma}, it suffices to prove that
\begin{itemize}
    \item $((H^{M}_c)_{\mathfrak{m}})_{\mathfrak{m}_{\Bar{v}}'}\neq 0$ only if $\mathfrak{m}_{\Bar{v}}'=\mathfrak{m}_{\Bar{v}}$
\end{itemize}
where
\begin{equation*}
H^M_c:=\varinjlim_{K_{M,\Bar{v}}'}H^{\ast}_c(X_{K^{\Bar{v}}_MK_{M,\Bar{v}}'}^M,\mathbf{F}_{\ell})\in \textnormal{Mod}_{\textnormal{sm}}(\mathbf{Z}_{\ell}[M(F^+_{\Bar{v}})])
\end{equation*}
for some good subgroup $K_M^{\Bar{v}}$ with $K_M^{\overline{S}_{\textnormal{bad}}}=M(\widehat{\mathcal{O}}_{F^+}^{\overline{S}_{\textnormal{bad}}})$.

Let $\mathfrak{m}_1,...,\mathfrak{m}_k\leq \mathfrak{Z}^{\overline{S}_{\textnormal{bad}}}_M$ be the \textit{finite} list of maximal ideals that are supported on $H^M_c$ and satisfy $I^{\overline{S}_{\textnormal{bad}},\ast}(\mathfrak{m}_i)=t_{G,K}^{\overline{S}_{\textnormal{bad}},\ast}(\mathfrak{m})$ and write $\mathfrak{m}_{i,\Bar{v}}\leq \mathfrak{Z}_{M(F^+_{\Bar{v}})}$ for the maximal ideal associated with the pair of determinants $(\overline{D}_{\mathfrak{m}_i}|_{G_{F_v}}, \overline{D}_{\mathfrak{m}_i}|_{G_{F_{v^c}}})$. We then have an inclusion
\begin{equation*}
    (H_c^M)_{\mathfrak{m}}\hookrightarrow \oplus_{i=1}^k(H_c^M)_{\mathfrak{m}_i}\cong \oplus_{i=1}^k\left((H_c^M)_{\mathfrak{m}_i}\right)_{\mathfrak{m}_{i,\Bar{v}}}
\end{equation*}
where we used the induction hypothesis (and the K\"unneth formula) for the last isomorphism. This shows that $\mathfrak{m}_{\Bar{v}}'$ must be of the form $I^{\ast}(\mathfrak{m}_{i,\Bar{v}})$ for some $i=1,...,k$. But this pullback is $\mathfrak{m}_{\Bar{v}}$ for any $i=1,...,k$.
\end{proof}
\subsubsection{Proof of Theorem~\ref{MainTHM} for $?=!$}

We start with a series of reductions.
\begin{enumerate}
    \item We can freely enlarge $\overline{S}_{\textnormal{bad}}$ (and $\overline{S}_{\textnormal{avoid}}$, accordingly). More precisely, let $\overline{S}_{\textnormal{bad}}\subset \overline{S}_{\textnormal{bad}}'$ be so that $(\overline{S}_{\textnormal{ram}},\overline{S}_{\textnormal{bad}}')$ is an allowable pair and assume that there is $D_!':G_{F,S_{\textnormal{bad}}'}\to \mathbf{T}_!^{\overline{S}_{\textnormal{avoid}}'}(K,m)/I_!^{\overline{S}_{\textnormal{avoid}}'}$ satisfying local-global compatibility away from $\overline{S}_{\textnormal{avoid}}'$ for an ideal with $(I_!^{\overline{S}_{\textnormal{avoid}}'})^{M_!'}=0$. By Theorem \ref{ScholzeTHM}, there is a determinant $D_!:G_{F,S_{\textnormal{bad}}}\to \mathbf{T}_!^{\overline{S}_{\textnormal{avoid}}}(K,m)/J_!^{\overline{S}_{\textnormal{avoid}}}$ satisfying local-global compatibility away from $\overline{S}_{\textnormal{bad}}$. We can then set $M_!=M_!'+N_!$ and $I_!^{\overline{S}_{\textnormal{avoid}}}:=I_!^{\overline{S}_{\textnormal{avoid}}'}\mathbf{T}_!^{\overline{S}_{\textnormal{avoid}}}(K,m)+J_!^{\overline{S}_{\textnormal{avoid}}}.$
    \item We have finitely many maximal ideals $\mathfrak{m}\leq \mathbf{T}_K^{\overline{S}_{\textnormal{bad}}}$ such that 
    \begin{equation*}
        \mathbf{T}_!^{\overline{S}_{\textnormal{avoid}}}(K,m)\cong \prod_{\mathfrak{m}}\mathbf{T}_!^{\overline{S}_{\textnormal{avoid}}}(K,m)_{\mathfrak{m}}.
    \end{equation*}
    It suffices to prove for every maximal ideal $\mathfrak{m}$, the existence of an ideal $I_!\leq \mathbf{T}_!^{\overline{S}_{\textnormal{avoid}}}(K,m)_{\mathfrak{m}}$ and a continuous determinant $D_!$ with values in $\mathbf{T}_!^{\overline{S}_{\textnormal{avoid}}}(K,m)_{\mathfrak{m}}/I_!$ satisfying local-global compatibility away from $\overline{S}_{\textnormal{avoid}}$. In particular, we fix a maximal ideal $\mathfrak{m}\leq \mathbf{T}_K^{\overline{S}_{\textnormal{bad}}}$ and prove the claim after localisation.
    \item For every $\Bar{v}\in \overline{S}_{\textnormal{ram}}$, we fix a preferred choice of place $v| \Bar{v}$ in $S_{\textnormal{ram}}$ and denote the other place above $\overline{v}$ by $v^c$. In particular, we get a corresponding decomposition $S_{\textnormal{ram}}=\widetilde{S}_{\textnormal{ram}}\coprod \widetilde{S}_{\textnormal{ram}}^c$. Since we can freely change our preferred choice of lifts, it suffices to prove local-global compatibility for $v\in \widetilde{S}_{\textnormal{ram}}$.
    \item Let $\overline{\chi}:G_F\to \overline{\mathbf{F}}_{\ell}^{\times}$ be an almost everywhere unramified continuous character that is unramified at $S_{\textnormal{ram}}$. By enlarging $\overline{S}_{\textnormal{bad}}$, we can assume that it is of the form $\overline{\chi}:G_{F,S_{\textnormal{avoid}}}\to k^{\times}$ for the residue field $k$ of the ring of integers $\mathcal{O}$ of some finite field extension of $\mathbf{Q}_{\ell}$. Let $\chi:G_{F,S_{\textnormal{avoid}}}\to \mathcal{O}^{\times}$ be its canonical lift. Let $L_{\overline{S}_{\textnormal{avoid}}}\leq K_{\overline{S}_{\textnormal{avoid}}}$ be a normal subgroup such that $K_{\overline{S}_{\textnormal{avoid}}}/L_{\overline{S}_{\textnormal{avoid}}}$ is an abelian group of prime-to-$\ell$ order and $\chi\circ \det$ is trivial on $L:=K^{\overline{S}_{\textnormal{avoid}}}L_{\overline{S}_{\textnormal{avoid}}}$. Then the map $f_{\chi}^{-1}:\mathbf{T}_K^{\overline{S}_{\textnormal{avoid}}}\otimes_{\mathbf{Z}_{\ell}}\mathcal{O}\to\mathbf{T}_K^{\overline{S}_{\textnormal{avoid}}}\otimes_{\mathbf{Z}_{\ell}}\mathcal{O}$ from Lemma \ref{CharTwistLemma} can be shown to descend to a surjection
    \begin{equation*}
        \mathbf{T}_{!,\mathcal{O}}^{\overline{S}_{\textnormal{avoid}}}(L,m)_{\mathfrak{m}(\chi)}\twoheadrightarrow \mathbf{T}_{!,\mathcal{O}}^{\overline{S}_{\textnormal{avoid}}}(K,m)_{\mathfrak{m}}
    \end{equation*}
    where $\mathbf{T}_{!,\mathcal{O}}^{\overline{S}_{\textnormal{avoid}}}(K,m):=(\mathbf{T}_K^{\overline{S}_{\textnormal{avoid}}}\otimes_{\mathbf{Z}_{\ell}}\mathcal{O})/\textnormal{Ann}_{\mathbf{T}_K^{\overline{S}_{\textnormal{avoid}}}\otimes_{\mathbf{Z}_{\ell}}\mathcal{O}}(i_{c,K,m}\otimes_{\mathbf{Z}_{\ell}}\mathcal{O})$.
    Using that $\Phi_{v,n}$ is compatible with character twists, we see that it suffices to prove the claim from (ii) for $\mathfrak{m}(\chi)$ in place of $\mathfrak{m}$.

    Using the theorem of Grunwald--Wang (cf. \cite{AT61}, Chapter X, Theorem 5), we construct and fix a character $\overline{\chi}$ such that, for $\Bar{v}\in \overline{S}_{\textnormal{ram}}$, $\overline{\chi}_{v^c}:=\overline{\chi}|_{G_{F_{v^c}}}$ is trivial and $\overline{\chi}_v:=\overline{\chi}|_{G_{F_{v}}}$ is one of the characters provided by Lemma~\ref{KeyLemma} for $\phi_1=\phi_{\mathfrak{m}_v}$ and $\phi_2=\phi_{\mathfrak{m}_{v^c}}^{\vee}(1-2n)$. We then prove the theorem for $\mathfrak{n}:=\mathfrak{m}(\chi)$. In particular, we note that $\mathfrak{n}_{\Bar{v}}=(\mathfrak{n}_v,\mathfrak{n}_{v^c})=(\mathfrak{m}_{v}(\chi_v),\mathfrak{m}_{v^c})\leq \mathfrak{Z}_{G(F^+_{\Bar{v}})}\cong^{\iota_v}\mathfrak{Z}_{\textnormal{GL}_n(F_v)}\otimes_{\mathbf{Z}_{\ell}}\mathfrak{Z}_{\textnormal{GL}_n(F_v)}$ is $(n,n)$-generic where we view $\textnormal{GL}_n(F_v)\times \textnormal{GL}_n(F_v)$ as a standard Levi subgroup of $\textnormal{GL}_{2n}(F_v)$ via the diagonal embedding.
\end{enumerate}

We now define an ideal $I_!\leq(J_!^{\overline{S}_{\textnormal{avoid}}})_{\mathfrak{n}}$ (with nilpotence degree bounded in terms of $n$ and $[F^+:\mathbf{Q}]$) and prove local-global compatibility at $v\in \widetilde{S}_{\textnormal{ram}}$ for the induced determinant $D_!:G_{F,\overline{S}_{\textnormal{bad}}}\to \mathbf{T}_!^{\overline{S}_{\textnormal{avoid}}}(K,m)_{\mathfrak{n}}/I_!$. 

Let $\widetilde{K}\leq \widetilde{G}(\mathbf{A}_{F^+}^{\infty})$ be an $(\overline{S}_{\textnormal{bad}},\overline{S}_{\textnormal{ram}})$-parahoric subgroup such that $\widetilde{K}\cap G(\mathbf{A}_{F^+}^{\infty})=K$. Denote by $\widetilde{\mathfrak{n}}\subset \widetilde{\mathbf{T}}^{\overline{S}_{\textnormal{bad}}}$ the pullback of $\mathfrak{n}$ along the unnormalised Satake transform and set $I_!:=\mathcal{S}_{\overline{S}_{\textnormal{ram}}}(\widetilde{J}_{\partial}^{(\overline{S}_{\textnormal{bad}},\overline{S}_{\textnormal{ram}})})_{\mathfrak{n}}+(J_!^{\overline{S}_{\textnormal{avoid}}})_{\mathfrak{n}}$ (cf. Theorem \ref{EisensteinCongCusp}). We then define the faithful Hecke algebras
    \begin{equation*}
\widetilde{A}_{\partial}:=\im\left(\widetilde{\mathbf{T}}_{\widetilde{K}}^{\overline{S}_\textnormal{bad}}\otimes_{\mathbf{Z}_{\ell}}\mathfrak{Z}_{G(F^+_{\Bar{v}})}\to (\widetilde{\mathbf{T}}_{\partial}^{(\overline{S}_{\textnormal{bad}},\overline{S}_{\textnormal{ram}})}(\widetilde{K},m)/\widetilde{J}_{\partial}^{(\overline{S}_{\textnormal{bad}},\overline{S}_{\textnormal{ram}})})_{\widetilde{\mathfrak{n}},\mathfrak{n}_{\Bar{v}}}\right),
    \end{equation*}
    \begin{equation*}
        A_!:=\im\left(\mathbf{T}_{K}^{\overline{S}_\textnormal{bad}}\otimes_{\mathbf{Z}_{\ell}}\mathfrak{Z}_{G(F^+_{\Bar{v}})}\to (\mathbf{T}_!^{\overline{S}_{\textnormal{avoid}}}(K,m)_{\mathfrak{n}}/I_!)_{\mathfrak{n}_{\Bar{v}}}\right)=^{\textnormal{Thm}~\ref{modellLGC}}
    \end{equation*}
    \begin{equation*}
        \im\left(\mathbf{T}_{K}^{\overline{S}_\textnormal{bad}}\otimes_{\mathbf{Z}_{\ell}}\mathfrak{Z}_{G(F^+_{\Bar{v}})}\to \mathbf{T}_!^{\overline{S}_{\textnormal{avoid}}}(K,L,m)_{\mathfrak{n}}/I_!\right).
    \end{equation*}
    Note that $\widetilde{A}_{\partial}$ and $A_!$ are Artinian local $\mathbf{Z}_{\ell}$-algebras. The preimage of their maximal ideals under the natural surjection from $\widetilde{\mathbf{T}}_{\widetilde{K}}^{\overline{S}_\textnormal{bad}}\otimes_{\mathbf{Z}_{\ell}}\mathfrak{Z}_{G(F^+_{\Bar{v}})}$  and $\mathbf{T}_{K}^{\overline{S}_\textnormal{bad}}\otimes_{\mathbf{Z}_{\ell}}\mathfrak{Z}_{G(F^+_{\Bar{v}})}$ is given by $(\widetilde{\mathfrak{n}},\mathfrak{n}_{\Bar{v}})$ and $(\mathfrak{n},\mathfrak{n}_{\Bar{v}})$, respectively. Moreover, $\overline{\mathcal{S}}_{\partial}$ restricts to a surjection $\widetilde{A}_{\partial}\to A_!$ of \textit{local} $\mathbf{Z}_{\ell}$-algebras descending
    $\mathcal{S}_{\Bar{v}}:=\mathcal{S}\otimes \textnormal{id}:\widetilde{\mathbf{T}}_{\widetilde{K}}^{\overline{S}_\textnormal{bad}}\otimes_{\mathbf{Z}_{\ell}}\mathfrak{Z}_{G(F^+_{\Bar{v}})}\to\mathbf{T}_{K}^{\overline{S}_\textnormal{bad}}\otimes_{\mathbf{Z}_{\ell}}\mathfrak{Z}_{G(F^+_{\Bar{v}})}$.

    We further set 
    \begin{equation*}
\widetilde{A}:=\im\left(\widetilde{\mathbf{T}}_{\widetilde{K}}^{\overline{S}_\textnormal{bad}}\otimes_{\mathbf{Z}_{\ell}}\mathfrak{Z}_{G(F^+_{\Bar{v}})}\to \left((\widetilde{A}_{\widetilde{K},m,\partial}^{(\overline{S}_{\textnormal{bad}},\overline{S}_{\textnormal{ram}})})_{\widetilde{\mathfrak{n}}}\right)_{\mathfrak{n}_{\Bar{v}}}\right),
    \end{equation*}
    where the notation is taken from Theorem~\ref{EisensteinCongCusp}. Note that $\widetilde{A}$ is a finite flat and local $\mathbf{Z}_{\ell}$-algebra. The preimage in $\widetilde{\mathbf{T}}_{\widetilde{K}}^{\overline{S}_\textnormal{bad}}\otimes_{\mathbf{Z}_{\ell}}\mathfrak{Z}_{G(F^+_{\Bar{v}})}$ of its maximal ideal under the natural surjection is $(\widetilde{\mathfrak{n}},\mathfrak{n}_{\Bar{v}})\leq\widetilde{\mathbf{T}}_{\widetilde{K}}^{\overline{S}_\textnormal{bad}}\otimes_{\mathbf{Z}_{\ell}}\mathfrak{Z}_{G(F^+_{\Bar{v}})}$. Moreover, by Theorem~\ref{EisensteinCongCusp} we see that $\mathcal{S}_{\Bar{v}}$ descends further to a surjection $\overline{\mathcal{S}}:\widetilde{A}\twoheadrightarrow \widetilde{A}_{\partial}\twoheadrightarrow A_!$ of local $\mathbf{Z}_{\ell}$-algebras.
    \begin{Lemma}
        There exists a continuous $2n$-dimensional $\widetilde{A}$-valued group determinant
        \begin{equation*}
            \widetilde{D}:G_{F,S_{\textnormal{bad}}}\to \widetilde{A}
        \end{equation*}
        such that the following hold.
        \begin{enumerate}
            \item We have $\widetilde{D}\otimes_{ \widetilde{A}}A_!\cong D_{!}D_{!}^{\vee,c}(1-2n)$.
            \item The induced pseudorepresentation $\widetilde{D}_v:=\widetilde{D}|_{G_{F_v}}$ of $G_{F_v}$ admits a decomposition $\widetilde{D}_{v}=D_1D_2$ into $n$-dimensional pseudorepresentations.
            \item If $K_v=\textnormal{GL}_n(\mathcal{O}_{F_v})$, $D_1$ is unramified.
            \item The diagram
            \begin{equation*}
                \begin{tikzcd}
	{\mathfrak{R}_{F_v,n}^{\textnormal{ps}}} && {\widetilde{A}} \\
	& {\mathfrak{Z}_{\textnormal{GL}_n(F_{v})}}
	\arrow["{D_1}", from=1-1, to=1-3]
	\arrow["{\Phi_{F_v,n}}"', from=1-1, to=2-2]
	\arrow["{\textnormal{nat}_{\widetilde{A}}}"', from=2-2, to=1-3]
\end{tikzcd}
            \end{equation*}
            is commutative.
            \item We have $D_1\otimes_{\widetilde{A}}A_!=D_{\mathfrak{n}}|_{G_{F_v}}$.
        \end{enumerate}
    \end{Lemma}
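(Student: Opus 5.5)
Since $\widetilde{A}$ is a finite flat local $\mathbf{Z}_{\ell}$-algebra, I will build $\widetilde{D}$ by gluing the characteristic-zero Galois representations attached to the cusp forms in Theorem~\ref{EisensteinCongCusp}. First I embed $\widetilde{A}\hookrightarrow \widetilde{A}\otimes_{\mathbf{Z}_{\ell}}\overline{\mathbf{Q}}_{\ell}\cong\prod_{x}\overline{\mathbf{Q}}_{\ell}$, with $x$ running over the $\overline{\mathbf{Q}}_{\ell}$-points. Each $x$ corresponds to some $\widetilde{\pi}$ in the multiset together with a character of $\mathfrak{Z}_{G(F^+_{\Bar{v}})}$ occurring in $\big(J_{P(F^+_{\Bar{v}})}(\iota^{-1}\widetilde{\pi}_{\Bar{v}})\big)_{\mathfrak{n}_{\Bar{v}}}$. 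The $\mathfrak{Z}_{G(F^+_{\Bar{v}})}$-action on $\widetilde{A}$ is the one coming from the positive-part algebra $t_{\Bar{v}}$, inverted at $u_v$. By Lemma~\ref{Casselmanslift} this action on $\widetilde{K}_{\Bar{v}}$-invariants is identified with the action on the Jacquet module, so via \ref{JacquetComm} the point $x$ restricted to $\mathfrak{Z}_{\widetilde{G}}$ is $I^{\ast}$ of its restriction to $\mathfrak{Z}_{G}$. I then take $\widetilde{D}_x:=\det r_{\iota}(\widetilde{\pi})$ from Theorem~\ref{SelfdualLGC}, which is unramified outside $S_{\textnormal{bad}}$ with Frobenius polynomial $\widetilde{P}_w$. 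The product $\prod_x\widetilde{D}_x$ is a determinant with values in $\prod_x\overline{\mathbf{Q}}_{\ell}$ whose characteristic polynomials at Frobenii away from $S_{\textnormal{bad}}$ lie in the image of $\widetilde{\mathbf{T}}^{\overline{S}_{\textnormal{bad}}}$. By Chebotarev and the fact that determinants are determined by characteristic polynomials (Chenevier), it therefore takes values in $\widetilde{A}$, giving $\widetilde{D}$. Property (i) follows by comparing characteristic polynomials at unramified Frobenii using Proposition~\ref{Sataketransform}, Chebotarev, and the surjection $\overline{\mathcal{S}}:\widetilde{A}\twoheadrightarrow A_!$, because $D_!D_!^{\vee,c}(1-2n)$ has characteristic polynomial $P_w(X)q_w^{n(2n-1)}P^{\vee}_{w^c}(q_w^{1-2n}X)$.

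For (ii)--(iv) I work pointwise. At each $x$, the representation $\rho_x:=r_{\iota}(\widetilde{\pi})|_{G_{F_v}}$ reduces to $(\phi_{\mathfrak{m}_v}\otimes\overline{\chi}_v)\oplus\phi_{\mathfrak{m}_{v^c}}^{\vee}(1-2n)$ by Theorem~\ref{modellLGC} and the choice of $\mathfrak{n}$. Since $\overline{\chi}_v$ was chosen as in Lemma~\ref{KeyLemma}, I get $\rho_x\cong\rho_{1,x}\oplus\rho_{2,x}$, and by Theorem~\ref{SelfdualLGC}(ii) (this uses that $\Bar{v}$ is nice) I get $J_{\textnormal{Q}}(\pi(\rho_x))_{\mathfrak{n}_{\Bar{v}}}\cong\pi(\rho_{1,x})\otimes\pi(\rho_{2,x})|\det|^n$. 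So the $\mathfrak{Z}_{\textnormal{GL}_n(F_v)}$-factor of $x$ is the supercuspidal support of $\pi(\rho_{1,x})$. The crucial integrality step is to define $D_1$ directly as the composite $\textnormal{nat}_{\widetilde{A}}\circ\Phi_{F_v,n}$. This gives a map $\mathfrak{R}^{\textnormal{ps}}_{F_v,n}\to\widetilde{A}$, i.e.\ an $\widetilde{A}$-valued $n$-dimensional determinant of $W^0_{F_v}$, which extends to $G_{F_v}$ by Proposition~\ref{ArtinianPoints} applied to the Artinian quotients $\widetilde{A}/\ell^k$ and a limit. This makes (iv) tautological, and at each $x$ it specialises to $\det\rho_{1,x}$. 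I define $D_2$ analogously from the second factor, using the $\mathfrak{Z}_{\textnormal{GL}_n(F_{v^c})}$-part twisted by $|\det|^{n}$ and transported through \eqref{eq-unitary}, so that it specialises to $\det\rho_{2,x}$. Then $D_1D_2$ and $\widetilde{D}|_{G_{F_v}}$ agree at every $\overline{\mathbf{Q}}_{\ell}$-point, hence agree, because $\widetilde{A}$ is reduced-injective into $\prod_x\overline{\mathbf{Q}}_{\ell}$ by flatness. This gives (ii). For (iii): if $K_v$ is hyperspecial, the invariants factor through the unramified block, so every $\pi(\rho_{1,x})$ has unramified supercuspidal support. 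Hence $D_1$ kills the ideal defining ramified determinants pointwise, and therefore identically.

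Finally, (v) follows from (iv) together with (i). Pushing (iv) along $\overline{\mathcal{S}}$ shows that $D_1\otimes A_!$ is $\theta^{\textnormal{Aut}}_v$. Unique factorisation then gives $D_!|_{G_{F_v}}\cdot D_!^{\vee,c}(1-2n)|_{G_{F_v}}=(D_1\otimes A_!)(D_2\otimes A_!)$, and matching residual factors, which are disjoint by genericity, identifies $D_1\otimes A_!$ with $D_{\mathfrak{n}}|_{G_{F_v}}$. I expect this last step to be the main obstacle. It needs a lemma that a factorisation of a determinant over a local Artinian ring into factors with coprime residual determinants is unique, which I would prove via Chenevier's idempotent lifting for Cayley--Hamilton algebras. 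A second delicate point is checking that the $\mathfrak{Z}_{G(F^+_{\Bar v})}$-action on $\widetilde{A}$ really matches the Jacquet module localised at $\mathfrak{n}_{\Bar v}$, which relies on Lemma~\ref{Casselmanslift} and Lemma~\ref{GeomLemmaLemma}.
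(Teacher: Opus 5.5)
Your overall strategy matches the paper's. You glue the determinants of the $r_{\iota}(\widetilde{\pi})$, descend to $\widetilde{A}$ via Chenevier, obtain (i) from Proposition~\ref{Sataketransform}, obtain (ii)--(iv) from Lemma~\ref{KeyLemma} together with Theorem~\ref{SelfdualLGC}(ii), and obtain (v) from uniqueness of factorisations with coprime residual factors. That last ingredient is exactly the ``Hensel's lemma for determinants'' (\cite{ACC23}, Lemma 3.2.4) that the paper cites, so it is not an obstacle.

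\textbf{The gap.} You write $\widetilde{A}\otimes_{\mathbf{Z}_{\ell}}\overline{\mathbf{Q}}_{\ell}\cong\prod_x\overline{\mathbf{Q}}_{\ell}$, and later assert that $\widetilde{A}$ is reduced-injective into $\prod_x\overline{\mathbf{Q}}_{\ell}$ ``by flatness''. Flatness only gives $\widetilde{A}\hookrightarrow\widetilde{A}[1/\ell]$. It says nothing about reducedness: $\mathbf{Z}_{\ell}[\varepsilon]/(\varepsilon^2)$ is finite flat and not reduced.

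Reducedness is a real issue here. By Lemma~\ref{Casselmanslift}, $\mathfrak{Z}_{G(F^+_{\Bar{v}})}$ acts on $\widetilde{A}[1/\ell]$ through its action on $J_{P}(\iota^{-1}\widetilde{\pi}_{\Bar{v}})^{K_{\Bar{v}}}$. Jacquet modules need not be semisimple, and the centre of the Levi can act on them non-semisimply; this already happens for the Borel of $\textnormal{GL}_2$ and a principal series induced from $\chi\otimes\chi$. If $\widetilde{A}[1/\ell]$ had nilpotents, the following three steps would all break:
\begin{itemize}
\item your deduction that $D_1D_2=\widetilde{D}|_{G_{F_v}}$ because they agree at every $\overline{\mathbf{Q}}_{\ell}$-point;
\item the ``pointwise, hence identically'' step in (iii);
\item the descent of $\prod_x\widetilde{D}_x$ to $\widetilde{A}$ itself, rather than to its image in $\prod_x\overline{\mathbf{Q}}_{\ell}$.
\end{itemize}

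\textbf{The missing input.} Genericity of $\mathfrak{n}_{\Bar{v}}$ forces semisimplicity, and you must use it before passing to points; this is how the paper's proof begins. Take any $\widetilde{\pi}$ contributing to $\widetilde{A}$. Lemma~\ref{KeyLemma} applied to $\rho=r_{\iota}(\widetilde{\pi})|_{G_{F_v}}$, together with $\pi(\rho)\cong\widetilde{\pi}_{\Bar{v}}$, gives that $J_P(\widetilde{\pi}_{\Bar{v}})_{\mathfrak{n}_{\Bar{v}}}\cong\pi(\rho_1)\otimes\pi(\rho_2)|\det|^{n}$ is irreducible. Hence $\mathfrak{Z}_{G(F^+_{\Bar{v}})}$ acts on it, and on its $K_{\Bar{v}}$-invariants, by a scalar. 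So each factor $\widetilde{A}_{\widetilde{\pi}}$ is a finite extension of $\mathbf{Q}_{\ell}$, and $\widetilde{A}[1/\ell]$ embeds in a finite product of such fields and is reduced.

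Once you insert this, your pointwise arguments go through. The rest of the proposal is fine, including defining $D_1:=\textnormal{nat}_{\widetilde{A}}\circ\Phi_{F_v,n}$ and checking that it specialises to $\det\rho_{1,x}$; this is the paper's descent of $D_1[1/\ell]$ read in the opposite direction.
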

    \begin{proof} By Lemma \ref{Casselmanslift}, we have a finite $\widetilde{\mathbf{T}}^{\overline{S}_{\textnormal{bad}}}\otimes_{\mathbf{Z}_{\ell}}\mathfrak{Z}_{G(F^+_{\Bar{v}})}$-equivariant product decomposition
        \begin{equation*}
            \widetilde{A}[\frac{1}{\ell}]\cong\prod_{\widetilde{\pi}} \widetilde{A}_{\widetilde{\pi}}
        \end{equation*}
        where the product runs over a finite set of cohomological cuspidal automorphic representations of $\widetilde{G}$ with $\overline{D}_{r_{\iota}(\widetilde{\pi})}\cong \overline{D}_{\mathfrak{n}}\oplus \overline{D}_{\mathfrak{n}}^{\vee,c}(1-2n)$ and
        \begin{equation*}
\widetilde{A}_{\widetilde{\pi}}:=\im\left((\widetilde{\mathbf{T}}^{\overline{S}_{\textnormal{bad}}}\otimes_{\mathbf{Z}_{\ell}}\mathfrak{Z}_{G(F^+_{\Bar{v}})})[\frac{1}{\ell}]\to\textnormal{End}_{\overline{\mathbf{Q}}_{\ell}}((\widetilde{\pi}_f^{\Bar{v},\widetilde{L}^{\Bar{v}}})\otimes_{\overline{\mathbf{Q}}_{\ell}}J_{P_{\Bar{v}}}(\widetilde{\pi}_{\Bar{v}})^{K_{\Bar{v}}})\right)_{(\widetilde{\mathfrak{n}},\mathfrak{n}_{\Bar{v}})}.
        \end{equation*}
Recall that $\mathfrak{n}$ was chosen so that with the choice of $\phi_1\otimes \chi=\phi_{\mathfrak{n}_v}$ and $\phi_2=\phi_{\mathfrak{n}_{v^c}}^{\vee}(1-2n)$ we are in the situation of Lemma~\ref{KeyLemma}. In particular, for any $\widetilde{\pi}$ appearing in the indexing set, the conclusion of Lemma~\ref{KeyLemma} applies with $\rho=r_{\iota}(\widetilde{\pi})|_{G_{F_v}}$. Consequently, using that by Theorem~\ref{SelfdualLGC}, we have $\pi(\rho)=\widetilde{\pi}_{\Bar{v}}$, we obtain that $\widetilde{A}_{\widetilde{\pi}}$ is isomorphic to a finite field extension $E_{\widetilde{\pi}}/\mathbf{Q}_{\ell}$ as a $(\widetilde{\mathbf{T}}^{\overline{S}_{\textnormal{bad}}}\otimes_{\mathbf{Z}_{\ell}}\mathfrak{Z}_{G(F^+_{\Bar{v}})})[\frac{1}{\ell}]$-algebra. We can therefore define a $2n$-dimensional $\widetilde{A}[\frac{1}{\ell}]$-valued Galois representation $\widetilde{\rho}$ with associated pseudorepresentation
\begin{equation*}
    \widetilde{D}[\frac{1}{\ell}]:=\prod_{\widetilde{\pi}}D_{r_{\iota}(\widetilde{\pi})}:G_{F,S_{\textnormal{bad}}}\to \prod_{\widetilde{\pi}}\widetilde{A}_{\widetilde{\pi}}\cong \widetilde{A}[\frac{1}{\ell}].
\end{equation*}
Moreover, $\widetilde{D}[\frac{1}{\ell}]$ comes from a unique $\widetilde{A}$-valued determinant via extension of scalars by \cite{Che14}, Corollary 1.14.

It satisfies (i) by Proposition \ref{Sataketransform}. Moreover, by Lemma~\ref{KeyLemma}, we have a decomposition 
\begin{equation}
    \widetilde{D}_v=D_{1}[1/\ell]D_{2}[1/\ell]
\end{equation}
such that the diagram
\begin{equation*}
                \begin{tikzcd}
	{\mathfrak{R}_{F_v,n}^{\textnormal{ps}}} && {\widetilde{A}[1/\ell]} \\
	& {\mathfrak{Z}_{\textnormal{GL}_n(F_{v})}}
	\arrow["{D_{1}[1/\ell]}", from=1-1, to=1-3]
	\arrow["{\Phi_{F_v,n}}"', from=1-1, to=2-2]
	\arrow["{\textnormal{nat}_{\widetilde{A}}}"', from=2-2, to=1-3]
\end{tikzcd}
            \end{equation*}
is commutative. In particular, we see that $D_{1}[1/\ell]$ and, similarly, $D_{2}[1/\ell]$, descend to $D_1:G_{F_v}\to \widetilde{A}$ and $D_2:G_{F_v}\to \widetilde{A}$, respectively. This proves (ii), (iii) and $(iv)$.

Note that, by construction, $\overline{D}_1=\overline{D}_{\mathfrak{n}}|_{G_{F_v}}$ and $\overline{D}_2=\overline{D}_{\mathfrak{n}}^{\vee,c}(1-2n)|_{G_{F_{v}}}$. In particular, (v) follows from Hensel's Lemma for determinants (cf. \cite{ACC23}, Lemma 3.2.4).
\end{proof}
We now easily finish the proof. The lemma yields a diagram
\begin{equation*}
    \begin{tikzcd}
	{\mathfrak{R}_{F_v,n}^{\textnormal{ps}}} && {A_!} \\
	& {\widetilde{A}} \\
	\\
	& {\mathfrak{Z}_{\textnormal{GL}_n(F_{v})}}
	\arrow["{D_{\mathfrak{n}}|_{G_{F_v}}}", from=1-1, to=1-3]
	\arrow["{D_1}", from=1-1, to=2-2]
	\arrow["{\Phi_{F_v,n}}"', from=1-1, to=4-2]
	\arrow["{\overline{\mathcal{S}}}"', from=2-2, to=1-3]
	\arrow["{\textnormal{nat}_A}"', from=4-2, to=1-3]
	\arrow["{\textnormal{nat}_{\widetilde{A}}}"{pos=0.8}, from=4-2, to=2-2]
\end{tikzcd}
\end{equation*}
with each of the inner triangles being commutative. Therefore, the outer triangle is also commutative.

\qed

\subsubsection{Proof of Theorem~\ref{MainTHM} for $?\in\{\emptyset, c\}$} We argue by induction on $n\geq 1$. For $n=1$, the case of $?=\emptyset$ and $?=c$ are both equivalent to the case of interior cohomology and so the theorem is already proved. We prove the theorem for $n>1$ assuming that it is known for $1\leq k<n$.

Using the excision triangle for the boundary and local-global compatibility for interior cohomology, we reduce the problem to local-global compatibility for the Borel--Serre boundary.

If $K_v = \textnormal{GL}_n(\mathcal{O}_{F_v})$ for some $v\in S_{\textnormal{ram}}$, then the cohomology of any Borel--Serre stratum can be described as the cohomology of the locally symmetric space of the corresponding Levi factor with hyperspecial level at $v$. In particular, the first claim of the Theorem follows easily.

To show the second assertion, it will be convenient to work with the faithful quotients of Bernstein centres as in \S\ref{sec_BSboundary_for_G}. To see that there is no harm in doing this, note that the following assertions are equivalent.
\begin{itemize}
    \item The existence of nilpotent ideals $I_{?,K_{\overline{S}_{\textnormal{ram}}}'}^{\overline{S}_{\textnormal{avoid}}}\leq \mathbf{T}_{?}^{\overline{S}_{\textnormal{avoid}}}(K^{\overline{S}_{\textnormal{ram}}}K_{\overline{S}_{\textnormal{ram}}}',m)$ of nilpotence degree depending only on $n$ and $[F^+:\mathbf{Q}]$ and continuous group determinants
\begin{equation*}
    D_?:G_{F,S_{\textnormal{bad}}}\to \mathbf{T}_{?}^{\overline{S}_{\textnormal{avoid}}}(K^{\overline{S}_{\textnormal{ram}}}K_{\overline{S}_{\textnormal{ram}}}',m)/I_{?,K_{\overline{S}_{\textnormal{ram}}}'}^{\overline{S}_{\textnormal{avoid}}}
\end{equation*}
satisfying local-global compatibility away from $\overline{S}_{\textnormal{avoid}}$.
\item The existence of a nilpotent ideal $\mathcal{I}_{?}^{\overline{S}_{\textnormal{avoid}}}\leq \mathfrak{Z}_{G,?}^{\overline{S}_{\textnormal{avoid}}}(K^{\overline{S}_{\textnormal{ram}}},m)$ depending only on $n$ and $[F^+:\mathbf{Q}]$ and a continuous group determinant
\begin{equation*}
    D_?:G_{F,S_{\textnormal{bad}}}\to \mathfrak{Z}_{G,?}^{\overline{S}_{\textnormal{avoid}}}(K^{\overline{S}_{\textnormal{ram}}},m)/\mathcal{I}_{?}^{\overline{S}_{\textnormal{avoid}}}
\end{equation*}
satisfying local-global compatibility away from $\overline{S}_{\textnormal{avoid}}$.
\end{itemize}
 Indeed, writing
\begin{equation*}
    \pi_{K_{\overline{S}_{\textnormal{ram}}}'}:\mathfrak{Z}_{G,?}^{\overline{S}_{\textnormal{avoid}}}(K^{\overline{S}_{\textnormal{ram}}},m)\to \mathbf{T}_{?}^{\overline{S}_{\textnormal{avoid}}}(K^{\overline{S}_{\textnormal{ram}}}K_{\overline{S}_{\textnormal{ram}}}',m)
\end{equation*} 
for the natural map, the correspondence for the ideals is given by sending $(I_{?,K_{\overline{S}_{\textnormal{ram}}}'}^{\overline{S}_{\textnormal{avoid}}})_{K_{\overline{S}_{\textnormal{ram}}}'}$ to 
\begin{equation*}
    \mathcal{I}_{?}^{\overline{S}_{\textnormal{avoid}}}:=\cap_{K_{\overline{S}_{\textnormal{ram}}}'}\pi_{K_{\overline{S}_{\textnormal{ram}}}'}^{-1}(I_{?,K_{\overline{S}_{\textnormal{ram}}}'}^{\overline{S}_{\textnormal{avoid}}})
\end{equation*}
 and sending $\mathcal{I}_{?}^{\overline{S}_{\textnormal{avoid}}}$ to 
 \begin{equation*}
     I_{?,K_{\overline{S}_{\textnormal{ram}}}'}^{\overline{S}_{\textnormal{avoid}}}:=\pi_{K_{\overline{S}_{\textnormal{ram}}}'}(\mathcal{I}_{?}^{\overline{S}_{\textnormal{avoid}}})\cdot\mathbf{T}_{?}^{\overline{S}_{\textnormal{avoid}}}(K^{\overline{S}_{\textnormal{ram}}}K_{\overline{S}_{\textnormal{ram}}}',m).
 \end{equation*} 
 The correspondence for determinants may be established using Chebotarev's density theorem, \cite{Che09} Example 2.32 and local-global compatibility away from $\overline{S}_{\textnormal{bad}}$. In particular, we prove the latter claim. 
 
 By repeated application of the excision long exact sequence for the stratification of the boundary, we see that it suffices to construct, for every standard $F^+$-rational parabolic subgroup $Q=MN\leq G$, a nilpotent ideal $\mathcal{I}_{\partial}^{Q}\leq \mathfrak{Z}_{G,c}^{\overline{S}_{\textnormal{avoid}},Q}(K^{\overline{S}_{\textnormal{ram}}},m)$ with nilpotence degree bounded only in terms of $n$ and $[F^+:\mathbf{Q}]$, and an $n$-dimensional continuous determinant $D_{Q}$ valued in
    \begin{equation*}
    \mathfrak{Z}_{G,c}^{\overline{S}_{\textnormal{avoid}},Q}(K^{\overline{S}_{\textnormal{ram}}},m)/\mathcal{I}_{\partial}^{Q}
    \end{equation*}
    satisfying local-global compatibility away from $\overline{S}_{\textnormal{avoid}}$.

   By Lemma~\ref{BSInductionLemma}, it suffices to construct a nilpotent ideal 
   \begin{equation*}
       \mathcal{I}_{G}^{M,\overline{S}_{\textnormal{ram}}}\leq \mathfrak{Z}_{G,c}^{\overline{S}_{\textnormal{avoid}}}(K^{\overline{S}_{\textnormal{bad}}}K_{M,\overline{S}_{\textnormal{avoid}}}',m)
   \end{equation*} with nilpotence degree bounded only in terms of $n$ and $[F^+:\mathbf{Q}]$, and an $n$-dimensional continuous determinant $D_{M}$ valued in
    \begin{equation*}
    \mathfrak{Z}_{G,c}^{\overline{S}_{\textnormal{avoid}}}(K_M^{\overline{S}_{\textnormal{bad}}}K_{M,\overline{S}_{\textnormal{avoid}}}',m)/\mathcal{I}_{G}^{M,\overline{S}_{\textnormal{ram}}}
    \end{equation*}
    satisfying local-global compatibility away from $\overline{S}_{\textnormal{avoid}}$.

    Writing $n=n_1+...+n_k$ for the partition corresponding to $M$, we have the corresponding decomposition $K^{\overline{S}_{\textnormal{bad}}}K_{M,\overline{S}_{\textnormal{avoid}}}'=K_1^{\overline{S}_{\textnormal{ram}}}\times ...\times K_k^{\overline{S}_{\textnormal{ram}}}$ into level subgroups for auxiliary general linear groups. By induction, we get for $i=1,...,k$, $n_i$-dimensional continuous determinants $D_i$ of $G_{F,S_{\textnormal{bad}}}$ valued in $\mathfrak{Z}^{\overline{S}_{\textnormal{avoid}}}_{\textnormal{GL}_{n_i},c}(K_i^{\overline{S}_{\textnormal{ram}}},m)$ modulo some nilpotent ideal satisfying local-global compatibility away from $\overline{S}_{\textnormal{avoid}}$. By the Künneth formula, we obtain an $n$-dimensional group determinant
    \begin{equation*}
        D_M:=D_1(-(n_2+...+n_k))\oplus...\oplus D_k
    \end{equation*}
    valued in $\mathfrak{Z}_{M,c}^{\overline{S}_{\textnormal{avoid}}}(K_M^{\overline{S}_{\textnormal{bad}}}K_{M,\overline{S}_{\textnormal{avoid}}}',m)$ modulo a nilpotent ideal $\mathcal{I}$ (with nilpotence degree bounded in terms of $n$ and $[F^+:\mathbf{Q}]$ and satisfying local-global compatibility away from $\overline{S}_{\textnormal{avoid}}$ (for $M$).

    We claim that it is, in fact, valued in the image of $\mathfrak{Z}_{G,c}^{\overline{S}_{\textnormal{avoid}}}(K_M^{\overline{S}_{\textnormal{bad}}}K_{M,\overline{S}_{\textnormal{avoid}}}',m)$ and satisfies local-global compatibility away from $\overline{S}_{\textnormal{avoid}}$ for $G$.

    To see the first claim, note that, by \cite{Che09} Example 1.14, it suffices to check that the continuous functions $\Lambda_i$ (with domain $G_{F,S_{\textnormal{bad}}}$) giving the $i$th coefficient of the characteristic polynomials of $D_M$ are all valued in the image of
    \begin{equation*}
        \mathfrak{Z}_{G,c}^{\overline{S}_{\textnormal{avoid}}}(K_M^{\overline{S}_{\textnormal{bad}}}K_{M,\overline{S}_{\textnormal{avoid}}}',m).
    \end{equation*} However, the image is certainly a compact hence closed subset of the profinite set $\mathfrak{Z}_{M,c}^{\overline{S}_{\textnormal{avoid}}}(K_M^{\overline{S}_{\textnormal{bad}}}K_{M,\overline{S}_{\textnormal{avoid}}}',m)/\mathcal{I}$, and so by Chebotarev's density theorem, it suffices to check this for a subset $\{\textnormal{Frob}_w\mid w\notin S_{\textnormal{bad}}\}$ of $G_{F,S_{\textnormal{bad}}}$. Consequently, the claim follows from Lemma~\ref{LocalEisensteinFunctoriality} applied to places away from $S_{\textnormal{bad}}$.

    The second claim now follows from Lemma~\ref{LocalEisensteinFunctoriality} applied to places in $\overline{S}_{\textnormal{ram}}$.
    \qed

\medskip
\printbibliography

@book{Sch15,
  title={On torsion in the cohomology of locally symmetric varieties},
  author={P. Scholze},
  year={2015},
  publisher={Ann. of Math. (2) 182, no. 3, 945-1066. MR 3418533},
  }

@book{Che14,
  title={The p-adic analytic space of pseudocharacters of a profinite group and pseudorepresentations over arbitrary rings},
  author={G. Chenevier},
  year={2014},
  publisher={Automorphic forms and Galois representations. Vol. 1, London Math. Soc. Lecture Note Ser., vol. 414, Cambridge Univ. Press, Cambridge, pp. 221- 285. MR 3444227},
  }

@book{CG18,
  title={Modularity lifting beyond the Taylor-Wiles method},
  author={F. Calegari and D. Geraghty},
  year={2018},
  publisher={Invent. Math. 211, no. 1, 297-433},
  }

@book{Cl90,
  title={Motifs et formes automorphes: applications du principe de fonctorialit\'e},
  author={L. Clozel},
  year={1990},
  publisher={ In Automorphic forms, Shimura varieties, and L-functions, Vol. I (Ann Arbor, MI, 1988), volume 10 of Perspect. Math., pages 77-159. Academic Press, Boston, MA},
  }

@book{Cas95,
  title={Introduction to the theory of admissible representations of $p$-adic reductive groups},
  author={W. Casselman},
  year={1995},
  publisher={preprint},
  }

@book{BZ77,
  title={Induced representations of reductive $\mathfrak{p}$-adic groups. I},
  author={I.N. Bernstein and A.V. Zelevinsky},
  year={1977},
  publisher={Ann.Sci.\'Ecole Norm. Sup. (4) 10(4), 441-472},
  }

@book{Shi14,
  title={On the cohomological base change for unitary similitude groups},
  author={S. W. Shin},
  year={2014},
  publisher={Compos. Math. 150, no. 2, 220-225, Appendix to Galois representations associated to holomorphic limits of discrete series, by Wushi Goldring},
  }

@book{NT16,
  title={Torsion Galois representations over CM fields and Hecke algebras in the derived category},
  author={J. Newton and J. A. Thorne},
  year={2016},
  publisher={Forum Math. Sigma 4, Paper No. e21, 88},
  }

@book{Zel80,
  title={Induced representations of reductive p-adic groups. II. On irreducible representations of GL(n).},
  author={A. V. Zelevinsky},
  year={1980},
  publisher={Ann. Sci.
 \'Ecole Norm. Sup. (4) 13.2, pp. 165-210},
  }

@book{HT01,
  title={The geometry and cohomology of some simple Shimura varieties},
  author={M. Harris and R. Taylor},
  year={2001},
  publisher={Vol. 151. Annals of Mathematics Studies. With an appendix by Vladimir G. Berkovich. Princeton, NJ: Princeton University Press, pp. viii+276},
  }

@book{Car12,
  title={Local-global compatibility and the action of monodromy on nearby cycles},
  author={A. Caraiani},
  year={2012},
  publisher={Duke Math. J. 161(12): 2311-2413},
  }

@book{BS73,
  title={Corners and arithmetic groups},
  author={A. Borel and J.-P. Serre},
  year={1973},
  publisher={Comment. Math. Helv. 48, 436-491, Avec un appendice: Arrondissement des vari\'et\'es \'a coins, par A. Douady et L. H\'erault},
  }

@book{Sch98,
  title={Equivariant homology for totally disconnected groups},
  author={P. Schneider},
  year={1998},
  publisher={J. Algebra 203, no. 1, 50-68},
  }

@book{KS94,
  title={Sheaves on manifolds},
  author={M. Kashiwara and P. Schapira},
  year={1994},
  publisher={Grundlehren der Mathematischen Wissenschaften [Fundamental Principles of Mathematical Sciences], vol. 292, Springer-Verlag, Berlin, With a chapter in French by Christian Houzel, Corrected reprint of the 1990 original},
  }

@book{FS98,
  title={A decomposition of spaces of automorphic forms, and
the Eisenstein cohomology of arithmetic groups},
  author={J. Franke and J. Schwermer},
  year={1998},
  publisher={Math. Ann., 311(4):765-790},
  }

@book{Eme10b,
  title={Ordinary parts of admissible representations of p-adic reductive groups II},
  author={M. Emerton},
  year={2010},
  publisher={Ast\'erisque, no. 331, 403-459},
  }

@book{BK98,
  title={Smooth representations of reductive p-adic groups: structure theory via types},
  author={C. J. Bushnell and P. C. Kutzko},
  year={1998},
  publisher={Proc. London Math. Soc.
(3) 77, no. 3, 582-634.},
  }

@misc{CN23,
      title={On the modularity of elliptic curves over imaginary quadratic fields}, 
      author={Ana Caraiani and James Newton},
      year={2025},
      eprint={2301.10509},
      archivePrefix={arXiv},
      primaryClass={math.NT},
      url={https://arxiv.org/abs/2301.10509}, 
}

@book{Var14,
  title={Local-global compatibility for regular algebraic cuspidal automorphic representation when $\ell\neq p$},
  author={I. Varma},
  year={2024},
  publisher={Forum of Mathematics, Sigma 12, Number 21, Pg. 1-32},
  }

@book{Hel16,
  title={The Bernstein center of the category of smooth $W(k)[GL_n(F)]$-modules},
  author={D. Helm},
  year={2016},
  publisher={Forum Math. Sigma 4, e11},
  }

@book{BD84,
  title={Le "centre" de Bernstein},
  author={J. Bernstein, and P. Deligne},
  year={1984},
  publisher={In "Representations des groupes r\'eductifs sur un corps local,
Travaux en cours" (P. Deligne ed.), Hermann, Paris, 1-32},
  }

@book{Che09,
  title={Une application des vari\'et\'es de Hecke des
groupes unitaires},
  author={G. Chenevier},
  year={2009},
  publisher={preprint},
  }

@book{Hel20,
  title={Curtis Homomorphisms and the integral Bernstein center for $\textnormal{GL}_n$},
  author={D. Helm},
  year={2020},
  publisher={Algebra and Number Theory 14, no. 10, 2607-2645},
  }

@book{Fr98,
  title={Harmonic analysis in weighted L2-spaces},
  author={J. Franke},
  year={1998},
  publisher={Ann. Sci. Ecol\'e
Norm. Sup. (4) 31, no. 2, 181-279.},
  }

@book{HLTT16,
  title={On the rigid
cohomology of certain Shimura varieties},
  author={M. Harris and K.-W. Lan and R. Taylor and J. Thorne},
  year={2016},
  publisher={Res. Math. Sci. 3, 3:37},
  }

@article{ACC23,
 author = {Allen, Patrick B. and Calegari, Frank and Caraiani, Ana and Gee, Toby and Helm, David and Le Hung, Bao and Newton, James and Scholze, Peter and Taylor, Richard and Thorne, Jack A.},
 title = {Potential automorphy over {CM} fields},
 fjournal = {Annals of Mathematics. Second Series},
 journal = {Ann. Math. (2)},
 issn = {0003-486X},
 volume = {197},
 number = {3},
 pages = {897--1113},
 year = {2023},
 language = {English},
 doi = {10.4007/annals.2023.197.3.2},
 zbMATH = {7668527},
 Zbl = {1521.11034}
}

@book{Lan79,
  title={Automorphic representations, Shimura varieties, and motives},
  author={R. P. Langlands},
  year={1979},
  publisher={M\"archen, Automorphic forms, representations and L-functions (Proc. Sympos. Pure
Math., Oregon State Univ., Corvallis, Ore., 1977), Part 2, Proc. Sympos. Pure Math.,
XXXIII, Amer. Math. Soc., Providence, R.I., 1979, pp. 205-246. MR 546619},
  }

@article{MT22,title={Automorphy lifting with adequate image},volume={11},journal={Forum of Mathematics, Sigma},publisher={Cambridge University Press},author={Miagkov, Konstantin and Thorne, Jack A.},year={2023},pages={e8}}

@misc{stacks-project,
  author       = {The {Stacks project authors}},
  title        = {The Stacks project},
  howpublished = {\url{https://stacks.math.columbia.edu}},
  year         = {2024},
}

@article{Hev23,
author = {B. Hevesi},
year = {2023},
title = {Ordinary parts and local-global compatibility at $\ell=p$},
volume = {},
journal = {arXiv:2311.13514},
doi = {}
}

@article{Pin90,
author = {R. Pink},
year = {1990},
title = {Arithmetical compactification of mixed Shimura varieties},
volume = {},
journal = {Bonner Mathematische
Schriften [Bonn Mathematical Publications], 209. Universitat Bonn, Mathematisches Institut, Bonn},
doi = {}
}

@article{Vig96,
author = {M.-F. Vign\'eras},
year = {1996},
title = {Repr\'esentations l-modulaires d’un groupe r\'eductif $p$-adique avec $\ell\neq p$},
volume = {volume 137},
journal = {Progress in Mathematics. Birkh\"auser Boston, Inc., Boston, MA},
doi = {}
}

@article{Vig98,
author = {Vigneras, Marie-France},
year = {1998},
month = {12},
pages = {},
title = {Induced R-representations of p-adic reductive groups},
volume = {4},
journal = {Selecta Mathematica (N. S.), v.4, 549-623 (1998)},
doi = {10.1007/s000290050040}
}

@misc{DHKM24,
      title={Local Langlands in families: The banal case}, 
      author={Jean-François Dat and David Helm and Robert Kurinczuk and Gilbert Moss},
      year={2024},
      eprint={2406.09283},
      archivePrefix={arXiv},
      primaryClass={math.RT},
      url={https://arxiv.org/abs/2406.09283}, 
}

@article{Vig01,
author = {M.-F. Vigneras},
year = {2001},
title = {Correspondance de Langlands semi-simple pour GL(n, F) modulo $\ell\neq p$},
volume = {177-223},
journal = {Invent. Math. 144},
doi = {}
}

@book{Del73,
  title={Les constantes des \'equations fonctionnelles des fonctions L},
  author={P. Deligne},
  year={1973},
  publisher={In Pierre
Deligne and Willem Kuijk, editors, Modular Functions of One Variable II, pages
501–597, Berlin, Heidelberg, 1973. Springer Berlin Heidelberg},
  }

@article{DHKM20,
  title={Moduli of Langlands parameters},
  author={Dat, Jean-Fran{\c{c}}ois and Helm, David and Kurinczuk, Robert and Moss, Gilbert},
  journal={Journal of the European Mathematical Society},
  volume={27},
  number={5},
  pages={1827--1927},
  year={2025},
  publisher={European Mathematical Society}
}

@article{WE18,
author = {C. Wang-Erickson},
year = {2018},
title = {Algebraic families of Galois representations and potentially semistable pseudodeformation rings},
volume = {1615–1681},
journal = {Math. Ann., 371(3-4)},
doi = {}
}

@article{HM18,
author = {D. Helm and G. Moss},
year = {2018},
title = {Converse theorems and the local Langlands correspondence in families},
volume = {999–1022},
journal = {Invent. Math., 214(2)},
doi = {}
}

@article{DHKM24a,
author = {J.-F. Dat and D. Helm and R. Kurinczuk and G. Moss},
year = {2024},
title = {Finiteness for Hecke algebras of p-adic
groups},
volume = {},
journal = {J. Amer. Math. Soc., 37(3):929–949},
doi = {}
}

@article{MT23,
author = {G. Moss and J. Trias},
year = {2023},
title = {Towards a theta correspondence in families for type II dual pairs},
volume = {},
journal = {arXiv:2312.12031},
doi = {}
}

@book{AT61,
  title={Class Field Theory},
  author={E. Artin and J. Tate},
  year={1961},
  publisher={Harvard, Dept. of Mathematics. Notes
from the Artin-Tate seminar on class field theory given at Princeton University 1951–52.
Reprinted 1968, 1990; second edition AMS Chelsea Publishing, 2009},
  }

@misc{Aca26,
      title={Adjoint {B}loch--{K}ato {S}elmer groups of regular algebraic automorphic {G}alois representations}, 
      author={A'Campo, Lambert and Hevesi, Bence and Thorne, Jack A. and Whitmore, Dmitri},
      year={},
      howpublished={In preparation},
}

@article {New23,
    AUTHOR = {Newton, James and Thorne, Jack A.},
     TITLE = {Adjoint {S}elmer groups of automorphic {G}alois representations of unitary type},
   JOURNAL = {J. Eur. Math. Soc. (JEMS)},
  FJOURNAL = {Journal of the European Mathematical Society (JEMS)},
    VOLUME = {25},
      YEAR = {2023},
    NUMBER = {5},
     PAGES = {1919--1967},
      ISSN = {1435-9855},
   MRCLASS = {11F80 (12G05)},
  MRNUMBER = {4592862},
MRREVIEWER = {Santosh Nadimpalli},
       DOI = {10.4171/jems/1228},
       URL = {https://doi-org.ezp.lib.cam.ac.uk/10.4171/jems/1228},
}

\end{document}